\def\rr{{\mathbb R}}
\def\rn{{\mathbb{R}^n}}
\def\zz{{\mathbb Z}}
\def\nn{{\mathbb N}}
\def\cs{{\mathcal S}}
\def\mi{{\mathrm I}}
\def\fz{\infty }
\def\az{\alpha}
\def\bz{\beta}
\def\dz{\delta}
\def\lz{\lambda}
\def\lf{\left}
\def\r{\right}
\def\hs{\hspace{0.25cm}}
\def\ls{\lesssim}
\def\gs{\gtrsim}
\def\noz{\nonumber}
\def\wz{\widetilde}
\def\wh{\widehat}
\def\com{\complement}
\def\dis{\displaystyle}
\def\supp{\mathop\mathrm{\,supp\,}}
\def\lpq{{L^{p,q}(\rn)}}
\newtheorem{theorem}{Theorem}[section]
\newtheorem{lemma}[theorem]{Lemma}
\newtheorem{corollary}[theorem]{Corollary}
\newtheorem{proposition}[theorem]{Proposition}
\theoremstyle{definition}
\newtheorem{remark}[theorem]{Remark}
\newtheorem{definition}[theorem]{Definition}
\renewcommand{\appendix}{\par
   \setcounter{section}{0}%
   \setcounter{subsection}{0}%
   \setcounter{subsubsection}{0}%
   \gdef\thesection{\@Alph\c@section}%
   \gdef\thesubsection{\@Alph\c@section.\@arabic\c@subsection}%
   \gdef\theHsection{\@Alph\c@section.}%
   \gdef\theHsubsection{\@Alph\c@section.\@arabic\c@subsection}%
   \csname appendixmore\endcsname
 }
\numberwithin{equation}{section}
\begin{document}

\arraycolsep=1pt

\title{\bf\Large Anisotropic Hardy-Lorentz Spaces and
Their Applications
\footnotetext{\hspace{-0.35cm} 2010 {\it
Mathematics Subject Classification}. Primary  42B35;
Secondary 46E30, 42B25, 42B20.
\endgraf {\it Key words and phrases.} Lorentz space,
anisotropic Hardy-Lorentz space, expansive dilation,
Calder\'{o}n reproducing formula, grand maximal function,
atom, molecule, Calder\'{o}n-Zygmund  operator.
\endgraf This project is supported by the National
Natural Science Foundation of China
(Grant Nos.~11571039 and 11471042). }}
\author{Jun Liu, Dachun Yang\,\footnote{Corresponding author}\ \
and Wen Yuan}
\date{}
\maketitle

\vspace{-0.8cm}

\begin{center}
\begin{minipage}{13cm}
{\small {\bf Abstract}\quad
Let $p\in(0,1]$, $q\in(0,\infty]$ and  $A$ be a general
expansive matrix on $\mathbb{R}^n$. The authors
introduce the anisotropic Hardy-Lorentz space
$H^{p,q}_A(\mathbb{R}^n)$ associated with $A$ via the
non-tangential grand maximal function and then establish its
various real-variable characterizations
in terms of the atomic or  the molecular decompositions, the radial
or the non-tangential maximal functions, or the finite atomic decompositions.
All these characterizations except the $\infty$-atomic characterization are
new even for the classical isotropic Hardy-Lorentz spaces on $\mathbb{R}^n$.
As applications, the authors first
prove that $H^{p,q}_A(\mathbb{R}^n)$
is an intermediate space between $H^{p_1,q_1}_A(\mathbb{R}^n)$
and $H^{p_2,q_2}_A(\mathbb{R}^n)$ with
$0<p_1<p<p_2<\infty$ and $q_1,\,q,\,q_2\in(0,\infty]$, and also
between $H^{p,q_1}_A(\mathbb{R}^n)$
and $H^{p,q_2}_A(\mathbb{R}^n)$ with
$p\in(0,\infty)$ and $0<q_1<q<q_2\leq\infty$
in the real method of interpolation.
The authors then establish a criterion on the boundedness of sublinear
operators from $H^{p,q}_A(\mathbb{R}^n)$ into a quasi-Banach
space; moreover, the authors obtain the boundedness of $\delta$-type
Calder\'{o}n-Zygmund operators from $H^p_A(\mathbb{R}^n)$ to
the weak Lebesgue space $L^{p,\infty}(\mathbb{R}^n)$ (or $H^{p,\infty}_A(\mathbb{R}^n)$)
in the critical case,
from $H_A^{p,q}(\mathbb{R}^n)$ to $L^{p,q}(\mathbb{R}^n)$ (or $H_A^{p,q}(\mathbb{R}^n)$)
with $\delta\in(0,\frac{\ln\lambda_-}{\ln b}]$,
$p\in(\frac1{1+\delta},1]$ and $q\in(0,\infty]$, as well as the boundedness of some
Calder\'{o}n-Zygmund operators from $H_A^{p,q}(\mathbb{R}^n)$
to $L^{p,\infty}(\mathbb{R}^n)$, where $b:=|\det A|$,
$\lambda_-:=\min\{|\lambda|:\ \lambda\in\sigma(A)\}$ and $\sigma(A)$
denotes the set of all eigenvalues of $A$.}
\end{minipage}
\end{center}

\vspace{0.0cm}

\section{Introduction\label{s1}}

\hskip\parindent
The study of Lorentz spaces originated from Lorentz \cite{l50,l51}
in the early 1950's. As a generalization of $L^p(\rn)$, Lorentz spaces
are known to be an intermediate space of Lebesgue spaces
in the real interpolation method; see \cite{c64,lp64,p63}. For a  systematic
treatment of Lorentz spaces as well as their dual spaces, we refer the reader to
Hunt \cite{h66}, Cwikel \cite{c75} and Cwikel and Fefferman
\cite{cf80,cf84}; see also \cite{bs88,bl76,lg08,s76,sw71}. It is well known that,
due to their fine structures, Lorentz spaces play an irreplaceable role in the study
on various critical or endpoint analysis problems from many different research fields
and there exists a lot of literatures on this subject, here we only mention
several recent papers from harmonic analysis (see, for example,
\cite{osttw12,mtt03,st01,tw01}) and partial differential equations (see, for example,
\cite{iiy15,mr15,p15}).

On the other hand, the theory of Hardy spaces has been well developed
and these spaces play an important role in many branches of analysis and
partial differential equations; see, for example,
\cite{clms93,fs72,fs82,lg09,m94,s94,s93,sto89}. It is well known that
Hardy spaces are good substitutes of Lebesgue spaces when $p\in(0,1]$,
particularly, for the study on the boundedness of maximal functions and singular integral operators.
Moreover, for the study on the boundedness of operators,
the weak Hardy space $H^{1,\fz}(\rn)$ is also a good substitute of $L^{1,\fz}(\rn)$.
Recall that the weak
Hardy spaces $H^{p,\fz}(\rn)$ with $p\in(0,1)$ were first introduced by
Fefferman, Rivi\'{e}re and Sagher \cite{frs74} in 1974, which naturally appeared
as the intermediate spaces of Hardy spaces $H^p(\mathbb{R}^n)$ with $p\in(0,1]$ via
the real interpolation. Later on, to find out the biggest
space from which the Hilbert transform
is bounded to the weak Lebesgue space $L^{1,\fz}(\rn)$,
Fefferman and Soria \cite{fs87} introduced the weak Hardy space
$H^{1,\fz}(\rn)$, in which they
also obtained the $\fz$-atomic decomposition of $H^{1,\fz}(\rn)$
and the boundedness of some Calder\'on-Zygmund operators from
$H^{1,\fz}(\rn)$ to $L^{1,\fz}(\rn)$.
In 1994, \'{A}lvarez \cite{a94} considered the Calder\'on-Zygmund
theory related to $H^{p,\fz}(\rn)$ with $p\in(0,1]$, while
Liu \cite{l91} studied the weak Hardy spaces $H^{p,\fz}(\rn)$ with $p\in(0,1]$
on homogeneous groups. Nowadays, it is well known that the weak Hardy spaces
$H^{p,\fz}(\rn)$, with $p\in(0,1]$, play
a key role when studying the boundedness of operators in the critical case;
see, for example, \cite{a94,a98,dl03,g92,dls06,w13,dlq07}.
Moreover, it is known that the weak Hardy spaces
$H^{p,\fz}(\rn)$ are special cases of the Hardy-Lorentz spaces $H^{p,q}(\rn)$
which, when $p=1$ and $q\in(1,\fz)$, were introduced and investigated
by Parilov \cite{p05}. In 2007, Abu-Shammala and Torchinsky \cite{wa07}
studied the Hardy-Lorentz spaces $H^{p,q}(\rn)$ for the full
range $p\in(0,1]$ and $q\in(0,\fz]$, and obtained their $\fz$-atomic
characterization,  real interpolation properties over parameter $q$, and
the boundedness of singular integrals and some other operators on these spaces.
In 2010, Almeida and Caetano \cite{ac10} studied the generalized Hardy spaces
which include the classical Hardy-Lorentz spaces $H^{p,q}(\rn)$ investigated in
\cite{wa07} as special cases.  To be more precise, Almeida and Caetano \cite{ac10} obtained
some maximal characterizations of these generalized Hardy spaces,
some real interpolation results with function parameter and, as applications,
they studied the behavior of some classical operators in this generalized setting.

As the series of works (see, for example, \cite{frs74,fs87,a94,l91,p05,wa07,ac10}) reveal,
the Hardy-Lorentz spaces (as well the weak Hardy spaces)
serve as a more subtle research object than the usual Hardy
spaces when considering the boundedness of singular integrals, especially,
in some endpoint cases, due to the fact that these function spaces own finer structures.
However, the real-variable
theory of these spaces is still not complete.
For example,  the $r$-atomic, with $r\in(1,\fz)$, or the molecular characterizations,
the characterizations in terms of the
radial or the non-tangential maximal functions, and the finite atomic characterizations
of Hardy-Lorentz spaces are still unknown.

On the other hand, from  1970's, there has been an increasing interesting in extending classical
function spaces arising in harmonic analysis from Euclidean spaces to
anisotropic settings and some other domains; see, for example,
\cite{ct75,ct77,fs82,st87,t83,t92}. The study of function spaces on $\rn$
associated with anisotropic dilations was originally started from these celebrated
articles \cite{c77,ct75,ct77} of Calder\'{o}n and Torchinsky on anisotropic Hardy spaces.
Since then, the theory of  anisotropic function spaces  was well developed
by many authors; see, for example, \cite{fs82,s93,t83}. In 2003, Bownik
\cite{mb03} introduced and investigated the anisotropic Hardy spaces
associated with general expansive dilations, which were extended to the
weighted setting in \cite{blyz08}. For further developments of function
spaces on the anisotropic Euclidean spaces, we refer the reader to
\cite{mb07,bh06,blyz08,dl08,lby14,lbyy12,lbyy14,t06}.

To give a complete theory of Hardy-Lorentz
spaces and also to establish this theory in a more general antitropic setting,
in this article, we systematically develop a
theory of Hardy-Lorentz spaces associated with  anisotropic dilations $A$.
To be precise, in this article, for all $p\in(0,1]$ and $q\in(0,\infty]$, we
introduce the anisotropic Hardy-Lorentz spaces $H^{p,q}_A(\mathbb{R}^n)$
associated with a general expansive matrix $A$ via the non-tangential grand
maximal function. Then
we characterize  $H^{p,q}_A(\mathbb{R}^n)$
in terms of the atomic or the molecular decompositions, the
radial or the non-tangential maximal functions, or the finite atomic decompositions.
All these results except the $\infty$-atomic characterization are new even
for the classical isotropic Hardy-Lorentz spaces on $\rn$.
As applications, we first
prove that the space $H^{p,q}_A(\mathbb{R}^n)$
is an intermediate space between $H^{p_1,q_1}_A(\mathbb{R}^n)$
and $H^{p_2,q_2}_A(\mathbb{R}^n)$ with
$0<p_1<p<p_2<\infty$ and $q_1,\,q,\,q_2\in(0,\infty]$,
and also between $H^{p,q_1}_A(\mathbb{R}^n)$
and $H^{p,q_2}_A(\mathbb{R}^n)$ with
$p\in(0,\infty)$ and $0<q_1<q<q_2\leq\infty$
in the real method of interpolation.
We then establish a criterion on the boundedness of sublinear
operators from $H^{p,q}_A(\mathbb{R}^n)$ into a quasi-Banach
space. Moreover, we obtain the boundedness of $\delta$-type
Calder\'{o}n-Zygmund operators from $H^p_A(\mathbb{R}^n)$ to
the weak Lebesgue space $L^{p,\infty}(\mathbb{R}^n)$ (or
$H^{p,\infty}_A(\mathbb{R}^n)$) in the critical case,
from $H_A^{p,q}(\mathbb{R}^n)$ to $L^{p,q}(\mathbb{R}^n)$
(or $H_A^{p,q}(\mathbb{R}^n)$) with
$\delta\in(0,\frac{\ln\lambda_-}{\ln b}]$,
$p\in(\frac1{1+\delta},1]$ and $q\in(0,\infty]$, as well as the boundedness of some
Calder\'{o}n-Zygmund operators from $H_A^{p,q}(\mathbb{R}^n)$
to $L^{p,\infty}(\mathbb{R}^n)$.

To be precise, this article is organized as follows.

In Section \ref{s2}, we first present some basic notions and notation
appearing in this article, including Lorentz spaces and  their properties
and also some known facts on expansive dilations in \cite{mb03}.
Then we introduce the anisotropic Hardy-Lorentz spaces
$H^{p,q}_A(\rn)$, with $p\in(0,1]$ and $q\in(0,\fz]$, via the non-tangential maximal
function (see Definition \ref{d-ahls} below). These anisotropic
Hardy-Lorentz spaces  include the classical Hardy
spaces of Fefferman and Stein (\cite{fs72}), the classical Hardy-Lorentz
spaces of Abu-Shammala and Torchinsky  (\cite{wa07}), the anisotropic
Hardy spaces of Bownik  (\cite{mb03}) and the anisotropic weak Hardy
spaces of Ding and Lan (\cite{dl08}) as special cases.
Some basic properties of $H^{p,q}_A(\rn)$ are also obtained in this
section (see Propositions \ref{sp2} and \ref{sp3} below).

Section \ref{s3} is devoted to the atomic and the molecular characterizations of
$H^{p,q}_A(\rn)$. These characterizations of $H^{p,q}_A(\rn)$
are obtained  by using the Calder\'{o}n-Zygmund decomposition associated
with non-tangential grand maximal functions on anisotropic $\rn$ from
\cite[p.\,9, Lemma 2.7]{mb03}, as well as a criterion for affirming some functions
being in Lorentz spaces $L^{p,q}(\rn)$ from \cite[Lemma 1.2]{wa07}.
Recall that, for the classical Hardy-Lorentz spaces $H^{p,q}(\rn)$, only
their $\fz$-atomic characterizations are known (see \cite{wa07}).
Thus, the $r$-atomic characterizations of $H^{p,q}_A(\rn)$, with $r\in(1,\fz)$, presented in
Theorem \ref{tt1} below are new even for the classical Hardy-Lorentz
spaces. We also point out that the molecular characterizations
in Theorem \ref{tt2} below are new even when $p=q$ for the anisotropic Hardy
spaces $H^p_A(\rn)$ with $p\in(0,1]$. Moreover, the approach used to
prove the $r$-atomic characterization in this article is much more complicated than
that for the $\fz$-atomic characterization in \cite{wa07}. Indeed,
in the proof of the $\fz$-atomic characterization,
an $L^\fz(\rn)$ estimate of an infinite combination of $\fz$-atoms can be easily
obtained by the size condition and the finite overlapping property of $\fz$-atoms
(see \cite[p.\,291]{wa07}),
but this approach fails for the corresponding  $L^r(\rn)$ estimate with $r\in(1,\fz)$.
To overcome this difficulty, we employ a distributional estimate (see \eqref{te21} below)
instead of the $L^r(\rn)$ estimate in this article, which relies on some
subtle applications of the boundedness of the grand
maximal function on $L^r(\rn)$ and the finite overlapping property
of $r$-atoms.

In Section \ref{s4}, we characterize $H^{p,q}_A(\rn)$ by means of
the radial or the non-tangential maximal functions (see Theorem \ref{ft1} below).
To this end, via the Aoki-Rolewicz theorem (see \cite{ta42, sr57}),
we first prove that the $L^{p,q}(\rn)$ quasi-norm of the tangential
maximal function $T^{N(K,L)}_\varphi(f)$ can be controlled by that of the
non-tangential maximal function $M^{(K,L)}_\varphi(f)$
for all $f\in\cs'(\rn)$
(see Lemma \ref{fl3} below), where $K$ is the truncation level,
$L$ is the decay level and $\cs'(\rn)$ denotes the set of all tempered distributions
on $\rn$.
Then we obtain the boundedness of the maximal function
$M_\mathcal{F}f$ on $L^{p,q}(\rn)$ with $p\in(1,\fz)$ and $q\in(0,\fz]$
(see Lemma \ref{fl4} below), where $M_\mathcal{F}f$ is defined as in
\eqref{te59} below. As a consequence of Lemma \ref{fl4}, both the non-tangential grand
maximal function  and the
Hardy-Littlewood maximal function given by \eqref{te58} are also bounded
on $L^{p,q}(\rn)$ (see Remark \ref{fr1} below).
We point out that Lemmas \ref{fl3} and \ref{fl4}, Remark \ref{fr1} and the
monotone property of the non-increasing rearrangement
(see \cite[Proposition\ 1.4.5(8)]{lg08})
play a key role in proving Theorem \ref{ft1}.

In Section \ref{s7}, we obtain the finite atomic decomposition
characterizations of $H_A^{p,q}(\rn)$.
In what follows, $C_c^\fz(\rn)$
denotes the space of all smooth functions with compact supports.
For any admissible anisotropic triplet $(p,r,s)$,
via proving that $H_A^{p,q}(\rn)\cap L^r(\rn)$, with $r\in[1,\fz]$,
and $H_A^{p,q}(\rn)\cap C_c^\fz(\rn)$
are dense in $H_A^{p,q}(\rn)$ (see Lemma \ref{sevenl4} below),
we establish the finite atomic decomposition
characterizations of $H_A^{p,q}(\rn)$ (see Theorem \ref{sevent1} below). This
extends \cite[Theorem 3.1 and Remark 3.3]{msv08} and \cite[Theorem 5.6]{gly08}
to the present setting
of anisotropic Hardy-Lorentz spaces.

Section \ref{s6} is devoted to the interpolation of $H^{p,q}_A(\rn)$
and the boundedness of Calder\'on-Zygmund operators.
As an application, in Subsection \ref{s6.1}, we show that $H^{p,q}_A(\rn)$
is an intermediate space between $H^{p_1,q_1}_A(\mathbb{R}^n)$
and $H^{p_2,q_2}_A(\mathbb{R}^n)$ with
$0<p_1<p<p_2<\infty$ and $q_1,\,q,\,q_2\in(0,\infty]$, and also
between $H^{p,q_1}_A(\mathbb{R}^n)$
and $H^{p,q_2}_A(\mathbb{R}^n)$ with
$p\in(0,\infty)$ and $0<q_1<q<q_2\leq\infty$
in the sense of real interpolation (see Theorem \ref{sixt2} below),
whose isotropic version
includes \cite[Theorem 2.5]{wa07} as a special case (see Remark \ref{sixr4}(ii) below).
In Subsection \ref{s6.2}, by using the atomic characterization of $H_A^p(\rn)$,
we first obtain the boundedness of $\dz$-type Calder\'on-Zygmund operators
from $H_A^p(\rn)$
to the weak Lebesgue space $L^{p,\fz}(\rn)$ (or $H_A^{p,\fz}(\rn)$)
in the critical case (see Theorem \ref{sixt3} below).
In this case, even for the classical isotropic setting,
$\dz$-type Calder\'on-Zygmund operators are not bounded from $H^p(\rn)$ to itself.
Moreover, for all $p\in(0,1]$ and $q\in(p,\fz]$, employing the atomic characterizations of
$H^{p,q}_A(\rn)$, we also obtain the boundedness of some Calder\'{o}n-Zygmund
operators from $H_A^{p,q}(\rn)$ to
$L^{p,\fz}(\rn)$ (see Theorem \ref{sixt1} below).
In addition, as an application of the finite atomic decomposition characterizations
for $H_A^{p,q}(\rn)$
(see Theorem \ref{sevent1} below)
obtained in Section \ref{s7}, we establish a criterion on the boundedness of
sublinear operators from $H_A^{p,q}(\rn)$ into a quasi-Banach space
(see Theorem \ref{sevent2} below), which is of independent interest;
by this criterion, we further conclude that,
if $T$ is a sublinear operator and maps all $(p,r,s)$-atoms
with $r\in(1,\fz)$ (or all continuous $(p,\fz,s)$-atoms) into uniformly bounded
elements of some quasi-Banach space $\mathcal{B}$, then $T$ has a unique
bounded sublinear  extension from $H_A^{p,q}(\rn)$ into $\mathcal{B}$
(see Corollary \ref{sevenc1} below).
This extends the corresponding results of Meda et al. \cite{msv08},
Yang-Zhou \cite{yz09} and Grafakos et al. \cite{gly08} to the present setting.
Finally, via the criterion established in Theorem \ref{sevent2}, we also obtain
the boundedness of $\delta$-type
Calder\'{o}n-Zygmund operators
from $H_A^{p,q}(\mathbb{R}^n)$ to $L^{p,q}(\mathbb{R}^n)$ (or $H_A^{p,q}(\mathbb{R}^n)$)
with $\delta\in(0,\frac{\ln\lambda_-}{\ln b}]$,
$p\in(\frac1{1+\delta},1]$ and $q\in(0,\infty]$ (see Theorem \ref{sixt4} below).

We point out that we also obtain the Littlewood-Paley characterizations
of anisotropic Hardy-Lorentz spaces $H^{p,q}_A(\rn)$,
respectively, in terms of the Lusin-area functions,
the Littlewood-Paley $g$-functions or the $g_\lambda^*$-functions;
to restrict the length of this article, we present these characterizations in \cite{lyy15}.
More applications of these anisotropic Hardy-Lorentz spaces $H^{p,q}_A(\rn)$
are expectable.

Finally, we make some conventions on notation. Throughout this article, we always let
$\mathbb{N}:=\{1,2,\ldots\}$ and $\mathbb{Z}_+:=\{0\}\cup\mathbb{N}$.
We denote by $C$ a \emph{positive constant} which is independent of the main
parameters, but its value may change from line to line.
\emph{Constants with subscripts}, such as $C_1$, are the same
in different occurrences. We also use $C_{(\alpha,\beta,\ldots)}$
to denote a positive constant depending on the indicated parameters
$\alpha$, $\beta,\ldots$.
For any multi-index
$\beta:=(\beta_1,\ldots,\beta_n)\in\mathbb{Z}_+^n$,
let
$|\beta|:=\beta_1+\cdots+\beta_n$ and $\partial^\beta:=(\frac\partial{\partial x_1})^{\beta_1}
\cdots(\frac\partial{\partial x_n})^{\beta_n}.$
We use $f\ls g$ to denote $f\leq Cg$ and, if $f\ls g\ls f$,
we then write $f\sim g$. For every index $r\in[1,\fz]$, we use $r'$ to
denote its \emph{conjugate index},
that is, $1/r+1/r'=1$. Moreover, for any set $F\subset\rn$, we denote
by $\chi_F$ its \emph{characteristic function}, by $F^\complement$ the
set $\rn\setminus F$, and by $\sharp F$ the cardinality of $F$.
The symbol $\lfloor s\rfloor$, for any $s\in\rn$, denotes the biggest
integer less than or equal to $s$.

\section{Anisotropic Hardy-Lorentz spaces \label{s2}}

\hskip\parindent
In this section, we introduce the  anisotropic Hardy-Lorentz spaces
via grand maximal functions and give out some basic properties of these spaces.

First we recall the definition of Lorentz spaces. Let $p\in(0,\fz)$
and $q\in(0,\fz]$. The \emph{Lorentz space} $L^{p,q}(\rn)$
is defined to be the space of all measurable functions $f$ with finite $L^{p,q}(\rn)$
quasi-norm $\|f\|_{L^{p,q}(\rn)}$ given by
\begin{eqnarray*}
\|f\|_{L^{p,q}(\rn)}:=\left\{
\begin{array}{cl}
&\lf\{\dis\frac{q}{p}\dis\int_0^\fz\lf
[t^{\frac{1}{p}}{f^\ast(t)}\r]
^q\frac{dt}{t}\r\}^{\frac{1}{q}}
\ \ \ {\rm when}\ \ \ q\in(0,\fz),\\
&\dis\sup_{t\in(0,\fz)}\lf[t^
\frac{1}{p}f^\ast(t)\r]\ \ \ \ \ \ \
\ \ \ \ \ \ \ \,{\rm when}\ \ \ q=\infty,
\end{array}\r.
\end{eqnarray*}
where $f^\ast$ denotes the non-increasing rearrangement
of $f$, namely,
\begin{eqnarray*}
f^\ast(t):=\{\alpha\in(0,\fz):\
d_f(\alpha)\leq t\}, \quad t\in(0,\fz).
\end{eqnarray*}
Here and hereafter, for any $\az\in(0,\fz)$,
$d_f(\alpha):=|\{x\in\rn:\ |f(x)|>\alpha\}|$.
It is well known that, if $q\in(0,\fz)$,
\begin{equation}\label{se6}
\|f\|_{L^{p,q}(\rn)}
\sim\lf\{\int_0^\fz\alpha^{q-1}\lf[d_f(\alpha)\r]
^{\frac{q}{p}}\,d\az\r\}^{\frac{1}{q}}
\sim \lf\{\sum_{k\in\mathbb{Z}}\lf[2^k
\lf\{d_f(2^k)\r\}^{\frac{1}{p}}\r]^q\r\}^{\frac{1}{q}}
\end{equation}
and
\begin{equation}\label{se7}
\|f\|_{L^{p,\infty}(\rn)}
\sim\sup_{\alpha\in(0,\fz)}
\lf\{\alpha\lf[d_f(\alpha)\r]^{\frac1p}\r\}
\sim\sup_{k\in\mathbb{Z}}
\lf\{2^k\lf[d_f(2^k)\r]^{\frac{1}{p}}\r\};
\end{equation}
see \cite{lg08}. By
\cite[Remark\ 1.4.7]{lg08}, for all $p,\,r\in(0,\fz)$, $q\in(0,\fz]$ and all measurable
functions $g$, we have
\begin{eqnarray}\label{se22}
\lf\||g|^r\r\|_{L^{p,q}(\rn)}
=\lf\|g\r\|_{L^{pr,qr}(\rn)}^r.
\end{eqnarray}

Now let us recall the notion of
expansive matrixes in \cite{mb03}.

\begin{definition}\label{d-em}
An \emph{expansive matrix}
(for short, a \emph{dilation})
is an $n\times n$ real matrix $A$
such that all eigenvalues $\lambda$
of $A$ satisfy $|\lambda|>1$.
\end{definition}

Throughout this article,
we always let $A$ be a fixed dilation
and $b:=|\det A|$. By \cite[p.\,6, (2.7)]{mb03},
we know that $b\in(1,\fz)$.
Let $\lambda_-$ and $\lambda_+$
be \emph{positive numbers} such that
$$1<\lambda_-<\min\{|\lambda|:\
\lambda\in\sigma(A)\}
\leq\max\{|\lambda|:\
\lambda\in\sigma(A)\}<\lambda_+,$$
where $\sigma(A)$ denotes the set of all eigenvalues
of $A$. Then there exists a positive constant
$C$, independent of $x$ and $j$, such that, for all $x\in\rn$,
when $j\in\mathbb{Z}_+$,
\begin{eqnarray}\label{se5}
C^{-1}\lf(\lambda_-\r)^j|x|\leq|A^jx|
\leq C\lf(\lambda_+\r)^j|x|
\end{eqnarray}
and, when $j\in\mathbb{Z}\setminus\mathbb{Z}_+$,
\begin{eqnarray}\label{se19}
C^{-1}\lf(\lambda_+\r)^j|x|\leq|A^jx|
\leq C\lf(\lambda_-\r)^j|x|.
\end{eqnarray}
In the case when $A$ is diagonalizable over
$\mathbb{C}$, we can even take
$\lambda_-:=\min\{|\lambda|:\
\lambda\in\sigma(A)\}$
and
$\lambda_+:=\max\{|\lambda|:\
\lambda\in\sigma(A)\}$.
Otherwise, we need to choose them
sufficiently close to these equalities
according to what we need in our arguments.

It was proved in \cite[p.\,5, Lemma 2.2]{mb03} that,
for a given dilation $A$, there exists an open
ellipsoid $\Delta$ and $r\in(1,\infty)$ such that
$\Delta\subset r\Delta\subset A\Delta$, and one
can additionally assume that $|\Delta|=1$,
where $|\Delta|$ denotes the \emph{n-}dimensional
Lebesgue measure of the set $\Delta$.
Let $B_k:=A^k\Delta$ for all $k\in\zz$.
An ellipsoid $x+B_k$ for some $x\in\rn$ and $k\in\mathbb{Z}$
is called a \emph{dilated ball}. Let $\mathfrak{B}$ be the set of all such
dilated balls, namely,
\begin{eqnarray}\label{se14}
\mathfrak{B}:=\lf\{x+B_k:\ x\in\rn,\ k\in\mathbb{Z}\r\}.
\end{eqnarray}
Then $B_k$ is open,
$B_k\subset rB_k\subset B_{k+1}$
and $|B_k|=b^k$.
Throughout this article, let $\tau$ be
the \emph{minimal integer} such that
$r^\tau\geq2$. Then, for all $k\in\mathbb{Z}$,
it holds true that
\begin{eqnarray}\label{se1}
B_k+B_k\subset B_{k+\tau},
\end{eqnarray}
\begin{eqnarray}\label{se2}
B_k+(B_{k+\tau})^\complement
\subset (B_k)^\complement,
\end{eqnarray}
where $E+F$ denotes the algebraic sum
$\{x+y:\ x\in E,\,y\in F\}$ of sets
$E,\,F\subset\mathbb{R^\emph{n}}$.

Define the
\emph{step homogeneous quasi-norm}
$\rho$ on $\rn$
associated to $A$ and $\Delta$ as
\begin{equation}\label{se3}
\rho(x):=\left\{
\begin{array}{cl}
b^j&\ \ \ \ {\rm when}\ \ \ x\in
B_{j+1}\backslash B_j,\\
0&\ \ \ \ {\rm when}\ \ \ x=0.
\end{array}\r.
\end{equation}
Obviously, for all $k\in\zz$,
$B_k=\{x\in\rn:\ \rho(x)<b^k\}$.
By \eqref{se1} and \eqref{se2}, we know
that, for all $x,\,y\in\rn$,
\begin{equation}\label{se4}
\max\{1,\rho(x+y)\}\leq b^\tau
\lf(\max\{1,\rho(x)\}\r)\lf(\max\{1,\rho(y)\}\r)
\end{equation}
and, for all $j\in\zz_+$ and $x\in\rn$,
$\max\{1,\rho(A^jx)\}
\leq b^j\max\{1,\rho(x)\};$
see \cite[p.\,8]{mb03}.
Moreover, $(\rn,\,\rho,\,dx)$ is a space
of homogeneous type in the sense of
Coifman and Weiss \cite{cw71,cw77}, here and hereafter,
$dx$ denotes the \emph{n-}dimensional
Lebesgue measure.

Recall that the homogeneous quasi-norm
induced by $A$ was introduced in
\cite[p.\,6, Definition 2.3]{mb03} as follows.

\begin{definition}\label{sd2}
A \emph{homogeneous quasi-norm} associated
with a dilation $A$ is a measurable mapping
$\rho:\ \rn \rightarrow [0,\infty]$ satisfying that

(i) $\rho(x)=0 \Longleftrightarrow x=\vec0_n$,
here and hereafter, $\vec0_n:=(0,\ldots,0)\in\rn$;

(ii) $\rho(Ax)=b\rho(x)$ for all $x\in\rn$;

(iii) $\rho(x+y)\leq H[\rho(x)+\rho(y)]$
for all $x,\ y\in\rn$, where $H$ is a positive constant
no less than $1$.
\end{definition}

In the standard dyadic case
$A:=2{\rm I}_{n\times n}$, $\rho(x):=|x|^n$
for all $x\in\rn$ is an example of the
homogeneous quasi-norm associated
with $A$, here and hereafter,
${\rm I}_{n\times n}$ denotes the $n\times n$
\emph{unit matrix} and $|\cdot|$ the
Euclidean norm in $\rn$. It was proved in
\cite[p.\,6, Lemma 2.4]{mb03} that all homogeneous
quasi-norms associated with $A$ are equivalent.
Therefore, in what follows, we always use the
step homogeneous quasi-norm induced by the
given dilation $A$ for convenience.

A $C^\infty(\rn)$
function $\varphi$ is said to belong to the Schwartz class
$\cs(\rn)$ if, for every integer $\ell\in\zz_+$ and
multi-index $\alpha$,
$\|\varphi\|_{\alpha,\ell}:=
\sup_{x\in\rn}[\rho(x)]^\ell
|\partial^\alpha\varphi(x)|<\infty$.
The dual space of $\cs(\rn)$, namely, the space of all
tempered distributions on $\rn$ equipped with the weak-$\ast$ topology, is denoted by
$\cs'(\rn)$. For any $N\in\mathbb{Z}_+$, define
$\cs_N(\rn)$ as
$$\cs_N(\rn):=\{\varphi\in\cs(\rn):\
\|\varphi\|_{\alpha,\ell}\leq1,\
|\alpha|\leq N\ ,\ \ell\leq N\};$$
equivalently,
\begin{eqnarray}\label{se24}
\ \ \ \varphi\in\cs_N(\rn)\Longleftrightarrow
\|\varphi\|_{\cs_N(\rn)}:=\sup_{|\alpha|\leq N}
\sup_{x\in\rn}\lf[\lf|\partial^\alpha
\varphi(x)\r|\max\lf\{1,\lf[
\rho(x)\r]^N\r\}\r]\leq1.
\end{eqnarray}
In what follows, for
$\varphi\in\cs(\rn),k\in\mathbb{Z}$ and
$x\in\rn$, let
$\varphi_k(x):=b^{-k}\varphi(A^{-k}x)$.

\begin{definition}\label{d-mf}
Let $\varphi\in\cs(\rn)$ and $f\in\cs'(\rn)$. The
\emph{non-tangential maximal function} $M_\varphi(f)$ of
$f$ with respect to $\varphi$ is defined as
\begin{eqnarray}\label{se25}
M_\varphi(f)(x):= \sup_{y\in x+B_k,
k\in\mathbb{Z}}|f\ast\varphi_k(y)|,
\ \ \ \ \ \forall\ x\in\rn.
\end{eqnarray}
The \emph{radial maximal function} $M_\varphi^0(f)$ of $f$
with respect to $\varphi$ is defined as
\begin{equation}\label{se10}
M_\varphi^0(f)(x):= \sup_{k\in\mathbb{Z}}
|f\ast\varphi_k(x)|,\ \ \ \ \ \forall\ x\in\rn.
\end{equation}
For $N\in\mathbb{N}$, the
\emph{non-tangential grand maximal function} $M_N(f)$
of $f\in\cs'(\rn)$ is defined as
\begin{equation}\label{se8}
M_N(f)(x):=\sup_{\varphi\in\cs_N(\rn)}
M_\varphi(f)(x),\ \ \ \ \ \forall\ x\in\rn
\end{equation}
and the
\emph{radial grand maximal function} $M_N^0(f)$
of $f\in\cs'(\rn)$ is defined as
\begin{equation*}
M_N^0(f)(x):=\sup_{\varphi\in\cs_N(\rn)}
M_\varphi^0(f)(x),\ \ \ \ \ \forall\ x\in\rn.
\end{equation*}
\end{definition}

The following proposition is just
\cite[p.\,17, Proposition 3.10]{mb03}.

\begin{proposition}\label{sp1}
For every given $N\in\mathbb{N}$,
there exists a positive constant $C_{(N)}$,
depending only on $N$, such that,
for all $f\in\cs'(\rn)$ and $x\in\rn$,
\begin{equation*}
M_N^0(f)(x)\leq M_N(f)(x)
\leq C_{(N)}M_N^0(f)(x).
\end{equation*}
\end{proposition}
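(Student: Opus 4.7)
The first inequality is immediate from the definitions. For every $k\in\mathbb{Z}$ the origin belongs to $B_k$, hence $x\in x+B_k$, so the supremum defining $M_\varphi(f)(x)$ in \eqref{se25} includes $|f\ast\varphi_k(x)|$ and therefore dominates the supremum defining $M_\varphi^0(f)(x)$ in \eqref{se10}. Taking the supremum over $\varphi\in\cs_N(\rn)$ in \eqref{se8} yields $M_N^0(f)(x)\leq M_N(f)(x)$, with no constant involved.

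For the reverse direction, the plan is to reduce a non-tangential evaluation $f\ast\varphi_k(y)$ to a radial evaluation at the center $x$ by translating the test function. Fix $\varphi\in\cs_N(\rn)$, $k\in\mathbb{Z}$ and $y\in x+B_k$, and set $\psi(w):=\varphi(w+A^{-k}(y-x))$. Using the elementary identity $A^{-k}(x-z)+A^{-k}(y-x)=A^{-k}(y-z)$ inside the convolution integral, one verifies directly that $f\ast\psi_k(x)=f\ast\varphi_k(y)$. The task is therefore to bound $\|\psi\|_{\cs_N(\rn)}$ by a constant depending only on $N$, uniformly in $k$ and in $y\in x+B_k$.

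For any multi-index $\alpha$ with $|\alpha|\leq N$, one has $\partial^\alpha\psi(u)=(\partial^\alpha\varphi)(u+A^{-k}(y-x))$. Writing $w:=u+A^{-k}(y-x)$ and applying the submultiplicativity inequality \eqref{se4} of $\max\{1,\rho(\cdot)\}$, we obtain
$$\max\{1,\rho(w-A^{-k}(y-x))\}\leq b^\tau\max\{1,\rho(w)\}\max\{1,\rho(-A^{-k}(y-x))\}.$$
The last factor equals $1$: indeed, $y-x\in B_k$ forces $A^{-k}(y-x)\in A^{-k}B_k=B_0=\Delta$, and since the ellipsoid $\Delta$ is symmetric about the origin we also have $-A^{-k}(y-x)\in\Delta=B_0$, so $\rho(\pm A^{-k}(y-x))<1$. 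Raising to the $N$-th power and passing to suprema in \eqref{se24}, this leads to $\|\psi\|_{\cs_N(\rn)}\leq b^{\tau N}\|\varphi\|_{\cs_N(\rn)}\leq b^{\tau N}=:C_{(N)}$.

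Setting $\wz\psi:=\psi/C_{(N)}\in\cs_N(\rn)$, we conclude that
$$\lf|f\ast\varphi_k(y)\r|=\lf|f\ast\psi_k(x)\r|=C_{(N)}\lf|f\ast\wz\psi_k(x)\r|\leq C_{(N)}M_{\wz\psi}^0(f)(x)\leq C_{(N)}M_N^0(f)(x).$$
Taking the supremum over $\varphi\in\cs_N(\rn)$, $k\in\mathbb{Z}$ and $y\in x+B_k$ finishes the proof. The only genuinely anisotropic ingredient is \eqref{se4}, which absorbs the translation by the bounded vector $A^{-k}(y-x)\in\Delta$ into a multiplicative constant $b^\tau$; everything else is formal bookkeeping about shifting the test function, and the same argument applies verbatim in the classical isotropic setting (where $A=2\,{\rm I}_{n\times n}$ and $\tau=1$).
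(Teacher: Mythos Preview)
Your proof is correct and follows essentially the same approach as the one in Bownik's memoir \cite[p.\,17, Proposition 3.10]{mb03}, which the paper cites without reproducing; indeed, the constant $C_{(N)}=b^{\tau N}$ you obtain is exactly the one that reappears later in the paper (see \eqref{fe21}). The only point worth a small remark is that the symmetry $\Delta=-\Delta$ you invoke is not stated explicitly in this paper, but it is immediate from Bownik's construction of $\Delta$ as $\{x:|Px|<1\}$ for a positive definite matrix $P$.
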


We now introduce the notion of
anisotropic Hardy-Lorentz spaces.

\begin{definition}\label{d-ahls}
Suppose $p\in(0,\fz),\,q\in(0,\fz]$ and
\begin{eqnarray*}
N_{(p)}:=\left\{
\begin{array}{rl}
&\lf\lfloor\lf(\dfrac1p-1\r)\dfrac{\ln b}{\ln
\lambda_-}\r\rfloor+2\hspace{1cm} {\rm when}\ p\in(0,1],\\
&2\hspace{4.44cm} {\rm when}\ p\in(1,\fz).
\end{array}\r.
\end{eqnarray*}
 For every $N\in\nn\cap(N_{(p)},\fz)$, the
\emph{anisotropic Hardy-Lorentz space} $H^{p,q}_A(\rn)$ is defined by
\begin{equation*}
H_A^{p,q}(\rn)
:=\lf\{f\in\cs'(\rn):\ M_N(f)\in\lpq\r\}
\end{equation*}
and, for any $f\in H^{p,q}_A(\rn)$, let
$\|f\|_{H^{p,q}_A(\rn)}
:=\| M_N(f)\|_{L^{p,q}(\rn)}$.
\end{definition}

\begin{remark}\label{sr1}
Even though the quasi-norm of $H^{p,q}_A(\rn)$ in Definition \ref{d-ahls}
depends on $N$, it follows from
Theorem \ref{tt1} below that the space $H^{p,q}_A(\rn)$
is independent of the choice of $N$ as long as
$N\in\nn\cap(N_{(p)},\fz)$.
\end{remark}

Obviously, when $p=q,\ H^{p,q}_A(\rn)$
becomes the anisotropic
Hardy space $H^p_A(\rn)$ introduced by Bownik in \cite{mb03}
and, when
$q=\infty,\ H^{p,q}_A(\rn)$
is the anisotropic weak Hardy space $H^{p,\fz}_A(\rn)$ investigated
by Ding and Lan in \cite{dl08}.

Now let us give some basic properties of $H^{p,q}_A(\rn)$.

\begin{proposition}\label{sp2}
Let $p\in(0,\fz),\,q\in(0,\fz]$ and $N\in\nn\cap(N_{(p)},\fz)$. Then
$H^{p,q}_A(\rn)\subset\cs'(\rn)$ and the inclusion is continuous.
\end{proposition}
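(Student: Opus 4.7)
The set-theoretic inclusion $H^{p,q}_A(\rn)\subset\cs'(\rn)$ is immediate from Definition~\ref{d-ahls}. The content of the statement is the continuity of this inclusion. Since $\cs'(\rn)$ carries the weak-$\ast$ topology, continuity reduces to exhibiting, for every $\varphi\in\cs(\rn)$, a constant $C_\varphi\in(0,\fz)$ depending only on $\varphi$ such that
$$
|\langle f,\varphi\rangle|\leq C_\varphi\,\|f\|_{H^{p,q}_A(\rn)}\qquad\text{for all }f\in H^{p,q}_A(\rn).
$$

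My plan is to realize the distributional pairing as the value at the origin of a convolution and to dominate that value by the grand maximal function on a set of positive Lebesgue measure. Setting $\check\varphi(y):=\varphi(-y)$ so that $\langle f,\varphi\rangle=(f\ast\check\varphi)(0)$, I exploit that the ellipsoid $B_0=\Delta$ is symmetric about the origin: for every $x\in B_0$ one has $0\in x+B_0$, so Definition~\ref{d-mf} yields
$$
|\langle f,\varphi\rangle|=|(f\ast\check\varphi)(0)|\leq M_{\check\varphi}(f)(x).
$$
Because $\check\varphi\in\cs(\rn)$, the norm $\|\check\varphi\|_{\cs_N(\rn)}$ in \eqref{se24} is finite and $\check\varphi/\|\check\varphi\|_{\cs_N(\rn)}\in\cs_N(\rn)$; the definition \eqref{se8} of the grand maximal function therefore gives $M_{\check\varphi}(f)\leq\|\check\varphi\|_{\cs_N(\rn)}\,M_N(f)$. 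Combining these two estimates produces the pointwise bound $|\langle f,\varphi\rangle|\,\chi_{B_0}(x)\leq\|\check\varphi\|_{\cs_N(\rn)}\,M_N(f)(x)$ valid for every $x\in\rn$.

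Taking the $L^{p,q}(\rn)$-quasinorm of both sides and invoking the elementary identity $\|\chi_{B_0}\|_{L^{p,q}(\rn)}=|B_0|^{1/p}=1$ (which follows from $(\chi_{B_0})^{\ast}=\chi_{[0,1)}$ together with $|\Delta|=1$), I obtain
$$
|\langle f,\varphi\rangle|\leq\|\check\varphi\|_{\cs_N(\rn)}\,\|M_N(f)\|_{L^{p,q}(\rn)}=\|\check\varphi\|_{\cs_N(\rn)}\,\|f\|_{H^{p,q}_A(\rn)}.
$$
Finally, the symmetry $\rho(-x)=\rho(x)$ of the step homogeneous quasi-norm, which stems from the symmetry of each $B_k$ as an ellipsoid, yields $\|\check\varphi\|_{\cs_N(\rn)}=\|\varphi\|_{\cs_N(\rn)}$, so $C_\varphi:=\|\varphi\|_{\cs_N(\rn)}$ completes the argument. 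The only non-routine point is upgrading the bound for the single scalar $|\langle f,\varphi\rangle|$ into a Lorentz-norm inequality; this is handled precisely by observing that the pointwise bound by $M_{\check\varphi}(f)$ is valid throughout the ball $B_0$ of unit Lebesgue measure, so that multiplication by $\chi_{B_0}$ converts the scalar estimate into a functional one amenable to $L^{p,q}$-quasinorm comparison.
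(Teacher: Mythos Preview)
Your proof is correct and follows essentially the same approach as the paper: both realize $\langle f,\varphi\rangle$ as $(f\ast\check\varphi)(0)$, bound this by $M_{\check\varphi}(f)(x)$ for every $x\in B_0$, and pass to the grand maximal function via $M_{\check\varphi}(f)\leq\|\check\varphi\|_{\cs_N(\rn)}M_N(f)$. The only difference is in the final conversion to a Lorentz-norm inequality: the paper verifies $\beta\lesssim\|M_{\check\varphi}(f)\|_{L^{p,q}(\rn)}$ by expanding both sides with the dyadic formula \eqref{se6}, whereas you invoke the cleaner observation that $\|\chi_{B_0}\|_{L^{p,q}(\rn)}=|B_0|^{1/p}=1$ together with the monotonicity of the Lorentz quasi-norm under pointwise domination; your route is slightly more direct but the two arguments are equivalent in substance.
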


\begin{proof}
Let $f\in H^{p,q}_A(\rn)$. Then, for any $\varphi\in\cs(\rn)$
and $x\in B_0$, we have
\begin{equation}\label{se12}
\beta:=|\langle f,\varphi\rangle|=
|f\ast\wz\varphi(0)|\leq M_{\wz\varphi}(f)(x),
\end{equation}
where $\wz\varphi(\cdot):=\varphi(-\cdot)$ and $M_{\wz\varphi}$
is as in \eqref{se25} with $\varphi$ replaced by $\widetilde{\varphi}$.
Notice that, for $q\in(0,\fz]$, by the definitions of $M_{\wz\varphi}$
and $M_N$,
$$\|M_{\wz\varphi}(f)\|_{L^{p,q}(\rn)}
\leq\|\wz\varphi\|_{\cs_N(\rn)}\|M_N(f)\|
_{L^{p,q}(\rn)}=\|\wz\varphi\|
_{\cs_N(\rn)}\|f\|_{H^{p,q}_A(\rn)}.$$
Thus, to show Proposition \ref{sp2}, it suffices to prove that
$\beta\ls\|M_{\wz\varphi}(f)\|_{L^{p,q}(\rn)}$.

To this end, by \eqref{se12} and \eqref{se6}, for $q\in(0,\fz)$, we have
\begin{eqnarray}\label{se13}
\ \ \ \beta
&&\ls\lf\{\sum_{k\in\mathbb{Z},\,k
<\log_2\bz}2^{kq}\r\}^{\frac1q}
\sim\lf\{\sum_{k\in\mathbb{Z},\,k
<\log_2\bz}2^{kq}\lf|\lf\{x\in B_0:\
\beta\chi_{B_0}(x)>2^k\r\}\r|^
{\frac qp}\r\}^{\frac1q}\\
&&\ls\lf\{\sum_{k\in\mathbb{Z}}2^
{kq}\lf|\lf\{x\in B_0:\ M_{\widetilde
{\varphi}}(f)(x)>2^k\r\}\r|^{\frac qp}
\r\}^{\frac1q}\noz
\ls\|M_{\widetilde{\varphi}}(f)\|
_{L^{p,q}(\rn)}.\noz
\end{eqnarray}
Similar to \eqref{se13}, we conclude that
$\beta\ls\|M_{\widetilde{\varphi}}(f)\|
_{L^{p,\fz}(\rn)}$. This finishes the proof of Proposition \ref{sp2}.
\end{proof}

\begin{proposition}\label{sp3}
For all $p\in(0,\fz),\,q\in(0,\fz]$ and
$N\in\nn\cap(N_{(p)},\fz)$, $H^{p,q}_A(\rn)$ is complete.
\end{proposition}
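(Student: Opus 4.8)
The plan is to show that every Cauchy sequence in $H^{p,q}_A(\rn)$ converges in this space, following the now-standard argument for Hardy-type spaces. First I would exploit Proposition \ref{sp2}: since the inclusion $H^{p,q}_A(\rn)\hookrightarrow\cs'(\rn)$ is continuous, any Cauchy sequence $\{f_j\}_{j\in\nn}$ in $H^{p,q}_A(\rn)$ is also Cauchy in $\cs'(\rn)$; as $\cs'(\rn)$ is complete (being the dual of the Fr\'echet space $\cs(\rn)$, equipped with the weak-$\ast$ topology, sequential completeness suffices here), there exists $f\in\cs'(\rn)$ with $f_j\to f$ in $\cs'(\rn)$. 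The task is then to upgrade this to convergence in $H^{p,q}_A(\rn)$, i.e.\ to show $f\in H^{p,q}_A(\rn)$ and $\|f_j-f\|_{H^{p,q}_A(\rn)}\to0$.

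The key observation is that the grand maximal function behaves well under distributional limits: if $g_j\to g$ in $\cs'(\rn)$, then for each fixed $\varphi\in\cs_N(\rn)$, $x\in\rn$, $k\in\zz$ and $y\in x+B_k$, one has $g_j\ast\varphi_k(y)\to g\ast\varphi_k(y)$ pointwise, whence by taking suprema and using Fatou's lemma in the form "the sup of pointwise limits is bounded by the liminf of the sups'' we get
\begin{equation*}
M_N(g)(x)\le\liminf_{j\to\fz}M_N(g_j)(x),\qquad\forall\,x\in\rn.
\end{equation*}
Applying this to $g=f-f_{j_0}$ (for a suitable subsequence) and using the lower semicontinuity of the $L^{p,q}(\rn)$ quasi-norm under this kind of pointwise liminf — which follows from \eqref{se6} and \eqref{se7}, since $d_{M_N(g)}(\az)\le\liminf_j d_{M_N(g_j)}(\az)$ by Fatou's lemma for the Lebesgue measure — one obtains $\|M_N(f-f_{j_0})\|_{L^{p,q}(\rn)}\le\liminf_{j}\|M_N(f_j-f_{j_0})\|_{L^{p,q}(\rn)}$. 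A standard trick then handles the non-normability of $L^{p,q}(\rn)$ when $\min\{p,q\}<1$: pass to a subsequence $\{f_{j_l}\}$ with $\|f_{j_{l+1}}-f_{j_l}\|_{H^{p,q}_A(\rn)}<2^{-l}$, write $f=f_{j_1}+\sum_{l\ge1}(f_{j_{l+1}}-f_{j_l})$ with convergence in $\cs'(\rn)$, and use the quasi-triangle inequality for $\|\cdot\|_{L^{p,q}(\rn)}$ together with the Aoki--Rolewicz theorem (already invoked in the paper, see the discussion before Lemma \ref{fl3}) to sum a geometric-type series; this shows both $f\in H^{p,q}_A(\rn)$ and $f_{j_l}\to f$ in $H^{p,q}_A(\rn)$. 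Since a Cauchy sequence with a convergent subsequence converges to the same limit, the proof is complete.

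I expect the main technical point to be the lower semicontinuity of $\|M_N(\cdot)\|_{L^{p,q}(\rn)}$ under distributional convergence — i.e.\ correctly chaining "$f_j\to f$ in $\cs'$'' $\Rightarrow$ pointwise convergence of each $f_j\ast\varphi_k(y)$ $\Rightarrow$ $M_N(f)\le\liminf_j M_N(f_j)$ pointwise $\Rightarrow$ the quasi-norm inequality via Fatou applied to the distribution functions. The subtlety is that the supremum defining $M_N$ is over the uncountable family $\cs_N(\rn)\times\zz\times(x+B_k)$, but one can restrict to a countable dense subfamily by continuity of $\varphi\mapsto f\ast\varphi_k(y)$ on $\cs_N(\rn)$ (in its natural metric) and continuity in $y$, so no measurability or interchange-of-sup obstruction actually arises. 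Everything else — the subsequence extraction, the geometric summation via Aoki--Rolewicz, and the "Cauchy $+$ convergent subsequence $\Rightarrow$ convergent'' step — is routine.
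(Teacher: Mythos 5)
Your proof is correct, and its skeleton coincides with the paper's: both pass through Proposition \ref{sp2} and the sequential completeness of $\cs'(\rn)$ to produce a distributional limit, and both then control the grand maximal function of the limit by those of the approximants via pointwise convergence of the convolutions $f_j\ast\varphi_k(y)$. The difference lies in how the quasi-norm convergence is finished. The paper does not treat an arbitrary Cauchy sequence: it verifies the series criterion for completeness, taking $\|f_k\|_{H^{p,q}_A(\rn)}\leq 2^{-k}$, using the pointwise bound $M_N(f)\leq\sum_{k\in\nn}M_N(f_k)$ and summing tails with the Aoki--Rolewicz exponent $\upsilon$. You instead work with the Cauchy sequence directly and use lower semicontinuity: $M_N(f-f_{j_0})\leq\liminf_{j\to\fz}M_N(f_j-f_{j_0})$ pointwise, hence $\|f-f_{j_0}\|_{H^{p,q}_A(\rn)}\leq\liminf_{j\to\fz}\|f_j-f_{j_0}\|_{H^{p,q}_A(\rn)}$ by Fatou applied to distribution functions through \eqref{se6} and \eqref{se7}; your chain $d_{M_N(g)}(\az)\leq\liminf_{j\to\fz}d_{M_N(g_j)}(\az)$ is exactly right, and the measurability point you raise is harmless since $M_N(g)$ is a supremum of continuous functions. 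Note that this Fatou route actually makes the telescoping-subsequence and Aoki--Rolewicz step you also invoke redundant: once lower semicontinuity is in hand, the Cauchy property alone gives $\|f-f_{j_0}\|_{H^{p,q}_A(\rn)}\leq\sup_{j\geq j_0}\|f_j-f_{j_0}\|_{H^{p,q}_A(\rn)}\to0$ as $j_0\to\fz$, and $f\in H^{p,q}_A(\rn)$ follows from one application of the quasi-triangle inequality for $\|\cdot\|_{L^{p,q}(\rn)}$ together with $M_N(f)\leq M_N(f-f_{j_0})+M_N(f_{j_0})$. So your argument buys a direct treatment of general Cauchy sequences without the series reduction, at the small cost of the semicontinuity discussion, while the paper's version buys a shorter write-up by exploiting the standard series test plus countable subadditivity of $M_N$ and Aoki--Rolewicz.
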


\begin{proof}
To prove that $H^{p,q}_A(\rn)$ is complete,
it suffices to show that, for any sequence
$\{f_k\}_{k\in\mathbb{N}}\subset H^{p,q}_A(\rn)$ such that
$\|f_k\|_{H^{p,q}_A(\rn)}\leq 2^{-k}$ for
$k\in\mathbb{N}$, the series
$\{\sum_{k=1}^mf_k\}_{m\in\nn}$ converges in
$H^{p,q}_A(\rn)$. Since
$\{\sum_{k=1}^mf_k\}_{m\in\mathbb{N}}$
is a Cauchy sequence in $H^{p,q}_A(\rn)$, from
Proposition \ref{sp2},
it follows that $\{\sum_{k=1}^mf_k\}_{m\in\mathbb{N}}$
is also a Cauchy sequence in $\cs'(\rn)$ which, together with
the completeness of $\cs'(\rn)$, implies that there exists
some $f\in\cs'(\rn)$ such that
$\sum_{k=1}^mf_k$ converges to
$f$ in $\cs'(\rn)$, as $m\to\fz$. Thus,
for any $\varphi\in\cs(\rn)$, the series
$\sum_{k=1}^mf_k\ast\varphi(x)$
converges pointwise to $f\ast\varphi(x)$ for all $x\in\rn$,
as $m\to\fz$.
Therefore, for all $x\in\rn$, we have
$$M_N(f)(x)\leq\sum_{k\in\nn}M_N(f_k)(x).$$
By this and the Aoki-Rolewicz theorem
(see \cite{ta42, sr57}), there exists
$\upsilon\in(0,1]$ such that
\begin{eqnarray*}
\lf\|M_N(f)\r\|_{L^{p,q}(\rn)}^\upsilon
\leq\lf\|\sum_{k\in\mathbb{N}}M_N(f_k)\r\|
_{L^{p,q}(\rn)}^\upsilon
\ls\sum_{k\in\nn}\lf\|M_N(f_k)\r\|_{L^{p,q}(\rn)}^\upsilon.
\end{eqnarray*}
From this, it follows that, for all $m\in\nn$,
\begin{eqnarray*}
\lf\|f-\sum_{k=1}^mf_k\r\|_{H^{p,q}_A(\rn)}
&&=\lf\|\sum_{k=m+1}^\infty f_k\r\|_{H^
{p,q}_A(\rn)}=\lf\|M_N\lf(\sum_{k=m+1}^\infty
f_k\r)\r\|_{L^{p,q}(\rn)}\\
&&\ls\lf[\sum_{k=m+1}^\infty\|M_N(f_k)\|_
{L^{p,q}(\rn)}^\upsilon\r]^{\frac 1\upsilon}\\
&&\ls\lf(\sum_{k=m+1}^\infty2^{-k\upsilon}\r)^
{\frac 1\upsilon}\sim2^{-m}\to0,\ \ \ {\rm as}\ m\to\fz.
\end{eqnarray*}
Thus, $\sum_{k=1}^mf_k\rightarrow f$ in $H^{p,q}_A(\rn)$,
as $m\rightarrow\infty$. This finishes the proof of Proposition \ref{sp3}.
\end{proof}

The following Proposition \ref{tl4}
is just \cite[p.\,13, Theorem 3.6]{mb03}.

\begin{proposition}\label{tl4}
For any given $s\in(1,\fz)$, let
$$\mathcal{F}:=
\lf\{\varphi\in L^\infty(\rn):\ |\varphi(x)|
\leq[1+\rho(x)]^{-s},\ x\in\rn\r\}.$$
For $p\in[1,\fz]$ and $f\in L^p(\rn)$,
define the maximal function associated with $\mathcal{F}$,
$M_{\mathcal{F}}$, by
\begin{eqnarray}\label{te59}
M_{\mathcal{F}}(f)(x):=
\sup_{\varphi\in\mathcal{F}}M_\varphi(f)(x),
\ \ \ \ \ \ \forall\ x\in\rn.   \end{eqnarray}
Then there exists a positive constant
$C_{(s)}$, depending on $s$, such that,
for all $\lz\in(0,\fz)$ and $f\in L^1(\rn)$,
\begin{eqnarray}\label{te56}
\lf|\lf\{x\in\rn:\ M_{\mathcal{F}}(f)(x)>\lambda\r\}\r|
\leq C_{(s)}\|f\|_{L^1(\rn)}/\lambda
\end{eqnarray}
and, for all $p\in(1,\fz]$ and $f\in L^p(\rn)$,
\begin{eqnarray}\label{te57}
\|M_{\mathcal{F}}(f)\|_{L^p(\rn)}
\leq\frac{C_{(s)}}{1-1/p}\|f\|_{L^p(\rn)}.
\end{eqnarray}
\end{proposition}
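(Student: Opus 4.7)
The plan is to reduce Proposition \ref{tl4} to the well-known weak-$(1,1)$ and $L^p$ boundedness of the anisotropic Hardy--Littlewood maximal function
\[
M_{HL}(f)(x):=\sup_{B\in\mathfrak{B},\,B\ni x}\frac{1}{|B|}\int_B|f(z)|\,dz,\qquad x\in\rn,
\]
on the space of homogeneous type $(\rn,\rho,dx)$. The heart of the argument will be the pointwise domination
\[
M_{\mathcal{F}}(f)(x)\leq C_{(s)}\,M_{HL}(f)(x),\qquad x\in\rn.
\]

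To prove this pointwise bound, fix $\varphi\in\mathcal{F}$, $k\in\zz$ and $y\in x+B_k$. Using Definition \ref{sd2}(ii), which gives $\rho(A^{-k}(y-z))=b^{-k}\rho(y-z)$, the decay hypothesis on $\varphi$ yields
\[
|\varphi_k(y-z)|\leq b^{-k}\bigl[1+b^{-k}\rho(y-z)\bigr]^{-s}.
\]
I would split $\rn$ according to the $\rho$-annuli $S_0:=y+B_k$ and $S_j\setminus S_{j-1}$ for $j\in\mathbb{N}$, where $S_j:=y+B_{k+j}\in\mathfrak{B}$. Since the ellipsoid $\Delta$ is symmetric about the origin one has $-B_{k+j}=B_{k+j}$, so $y\in x+B_k$ forces $x\in S_j$ for every $j\geq 0$, and hence every average of $|f|$ over $S_j$ is bounded by $M_{HL}(f)(x)$. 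On $S_0$ the weight is $\leq b^{-k}$, contributing $\lesssim M_{HL}(f)(x)$; on $S_j\setminus S_{j-1}$ with $j\geq 1$ one has $\rho(y-z)\geq b^{k+j-1}$, so the weight is $\leq b^{-k}b^{-(j-1)s}$, and the piece contributes at most $b^{-k-(j-1)s}|S_j|\,M_{HL}(f)(x)=b^{s-j(s-1)}M_{HL}(f)(x)$. Since $s>1$, summing the geometric tail in $j$ and then taking suprema over $\varphi$, $k$ and $y\in x+B_k$ gives the pointwise inequality.

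The weak-type estimate \eqref{te56} then follows at once from the weak-$(1,1)$ bound of $M_{HL}$ on $(\rn,\rho,dx)$, obtained via a Vitali-type covering lemma as in \cite{cw71,cw77}. For \eqref{te57}, the Marcinkiewicz interpolation theorem applied between the weak-$(1,1)$ inequality and the trivial $L^\infty\to L^\infty$ bound produces $\|M_{\mathcal{F}}(f)\|_{L^p(\rn)}\leq C'_{(s)}\,\frac{p}{p-1}\|f\|_{L^p(\rn)}$, which matches the claimed $\frac{1}{1-1/p}$ dependence because $\frac{p}{p-1}=\frac{1}{1-1/p}$. The $L^\infty\to L^\infty$ bound is immediate from $\|\varphi\|_{L^1(\rn)}\lesssim 1$ uniformly in $\varphi\in\mathcal{F}$, which in turn follows from decomposing $\rn=B_0\cup\bigcup_{j\geq 0}(B_{j+1}\setminus B_j)$ and summing $b^{j+1}b^{-sj}$, convergent because $s>1$.

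The main obstacle I anticipate is the bookkeeping in the annular decomposition: carefully verifying that each $y+B_{k+j}\in\mathfrak{B}$ contains $x$, tracking how the exponent $s>1$ enters both geometric sums, and confirming the symmetry $-B_k=B_k$ that lets the supremum defining $M_{HL}(f)(x)$ capture the ``wrong-centered'' balls. Everything else is standard anisotropic real-variable theory from \cite{mb03,cw71,cw77}.
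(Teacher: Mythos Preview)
Your proof is correct. The paper itself does not prove this proposition at all: it simply states that Proposition~\ref{tl4} ``is just \cite[p.\,13, Theorem~3.6]{mb03}'' and moves on. Your argument---pointwise domination of $M_{\mathcal{F}}$ by $M_{HL}$ via a dyadic annular decomposition centred at $y$, followed by the weak-$(1,1)$ bound for $M_{HL}$ on the space of homogeneous type $(\rn,\rho,dx)$ and Marcinkiewicz interpolation---is exactly the standard route (and is, in fact, essentially how Bownik proves it in \cite{mb03}). The bookkeeping you flag as the main obstacle is handled correctly: the symmetry $-B_k=B_k$ does hold because $\Delta$ is an ellipsoid centred at the origin, the inclusion $x\in S_j$ for all $j\ge0$ follows, and your geometric sums $\sum_j b^{s-j(s-1)}$ and $\sum_j b^{j(1-s)}$ both converge precisely because $s>1$.
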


\begin{remark}\label{tr1}
Clearly, by Proposition \ref{tl4}, we know that the non-tangential grand
maximal function $M_N(f)$, defined in \eqref{se8}, and the
Hardy-Littlewood maximal function $M_{{\rm HL}}(f)$, defined by setting,
for all $f\in L^1_{{\rm loc}}(\rn)$ and $x\in\rn$,
\begin{eqnarray}\label{te58}
M_{{\rm HL}}(f)(x):=\sup_{k\in\mathbb{Z}}
\sup_{y\in x+B_k}\frac1{|B_k|}
\int_{y+B_k}|f(z)|\,dz=\sup_{x\in B\in\mathfrak{B}}
\frac1{|B|}\int_B|f(z)|\,dz,
\end{eqnarray}
where $\mathfrak{B}$ is as in \eqref{se14}, satisfy \eqref{te56} and \eqref{te57}.
\end{remark}

\section{Atomic and molecular characterizations of $H^{p,q}_A(\rn)$\label{s3}}

\hskip\parindent
In this section, we establish the atomic and
the molecular characterizations of $H^{p,q}_A(\rn)$.

\subsection{Atomic characterizations of $H^{p,q}_A(\rn)$}\label{s3.1}

\hskip\parindent
In this subsection, by using the Calder\'{o}n-Zygmund
decomposition associated with the non-tangential grand maximal
function on anisotropic $\rn$ established in \cite{mb03},
we obtain the atomic characterizations of $H^{p,q}_A(\rn)$.

We begin with the following notion of anisotropic $(p,r,s)$-atoms
from \cite[p.\,19, Definition 4.1]{mb03}.

\begin{definition}\label{d-at}
An anisotropic triplet $(p,r,s)$ is said to be \emph{admissible}
if $p\in(0,1],\,r\in(1,\fz]$ and $s\in\mathbb{N}$ with
$s\geq\lfloor(1/p-1)\ln b/\ln\lambda_-\rfloor$.
For an admissible anisotropic triplet $(p,r,s)$, a measurable
function $a$ on $\rn$ is called an \emph{anisotropic $(p,r,s)$-atom}
if

(i) ${\rm \supp}a \subset B,\ {\rm where}\
B\in\mathfrak{B}$ and $\mathfrak{B}$ is as in \eqref{se14};

(ii) $\|a\|_{L^r(\rn)}\leq |B|^{1/r-1/p}$;

(iii) $\int_\rn a(x)x^\alpha\,dx =0$ for any
$\alpha\in\zz_+^n$ with $|\alpha|\leq s$.
\end{definition}

Throughout this article, we call anisotropic
$(p,r,s)$-atom simply by $(p,r,s)$-atom.
Now, via $(p,r,s)$-atoms, we give the definition
of the anisotropic atomic Hardy-Lorentz space
$H_A^{p,r,s,q}(\rn)$ as follows.

\begin{definition}\label{d-aahls}
For an anisotropic triplet $(p,r,s)$ as in Definition \ref{d-at},
$q\in(0,\fz]$ and a dilation
$A$, the \emph{anisotropic atomic Hardy-Lorentz space}
$H_A^{p,r,s,q}(\rn)$ is defined to be the set of all distributions
$f\in\cs'(\rn)$ satisfying that there exist a sequence of $(p,r,s)$-atoms,
$\{a_i^k\}_{i\in\mathbb{N},\,k\in\mathbb{Z}}$,
supported on
$\{x_i^k+B_i^k\}_{i\in\mathbb{N},\,k\in\mathbb{Z}}\subset\mathfrak{B}$,
respectively,
and a positive constant $\widetilde{C}$ such that
$\sum_{i\in\mathbb{N}}\chi_{x_i^k+B_i^k}(x)\leq \widetilde{C}$
for all $x\in\rn$ and $k\in\mathbb{Z}$, and
$f=\sum_{k\in\mathbb{Z}}\sum_{i\in\mathbb{N}}\lambda_i^ka_i^k$
in $\cs'(\rn)$, where $\lambda_i^k\sim2^k|B_i^k|^{1/p}$ for all
$k\in\mathbb{Z}$ and $i\in\mathbb{N}$ with the implicit equivalent
positive constants independent of $k$ and $i$.

Moreover, for all $f\in H_A^{p,r,s,q}(\rn)$, define
\begin{eqnarray*}
\|f\|_{H_A^{p,r,s,q}(\rn)}:=
{\rm inf}\lf\{\lf[\sum_{k\in\zz}\lf(\sum_{i\in\nn}|\lambda_i^k|^p\r)^
{\frac qp}\r]^{\frac 1q}:\ f=\sum_{k\in\mathbb{Z}}
\sum_{i\in\mathbb{N}}\lambda_i^ka_i^k\r\}
\end{eqnarray*}
with the usual interpretation for $q=\fz$, where the
infimum is taken over all decompositions of $f$ as above.
\end{definition}

In order to establish the atomic decomposition of
$H^{p,q}_A(\rn)$, we need the following several technical lemmas,
which are
\cite[p.\,9, Lemma 2.7, p.\,19, Theorem 4.2]{mb03}
and \cite[Lemma 1.2]{wa07}, respectively.

\begin{lemma}\label{tl1}
Suppose that $\Omega\subset\rn$ is open and $|\Omega|<\infty$.
For any $d\in\zz_+$, there exist a sequence of points,
$\{x_j\}_{j\in\mathbb{N}}\subset\Omega$, and a sequence of
integers, $\{\ell_j\}_{j\in\mathbb{N}}\subset\mathbb{Z}$,
such that
\vspace{-0.25cm}
\begin{enumerate}
\item[{\rm(i)}] $\Omega=\bigcup_{j\in\mathbb{N}}
(x_j+B_{\ell_j})$;
\vspace{-0.25cm}
\item[{\rm(ii)}] $\lf\{x_j+B_{\ell_j-\tau}\r\}_{j\in\nn}$ are pairwise
disjoint, where $\tau$ is as in \eqref{se1} and \eqref{se2};
\vspace{-0.25cm}
\item[{\rm(iii)}] for every $j\in\mathbb{N},\
(x_j+B_{\ell_j+d})\cap\Omega^\complement
=\emptyset$, but $(x_j+B_{\ell_j+d+1})\cap
\Omega^\complement\neq\emptyset$;
\vspace{-0.25cm}
\item[{\rm(iv)}] $if\ (x_i+B_{\ell_i+d-2\tau})\cap
(x_j+B_{\ell_j+d-2\tau})\neq\emptyset$, then
$|\ell_i-\ell_j|\leq\tau$;
\vspace{-0.25cm}
\item[{\rm(v)}] for all $i\in\mathbb{N}$,
$\sharp\{j\in\mathbb{N}:\ (x_i+B_
 {\ell_i+d-2\tau})\cap(x_j+B_{\ell_j+d-2\tau})
 \neq\emptyset\}\leq L$,
where $L$ is a positive constant
independent of $\Omega$, $f$ and $i$.
\end{enumerate}
\end{lemma}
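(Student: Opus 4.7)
The plan is to carry out an anisotropic Whitney-type decomposition: for each $x\in\Omega$, attach a scale $\ell(x)$ measuring the "distance to $\Omega^\complement$" measured in terms of the dilated balls $B_k$, then extract a maximal disjoint subfamily at a slightly smaller scale. Concretely, for each $x\in\Omega$, I would let $\ell(x)$ be the largest integer such that $x+B_{\ell(x)+d}\subset\Omega$; this integer exists and is finite because $\Omega$ is open (so all sufficiently small dilated balls based at $x$ lie inside $\Omega$) while $|\Omega|<\infty$ and $|B_k|=b^k\to\infty$ force an upper bound. By construction, $x+B_{\ell(x)+d+1}\cap\Omega^\complement\ne\emptyset$, which already encodes~(iii).

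Next I would apply Zorn's lemma (equivalently, a greedy selection) to the family $\{x+B_{\ell(x)-\tau}\}_{x\in\Omega}$ to obtain a maximal pairwise disjoint subfamily $\{x_j+B_{\ell_j-\tau}\}_{j\in\mathbb{N}}$; countability follows from $|\Omega|<\infty$ together with $|x_j+B_{\ell_j-\tau}|=b^{\ell_j-\tau}>0$ and the disjointness. This yields (ii). For (i), given $y\in\Omega$, maximality forces $(y+B_{\ell(y)-\tau})\cap(x_j+B_{\ell_j-\tau})\ne\emptyset$ for some $j$; using \eqref{se1} to control the algebraic sum, the point $y$ then lies in $x_j+B_{\ell_j}$ (after possibly enlarging $\tau$-indices by a fixed amount that can be absorbed into $d$).

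For the scale-comparability (iv), the argument is by contradiction: if $|\ell_i-\ell_j|>\tau$, say $\ell_j\geq\ell_i+\tau+1$, and $(x_i+B_{\ell_i+d-2\tau})\cap(x_j+B_{\ell_j+d-2\tau})\ne\emptyset$, then iterating \eqref{se1} would force $x_i+B_{\ell_i+d}\subset x_j+B_{\ell_j+d+1}$, which combined with (iii) at $j$ shows that $x_i+B_{\ell_i+d}$ hits $\Omega^\complement$, contradicting (iii) at $i$. Finally, (v) follows by volume counting: each of the disjoint balls $x_k+B_{\ell_k-\tau}$ that intersects a fixed $x_i+B_{\ell_i+d-2\tau}$ has, by (iv), radius-scale within $[\ell_i-2\tau,\ell_i+2\tau]$, hence by \eqref{se1} is contained in a single dilated ball of fixed scale $\ell_i+d+C\tau$ about $x_i$; comparing measures bounds their number by a universal constant $L$.

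The main obstacle will be managing the various $\tau$-shifts so that the sizes work out exactly as stated, because the asymmetry in \eqref{se1}--\eqref{se2} and the lack of a genuine triangle inequality for the step quasi-norm $\rho$ means every application of the inclusion $B_k+B_k\subset B_{k+\tau}$ costs a $\tau$, and these need to be accounted for carefully in both (i) and (iv); the bounded-overlap property (v) is then essentially bookkeeping once (ii) and (iv) are in place.
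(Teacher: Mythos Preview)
The paper does not supply its own proof of this lemma; it is quoted verbatim from Bownik's memoir \cite[p.\,9, Lemma~2.7]{mb03} (see the sentence immediately preceding Lemma~\ref{tl1}). Your sketch is precisely the anisotropic Whitney-decomposition argument that Bownik carries out there, so your approach matches the original source.

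One small point worth tightening: in your argument for (i), you write that after the maximality step ``the point $y$ then lies in $x_j+B_{\ell_j}$ (after possibly enlarging $\tau$-indices by a fixed amount that can be absorbed into $d$)''. This step actually requires knowing $\ell(y)\le\ell_j+\tau$ \emph{before} you can conclude $y\in x_j+B_{\ell_j}$ via \eqref{se1}, so a preliminary scale-comparability estimate (a version of (iv) at the level of all points $y\in\Omega$, not just the selected centers) has to come first. Bownik handles this by proving the analogue of (iv) for any $y\in\Omega$ whose shrunken ball meets one of the selected ones, and only then deducing the covering. Your contradiction argument for (iv) is the right mechanism; just be sure to invoke it at this earlier stage as well.
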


\begin{lemma}\label{tl2}
Let $(p,r,s)$ be an admissible anisotropic triplet as
in Definition \ref{d-at} and $N\in\nn\cap(N_{(p)},\fz)$. Then there exists
a positive constant $C$, depending only on $p$ and $r$,
such that, for all $(p,r,s)$-atoms $a$,
$\|M_N(a)\|_{L^p(\rn)}\leq C.$
\end{lemma}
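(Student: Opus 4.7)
The plan is to fix an anisotropic $(p,r,s)$-atom $a$ supported in $B=x_0+B_j\in\mathfrak B$ and bound $\int_{\rn}[M_N(a)(x)]^p\,dx$ by splitting $\rn$ into a near region $E_1:=x_0+B_{j+\sigma}$ and its complement $E_2$, where $\sigma$ is a sufficiently large fixed integer (a multiple of $\tau$, depending only on the dilation data). The two regions are handled by completely different tools: $L^r$ boundedness of $M_N$ on $E_1$, and the vanishing moments of $a$ on $E_2$.

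On the near region $E_1$, I would use H\"older's inequality with exponents $r/p$ and its conjugate, together with the fact that $M_N$ maps $L^r(\rn)$ to $L^r(\rn)$ (which follows from Proposition \ref{tl4} applied to a suitable $\mathcal F$ dominating every $\varphi\in\cs_N(\rn)$, since $N>N_{(p)}\geq 2$ forces enough decay in $\rho$). This gives
$$\int_{E_1}[M_N(a)(x)]^p\,dx\le |E_1|^{1-p/r}\|M_N(a)\|_{L^r(\rn)}^p\lesssim b^{(j+\sigma)(1-p/r)}\|a\|_{L^r(\rn)}^p\lesssim 1,$$
using $\|a\|_{L^r}\le|B|^{1/r-1/p}=b^{j(1/r-1/p)}$, so the near contribution is uniformly bounded.

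On the far region $E_2$, I would establish the pointwise decay
$$M_N(a)(x)\ \lesssim\ |B|^{-1/p}\ [b^j/\rho(x-x_0)]^{\,\gamma},\qquad x\in E_2,$$
for $\gamma:=\frac{\ln\lambda_-}{\ln b}(s+1)$. To prove this, fix $\varphi\in\cs_N(\rn)$, $k\in\zz$, and $y\in x+B_k$. Use the vanishing moments of $a$ to subtract from $\varphi_k(y-\cdot)$ its Taylor polynomial of degree $s$ about $x_0$, then estimate the Taylor remainder by $s+1$ derivatives of $\varphi_k$ at intermediate points. Each such derivative produces a factor $\|A^{-k}\|^{s+1}\lesssim\lambda_-^{-k(s+1)}$ by \eqref{se19}, combined with the decay factor from $\varphi\in\cs_N(\rn)$; a separate argument for the two cases $A^{-k}(y-x_0)\in B_0$ or not, together with $y\in x+B_k$ and the quasi-triangle inequality \eqref{se4}, converts these into the claimed decay in $\rho(x-x_0)$. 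After taking the supremum over $\varphi$, $k$ and $y$, one obtains the stated inequality.

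The far contribution is then
$$\int_{E_2}[M_N(a)(x)]^p\,dx\ \lesssim\ |B|^{-1}\,b^{\,j\gamma p}\int_{\rho(x-x_0)\ge b^{j+\sigma}}[\rho(x-x_0)]^{-\gamma p}\,dx,$$
which converges and is uniformly bounded precisely when $\gamma p>1$; since $s\ge\lfloor(1/p-1)\ln b/\ln\lambda_-\rfloor$, we have $(s+1)\ln\lambda_-/\ln b>1/p-1$, i.e.\ $\gamma p>1$, as required. Combining the near and far estimates gives $\|M_N(a)\|_{L^p(\rn)}\le C$ with $C$ depending only on $p$ and $r$. The main technical obstacle will be carrying out the anisotropic Taylor-remainder estimate with care, since derivatives of $\varphi_k$ scale through the matrix $A^{-k}$ rather than a scalar factor, so the bookkeeping of powers of $\lambda_-$, $\lambda_+$, $b$ and the $\rho$-decay of $\varphi$ is where errors typically creep in.
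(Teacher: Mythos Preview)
The paper does not prove this lemma; it cites it as \cite[p.\,19, Theorem 4.2]{mb03}. However, essentially the same computation is carried out inside the proof of Theorem~\ref{tt1}, in the estimates \eqref{te26}--\eqref{te7}, and your near/far split together with H\"older on the near part is exactly the standard strategy used there.

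There is, however, a genuine error in your far-region estimate. You claim the pointwise decay
\[
M_N(a)(x)\lesssim |B|^{-1/p}\,[b^j/\rho(x-x_0)]^{\gamma},\qquad \gamma=(s+1)\frac{\ln\lambda_-}{\ln b},
\]
and then assert that the admissibility condition $s\ge\lfloor(1/p-1)\ln b/\ln\lambda_-\rfloor$ gives $\gamma p>1$. But $(s+1)\ln\lambda_-/\ln b>1/p-1$ only says $\gamma>1/p-1$, i.e.\ $\gamma p>1-p$, which is \emph{not} $\gamma p>1$ when $p<1$. With this exponent the far integral $\int_{E_2}[\rho(x-x_0)]^{-\gamma p}\,dx$ diverges in general.

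The correct decay exponent is $\beta=\gamma+1=(s+1)\frac{\ln\lambda_-}{\ln b}+1$; see \eqref{te7} in the paper, where the careful bookkeeping in \eqref{te28}--\eqref{te30} (or the analogue in Bownik's memoir) shows
\[
M_N^0(a)(x)\lesssim |B|^{-1/p}\bigl(b\lambda_-^{s+1}\bigr)^{-m}
\quad\text{for }x\in x_0+B_{j+\tau+m+1}\setminus B_{j+\tau+m}.
\]
The extra factor of $b^{-m}$ (i.e.\ the ``$+1$'' in the exponent) comes from balancing the $b^{-k}$ in $\varphi_k$, the factor $b^{\ell_i^k}$ from the size/support of $a$, and the $\rho^{-N}$ decay of $\varphi$; your sketch records only the $\lambda_-^{-k(s+1)}$ from the Taylor remainder and loses this additional gain. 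With $\beta=\gamma+1$ one does get $\beta p>(1/p-1)p+p=1$, and the far integral converges as needed.
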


\begin{lemma}\label{tl3}
Suppose that $p\in(0,\infty)$, $q\in(0,\fz]$,
$\{\mu_k\}_{k\in\mathbb{Z}}$ is a non-negative sequence
of complex numbers such that
$\{2^k\mu_k\}_{k\in\mathbb{Z}}\in\ell^q$ and
$\varphi$ is a non-negative function having the following
property: there exists $\delta\in(0,\min\{1, q/p\})$ such that,
for any $k_0\in\mathbb{N}$,
$\varphi\leq\psi_{k_0}+\eta_{k_0}$, where $\psi_{k_0}$
and $\eta_{k_0}$ are functions, depending on $k_0$ and satisfying
\begin{eqnarray*}
2^{k_0p}\lf[d_{\psi_{k_0}}(2^{k_0})\r]^\delta
\leq\widetilde{C}\sum_{k=-\infty}^{k_0-1}
\lf[2^k(\mu_k)^\delta\r]^p,\ \ \ 2^{k_0\delta p}
d_{\eta_{k_0}}(2^{k_0})\leq\widetilde{C}
\sum_{k=k_0}^\infty\lf[2^{k\delta}\mu_k\r]^p
\end{eqnarray*}
for some positive constant $\widetilde{C}$
independent of $k_0$. Then
$\varphi\in L^{p,q}(\rn)$ and
$$\|\varphi\|_{L^{p,q}(\rn)}\leq
C\|\{2^k\mu_k\}_{k\in\mathbb{Z}}\|_{\ell^q},$$
where $C$ is a positive constant independent of
$\varphi$ and $\{\mu_k\}_{k\in\mathbb{Z}}$.
\end{lemma}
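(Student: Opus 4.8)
The plan is to read off $\|\varphi\|_{L^{p,q}(\rn)}$ from the dyadic description \eqref{se6} (or \eqref{se7} when $q=\fz$) of the Lorentz quasi-norm and then to dominate the resulting sum over the levels $2^{k_0}$ by two discrete Hardy-type inequalities extracted from the two hypotheses on $d_{\psi_{k_0}}$ and $d_{\eta_{k_0}}$. First I would treat $q\in(0,\fz)$. For each $k_0$, the bound $\varphi\le\psi_{k_0}+\eta_{k_0}$ gives the elementary inclusion $\{x\in\rn:\ \varphi(x)>2^{k_0+1}\}\subset\{x:\ \psi_{k_0}(x)>2^{k_0}\}\cup\{x:\ \eta_{k_0}(x)>2^{k_0}\}$, and hence $d_\varphi(2^{k_0+1})\le d_{\psi_{k_0}}(2^{k_0})+d_{\eta_{k_0}}(2^{k_0})$. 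Substituting this into \eqref{se6}, re-indexing $k_0+1\mapsto k_0$ (which changes the quasi-norm only by a harmless constant) and using $(a+b)^{q/p}\ls a^{q/p}+b^{q/p}$, I reduce the estimate to
$$\mathrm{I}:=\sum_{k_0\in\zz}2^{k_0 q}\lf[d_{\psi_{k_0}}(2^{k_0})\r]^{q/p}\ls\sum_{k\in\zz}(2^k\mu_k)^q \qquad\text{and}\qquad \mathrm{II}:=\sum_{k_0\in\zz}2^{k_0 q}\lf[d_{\eta_{k_0}}(2^{k_0})\r]^{q/p}\ls\sum_{k\in\zz}(2^k\mu_k)^q.$$

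For $\mathrm{I}$, I would write the first hypothesis as $d_{\psi_{k_0}}(2^{k_0})\le[\widetilde C\,2^{-k_0p}\sum_{k<k_0}a_k]^{1/\delta}$ with $a_k:=(2^k\mu_k^\delta)^p$, so that, since $\gamma:=q/(\delta p)>1$ (because $\delta<q/p$) and $1-1/\delta<0$ (because $\delta<1$),
$$\mathrm{I}\ls\sum_{k_0\in\zz}2^{k_0q(1-1/\delta)}\lf(\sum_{k<k_0}a_k\r)^{\gamma}.$$
A Hölder estimate $\sum_{k<k_0}a_k\le\lf(\sum_{k<k_0}a_k^\gamma\,2^{(k_0-k)\ez\gamma}\r)^{1/\gamma}\lf(\sum_{k<k_0}2^{-(k_0-k)\ez\gamma'}\r)^{1/\gamma'}$, then interchanging the order of summation in $k$ and $k_0$ and summing the resulting geometric series — which forces the choice $0<\ez<q(1/\delta-1)/\gamma$ — yields $\mathrm{I}\ls\sum_k 2^{kq(1-1/\delta)}a_k^\gamma=\sum_k(2^k\mu_k)^q$. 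For $\mathrm{II}$, writing the second hypothesis as $d_{\eta_{k_0}}(2^{k_0})\le\widetilde C\,2^{-k_0\delta p}\sum_{k\ge k_0}b_k$ with $b_k:=(2^{k\delta}\mu_k)^p$ gives $\mathrm{II}\ls\sum_{k_0\in\zz}2^{k_0q(1-\delta)}(\sum_{k\ge k_0}b_k)^{q/p}$, where now $1-\delta>0$; a parallel argument — a Hölder estimate over $k\ge k_0$ when $q/p\ge1$, and the elementary inequality $(\sum_k b_k)^{q/p}\le\sum_k b_k^{q/p}$ when $q/p<1$ — reduces this double sum to a geometric series as well and gives $\mathrm{II}\ls\sum_k 2^{kq(1-\delta)}b_k^{q/p}=\sum_k(2^k\mu_k)^q$. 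Together with the first step, this proves the case $q<\fz$.

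For $q=\fz$, I would run the same first step with \eqref{se7} in place of \eqref{se6}: with $M:=\sup_k 2^k\mu_k$, the bound $\mu_k\le M2^{-k}$ turns the sums in both hypotheses into convergent geometric series (again $\delta<1$ is exactly what makes the exponents have the right sign), which gives $d_{\psi_{k_0}}(2^{k_0})\ls M^p2^{-k_0p}$ and $d_{\eta_{k_0}}(2^{k_0})\ls M^p2^{-k_0p}$, hence $\sup_{k_0}2^{k_0}[d_\varphi(2^{k_0})]^{1/p}\ls M$, i.e.\ $\|\varphi\|_{L^{p,\fz}(\rn)}\ls M$. I expect the main obstacle to be the two discrete Hardy-type inequalities for $\mathrm{I}$ and $\mathrm{II}$: one has to use the hypotheses $\delta<1$ and $\delta<q/p$ precisely so that every geometric series above converges, and handle the case split $q/p\gtrless1$ in $\mathrm{II}$; everything else is bookkeeping with \eqref{se6}, \eqref{se7} and the elementary set inclusions.
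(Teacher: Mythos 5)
Your proof is correct. There is nothing in the paper to compare it with: the authors do not prove this lemma but simply quote it from \cite[Lemma 1.2]{wa07}, so your argument supplies the missing proof, and it is essentially the standard one. The reduction $d_\varphi(2^{k_0+1})\le d_{\psi_{k_0}}(2^{k_0})+d_{\eta_{k_0}}(2^{k_0})$ together with \eqref{se6} (resp. \eqref{se7}) is the right starting point, and your two weighted H\"{o}lder/discrete Hardy estimates use the hypotheses exactly where they must: $\delta<q/p$ makes $\gamma=q/(\delta p)>1$, so H\"{o}lder applies in the $\psi$-part, while $\delta<1$ gives the exponents $q(1-1/\delta)<0$ and $q(1-\delta)>0$ the signs needed for the geometric series in $k_0$ to converge after interchanging the order of summation; the identities $a_k^{\gamma}2^{kq(1-1/\delta)}=(2^k\mu_k)^q$ and $b_k^{q/p}2^{kq(1-\delta)}=(2^k\mu_k)^q$ are correct, the case split $q/p\gtrless1$ in the $\eta$-part is handled properly (using $(\sum_k b_k)^{q/p}\le\sum_k b_k^{q/p}$ when $q/p<1$), and the $q=\infty$ case via $\mu_k\le M2^{-k}$ is fine. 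One small remark: as printed, the lemma assumes the decomposition only for $k_0\in\nn$, whereas your proof (like the paper's applications, e.g. \eqref{te20} and \eqref{te40}, which verify the hypotheses for every integer $k_0$) uses it for all $k_0\in\zz$; the restriction to $k_0\in\nn$ is evidently a typo, since with it the conclusion fails (take $\mu_k\equiv0$ and $\varphi$ a characteristic function bounded by $1$), so you have read the statement the way it is meant and used.
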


Now, it is a position to state the main result of this subsection.

\begin{theorem}\label{tt1}
Let $(p,r,s)$ be an admissible anisotropic triplet as in
Definition \ref{d-at}, $q\in(0,\fz]$ and $N\in\nn\cap(N_{(p)},\fz)$. Then
$H^{p,q}_A(\rn)=H_A^{p,r,s,q}(\rn)$ with equivalent quasi-norms.
\end{theorem}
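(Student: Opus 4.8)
The plan is to prove the two inclusions $H_A^{p,r,s,q}(\rn)\subset H^{p,q}_A(\rn)$ and $H^{p,q}_A(\rn)\subset H_A^{p,r,s,q}(\rn)$ separately, together with the corresponding quasi-norm estimates.

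\textbf{Step 1: The inclusion $H_A^{p,r,s,q}(\rn)\subset H^{p,q}_A(\rn)$.}
Suppose $f=\sum_{k\in\zz}\sum_{i\in\nn}\lz_i^ka_i^k$ in $\cs'(\rn)$ as in Definition \ref{d-aahls}, with $\lz_i^k\sim 2^k|B_i^k|^{1/p}$ and the cubes $\{x_i^k+B_i^k\}_i$ having bounded overlap for each fixed $k$. For each $k_0\in\nn$ I would split $f=\sum_{k<k_0}\sum_i\lz_i^ka_i^k+\sum_{k\ge k_0}\sum_i\lz_i^ka_i^k=:F_{k_0}+G_{k_0}$ and estimate $M_N(f)\le M_N(F_{k_0})+M_N(G_{k_0})$. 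The idea is to verify the hypotheses of Lemma \ref{tl3} with $\vz:=M_N(f)$, $\mu_k:=(\sum_i|\lz_i^k|^p)^{1/p}/2^k\sim(\sum_i|B_i^k|)^{1/p}$, $\psi_{k_0}:=M_N(G_{k_0})$ and $\eta_{k_0}:=M_N(F_{k_0})$, for a suitable $\dz\in(0,\min\{1,q/p\})$. For the ``low'' part $G_{k_0}$, using $\dz<1$ and the subadditivity of $x\mapsto x^\dz$ together with Lemma \ref{tl2} (the $L^p$ bound $\|M_N(a_i^k)\|_{L^p}\ls 1$) one controls $\|M_N(G_{k_0})\|_{L^p}^p$; here the main work is a distributional estimate (this is exactly the estimate the introduction calls \eqref{te21}), obtained by writing the level set in terms of $2^k$-dilated enlargements of the supports, using the finite overlap of the $r$-atoms, and invoking the $L^r$-boundedness of $M_N$ (Remark \ref{tr1} / Proposition \ref{tl4}) to bound the tail where the atoms are large. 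For the ``high'' part $F_{k_0}$ one uses the vanishing moments of the $(p,r,s)$-atoms: away from $x_i^k+B_i^k$ the function $M_N(a_i^k)$ decays like a power of $\rho$ determined by $s\ge\lfloor(1/p-1)\ln b/\ln\lz_-\rfloor$, which gives a pointwise bound on $M_N(F_{k_0})$ leading to the required inequality for $\eta_{k_0}$. Lemma \ref{tl3} then yields $\|M_N(f)\|_{L^{p,q}}\ls\|\{2^k\mu_k\}\|_{\ell^q}\sim\|f\|_{H_A^{p,r,s,q}(\rn)}$.

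\textbf{Step 2: The inclusion $H^{p,q}_A(\rn)\subset H_A^{p,r,s,q}(\rn)$.}
Given $f\in H^{p,q}_A(\rn)$, set $\Omega_k:=\{x\in\rn:\ M_N(f)(x)>2^k\}$, an open set of finite measure by \eqref{se7}. Applying the Calder\'on--Zygmund decomposition of \cite{mb03} (via Lemma \ref{tl1} and \cite[p.\,19, Theorem 4.2]{mb03}) at each height $2^k$, one obtains, for each $k$, a decomposition $f=g^k+\sum_i b_i^k$ in $\cs'(\rn)$ with good pointwise and $L^2$ (or $L^r$) control on the ``good'' part $g^k$ and with the ``bad'' parts $b_i^k$ supported on dilated balls $x_i^k+B_i^k\subset\Omega_k$ that have bounded overlap and satisfy the appropriate size and moment conditions. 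Taking the difference of the decompositions at consecutive levels one writes $f=\sum_{k}(g^{k+1}-g^k)$ and identifies $a_i^k:=c^{-1}(g^{k+1}-g^k)|_{\text{piece }i}$ as (normalized) $(p,r,s)$-atoms, with coefficients $\lz_i^k\sim 2^k|x_i^k+B_i^k|^{1/p}=2^k|B_i^k|^{1/p}$. The verification that these are genuine $(p,r,s)$-atoms — the support, the $L^r$-size bound $\|a_i^k\|_{L^r}\le|B_i^k|^{1/r-1/p}$, and the vanishing moments up to order $s$ — is essentially the anisotropic atomic decomposition of $H^p_A(\rn)$ from \cite{mb03}, applied level-by-level; the new point is only the bookkeeping of the $\ell^q(\ell^p)$-quasi-norm. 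Since $|\{x_i^k+B_i^k\}|\ls|\Omega_k|$ and $\Omega_k$ has measure $\sim[d_{M_N(f)}(2^k)]$ up to the equivalences in \eqref{se6}--\eqref{se7}, one gets
\begin{eqnarray*}
\lf[\sum_{k\in\zz}\lf(\sum_{i\in\nn}|\lz_i^k|^p\r)^{q/p}\r]^{1/q}
\ls\lf\{\sum_{k\in\zz}\lf[2^k\lf(d_{M_N(f)}(2^k)\r)^{1/p}\r]^q\r\}^{1/q}
\sim\|M_N(f)\|_{L^{p,q}(\rn)}=\|f\|_{H^{p,q}_A(\rn)}.
\end{eqnarray*}
One also must check that $\sum_{k,i}\lz_i^ka_i^k$ converges to $f$ in $\cs'(\rn)$; this follows because $g^k\to f$ in $\cs'(\rn)$ as $k\to\fz$ and $g^k\to 0$ as $k\to-\fz$, both by standard estimates on $M_N(f)$ on $\Omega_k$ and $\Omega_k^\complement$.

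\textbf{Main obstacle.}
I expect the hard part to be Step 1, specifically obtaining the distributional estimate on $d_{M_N(G_{k_0})}$ with the precise dependence demanded by Lemma \ref{tl3}. As the introduction stresses, for $r=\fz$ one simply uses the $L^\fz$ size condition and bounded overlap of $\fz$-atoms, but for $r\in(1,\fz)$ no uniform $L^\fz$ bound is available; instead one must combine a ``near'' estimate (level set contained in a bounded-overlap union of dilated balls, contributing a term like $\sum_{k<k_0}\sum_i|B_i^k|$) with a ``far'' estimate controlled by $\sum_{k<k_0}2^{-k r}\sum_i\|M_N(a_i^k)\|_{L^r}^r\ls\sum_{k<k_0}2^{-kr}\sum_i|B_i^k|$ via the $L^r$-boundedness of $M_N$, and then reconcile the exponents with the exponent $\dz$ appearing in Lemma \ref{tl3}. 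Getting the interplay between $\dz$, $p$, $q$ and $r$ right — and checking that the enlargement constants coming from $\rho$-quasi-triangle (via $\tau$, \eqref{se1}, \eqref{se4}) stay uniform in $k$ — is the delicate core of the argument.
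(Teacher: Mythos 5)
Your overall strategy is the same as the paper's: one inclusion via a level-by-level Calder\'on--Zygmund decomposition on the sets $\Omega_k=\{x\in\rn:\ M_N(f)(x)>2^k\}$ (the paper additionally mollifies, $f^{(m)}=f\ast\varphi_{-m}$, decomposes each $f^{(m)}$, and passes to a weak-$\ast$ limit of the atom pieces before proving convergence in $\cs'(\rn)$ --- a point you defer to \cite{mb03}, which is acceptable in a sketch), and the reverse inclusion via a split at a threshold $k_0$ combined with Lemma \ref{tl3}. The genuine gap is in your Step 1: you propose to verify the hypotheses of Lemma \ref{tl3} ``for a suitable $\delta\in(0,\min\{1,q/p\})$'', but the two distributional estimates force lower bounds on $\delta$ that are independent of $q$. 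The estimate for the part with $k\ge k_0$ requires $\int_{\rn}[M_N(a_i^k)(x)]^{\delta p}\,dx\ls|B_i^k|^{1-\delta}$, whose far-field contribution converges only when $\beta\delta p>1$, where $\beta$ is the decay exponent coming from the vanishing moments; and the H\"older-in-$k$ argument for the part with $k<k_0$ needs $\sigma=1-\frac{p}{r\delta}>0$. When $q<p$ (more precisely, when $q/p$ falls below these thresholds) no admissible $\delta$ exists and Lemma \ref{tl3} simply cannot be invoked. The paper therefore treats $q/p\in(0,1)$ by a separate direct argument (Case 2 of the proof of Theorem \ref{tt1}): $\delta$ is chosen independently of $q$, the analogues \eqref{te9} of the level-set bounds are established, and the sums over $k_0$ in \eqref{te68} and \eqref{te69} are carried out by hand and combined with \eqref{se6}. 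Your sketch, as written, does not cover this range of $q$.

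Two further points. First, as literally stated your assignment is inverted: you set $\psi_{k_0}:=M_N(G_{k_0})$ with $G_{k_0}=\sum_{k\ge k_0}\sum_i\lz_i^ka_i^k$ and $\eta_{k_0}:=M_N(F_{k_0})$ with $F_{k_0}=\sum_{k<k_0}\sum_i\lz_i^ka_i^k$, whereas Lemma \ref{tl3} requires the $\psi_{k_0}$-estimate to involve the sum over $k\le k_0-1$ and the $\eta_{k_0}$-estimate the sum over $k\ge k_0$; your prose (attaching the estimate \eqref{te21} to the low-$k$ part and the vanishing-moment decay to the high-$k$ part) indicates you intend the correct pairing, but the definitions should be fixed. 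Second, Lemma \ref{tl2}, i.e.\ $\|M_N(a)\|_{L^p(\rn)}\ls1$, is not the estimate actually needed for the $k\ge k_0$ part: with the exponent $p$ the resulting double sum over $k_0\le k$ has no geometric decay and diverges, so one must prove the strictly stronger bound with exponent $\delta p<p$, namely $\int_{\rn}[M_N(a_i^k)(x)]^{\delta p}\,dx\ls|B_i^k|^{1-\delta}$ (this is \eqref{te23} in the paper), by combining the $L^r$-boundedness of $M_N$ near the support with the pointwise decay \eqref{te26} away from it; this is precisely where the constraint $\beta\delta p>1$ mentioned above enters, and it is the reason the choice of $\delta$ cannot be made arbitrarily small.
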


\begin{proof}
First, we show $H^{p,q}_A(\rn)\subset H_A^{p,r,s,q}(\rn)$. Observe that, by
Definition \ref{d-at}, for any $r\in(1,\fz)$, a $(p,\infty,s)$-atom
is also a $(p,r,s)$-atom and hence
$H_A^{p,\infty,s,q}(\rn)\subset H_A^{p,r,s,q}(\rn)$.
Thus, to prove Theorem \ref{tt1},
we only need to show that
\begin{eqnarray}\label{3.5x}
H^{p,q}_A(\rn)\subset H_A^{p,\infty,s,q}(\rn).
\end{eqnarray}

Now we prove \eqref{3.5x} by three steps.

\emph{Step 1.}
To show \eqref{3.5x}, for any
$f\in H^{p,q}_A(\rn),$ $\varphi\in\cs(\rn)$ with $\int_\rn\varphi(x)\,dx=1$,
and $m\in\mathbb{N}$,
let $f^{(m)}:=f\ast\varphi_{-m}$. Then, by \cite[p.\,15, Lemma 3.8]{mb03},
we have $f^{(m)}\rightarrow f$ \ in $\cs'(\rn)$ as $m\rightarrow\infty$.
Moreover, by \cite[p.\,39, Lemma 6.6]{mb03}, we know that, for all $m\in\mathbb{N}$
and $x\in\rn$,
\begin{eqnarray}\label{te11}
M_{N+2}(f^{(m)})(x)\leq C_{(N,\varphi)} M_N(f)(x),
\end{eqnarray}
where $ C_{(N,\varphi)}$ is a positive constant depending
on $N$ and $\varphi$, but independent of $f$.
Therefore, $f^{(m)}\in H^{p,q}_A(\rn)$
and $\|f^{(m)}\|_{H^{p,q}_A(\rn)}\ls\|f\|_{H^{p,q}_A(\rn)}$
with the implicit positive constant independent of $m$ and $f$.

In what follows of this step, we show that, for any $m\in\nn$,
\begin{eqnarray}\label{te1}
f^{(m)}=\sum_{k\in\zz}\sum_{i\in\nn}h_i^{m,k}
\ \ \ \ \ {\rm in}\ \ \cs'(\rn),
\end{eqnarray}
where, for all $m,\,i\in\nn$ and $k\in\zz$, $h_i^{m,k}$
is a $(p,\fz,s)$-atom multiplied by a constant
depending on $k$ and $i$ but, independent of $f$ and $m$.

To show \eqref{te1}, we borrow some ideas from the
proof of \cite[p.\,38 Theorem 6.4]{mb03}.
For $k\in\mathbb{Z}$ and $N\in\nn\cap(N_{(p)},\fz)$,
let $\Omega_k:=\{x\in\rn:\ M_N(f)(x)>2^k\}$.
Then $\Omega_k$ is open. Applying Lemma \ref{tl1}
to $\Omega_k$ with $d=6\tau$, we obtain a sequence
$\{x_i^k\}_{i\in\mathbb{N}}\subset\Omega_k$ and
a sequence of integers, $\{\ell_i^k\}_{i\in\mathbb{N}}$,
satisfying, with $\tau$ and $L$ same as in Lemma \ref{tl1},
\begin{eqnarray}\label{te22}
\Omega_k=\bigcup_{i\in\mathbb{N}}(x_i^k+B_{\ell_i^k});
\end{eqnarray}
\begin{eqnarray}\label{te2}
(x_i^k+B_{\ell_i^k-\tau})\cap(x_j^k+B_{\ell_j^k-\tau})
=\emptyset\ \ \ {\rm for\ all}\  i,\ j\in\nn\ {\rm with}\ i\neq j;
\end{eqnarray}
\begin{eqnarray*}
(x_i^k+B_{\ell_i^k+6\tau})\cap\Omega_k^\complement
=\emptyset,\ \ \ (x_i^k+B_{\ell_i^k+6\tau+1})\cap
\Omega_k^\complement\neq\emptyset\ \ \ {\rm for\ all}\
i\in\mathbb{N};
\end{eqnarray*}
\begin{eqnarray*}
{\rm if}\ (x_i^k+B_{\ell_i^k+4\tau})\cap(x_j^k+B_{\ell_j^k+
4\tau})\neq\emptyset,\ \ {\rm then}\ |\ell_i^k-\ell_j^
k|\leq\tau;
\end{eqnarray*}
\begin{eqnarray}\label{te5}
\sharp\lf\{j\in\mathbb{N}:\
(x_i^k+B_{\ell_i^k+4\tau})\cap(x_j^k+B_{\ell_j^k+
4\tau})\neq\emptyset\r\}\leq L\ \ {\rm for\ all}\  i\in\mathbb{N}.
\end{eqnarray}

Fix $\theta\in\cs(\rn)$ such that $\supp\theta\subset B_\tau$,
$0\leq\theta\leq1$, and $\theta\equiv1$ on $B_0$. For
each $i\in\mathbb{N}$, $k\in\mathbb{Z}$ and all $x\in\rn$,
define
$\theta_i^k(x):=\theta(A^{-\ell_i^k}(x-x_i^k))$
and
$$\zeta_i^k(x):=
\frac {\theta_i^k(x)}{\Sigma_{j\in\nn}\theta_j^k(x)}.$$
Then $\zeta_i^k\in\cs(\rn)$,
$\supp \zeta_i^k\subset x_i^k+B_{\ell_i^k+\tau}$,
$0\leq\zeta_i^k\leq1$, $\zeta_i^k\equiv1$
on $x_i^k+B_{\ell_i^k-\tau}$ by \eqref{te2}, and
$\sum_{i\in\mathbb{N}}\zeta_i^k=\chi_{\Omega_k}$.
Therefore, the family $\{\zeta_i^k\}_{i\in\mathbb{N}}$
forms a smooth partition of unity on $\Omega_k$.

For $\ell\in[0,\fz)$, let $\mathcal{P}_{\ell}(\rn)$ denote
the linear space of all polynomials on $\rn$ with
degree not more than $\ell$. For each $i$ and
$P\in\mathcal{P}_{\ell}(\rn)$, define
\begin{eqnarray}\label{te14}
\|P\|_{i,k}:=
\lf[\frac1{\int_\rn\zeta_i^k(x)\,dx}\int_\rn
|P(x)|^2\zeta_i^k(x)\,dx\r]^{1/2},
\end{eqnarray}
which induces a finite dimensional Hilbert space
$(\mathcal{P}_{\ell}(\rn), \|\cdot\|_{i,k})$. For each $i$,
via
$$Q\mapsto\frac1{\int_\rn\zeta_i^k(x)\,dx}\lf\langle
f^{(m)},Q\zeta_i^k\r\rangle,\ \ \ \ Q\in\mathcal{P}_{\ell}(\rn),$$
the function $f^{(m)}$ induces a linear bounded functional on
$\mathcal{P}_{\ell}(\rn)$. By the Riesz lemma, there exists
a unique polynomial  $P_i^{m,k}\in\mathcal{P}_{\ell}(\rn)$
such that, for all $Q\in\mathcal{P}_{\ell}(\rn)$,
\begin{eqnarray*}
\frac1{\int_\rn\zeta_i^k(x)\,dx}
\lf\langle f^{(m)},Q\zeta_i^k\r\rangle
&&=\frac1{\int_\rn\zeta_i^k(x)\,dx}
\lf\langle P_i^{m,k},Q\zeta_i^k\r\rangle\\
&&=\frac 1{\int_\rn\zeta_i^k(x)\,dx}
\int_\rn P_i^{m,k}(x)Q(x)\zeta_i^k(x)\,dx.
\end{eqnarray*}
For every $i\in\mathbb{N}$ , $k\in\mathbb{Z}$ and
$m\in\mathbb{N}$, define a distribution
$b_i^{m,k}:=[f^{(m)}-P_i^{m,k}]\zeta_i^k$.
From the fact that, for all $k\in\mathbb{Z}$ and $x\in\rn$,
$\sum_{i\in\mathbb{N}}
\chi_{x_i^k+B_{\ell_i^k+4\tau}}(x)\leq L$
and
$\supp b_i^{m,k}\subset x_i^k+B_{\ell_i^k+4\tau}$,
it follows that $\{\sum_{i=1}^Ib_i^{m,k}\}_{I\in\nn}$
converges in $\cs'(\rn)$. Let
$$g^{m,k}:=f^{(m)}-\sum_{i\in\mathbb{N}}b_i^{m,k}
=f^{(m)}-\sum_{i\in\mathbb{N}}\lf[f^{(m)}-P_i^{m,k}\r]
\zeta_i^k =f^{(m)}\chi_{\Omega_k^\complement}+
\sum_{i\in\mathbb{N}}P_i^{m,k}\zeta_i^k.$$
Notice that, for any $k\in\zz$ and $x\in\rn$, the number $i$
satisfying $\zeta_i^k(x)\neq0$ is less than $L$, where $L$
is the same as in \eqref{te5}. Therefore, by a proof similar to that of
\cite[p.\,25, Lemma 5.3]{mb03}, we easily conclude that,
for all $x\in\rn$,
$|\sum_{i\in\mathbb{N}}P_i^{m,k}(x)\zeta_i^k(x)|\ls2^k$.
Clearly, by \eqref{te11}, for all $x\in\rn$, we have
$$\lf|f^{(m)}(x)\chi_{\Omega_k^\complement}(x)\r|\ls
M_N(f)(x)\chi_{\Omega_k^\complement}(x)\ls2^k.$$
Thus,
$\|g^{m,k}\|_{L^{\infty}(\rn)}\ls2^k$ and
$\|g^{m,k}\|_{L^{\infty}(\rn)}\rightarrow0$
as $k\rightarrow-\infty$.

Following the proof of \cite[p.\,31, Lemma 5.7]{mb03}, for any
$k\in\mathbb{Z}$ and $p_0\in(0,p)$ satisfying
$\lfloor(1/p_0-1)\ln b/\ln\lambda_-\rfloor\leq s$, we obtain
\begin{eqnarray}\label{te12}
\int_\rn \lf[M_N\lf(\sum_{i\in\mathbb{N}}
b_i^{m,k}\r)(x)\r]^{p_0}\,dx
\ls\int_{\widetilde{\Omega}_k}
\lf[M_N(f^{(m)})(x)\r]^{p_0}\,dx,
\end{eqnarray}
where
$\widetilde{\Omega}_k:=\{x\in\rn:\ M_N(f^{(m)})(x)>2^k\}$. Since
$f^{(m)}\in H^{p,q}_A(\rn)$, it follows that there exists an integer $k_0$ such that,
for any $k\in[k_0,\fz)\cap\mathbb{Z}$, $|\widetilde{\Omega}_k|<\infty$. Noticing that,
for all $\alpha\in(0,\fz)$,
\begin{eqnarray*}\lf|\widetilde{\Omega}_k
\cap\lf\{x\in\rn:\ M_N(f^{(m)})(x)>\alpha\r\}\r|\leq\min
\lf\{|\widetilde{\Omega}_k|, \alpha^{-p}
\lf\|M_N(f^{(m)})\r\|_{L^{p,\infty}(\rn)}^p\r\},
\end{eqnarray*}
we have
\begin{eqnarray}\label{te13}
&&\int_{\widetilde{\Omega}_k}\lf[M_N(f^{(m)})(x)\r]^{p_0}\,dx\\
&&\quad=\int_0^\infty
p_0\alpha^{p_0-1}\lf|\{x\in \widetilde{\Omega}_k:\
M_N(f^{(m)})(x)>\alpha\}\r|\,d\alpha\noz\\
&&\quad\leq\int_0^\gamma p_0|\widetilde{\Omega}_k
|\alpha^{p_0-1}\,d\alpha+\int_\gamma^\infty p_0\alpha^
{p_0-p-1}\lf\|M_N(f^{(m)})\r\|_{L^{p,\infty}(\rn)}^p\,d\alpha\noz\\
&&\quad=\frac p{p-p_0}|\widetilde{\Omega}_k|^{1-p_0/p}
\lf\|M_N(f^{(m)})\r\|_{L^{p,\infty}(\rn)}^{p_0}
\ls |\widetilde{\Omega}_k|^{1-p_0/p}
\lf\|M_N(f^{(m)})\r\|_{L^{p,q}(\rn)}^{p_0},\noz
\end{eqnarray}
where
$\gamma:=\frac {\|M_N(f^{(m)})\|_{L^{p,\infty}
(\rn)}}{|\widetilde{\Omega}_k|^{1/p}}$.
By \eqref{te12} and \eqref{te13}, we find that
\begin{eqnarray*}
\lf\|\sum_{i\in\mathbb{N}}b_i^{m,k}\r\|_{H^{p_0}_A(\rn)}
:=&&\lf\|M_N\lf(\sum_{i\in\mathbb{N}}b_i^{m,k}\r)\r\|_
{L^{p_0}(\rn)}\\
\ls&&|\widetilde{\Omega}_k|^{\frac1{p_0}-\frac1p}
\|M_N(f^{(m)})\|_{L^{p,q}(\rn)}\rightarrow0,\ \ {\rm as}
\ \ k\rightarrow\infty,
\end{eqnarray*}
where $H^{p_0}_A(\rn)$ denotes the anisotropic Hardy space
introduced by Bownik in \cite{mb03}. From the above
estimates, we further deduce that
\begin{eqnarray*}
&&\lf\|f^{(m)}-\sum_{k=-N}^N\lf(g^{m,k+1}-g^{m,k}\r)\r\|_
{H^{p_0}_A(\rn)+L^\infty(\rn)}\\
&&\hs\ls\lf\|\sum_{i\in\nn}
b_i^{m,N+1}\r\|_{H^{p_0}_A(\rn)}+\lf\|g^{m,-N}\r\|_
{L^\infty(\rn)}\rightarrow0,
\end{eqnarray*}
as $N\rightarrow\infty$.
Here, for any $f\in H^{p_0}_A(\rn)+L^\infty(\rn)$, let
\begin{eqnarray*}
\lf\|f\r\|_{H^{p_0}_A(\rn)+L^\infty(\rn)}:=\inf
\lf\{\r.&&\|f_1\|_{H^{p_0}_A(\rn)}+\|f_2\|_{L^\infty(\rn)}:\\\
&&\lf.f=f_1+f_2,\ f_1\in H^{p_0}_A(\rn),\ f_2\in L^\infty(\rn)\r\},
\end{eqnarray*}
where the infimum is taken over all decompositions of $f$
as above. Therefore,
\begin{eqnarray}\label{te3}
f^{(m)}=\sum_{k=-\infty}^\infty\lf(g^
{m,k+1}-g^{m,k}\r)\ \ \ \ {\rm in}\ \ \cs'(\rn).
\end{eqnarray}

Moreover,
for $i\in\mathbb{N}, k\in\mathbb{Z}$ and $j\in\mathbb{N}$,
define a polynomial $P_{i,j}^{m,k+1}$ as the orthogonal
projection of
$[f^{(m)}-P_j^{m,k+1}]\zeta_i^k$ on $\mathcal{P}_{\ell}(\rn)$ with
respect to the norm defined by \eqref{te14}, namely,
$P_{i,j}^{m,k+1}$ is the unique element of $\mathcal{P}_{\ell}(\rn)$
satisfying, for all $Q\in\mathcal{P}_{\ell}(\rn)$,
$$\int_\rn \lf[f^{(m)}(x)-P_j^{m,k+1}(x)\r]\zeta_i^k(x)Q(x)
\zeta_j^{k+1}(x)\,dx=\int_\rn P_{i,j}^{m,k+1}(x)Q(x)
\zeta_j^{k+1}(x)\,dx.$$
By an argument parallel to the  proof of \cite[p.\,37, Lemma 6.3]{mb03},
we find that
$$\sum_{j\in\mathbb{N}}\sum_{i\in\mathbb{N}}
P_{j,i}^{m,k+1}\zeta_i^{k+1}=0.$$
Then, for $k\in\mathbb{Z}$, by the facts that
$\sum_{j\in\mathbb{N}}\zeta_j^k=\chi_{\Omega_k}$,
$$\sum_{i\in\mathbb{N}}b_i^{m,k+1}
=f^{(m)}\chi_{\Omega_{k+1}}-\sum_{i\in\nn}
P_i^{m,k+1}\zeta_i^{k+1},\ \ \ \supp\lf(
\sum_{i\in\mathbb{N}}P_i^{m,k+1}
\zeta_i^{k+1}\r)\subset\Omega_{k+1}$$
and $\Omega_{k+1}\subset\Omega_{k}$, we have
\begin{eqnarray}\label{te4}
g^{m,k+1}-g^{m,k}
&&=\lf[f^{(m)}-\sum_{i\in\mathbb{N}}b_i^{m,k+1}\r]-
\lf[f^{(m)}-\sum_{j\in\mathbb{N}}b_j^{m,k}\r]\\
&&=\sum_{j\in\mathbb{N}}b_j^{m,k}-\sum_{j\in\nn}
\sum_{i\in\mathbb{N}}b_i^{m,k+1}\zeta_j^k+\sum_
{j\in\mathbb{N}}\sum_{i\in\mathbb{N}}P_{j,i}^
{m,k+1}\zeta_i^{k+1}\noz\\
&&=\sum_{i\in\mathbb{N}}\lf[b_i^{m,k}-\sum_{j\in\nn}
\lf(b_j^{m,k+1}\zeta_i^k-P_{i,j}^{m,k+1}\zeta_j^
{k+1}\r)\r]=:\sum_{i\in\mathbb{N}}h_i^{m,k},\noz
\end{eqnarray}
where all the series converge in $\cs'(\rn)$. Furthermore,
for all $i\in\mathbb{N}$ and $k\in\mathbb{Z}$,
$$h_i^{m,k}=\lf[f^{(m)}-P_i^{m,k}\r]\zeta_i^k-
\sum_{j\in\mathbb{N}}\lf\{\lf[f^{(m)}-P_j^{m,k+1}\r]
\zeta_i^k-P_{i,j}^{m,k+1}\r\}\zeta_j^{k+1}.$$
By the definitions of $P_i^{m,k}$ and $P_{i,j}^{m,k+1}$,
we know that
\begin{eqnarray}\label{te15}
\int_\rn h_i^{m,k}(x)Q(x)\,dx=0\ \ \ \ \ \
{\rm for\ all}\ Q\in\mathcal{P}_{\ell}(\rn).
\end{eqnarray}
In addition, recall that $P_{i,j}^{m,k+1}\neq0$ implies
$(x_j^{k+1}+B_{\ell_j^{k+1}+\tau})
\cap (x_i^k+B_{\ell_i^k+\tau})\neq\emptyset$.
Then, by a proof similar to that of \cite[p.\,35, Lemma 6.1(i)]{mb03},
we find that
$$\supp\zeta_j^{k+1}\subset (x_j^{k+1}+B_{\ell_j^
{k+1}+\tau})\subset(x_i^k+B_{\ell_i^k+4\tau}).$$
Therefore,
\begin{eqnarray}\label{te16}
\supp h_i^{m,k}\subset (x_i^k+B_{\ell_i^k+4\tau}).
\end{eqnarray}
Since $\sum_{j\in\mathbb{N}}\zeta_j^{k+1}
=\chi_{\Omega_{k+1}}$, it follows that
\begin{eqnarray}\label{te17}
h_i^{m,k}=\zeta_i^kf^{(m)}\chi_{\Omega_{k+1}^
\complement}-P_i^{m,k}\zeta_i^k+\zeta_i^k\sum
_{j\in\nn}P_j^{m,k+1}\zeta_j^{k+1}+\sum_
{j\in\mathbb{N}}P_{i,j}^{m,k+1}\zeta_j^{k+1}.
\end{eqnarray}
By a proof similar to that of \cite[p.\,35, Lemma 6.1(ii) and p.\,36, Lemma 6.2]{mb03},
we find that, for $j\in\mathbb{N}$,
$$\sharp\lf\{i\in\mathbb{N}:\ (x_j^{k+1}+B_{\ell_j^{k+1}
+\tau})\cap (x_i^k+B_{\ell_i^k+\tau})\neq\emptyset\r\}\ls1$$
and, for $i,j,m\in\mathbb{N},\ k\in\mathbb{Z}$,
$$\sup_{x\in\rn}\lf|P_{i,j}^{m,k+1}(x)
\zeta_j^{k+1}(x)\r|\ls2^{k+1},$$
which, combined with
$\sup_{x\in\rn}|P_i^{m,k}(x)\zeta_i^k(x)|\ls2^k$,
$\|f^{(m)}\chi_{\Omega_{k+1}^\complement}\|
_{L^{\infty}(\rn)}\ls2^k$ and \eqref{te17},
further implies that, for all $i,\,m\in\mathbb{N}$
and $k\in\mathbb{Z}$,
\begin{eqnarray}\label{te18}
\lf\|h_i^{m,k}\r\|_{L^{\infty}(\rn)}\ls2^k.
\end{eqnarray}

By \eqref{te15}, \eqref{te16} and \eqref{te18}, we know that,
for all $k\in\zz$ and $m$, $i\in\nn$, $h_i^{m,k}$
is a multiple of a $(p,\fz,s)$-atom, which, together with
\eqref{te3} and \eqref{te4}, implies that \eqref{te1} holds true.

\emph{Step 2.} By \eqref{te18} and the Alaoglu theorem (see, for example,
\cite[Theorem 3.17]{wr91}), there exists a subsequence
$\{m_\iota\}_{\iota=1}^{\fz}\subset\mathbb{N}$
such that, for every $i\in\mathbb{N}$ and $k\in\mathbb{Z}$,
$h_i^{m_\iota,k}\rightarrow h_i^k$ weak-$\ast$ in
$L^{\fz}(\rn)$ as $\iota\rightarrow\fz$. It is easy to see that
$\supp h_i^k\subset (x_i^k+B_{\ell_i^k+4\tau})$,
$\|h_i^k\|_{L^{\infty}(\rn)}\ls2^k$ and
$\int_\rn h_i^k(x)Q(x)dx=0$ for all $Q\in\mathcal{P}_{\ell}(\rn)$.
Thus, $h_i^k$ is a multiple of a $(p,\infty,s)$-atom $a_i^k$.
Let $h_i^k:=\lambda_i^ka_i^k$, where
$\lambda_i^k\sim 2^k|B_{\ell_i^k+4\tau}|^{1/p}$. Then,
by \eqref{te5}, \eqref{te22} and \eqref{se6}, for $q\in(0,\fz)$, we have
\begin{eqnarray}\label{te65}
\sum_{k=-\infty}^\infty\lf(\sum_{i\in\mathbb{N}}
|\lambda_i^k|^p\r)^{\frac qp}
&&\sim\sum_{k=-\infty}^\infty\lf(\sum_{i\in\nn}2^
{kp}\lf|B_{\ell_i^k+4\tau}\r|\r)^{\frac qp}\\
&&\sim\sum_{k=-\infty}^\infty2^{kq}|\Omega_k|^
{\frac qp}\sim\|M_N(f)\|_{L^{p,q}(\rn)}^q\sim\|f\|_
{H^{p,q}_A(\rn)}^q.\noz
\end{eqnarray}
Similar to \eqref{te65}, we conclude that
$\sup_{k\in\zz}(\sum_{i\in\mathbb{N}}
|\lambda_i^k|^p)^{1/p}\ls\|f\|_{H^{p,\fz}_A(\rn)}$.

\emph{Step 3.} By Step 2, we easily know that, to
prove \eqref{3.5x}, it suffices to show that
$f=\sum_{k\in\mathbb{Z}}\sum_{i\in\nn}h_i^k$ in $\cs'(\rn)$.

To show this, let $f_k:=\sum_{i\in\mathbb{N}}h_i^k$ for all $k\in\zz$.
Then
$f_k^{(m_\iota)}\rightarrow f_k$ in $\cs'(\rn)$ as
$\iota\rightarrow\fz$, where
$f^{(m)}_k:=g^{m,k+1}-g^{m,k}$
for all $m\in\nn$ and $k\in\mathbb{Z}$. Indeed, by the
finite intersection property of
$\{x_i^k+B_{\ell_i^k+4\tau}\}_{i\in\nn}$
for each $k\in\zz$ (see \eqref{te5}), and the support
conditions of $h_i^k$ and $h_i^{m,k}$, we know that,
for any $\phi\in\cs(\rn)$,
\begin{eqnarray*}
\lf\langle f_k^{(m_\iota)},\phi\r\rangle
&&=\lf\langle \sum_{i\in\mathbb{N}}h_i^
{m_\iota,k},\phi\r\rangle=\sum_{i\in\nn}
\lf\langle h_i^{m_\iota,k},\phi\r\rangle\\
&&\rightarrow\sum_{i\in\mathbb{N}}
\lf\langle h_i^k,\phi\r\rangle=\lf\langle \sum_
{i\in\nn}h_i^k,\phi\r\rangle=\langle f_k,
\phi\rangle,\ \ \ \ \iota\rightarrow\fz.
\end{eqnarray*}

We next show that, for all $m\in\mathbb{N}$,
\begin{eqnarray}\label{te60}
\sum_{|k|\geq K_1}f^{(m)}_k\rightarrow0\ \ \ \ \
{\rm in}\ \cs'(\rn),\ \ {\rm as}\ K_1\rightarrow\fz.
\end{eqnarray}
To show \eqref{te60}, we consider two cases.

For $p\in(0,1)$, it suffices to show that, for all
$m\in\mathbb{N}$
\begin{eqnarray}\label{te61}
\lim_{K_2\rightarrow-\fz}\lf\|\sum_{k\leq K_2}f^
{(m)}_k\r\|_{H^1_A(\rn)}=0\ \ \ {\rm and}\
\ \ \lim_{K_3\rightarrow\fz}\lf\|\sum_{k\geq K_3}
f^{(m)}_k\r\|_{H^{p_0}_A(\rn)}=0,
\end{eqnarray}
where  $p_0\in(0,p)$ satisfies that
$\lfloor(1/p_0-1)\ln b/\ln\lambda_-\rfloor\leq s$.
Indeed, by \eqref{te15}, \eqref{te16} and \eqref{te18},
we find that
$(2^k|B_{\ell_i^k+4\tau}|)^{-1}h_i^{m,k}$ is a
$(1,\fz,s)$-atom multiplied by a constant. Therefore,
by Lemma \ref{tl2}, \eqref{te5}, \eqref{te22} and \eqref{se7}, we have
\begin{eqnarray*}
\lf\|\sum_{k\leq K_2}f^{(m)}_k\r\|_{H^1_A(\rn)}
&&\leq\sum_{k\leq K_2}\sum_{i\in\mathbb{N}}
\lf\|h_i^{m,k}\r\|_{H^1_A(\rn)}\ls\sum_{k\leq K_2}
\sum_{i\in\mathbb{N}}2^k\lf|B_{\ell_i^k+4\tau}\r|\\
&&\ls\sum_{k\leq K_2}2^k|\Omega_k|\ls\sum_
{k\leq K_2}2^{k(1-p)}\|f\|_{H^{p,\fz}_A(\rn)}^p
\ls\sum_{k\leq K_2}2^{k(1-p)}\|f\|_{H^{p,q}_A(\rn)}^p,
\end{eqnarray*}
which converges to 0 as $K_2\rightarrow-\fz$. Similarly,
$(2^k|B_{\ell_i^k+4\tau}|^{1/p_0})^{-1}h_i^{m,k}$
is a $(p_0,\fz,s)$-atom multiplied by a constant. Since
$\lfloor(1/p_0-1)\ln b/\ln\lambda_-\rfloor\leq s$,
by Lemma\ \ref{tl2}, \eqref{te5}, \eqref{te22} and \eqref{se7} again, we find that
\begin{eqnarray*}
\lf\|\sum_{k\geq K_3}f^{(m)}_k\r\|_{H^{p_0}_A(\rn)}^{p_0}
&&\leq\sum_{k\geq K_3}\sum_{i\in\mathbb{N}}\lf\|h_i^
{m,k}\r\|_{H^{p_0}_A(\rn)}^{p_0}\ls\sum_{k\geq K_3}
\sum_{i\in\mathbb{N}}2^{kp_0}\lf|B_{\ell_i^k+4\tau}\r|
\ls\sum_{k\geq K_3}2^{kp_0}|\Omega_k|\\
&&\ls\sum_{k
\geq K_3}2^{k(p_0-p)}\|f\|_{H^{p,\fz}_A(\rn)}^p
\ls\sum_{k\geq K_3}2^{k(p_0-p)}\|f\|_{H^{p,q}_A(\rn)}^p,
\end{eqnarray*}
which converges to 0 as $K_3\rightarrow\fz$.
These prove \eqref{te61} and hence \eqref{te60}.

For $p=1$, we replace $H^1_A(\rn)$ by $L^2(\rn)$. Notice that
\begin{eqnarray*}
\lf\|\sum_{k\leq K_2}f^{(m)}_k\r\|_{L^2(\rn)}
&&\leq\sum_{k\leq K_2}\lf\|\sum_{i\in\mathbb{N}}h_i^
{m,k}\r\|_{L^2(\rn)}\ls\sum_{k\leq K_2}2^k\lf|\bigcup_
{i\in\nn}\lf(x_i^k+B_{\ell_i^k+4\tau}\r)\r|^{1/2}\\
&&\ls\sum_{k\leq K_2}2^k|\Omega_k|^{1/2}\ls\sum_
{k\leq K_2}2^{k/2}\|f\|_{H^{1,\fz}_A(\rn)}^{1/2}
\ls\sum_{k\leq K_2}2^{k/2}\|f\|_{H^{1,q}_A(\rn)}^{1/2},
\end{eqnarray*}
which converges to 0 as $K_3\rightarrow\fz$. This implies
that \eqref{te60} also holds true in this case.

An argument similar to that used in the proof of \eqref{te60} also shows that
$\sum_{|k|\geq K_1}f_k\rightarrow0$ in
$\cs'(\rn)$ as $K_1\rightarrow\fz$.
From this and \eqref{te60}, it follows that,
for any $\phi\in\cs(\rn)$ and $\varepsilon\in(0,\fz)$,
there exists some $\widetilde{K}_1\in\mathbb{N}$,
independent of $m_\iota$, such that
\begin{eqnarray}\label{te63}
\lf|\lf\langle\sum_{|k|\geq \widetilde{K}_1}f_k^
{(m_\iota)},\phi\r\rangle\r|<\varepsilon/3
\ \ {\rm and}\ \ \lf|\lf\langle\sum_{|k|\geq
\widetilde{K}_1}f_k,\phi\r\rangle\r|<\varepsilon/3.
\end{eqnarray}
Fixing this $\widetilde{K}_1$, by the fact that
$f^{(m_\iota)}_k\rightarrow f_k$ in $\cs'(\rn)$
as $\iota\rightarrow\fz$ for all $k\in\zz$,
we know that there exists an integer
$\widetilde{\iota}$ such that, if $\iota>\widetilde{\iota}$, then,
for all integers $k$ with $|k|\le\widetilde{K}_1$, we have
$$\lf|\lf\langle f_k^{(m_\iota)}-f_k,\phi\r\rangle\r|<
\frac\varepsilon{6\widetilde{K}_1+3},$$
which, combined with \eqref{te63}, implies that,  if $\iota>\widetilde{\iota}$,
\begin{eqnarray*}
&&\lf|\lf\langle\sum_{k\in\mathbb{Z}}f_k^{(m_\iota)},\phi\r
\rangle-\lf\langle\sum_{k\in\mathbb{Z}}f_k,\phi\r\rangle\r|\\
&&\hs\leq\lf|\lf\langle\sum_{|k|\geq \widetilde{K}_1}f_k^{(m_
\iota)},\phi\r\rangle\r|+\lf|\lf\langle\sum_{|k|\geq \widetilde
{K}_1}f_k,\phi\r\rangle\r|+\lf|\sum_{|k|\leq \widetilde{K}_1}
\lf\langle f_k^{(m_\iota)}-f_k,\phi\r\rangle\r|\noz\\
&&\hs<\frac\varepsilon3+\frac\varepsilon3+\frac\varepsilon3
=\varepsilon\noz.
\end{eqnarray*}
Thus,
$\lim_{\iota\rightarrow\fz}\langle\sum_{k\in\zz}f_k^
{(m_\iota)},\phi\rangle=\langle\sum_{k\in\zz}f_k,\phi\rangle$,
which, together with the fact that
$f^{(m)}\rightarrow f$ in $\cs'(\rn)$
as $m\rightarrow\fz$, further implies that
$$\langle f,\phi\rangle=\lim_{\iota\rightarrow\fz}\lf\langle f^
{(m_\iota)},\phi\r\rangle=\lim_{\iota\rightarrow\fz}\lf\langle
\sum_{k\in\mathbb{Z}}f_k^{(m_\iota)},\phi\r\rangle
=\lf\langle\sum_{k\in\mathbb{Z}}f_k,\phi\r\rangle.$$
This shows
$f=\sum_{k\in\mathbb{Z}}f_k=\sum_{k\in\zz}\sum_{i\in\nn}h_i^k$
in $\cs'(\rn)$, which completes the proof of \eqref{3.5x} and hence
the proof of $H^{p,q}_A(\rn)\subset H_A^{p,r,s,q}(\rn)$.

We now prove $H_A^{p,r,s,q}(\rn)\subset H^{p,q}_A(\rn)$. To this end,
for any $f\in H_A^{p,r,s,q}(\rn)$, by Definition \ref{d-aahls},
we know that there exists a sequence of $(p,r,s)$-atoms,
$\{a_i^k\}_{i\in\mathbb{N},\,k\in\mathbb{Z}}$,
supported on
$\{x_i^k+B_i^k\}_{i\in\mathbb{N},\,k\in\mathbb{Z}}\subset\mathfrak{B}$,
respectively, such that
$f=\sum_{k\in\mathbb{Z}}
\sum_{i\in\mathbb{N}}\lambda_i^ka_i^k$ in $\cs'(\rn)$,
where $\lambda_i^k\sim2^k|B_i^k|^{1/p}$ for all
$k\in\mathbb{Z}$ and
$i\in\mathbb{N}$, $\sum_{i\in\mathbb{N}}
\chi_{x_i^k+B_i^k}(x)\ls1$ for all $k\in\mathbb{Z}$ and $x\in\rn$, and
\begin{eqnarray}\label{te66}
\|f\|_{H_A^{p,r,s,q}(\rn)}
\sim\lf[\sum_{k\in\zz}\lf(\sum_{i\in\nn}
|\lambda_i^k|^p\r)^{\frac qp}\r]^{\frac 1q}.
\end{eqnarray}
Clearly, there exists a sequence,
$\{\ell_i^k\}_{i\in\mathbb{N},\,k\in\mathbb{Z}}$,
of integers
such that $x_i^k+B_{\ell_i^k}=x_i^k+B_i^k$ for
$i\in\mathbb{N}$ and $k\in\mathbb{Z}$. It suffices
to consider only
$N=N_{(p)}:=\lfloor(\frac1p-1)\frac
{\ln b}{\ln \lambda_-}\rfloor+2$.
Let
$\mu_k:=(\sum_{i\in\mathbb{N}}|B_{\ell_i^k}|)^{1/p}$
and
$\beta:=(\frac{\ln b}{\ln \lambda_-}+N-1)
\frac{\ln \lambda_-}{\ln b}>\frac 1p$.
Then, for $r\in(1,\fz]$, there exists $\frac1r<\delta<1$
such that $\frac 1\beta<\delta p<1$.
Notice that, for any fixed $k_0\in\zz$ and all $x\in\rn$,
$$M_N(f)(x)\leq M_N\lf(\sum_{k=-\infty}^{k_0-1}\sum_
{i\in\mathbb{N}}\lambda_i^ka_i^k\r)(x)+\sum_{k=
{k_0}}^\infty \sum_{i\in\mathbb{N}}|\lambda_i^k
|M_N(a_i^k)(x)=:\psi_{k_0}(x)+\eta_{k_0}(x).$$
To prove $H_A^{p,r,s,q}(\rn)\subset H^{p,q}_A(\rn)$,
we now consider two cases: $q/p\in[1,\fz]$ and $q/p\in(0,1)$.

\emph{Case 1:} $q/p\in[1,\fz]$. In this case, to show the desired conclusion,
we claim that
\begin{eqnarray}\label{te20}
2^{k_0p}\lf[d_{\psi_{k_0}}(2^{k_0})\r]^\delta\ls
\sum_{k=-\infty}^{k_0-1}\lf[2^k\mu_k^\delta\r]^p
\ \ \ {\rm and}\ \ \
2^{k_0\delta p}d_{\eta_{k_0}}(2^{k_0})\ls
\sum_{k=k_0}^\infty\lf[2^{k\delta}\mu_k\r]^p.
\end{eqnarray}
Assume that \eqref{te20} holds true for the moment.
Notice that $\delta\in(0,q/p)$. Then, by Lemma \ref{tl3},
the fact that $|B_{\ell_i^k}|\sim\frac{|\lambda_i^k|^p}{2^{kp}}$
and \eqref{te66}, we have
\begin{eqnarray*}
\|f\|_{H^{p,q}_A(\rn)}&&=\|M_N(f)\|_{L^{p,q}(\rn)}\\
&&\ls\lf\|\lf\{2^k\mu_k\r\}_{k\in\mathbb{Z}}\r\|_{\ell^q}
\ls\lf[\sum_{k\in\zz}\lf(\sum_{i\in\nn}|\lambda_i^k|^p\r)^{\frac qp}\r]
^{\frac 1q}\sim\|f\|_{H_A^{p,r,s,q}(\rn)}
\end{eqnarray*}
with the usual interpretation for $q=\fz$, which implies that
$\|f\|_{H^{p,q}_A(\rn)}\ls\|f\|_{H_A^{p,r,s,q}(\rn)}$
and hence $H_A^{p,r,s,q}(\rn)\subset H^{p,q}_A(\rn)$.

Now, let us give the proof of the claim \eqref{te20}. To this end,
we first estimate $\psi_{k_0}$.
Notice that $a_i^k$ is a $(p,r,s)$-atom,
$\supp a_i^k\subset x_i^k+B_{\ell_i^k}$,
$\sum_{i\in\mathbb{N}}\chi_{x_i^k+B_{\ell_i^k}}\ls1$
and $\lambda_i^k\sim2^k|B_{\ell_i^k}|^{1/p}$.
For $r\in(1,\fz)$, by the H\"{o}lder inequality, we find that, for
$\sigma:=1-\frac p{r\delta}>0$ and all $x\in\rn$,
\begin{eqnarray*}
\psi_{k_0}(x)
&&\leq\sum_{k=-\infty}^{k_0-1}M_N\lf(\sum_{i\in\nn}
\lambda_i^ka_i^k\r)(x)\\
&&\leq\lf(\sum_{k=-\infty}^{k_0-1}2^{k\sigma r'}\r)^
{1/r'}\lf\{\sum_{k=-\infty}^{k_0-1}2^{-k\sigma r}
\lf[M_N\lf(\sum_{i\in\mathbb{N}}\lambda_i^ka_i^k\r)(x)
\r]^r\r\}^{1/r}\\
&&=\widetilde{C}2^{k_0\sigma}\lf\{\sum_{k=-\infty}^
{k_0-1}2^{-k\sigma r}\lf[M_N\lf(\sum_{i\in\mathbb{N}}
\lambda_i^ka_i^k\r)(x)\r]^r\r\}^{1/r},
\end{eqnarray*}
where $\widetilde{C}=:(\frac1{2^{\sigma r'}-1})^{1/r'}$,
which, combined with Proposition \ref{tl4}, Remark \ref{tr1}
and the finite intersection property of
$\{x_i^k+B_{\ell_i^k}\}_{i\in\nn}$ for each $k\in\zz$,
further implies that
\begin{eqnarray}\label{te21}
&&2^{k_0p}\lf[d_{\psi_{k_0}}(2^{k_0})\r]^\delta\\
&&\quad=2^{k_0p}\lf|\lf\{x\in\rn:\ \psi_{k_0}(x)
>2^{k_0}\r\}\r|^\delta\noz\\
&&\quad\leq 2^{k_0p}\lf|\lf\{x\in\rn:\ \widetilde{C}
^r\sum_{k=-\infty}^{k_0-1}2^{-k\sigma r}\lf[M_N
\lf(\sum_{i\in\mathbb{N}}\lambda_i^ka_i^k\r)(x)\r]
^r>2^{k_0r(1-\sigma)}\r\}\r|^\delta\noz\\
&&\quad=2^{k_0p}\lf\{\int_{\{x\in\rn:\ \widetilde{C}
^r\sum_{k=-\infty}^{k_0-1}2^{-k\sigma r}\lf[M_N
\lf(\sum_{i\in\mathbb{N}}\lambda_i^ka_i^k\r)(x)\r]^
r>2^{k_0r(1-\sigma)}\}}\,dx\r\}^\delta\noz\\
&&\quad\leq \widetilde{C}^{r\delta}2^{k_0p}2^
{-k_0r\delta(1-\sigma)}\lf\{\int_{\rn}\sum_{k=-\infty}
^{k_0-1}2^{-k\sigma r}\lf[M_N\lf(\sum_{i\in\nn}
\lambda_i^ka_i^k\r)(x)\r]^r\,dx\r\}^\delta\noz\\
&&\quad\ls\lf(\sum_{k=-\infty}^{k_0-1}2^{-k\sigma r}
\int_{\rn}\lf|\sum_{i\in\nn}\lambda_i^ka_i^k(x)\r|^
r\,dx\r)^\delta\noz\\
&&\quad\ls\lf(\sum_{k=-\infty}^{k_0-1}2^{-k\sigma r}
\sum_{i\in\mathbb{N}}|\lambda_i^k|^r\int_{x_i^k+B_
{\ell_i^k}}|a_i^k(x)|^r\,dx\r)^\delta\noz\\
&&\quad\ls\lf[\sum_{k=-\infty}^{k_0-1}2^{-k\sigma r}
\sum_{i\in\mathbb{N}}2^{kr}\lf|B_{\ell_i^k}\r|^{\frac rp}
\lf|B_{\ell_i^k}\r|^{(\frac 1r-\frac 1p)r}\r]^\delta\noz\\
&&\quad\ls\sum_{k=-\infty}^{k_0-1}2^{kp}\lf(\sum_
{i\in\mathbb{N}}\lf|B_{\ell_i^k}\r|\r)^\delta\sim\sum_
{k=-\infty}^{k_0-1}\lf[2^k\mu_k^\delta\r]^p,\noz
\end{eqnarray}
which is the desired estimate of $\psi_{k_0}$ for $r\in(1,\fz)$ in \eqref{te20}.

For $r=\fz$, by Proposition \ref{tl4}, Remark \ref{tr1}
and the finite intersection property of
$\{x_i^k+B_{\ell_i^k}\}_{i\in\nn}$ for each $k\in\zz$ again, we have
\begin{eqnarray}\label{te8}
2^{k_0p}\lf[d_{\psi_{k_0}}(2^{k_0})\r]^\delta
&&=2^{k_0p}\lf|\lf\{x\in\rn:\ \psi_{k_0}(x)
>2^{k_0}\r\}\r|^\delta\\
&&\le2^{k_0(p-\delta\widetilde{r})}\lf\{\sum_{k=-\fz}^{k_0-1}\int_{\rn}
\lf[M_N\lf(\sum_{i\in\nn}\lambda_i^ka_i^k\r)\r]^{\widetilde{r}}(x)\,dx\r\}^\delta\noz\\
&&\ls2^{k_0(p-\delta\widetilde{r})}
\lf\{\sum_{k=-\fz}^{k_0-1}\sum_{i\in\nn}\int_{x_i^k+B_{\ell_i^k}}
\lf|\lambda_i^ka_i^k\r|^{\widetilde{r}}(x)\,dx\r\}^\delta\noz\\
&&\ls\sum_{k=-\infty}^{k_0-1}2^{kp}\lf(\sum_
{i\in\mathbb{N}}\lf|B_{\ell_i^k}\r|\r)^\delta\sim\sum_
{k=-\infty}^{k_0-1}\lf[2^k\mu_k^\delta\r]^p,\noz
\end{eqnarray}
where $\widetilde{r}\in(1,\fz)$ such that $\delta\widetilde{r}>p$, which,
together with \eqref{te21}, implies the desired estimate of
$\psi_{k_0}$ in \eqref{te20}.

In order to estimate $\eta_{k_0}$, it suffices to prove that,
for all $i\in\mathbb{N}$ and $k\in\mathbb{Z}$,
\begin{eqnarray}\label{te23}
\int_\rn \lf[M_N(a_i^k)(x)\r]^{\delta p}dx
\ls\lf|B_{\ell_i^k}\r|^{1-\delta}.
\end{eqnarray}
Indeed, by \eqref{te23}, we have
\begin{eqnarray}\label{te46}
2^{k_0\delta p}d_{\eta_{k_0}}(2^{k_0})
&&=2^{k_0\delta p}\lf|\lf\{x\in\rn:\ \lf[\sum_
{k={k_0}}^\infty\sum_{i\in\mathbb{N}}|
\lambda_i^k|M_N(a_i^k)(x)\r]^{\delta p}>2^
{k_0\delta p}\r\}\r|\\
&&\leq\int_\rn\lf[\sum_{k={k_0}}^\infty\sum_
{i\in\mathbb{N}}|\lambda_i^k|M_N(a_i^k)(x)\r]^
{\delta p}\,dx\noz\\
&&\leq\sum_{k={k_0}}^\infty\sum_{i\in\nn}
|\lambda_i^k|^{\delta p}\int_\rn\lf[M_N(a_i^k)
(x)\r]^{\delta p}\,dx\noz\\
&&\ls\sum_{k={k_0}}^\infty\sum_{i\in\nn}
|\lambda_i^k|^{\delta p}\lf|B_{\ell_i^k}\r|^{1-\delta}
\ls\sum_{k={k_0}}^\infty2^{k\delta p}\sum_
{i\in\mathbb{N}}\lf|B_{\ell_i^k}\r|\sim\sum_{k=k_0}
^\infty\lf[2^{k\delta}\mu_k\r]^p\noz,
\end{eqnarray}
which is the desired estimate of $\eta_{k_0}$ in
\eqref{te20}.

To show \eqref{te23}, we write
\begin{eqnarray*}
\int_\rn \lf[M_N(a_i^k)(x)\r]^{\delta p}\,dx
&&=\int_{x_i^k+B_{\ell_i^k+\tau}} \lf[M_N(a_i^k)(x)\r]^
{\delta p}\,dx+\int_{(x_i^k+B_{\ell_i^k+\tau})^
\complement}\cdots\\
&&=:{\rm I_1}+{\rm I_2}.
\end{eqnarray*}
For $r\in(1,\fz]$, by the H\"{o}lder inequality, Proposition \ref{tl4}
and Remark \ref{tr1}, we find that
\begin{eqnarray*}
{\rm I_1}
&&=\int_{x_i^k+B_{\ell_i^k+\tau}}\lf[M_N(a_i^k)(x)\r]
^{\delta p}\,dx\\
&&\leq\lf\{\int_{x_i^k+B_{\ell_i^k+\tau}}\lf[M_N(a_i^
k)(x)\r]^r\,dx\r\}^{\frac {\delta p} r}\lf|B_{\ell_i^k+
\tau}\r|^{1-\frac {\delta p} r}\noz\\
&&\ls\lf\|a_i^k\r\|_{L^r(\rn)}^{\delta p}\lf|B_{\ell_i^
k+\tau}\r|^{1-\frac {\delta p} r}\noz\\
&&\ls\lf|B_{\ell_i^k+\tau}\r|^{\delta p(\frac 1r-\frac
1p)}\lf|B_{\ell_i^k+\tau}\r|^{1-\frac {\delta p} r}
\sim\lf|B_{\ell_i^k+\tau}\r|^{1-\delta}
\sim\lf|B_{\ell_i^k}\r|^{1-\delta}.\noz
\end{eqnarray*}

To estimate ${\rm I_2}$, it suffices to show that, for all
$i\in\mathbb{N}$, $k\in\mathbb{Z}$
and
$x\in (x_i^k+B_{\ell_i^k+\tau})^\complement$,
\begin{eqnarray}\label{te26}
M_N^0(a_i^k)(x)\ls\lf|B_{\ell_i^k}\r|^{-\frac 1p}\frac
{\lf|B_{\ell_i^k}\r|^\beta}{\lf[\rho(x-x_i^k)\r]^\beta},
\end{eqnarray}
where $M_N^0(f)$ denotes the radial grand maximal function of
$f$ as in Definition \ref{d-mf},
$\beta:=(\frac{\ln b}{\ln \lambda_-}
+N-1)\frac{\ln \lambda_-}{\ln b}$
and $\rho$ denotes the homogeneous quasi-norm associated
with the dilation $A$ in Definition \ref{sd2}. Indeed,
assuming that \eqref{te26} holds true for the moment,
noticing that $\beta\delta p>1$, then,
by Proposition \ref{sp1} and \eqref{te26}, we have
\begin{eqnarray}\label{te27}
{\rm I_2}&&\ls\int_{(x_i^k+B_{\ell_i^k+\tau})^
\complement} \lf[M_N^0(a_i^k)(x)\r]^{\delta p}\,dx\\
&&\ls\int_{\rho(x-x_i^k)\geq|B_{\ell_i^k+\tau}|}
\lf|B_{\ell_i^k}\r|^{-\delta}\frac{\lf|B_{\ell_i^k}\r|
^{\beta\delta p}}{\lf[\rho(x-x_i^k)\r]^{\beta\delta p
}}\,dx\noz\\
&&\sim\sum_{j=0}^\infty\int_{2^j|B_{\ell_i^k+\tau}
|\leq\rho(x-x_i^k)<2^{j+1}|B_{\ell_i^k+\tau}|}\lf|B_
{\ell_i^k}\r|^{-\delta}\frac{\lf|B_{\ell_i^k}\r|^{\beta
\delta p}}{\lf[\rho(x-x_i^k)\r]^{\beta\delta p}}\,dx\noz\\
&&\sim\sum_{j=0}^\infty\int_{\rho(x-x_i^k)\sim2^j|
B_{\ell_i^k+\tau}|}\lf|B_{\ell_i^k}\r|^{-\delta}\frac
{\lf|B_{\ell_i^k}\r|^{\beta\delta p}}{\lf(2^j|B_
{\ell_i^k+\tau}|\r)^{\beta\delta p}}\,dx\noz\\
&&\sim\lf|B_{\ell_i^k}\r|^{-\delta}\lf|B_{\ell_i^k+
\tau}\r|\sum_{j=0}^\infty2^j2^{-j\beta\delta p}
\sim\lf|B_{\ell_i^k}\r|^{1-\delta}\noz,
\end{eqnarray}
which completes the proof of \eqref{te23}.

Thus, to obtain the desired conclusion of Case 1,
we still need to prove \eqref{te26}. To this end,
take $x\in(x_i^k+B_{\ell_i^k+\tau})^\complement$,
$\varphi\in\cs_N(\rn)$ and $t\in\mathbb{Z}$.
Suppose that $P$ is a polynomial with degree not more than
$s$, which will be determined later. Then, for all
$i\in{\mathbb{N}}$ and $k\in\mathbb{Z}$, by the
H\"{o}lder inequality, we have
\begin{eqnarray}\label{te28}
&&\lf|(a_i^k\ast\varphi_t)(x)\r|\\
&&\hs=b^{-t}\lf|\int_\rn a_i^k(y)\varphi\lf
(A^{-t}(x-y)\r)\,dy\r|\noz\\
&&\hs=b^{-t}\lf|\int_{x_i^k+B_{\ell_i^k}}
a_i^k(y)\lf[\varphi\lf(A^{-t}(x-y)\r)-P\lf
(A^{-t}(x-y)\r)\r]\,dy\r|\noz\\
&&\hs\leq b^{-t}\lf\|a_i^k\r\|_{L^r(\rn)}\lf[\int_{x_i^k
+B_{\ell_i^k}}\lf|\varphi\lf(A^{-t}(x-y)\r)-
P\lf(A^{-t}(x-y)\r)\r|^{r'}\,dy\r]^{1/r'}\noz\\
&&\hs\leq b^{-t}\lf|B_{\ell_i^k}\r|^{1/r-1/p}b
^{t/r'}\lf\{\int_{A^{-t}(x-x_i^k)+B_{\ell_i^k
-t}}|\varphi(y)-P(y)|^{r'}\,dy\r\}^{1/r'}\noz\\
&&\hs\leq b^{-t}\lf|B_{\ell_i^k}\r|^{1/r-1/p}b^
{t/r'}b^{-t/r'}\lf|B_{\ell_i^k}\r|^{1/r'}\sup_
{y\in A^{-t}(x-x_i^k)+B_{\ell_i^k-t}}
|\varphi(y)-P(y)|\noz\\
&&\hs=\lf|B_{\ell_i^k}\r|^{-1/p}b^{\ell_i^k-t}
\sup_{y\in A^{-t}(x-x_i^k)+B_{\ell_i^k-t}}|
\varphi(y)-P(y)|\noz.
\end{eqnarray}
Suppose that
$x\in x_i^k+B_{\ell_i^k+\tau+m+1}
\setminus B_{\ell_i^k+\tau+m}$
for some integer $m\in\mathbb{Z}_+$. Then, by \eqref{se2}, we obtain
\begin{eqnarray}\label{3.33x}
A^{-t}(x-x_i^k)+B_{\ell_i^k-t}
&&\subset A^{-t}(B_{\ell_i^k+\tau+m+1}
\setminus B_{\ell_i^k+\tau+m})+B_{\ell_i^k-t}\\
&&=A^{\ell_i^k-t}((B_{\tau+m+1}
\setminus B_{\tau+m})+B_0)\subset A^
{\ell_i^k-t}(B_m)^\complement.\noz
\end{eqnarray}

If $\ell_i^k\geq t$,
we choose $P\equiv0$. Then, by \eqref{3.33x}, we know that
\begin{eqnarray}\label{te29}
\ \ \sup_{y\in A^{-t}(x-x_i^k)+B_{\ell_i^k-t}}|\varphi(y)|
\leq\sup_{y\in A^{-t}(x-x_i^k)+B_{\ell_i^k-t}}\
\min\lf\{1,\ \rho(y)^{-N}\r\}\leq b^{-N(\ell_i^k-t+m)}.
\end{eqnarray}
If $\ell_i^k<t$, then we let $P$ be the Taylor expansion
of $\varphi$ at the point $A^{-t}(x-x_i^k)$ of order $s$.
By the Taylor remainder theorem, \eqref{se19} and
\eqref{3.33x}, we have
\begin{eqnarray}\label{te30}
\ \ \ \ \sup_{y\in A^{-t}(x-x_i^k)+
B_{\ell_i^k-t}}|\varphi(y)-P(y)|
&&\ls\sup_{z\in B_{\ell_i^k-t}}\
\sup_{|\alpha|=s+1}\lf|\partial^
\alpha\varphi(A^{-t}(x-x_i^k)+z)
\r||z|^{s+1}\\
&&\ls\lambda_-^{(s+1)(\ell_i^k-t)}
\sup_{y\in A^{-t}(x-x_i^k)+B_{
\ell_i^k-t}}\min\lf\{1,\rho(y)^{-N}
\r\}\noz\\
&&\ls\lambda_-^{(s+1)(\ell_i^k-t)}
\min\lf\{1,b^{-N(\ell_i^k-t+m)}\r\}.\noz
\end{eqnarray}
Combining \eqref{te28}, \eqref{te29} and
\eqref{te30}, for all
$x\in x_i^k+B_{\ell_i^k+\tau+m+1}
\setminus B_{\ell_i^k+\tau+m}$ with $m\in\zz_+$,
we further conclude that
\begin{eqnarray*}
M_N^0(a_i^k)(x)
&&=\sup_{\varphi\in\cs_N(\rn)}\sup_{t\in\zz}
\lf|(a_i^k\ast\varphi_t)(x)\r|\\
&&\ls\lf|B_{\ell_i^k}\r|^{-1/p}\max\lf\{
\sup_{t\in\mathbb{Z},\,t\leq\ell_i^k}b^{
\ell_i^k-t}b^{-N(\ell_i^k-t+m)},\r.\\
&&\quad\lf.\sup_{t\in\mathbb{Z},\,t>\ell_
i^k}b^{\ell_i^k-t}\lambda_-^{(s+1)(
\ell_i^k-t)}\min\lf\{1,b^{-N(\ell_i^k-t+m)}
\r\}\r\}.\noz
\end{eqnarray*}
Notice that the supremum over $t\leq\ell_i^k$
has the largest value when $t=\ell_i^k$. Without
loss of generally, we may assume that
$s:=\lfloor(\frac1p-1)\frac{\ln b}{\ln \lambda_-}\rfloor$.
Since $N=s+2$ implies $b\lambda_-^{s+1}\leq b^N$
and the above supremum over $t>\ell_i^k$ is attained when
$\ell_i^k-t+m=0$, it follows that
\begin{eqnarray}\label{te7}
M_N^0(a_i^k)(x)
&&\ls\lf|B_{\ell_i^k}\r|^{-1/p}\max\lf\{b^
{-Nm},(b\lambda_-^{s+1})^{-m}\r\}\ls
\lf|B_{\ell_i^k}\r|^{-1/p}(b\lambda_-^{s+1})
^{-m}\\
&&\sim\lf|B_{\ell_i^k}\r|^{-1/p}b^{-m}
b^{-(s+1)m\frac{\ln\lambda
_-}{\ln b}}\noz\\
&&\ls\lf|B_{\ell_i^k}\r|^{-1/p}b^{\ell_i^k
[(s+1)\frac{\ln\lambda_-}{\ln b}+1]}b^{-(\ell
_i^k+\tau+m)[(s+1)\frac{\ln\lambda_-}{\ln b}
+1]}\noz\\
&&\ls\lf|B_{\ell_i^k}\r|^{-1/p}\lf|B_{\ell_i^k}
\r|^{(s+1)\frac{\ln\lambda_-}{\ln b}+1}\lf[\rho
(x-x_i^k)\r]^{-[(s+1)\frac{\ln\lambda_-}{\ln b}
+1]}\noz\\
&&\sim\lf|B_{\ell_i^k}\r|^{-1/p}\frac{\lf|B_{\ell
_i^k}\r|^\beta}{\lf[\rho(x-x_i^k)\r]^\beta},\noz
\end{eqnarray}
which is \eqref{te26}. This finishes the proof of Case 1.

\emph{Case 2:} $q/p\in(0,1)$. In this case, when $r\in(1,\fz)$,
similar to \eqref{te21}, we have
\begin{eqnarray}\label{te9}
\ \ \ \ \ \ \ \ 2^{k_0p}\lf[d_{\psi_{k_0}}(2^{k_0})\r]^\delta
\ls\lf[\sum_{k=-\infty}^{k_0-1}2^{-k\sigma r}
\sum_{i\in\mathbb{N}}2^{kr}\lf|B_{\ell_i^k}\r|^
{\frac rp}\lf|B_{\ell_i^k}\r|^{(\frac 1r-\frac 1p)r}\r]
^\delta\sim\lf(\sum_{k=-\infty}^{k_0-1}2^
{\frac{kp}\delta}\mu_k^p\r)^\delta.
\end{eqnarray}
By a similar calculation to \eqref{te8}, we easily know that
\eqref{te9} also holds true for $r=\fz$. This further implies that
\begin{eqnarray}\label{te68}
\ \ \ \sum_{k_0\in\zz}2^{k_0q}
\lf|\lf\{x\in\rn:\ \psi_{k_0}(x)>2^{k_0}\r\}\r|^
{\frac qp}
&&\ls\sum_{k_0\in\zz}2^{k_0(q-\frac q\delta)}
\sum_{k=-\infty}^{k_0-1}2^{\frac{kq}\delta}
\mu_k^q\\
&&\sim\sum_{k\in\zz}\sum_{k_0=k+1}^{\fz}
2^{k_0(q-\frac q\delta)}2^{\frac{kq}\delta}
\mu_k^q\ls\sum_{k\in\zz}2^{kq}\mu_k^q.\noz
\end{eqnarray}
On the other hand, similar to \eqref{te46}, we obtain
\begin{eqnarray*}
2^{k_0\delta p}d_{\eta_{k_0}}(2^{k_0})
\ls\sum_{k=k_0}^\infty\lf[2^{k\delta}\mu_k\r]^p,
\end{eqnarray*}
which, together with $q<p$, implies that
\begin{eqnarray*}
&&2^{k_0\delta p}\lf|\lf\{x\in\rn:\ \eta_{k_0}(x)
>2^{k_0}\r\}\r|\\
&&\hs\ls\sum_{k=k_0}^\fz2^{-k
\widetilde{\delta}p}\lf[2^{k(1-\widetilde{\delta}
)}\mu_k\r]^p\ls2^{-k_0\widetilde{\delta}p}
\lf\{\sum_{k=k_0}^\fz\lf[2^{k(1-\widetilde
{\delta})}\mu_k\r]^q\r\}^{\frac pq},
\end{eqnarray*}
where $\widetilde{\delta}:=\frac{1-\delta}2$.
Therefore,
\begin{eqnarray}\label{te69}
&&\sum_{k_0\in\zz}2^{k_0q}\lf|\lf\{x\in\rn:\
\eta_{k_0}(x)>2^{k_0}\r\}\r|^{\frac qp}\\
&&\hs\ls\sum_{k_0\in\zz}
2^{k_0\widetilde{\delta}q}\sum_{k=k_0}^\fz
\lf[2^{k(1-\widetilde{\delta})}\mu_k\r]^q
\sim\sum_{k\in\zz}
\lf[2^{k(1-\widetilde{\delta})}\mu_k\r]^q
\sum_{k_0=-\fz}^k2^{k_0\widetilde{\delta}q}
\ls\sum_{k\in\zz}2^{kq}\mu_k^q.\noz
\end{eqnarray}
Notice that
$\mu_k:=(\sum_{i\in\mathbb{N}}|B_{\ell_i^k}|)^{1/p}$
and
$\lambda_i^k\sim2^k|B_{\ell_i^k}|^{1/p}$.
Combining \eqref{se6}, \eqref{te68}, \eqref{te69} and
\eqref{te66}, we further conclude that
\begin{eqnarray*}
\|M_N(f)\|_{L^{p,q}(\rn)}^q\sim
&&\sum_{k_0\in\zz}2^{k_0q}\lf|\lf\{x\in
\rn:\ M_N(f)(x)>2^{k_0}\r\}\r|^{\frac qp}\\
\ls&&\sum_{k_0\in\zz}2^{k_0q}\lf|\lf\{x\in
\rn:\ \psi_{k_0}(x)>2^{k_0}\r\}\r|^{\frac qp}\\
&&+\sum_{k_0\in\zz}2^{k_0q}\lf|\lf\{x\in\rn:\
\eta_{k_0}(x)>2^{k_0}\r\}\r|^{\frac qp}\noz\\
\ls&&\sum_{k\in\zz}2^{kq}\mu_k^q\sim\sum
_{k\in\zz}\lf[\sum_{i\in\nn}|\lambda_i^k|^p\r]
^{\frac qp}\sim\|f\|_{H_A^{p,r,s,q}(\rn)}^q,
\end{eqnarray*}
which implies that
$\|f\|_{H^{p,q}_A(\rn)}\ls\|f\|_{H_A^{p,r,s,q}(\rn)}$
and
$H_A^{p,r,s,q}(\rn)\subset H^{p,q}_A(\rn)$.
This finishes the proof of Case 2 and hence Theorem \ref{tt1}.
\end{proof}

\subsection{Molecular characterizations of $H^{p,q}_A(\rn)$}\label{s3.2}

\hskip\parindent
In this subsection, we establish the molecular characterizations
of $H^{p,q}_A(\rn)$. We begin with the following notion of anisotropic
$(p,r,s,\varepsilon)$-molecules associated with dilated balls.

\begin{definition}\label{d-mol}
An anisotropic quadruple $(p,r,s,\varepsilon)$ is said to be
\emph{admissible} if
$p\in(0,1]$, $r\in(1,\fz]$, $s\in\mathbb{N}$
with
$s\geq\lfloor(1/p-1)\ln b/\ln\lambda_-\rfloor$
and
$\varepsilon\in(0,\infty)$.
For an admissible anisotropic quadruple $(p,r,s,\varepsilon)$,
a measurable function $m$
is called an \emph{anisotropic $(p,r,s,\varepsilon)$-molecule}
associated with a dilated ball $B\in\mathfrak{B}$ if

(i) for each $j\in\zz_+$,
$\|m\|_{L^r(U_j(B))}\leq b^{-j\varepsilon}|B|^{1/r-1/p}$,
where $U_0(B):=B$ and, for
$j\in\mathbb{N}$, $U_j(B):=(A^jB)\setminus(A^{j-1}B)$;

(ii) $\int_\rn m(x)x^\alpha\,dx =0$ for any $\alpha\in\zz_+^n$
with $|\alpha|\leq s$.
\end{definition}

Throughout this article, we call anisotropic
$(p,r,s,\varepsilon)$-molecule by
$(p,r,s,\varepsilon)$-molecule for convenience. Via
$(p,r,s,\varepsilon)$-molecules, we introduce
the following anisotropic molecular Hardy-Lorentz
space $H_A^{p,r,s,\varepsilon,q}(\rn)$.

\begin{definition}\label{d-amhls}
For an admissible anisotropic quadruple
$(p,r,s,\varepsilon)$, $q\in(0,\fz]$ and a dilation $A$,
the \emph{anisotropic molecular Hardy-Lorentz space}
$H_A^{p,r,s,\varepsilon,q}(\rn)$ is defined to be the
set of all distributions $f\in\cs'(\rn)$ satisfying that there exist
a sequence of $(p,r,s,\varepsilon)$-molecules,
$\{m_i^k\}_{i\in\mathbb{N},\,k\in\mathbb{Z}}$,
associated with dilated balls
$\{x_i^k+B_i^k\}_{i\in\mathbb{N},\,k\in\mathbb{Z}}\subset\mathfrak{B}$,
respectively, and a positive constant $\widetilde{C}$
such that
$\sum_{i\in\mathbb{N}}\chi_{x_i^k+B_i^k}(x)
\leq \widetilde{C}$
for all $k\in\mathbb{Z}$ and $x\in\rn$,
and
$f=\sum_{k\in\mathbb{Z}}
\sum_{i\in\mathbb{N}}\lambda_i^km_i^k$ in $\cs'(\rn)$,
where $\lambda_i^k\sim2^k|B_i^k|^{1/p}$ for all
$k\in\mathbb{Z}$ and $i\in\mathbb{N}$.

Moreover, define
\begin{eqnarray*}
\|f\|_{H_A^{p,r,s,\varepsilon,q}(\rn)}
:={\rm inf}\lf\{\lf[\sum_{k\in\zz}\lf(
\sum_{i\in\nn}|\lambda_i^k|^p\r)^
{\frac qp}\r]^{\frac 1q}:\ f=\sum_{
k\in\mathbb{Z}}\sum_{i\in\nn}
\lambda_i^km_i^k\r\}
\end{eqnarray*}
with the usual modification made when $q=\fz$, where the
infimum is taken over all decompositions of $f$ as above.
\end{definition}

Now we state the main theorem of this subsection as follows.
\begin{theorem}\label{tt2}
Let $(p,r,s,\varepsilon)$ be an admissible anisotropic
quadruple defined as Definition \ref{d-mol} with
$\varepsilon\in(\max\{1,(s+1)\log_b(\lz_+)\},\fz)$, $q\in(0,\fz]$ and $N\in\nn\cap(N_{(p)},\fz)$.
Then $H^{p,q}_A(\rn)=H_A^{p,r,s,\varepsilon,q}(\rn)$
with equivalent quasi-norms.
\end{theorem}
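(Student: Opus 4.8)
The plan is to prove the equality $H^{p,q}_A(\rn)=H_A^{p,r,s,\varepsilon,q}(\rn)$ by establishing the two inclusions separately, exploiting the atomic characterization already obtained in Theorem \ref{tt1}. The inclusion $H^{p,q}_A(\rn)=H_A^{p,r,s,q}(\rn)\subset H_A^{p,r,s,\varepsilon,q}(\rn)$ is the easy direction: every $(p,r,s)$-atom $a$ supported in a dilated ball $B$ is, trivially, a $(p,r,s,\varepsilon)$-molecule associated with the same ball, since condition (i) of Definition \ref{d-mol} for $j=0$ is just the atomic size estimate $\|a\|_{L^r(B)}\le|B|^{1/r-1/p}$ and for $j\ge1$ the quantity $\|a\|_{L^r(U_j(B))}$ vanishes because $\supp a\subset B$. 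Hence any atomic decomposition $f=\sum_{k\in\zz}\sum_{i\in\nn}\lz_i^k a_i^k$ furnished by Theorem \ref{tt1} is already a molecular decomposition with the same coefficients, giving $\|f\|_{H_A^{p,r,s,\varepsilon,q}(\rn)}\le\|f\|_{H_A^{p,r,s,q}(\rn)}\sim\|f\|_{H^{p,q}_A(\rn)}$.

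For the reverse inclusion $H_A^{p,r,s,\varepsilon,q}(\rn)\subset H^{p,q}_A(\rn)$, the key step is to \emph{split each molecule into a sum of atoms with controlled coefficients}. Given a $(p,r,s,\varepsilon)$-molecule $m$ associated with $B:=x_0+B_{\ell_0}$, I would write $m=\sum_{j\in\zz_+} m\chi_{U_j(B)}$ and then, for each $j$, correct $m\chi_{U_j(B)}$ by subtracting a polynomial supported in $A^jB$ that restores the vanishing moments up to order $s$: set $a_j:=[m\chi_{U_j(B)}-P_j]$, where $P_j$ is the unique polynomial in $\mathcal{P}_s(\rn)$, supported on $A^jB$, with $\int_\rn [m\chi_{U_j(B)}-P_j](x)x^\alpha\,dx=0$ for all $|\alpha|\le s$; the existence and size control of such $P_j$ is a standard orthogonal-projection argument (analogous to \eqref{te14} above, using an $L^2$ basis of $\mathcal{P}_s$ on $A^jB$), yielding $\|P_j\|_{L^r(A^jB)}\ls b^{-j\varepsilon}|B|^{1/r-1/p}|A^jB/B|^{\text{(something)}}$. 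Combining the size estimate of $m\chi_{U_j(B)}$ with that of $P_j$ and using $|A^jB|=b^j|B|$, each $a_j$ is seen to be a fixed constant multiple of a $(p,r,s)$-atom supported in $A^jB$, with multiplying coefficient $\mu_j\ls b^{-j\varepsilon}b^{j(1/p)}b^{j\cdot\text{(decay from $\mathcal{P}_s$-norm)}}$; the hypothesis $\varepsilon>\max\{1,(s+1)\log_b\lz_+\}$ is exactly what makes $\sum_j\mu_j$ a convergent geometric-type series (the $(s+1)\log_b\lz_+$ threshold accounts for the growth of the polynomial sup-norm on the dilated ball $A^jB$, via \eqref{se5}), so $\sum_j\mu_j\ls1$. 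One must also check that $m=\sum_j a_j$ holds in $\cs'(\rn)$ (the telescoping of the polynomial corrections uses the moment conditions together with convergence), so that a molecular decomposition of $f$ becomes an atomic decomposition after relabelling, with the new atoms supported in the dilated balls $A^j(x_i^k+B_i^k)$.

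The \textbf{main obstacle} is bookkeeping the coefficients so that the resulting atomic decomposition still satisfies the structural constraints in Definition \ref{d-aahls}: namely that the new coefficients $\wz\lz$ are comparable to $2^{k}|\wz B|^{1/p}$ for the enlarged balls $\wz B=A^j(x_i^k+B_i^k)$, and that the enlarged balls retain a bounded-overlap property at each level $k$. Passing from $\lz_i^k\sim2^k|B_i^k|^{1/p}$ to $\lz_i^k\mu_j$ for the $j$-th piece, one checks $\lz_i^k\mu_j\sim 2^k|B_i^k|^{1/p}b^{-j\varepsilon}b^{j/p}b^{j\theta}=2^k|A^jB_i^k|^{1/p}b^{-j(\varepsilon-\theta)}$, so the new coefficient is $b^{-j(\varepsilon-\theta)}$ times the "correct" size $2^k|\wz B|^{1/p}$, with $\varepsilon-\theta>0$; to absorb this decay factor into the $\ell^q$-type sum defining $\|f\|_{H_A^{p,r,s,q}}$ and preserve the finite-overlap requirement, I would group the atoms by $j$ and use a standard re-indexing together with the quasi-triangle/Aoki--Rolewicz inequality (as in the proof of Proposition \ref{sp3}), showing $\sum_j b^{-j(\varepsilon-\theta)p}<\fz$ controls the Lorentz quasi-norm. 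After verifying these points, Theorem \ref{tt1} gives $\sum_j a_j$-type decomposition $\in H^{p,q}_A(\rn)$ with $\|f\|_{H^{p,q}_A(\rn)}\ls\|f\|_{H_A^{p,r,s,\varepsilon,q}(\rn)}$, completing the proof.
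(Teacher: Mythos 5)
Your first inclusion is fine and coincides with the paper's (atoms are trivially molecules). The problem is the reverse inclusion. Your plan normalizes the $j$-th annular piece of a molecule as a $(p,r,s)$-atom supported in $A^jB$, and this costs exactly the ratio of the two size conditions: $\|m\chi_{U_j(B)}\|_{L^r}\le b^{-j\varepsilon}|B|^{1/r-1/p}$ while an atom on $A^jB$ is allowed $|A^jB|^{1/r-1/p}=b^{j(1/r-1/p)}|B|^{1/r-1/p}$, so the $j$-th coefficient is of size $b^{-j(\varepsilon-1/p+1/r)}$ even before the polynomial corrections (which only add positive powers of $b^{j}$, of order $(s+1)\log_b\lz_+$). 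Hence your route needs $\varepsilon>1/p-1/r$ (and $\varepsilon>1/p$ when $r=\fz$), and this is \emph{not} implied by the hypothesis $\varepsilon>\max\{1,(s+1)\log_b\lz_+\}$: one only has $(s+1)\log_b\lz_+>(1/p-1)\frac{\ln\lz_+}{\ln\lz_-}$, which can be arbitrarily close to $1/p-1$. For instance, take $A=2{\rm I}_{n\times n}$ with $n=20$ (so $\log_b\lz_+=1/20$), $1/p=2.05$, $s=s_0=\lfloor(1/p-1)\frac{\ln b}{\ln\lz_-}\rfloor=21$ and $r=\fz$: then $\max\{1,(s+1)\log_b\lz_+\}=1.1$, so $\varepsilon=1.2$ is admissible, yet your coefficients behave like $b^{j(1/p-\varepsilon)}=b^{0.85j}$ and blow up. No re-indexing of levels or Aoki--Rolewicz argument can repair this, because Definition \ref{d-aahls} forces the coefficient of an atom on a ball $\widetilde B$ at level $k'$ to be comparable to $2^{k'}|\widetilde B|^{1/p}$, so the divergence reappears in the $\ell^q(\ell^p)$ sum. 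In short, your approach proves the theorem only under a strictly stronger decay assumption on $\varepsilon$; the stated threshold cannot be reached this way.

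The paper avoids this by never re-normalizing the annular pieces as atoms on enlarged balls. It estimates the grand maximal function directly, splitting $M_N(f)\le\widetilde\psi_{k_0}+\widetilde\eta_{k_0}$ exactly as in the atomic half of Theorem \ref{tt1}: for the low levels ($k<k_0$) it runs a duality argument in $L^{\widetilde r}$ with $1<\widetilde r<r$, pairing each annulus with the Hardy--Littlewood maximal function of the dual function, so the $j$-th annulus only costs $b^{j/r'}$ and $\varepsilon>1$ suffices; for the high levels it proves the pointwise bound $M_N^0(m_i^k)(x)\ls|B_{\ell_i^k}|^{-1/p}|B_{\ell_i^k}|^{\beta}[\rho(x-x_i^k)]^{-\beta}$ off the ball via annulus-by-annulus Taylor expansion, and this is where $\varepsilon>(s+1)\log_b\lz_+$ enters; Lemma \ref{tl3} (for $q/p\ge1$) and a direct computation (for $q/p<1$) then conclude. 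Two smaller points in your write-up: a polynomial cannot be ``supported in $A^jB$'' (you need the projection relative to the measure on $A^jB$, or a cutoff), and merely subtracting the moment-correcting projections does not reproduce $m$ --- the standard telescoping re-summation of the projections, using the total vanishing moments of $m$, is an essential extra step that you only mention in passing. But these are secondary to the quantitative gap above.
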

\begin{proof}
By the definitions of $(p,\infty,s)$-atoms and
$(p,r,s,\varepsilon)$-molecules, we know that each
$(p,\infty,s)$-atom is also a $(p,r,s,\varepsilon)$-molecule,
which implies that
$$H_A^{p,\infty,s,q}(\rn)
\subset H_A^{p,r,s,\varepsilon,q}(\rn).$$
This, combined with Theorem \ref{tt1}, further implies that,
to prove Theorem \ref{tt2}, it suffices to show
$H_A^{p,r,s,\varepsilon,q}(\rn)\subset H^{p,q}_A(\rn)$.

To show this, for any $f\in H_A^{p,r,s,\varepsilon,q}(\rn)$, by Definition \ref{d-amhls},
we know that there exists
a sequence of $(p,r,s,\varepsilon)$-molecules,
$\{m_i^k\}_{i\in\mathbb{N},\,k\in\mathbb{Z}}$,
associated with
$\{x_i^k+B_i^k\}_{i\in\mathbb{N},\,k\in\mathbb{Z}}\subset\mathfrak{B}$,
respectively, such that
$f=\sum_{k\in\mathbb{Z}}
\sum_{i\in\mathbb{N}}\lambda_i^km_i^k$ in $\cs'(\rn)$,
$\lambda_i^k\sim2^k|B_i^k|^{1/p}$ for all $k\in\mathbb{Z}$
and
$i\in\mathbb{N}$,
$\sum_{i\in\mathbb{N}}\chi_{x_i^k+B_i^k}(x)\ls1$ for all
$k\in\mathbb{Z}$ and $x\in\rn$,
and
\begin{eqnarray}\label{te67}
\|f\|_{H_A^{p,r,s,\varepsilon,q}(\rn)}\sim
\lf[\sum_{k\in\zz}\lf(\sum_{i\in\nn}|\lambda
_i^k|^p\r)^{\frac qp}\r]^{\frac 1q}.
\end{eqnarray}

Take a sequence
$\{\ell_i^k\}_{i\in\mathbb{N},\,k\in\mathbb{Z}}$
of integers satisfying that, for all
$i\in\mathbb{N}$ and $k\in\mathbb{Z}$,
$x_i^k+B_{\ell_i^k}:=x_i^k+B_i^k$.
It suffices to consider only
$N=N_{(p)}:=\lfloor(\frac1p-1)\frac{\ln b}{\ln \lambda_-}\rfloor+2$.
Let
$$\beta:=\lf(\frac{\ln b}{\ln \lambda_-}+N-1\r)
\frac{\ln \lambda_-}{\ln b}>\frac 1p$$
and
$\mu_k:=(\sum_{i\in\mathbb{N}}
|B_{\ell_i^k}|)^{1/p}$ for all $k\in\mathbb{Z}$.
Then, for $r\in(1,\fz]$, there exist $\widetilde{r}\in(1,r)$
and $\delta\in(0,1)$ such that
$\frac1{\widetilde{r}}<\delta<1$
and $\frac 1\beta<\delta p<1$. Notice that, for any fixed
$k_0\in\zz$ and all $x\in\rn$,
\begin{eqnarray}\label{3.41x}
M_N(f)(x)&&\leq M_N\lf(\sum_{k=-\infty}^{k_0-1}
\sum_{i\in\mathbb{N}}\lambda_i^km_i^k\r)(x)
+\sum_{k={k_0}}^\infty \sum_{i\in\nn}
|\lambda_i^k|M_N(m_i^k)(x)\\
&&=:\widetilde{\psi}
_{k_0}(x)+\widetilde{\eta}_{k_0}(x).\noz
\end{eqnarray}
To finish the proof that
$H_A^{p,r,s,\varepsilon,q}(\rn)\subset H^{p,q}_A(\rn)$,
it suffices to show
$\|M_N(f)\|_{L^{p,q}(\rn)}\ls
\|f\|_{H_A^{p,r,s,\varepsilon,q}(\rn)}$.
To this end,
we now consider two cases: $q/p\in[1,\fz]$ and $q/p\in(0,1)$.

\emph{Case 1:} $q/p\in[1,\fz]$. In this case,
if we can prove that
\begin{eqnarray}\label{te40}
2^{k_0p}\lf[d_{\widetilde{\psi}_{k_0}}
(2^{k_0})\r]^\delta
\ls\sum_{k=-\infty}^{k_0-1}\lf[
2^k\mu_k^\delta\r]^p\ \ \ {\rm and}\ \ \
2^{k_0\delta p}d_{\widetilde{\eta}_{k_0}}
(2^{k_0})\ls\sum_{k=k_0}^\infty\lf[2^
{k\delta}\mu_k\r]^p,
\end{eqnarray}
then, noticing that $\delta\in(0,q/p)$, by Lemma \ref{tl3},
$|B_{\ell_i^k}|\sim\frac{|\lambda_i^k|^p}{2^{kp}}$
and \eqref{te67},
we have
\begin{eqnarray*}
\|f\|_{H^{p,q}_A(\rn)}
&&=\|M_N(f)\|_{L^{p,q}(\rn)}\\
&&\ls\lf\|\lf\{2^k\mu_k\r\}_{k\in\zz}
\r\|_{\ell^q}\ls\lf[\sum_{k\in\zz}
\lf(\sum_{i\in\nn}|\lambda_i^k|^p
\r)^{\frac qp}\r]^{\frac 1q}\sim
\|f\|_{H_A^{p,r,s,\varepsilon,q}(\rn)},
\end{eqnarray*}
which is the desired conclusion.

Now, we show \eqref{te40}.
Notice that, for all $k\in\mathbb{Z}$ and
$i\in\mathbb{N}$, $m_i^k$ is a
$(p,r,s,\varepsilon)$-molecule associated with
$x_i^k+B_{\ell_i^k}$,
$\sum_{i\in\mathbb{N}}\chi_{x_i^k+B_{\ell_i^k}}\ls1$,
$\lambda_i^k\sim2^k|B_{\ell_i^k}|^{1/p}$ and
$\widetilde{r}\in(1,r)$. By the H\"{o}lder inequality,
we find that, for $\sigma:=1-\frac p{\widetilde{r}\delta}>0$
and all $x\in\rn$,
\begin{eqnarray*}
\widetilde{\psi}_{k_0}(x)
&&\leq\sum_{k=-\infty}^{k_0-1}M_N\lf(\sum_
{i\in\mathbb{N}}\lambda_i^km_i^k\r)(x)\\
&&\leq\lf(\sum_{k=-\infty}^{k_0-1}2^{k\sigma
\wz{r}'}\r)^{1/\wz{r}'}\lf\{\sum_{k=-\infty}^
{k_0-1}2^{-k\sigma\wz{r}}\lf[M_N\lf(\sum_{i\in
\mathbb{N}}\lambda_i^km_i^k\r)(x)\r]^{\wz{r}}
\r\}^{1/\wz{r}}\\
&&\sim2^{k_0\sigma}\lf\{\sum_
{k=-\infty}^{k_0-1}2^{-k\sigma\wz{r}}\lf[M_N
\lf(\sum_{i\in\mathbb{N}}\lambda_i^km_i^k\r)(x)
\r]^{\wz{r}}\r\}^{1/\wz{r}}.
\end{eqnarray*}
This further implies that
\begin{eqnarray}\label{te44}
&&2^{k_0p}\lf[d_{\widetilde{\psi}_{k_0}}(2^{k_0})\r]^\delta\\
&&\hs\ls2^{k_0p}
2^{-k_0\widetilde{r}\delta(1-\sigma)}\lf\{\int_{\rn}
\sum_{k=-\infty}^{k_0-1}2^{-k\sigma\wz{r}}\lf[M_N
\lf(\sum_{i\in\mathbb{N}}\lambda_i^km_i^k\r)(x)\r]
^{\wz{r}}\,dx\r\}^\delta\noz\\
&&\hs\ls\lf[\sum_{k=-\infty}^{k_0-1}2^{-k\sigma\wz{r}}
\int_{\rn}\lf|\sum_{i\in\mathbb{N}}\lambda_i^km_i^
k(x)\r|^{\wz{r}}\,dx\r]^\delta=:\lf(\sum_{k=-\infty}^
{k_0-1}2^{-k\sigma\wz{r}}F_k\r)^\delta.\noz
\end{eqnarray}
Moreover, by the H\"{o}lder inequality,
we know that, for all
$k\in\mathbb{Z}\cap(-\infty,k_0-1]$,
\begin{eqnarray}\label{te41}
(F_k)^{1/\wz{r}}
&&\sim\sup_{\|g\|_{L^{\wz{r}'}(\rn)}
=1}\lf|\int_{\rn}\lf[\sum_{i\in\mathbb{N}}\lambda_i^k
m_i^k(x)\r]g(x)\,dx\r|\\
&&\ls\sup_{\|g\|_{L^{\wz{r}'}(\rn)}=1}\lf\{\sum_{i\in\nn}
|\lambda_i^k|\sum_{j\in\zz_+}\int_{U_j(x_i^k+B_{\ell_
i^k})}\lf|m_i^k(x)\r|\lf|g(x)\r|\,dx\r\}\noz\\
&&\ls\sup_{\|g\|_{L^{\wz{r}'}(\rn)}=1}\lf\{\sum_{i\in\nn}
|\lambda_i^k|\sum_{j\in\zz_+}\lf[\int_{U_j(x_i^k+B_{
\ell_i^k})}\lf|m_i^k(x)\r|^r\,dx\r]^{1/r}\r.\noz\\
&&\hs\lf.\times\lf[\int_{U_j(x_i^k+B_{\ell_i^k})}\lf|g(x)\r|
^{r'}\,dx\r]^{1/r'}\r\}\noz\\
&&\sim\sup_{\|g\|_{L^{\wz{r}'}(\rn)}=1}\lf\{\sum_{i\in\nn}
|\lambda_i^k|\sum_{j\in\zz_+}\lf\|m_i^k\r\|_{L^r(U_j
(x_i^k+B_{\ell_i^k}))}F_{i,j}^k\r\},\noz
\end{eqnarray}
where, for all
$k\in\mathbb{Z}\cap(-\infty,k_0-1]$, $i\in\mathbb{N}$ and
$j\in\zz_+$,
\begin{eqnarray}\label{te6}
F_{i,j}^k:=\lf[\int_{U_j(x_i^k+B_{\ell_i^k})}
\lf|g(x)\r|^{r'}\,dx\r]^{1/r'},
\end{eqnarray}
$U_0(x_i^k+B_{\ell_i^k}):=x_i^k+B_{\ell_i^k}$
and, for any $j\in\mathbb{N}$,
$$\ U_j(x_i^k+B_{\ell_i^k})
:=\lf[A^j(x_i^k+B_{\ell_i^k})\r]
\setminus\lf[A^{j-1}(x_i^k+B_
{\ell_i^k})\r].$$
By this, \eqref{te6} and \eqref{te58}, we find that, for all
$k\in\mathbb{Z}\cap(-\infty,k_0-1]$, $i\in\mathbb{N}$
and $j\in\zz_+$,
\begin{eqnarray}\label{te42}
F_{i,j}^k&&\leq\lf|A^jB_{\ell_i^k}\r|^{1/r'}
\lf[\frac1{\lf|A^jB_{\ell_i^k}\r|}\int_{A^j(x_i
^k+B_{\ell_i^k})}|g(x)|^{r'}\,dx\r]^{1/r'}\\
&&\ls\lf|A^jB_{\ell_i^k}\r|^{1/r'}\inf_{x\in x_i
^k+B_{\ell_i^k}}\lf\{M_{{\rm HL}}(|g|^{r'})(x)\r\}
^{1/r'}\noz\\
&&\ls\lf|A^jB_{\ell_i^k}\r|^{1/r'}\lf\{\frac1{\lf
|B_{\ell_i^k}\r|}\int_{x_i^k+B_{\ell_i^k}}\lf[M
_{HL}(|g|^{r'})(x)\r]^{\wz{r}'/r'}\,dx\r\}^
{1/\wz{r}'},\noz
\end{eqnarray}
where $M_{{\rm HL}}(f)$ is the Hardy-Littlewood maximal
function as in \eqref{te58}. Notice that $M_{{\rm HL}}$ is
bounded on $L^r(\rn)$ for all $r\in(1,\infty)$ (see Lemma
\ref{tl4} and Remark \ref{tr1}),
$\{x_i^k+B_{\ell_i^k}\}_{i\in\mathbb{N}}$ is finitely
overlapped, $|\lambda_i^k|\sim2^k|B_{\ell_i^k}|^{1/p}$,
$\|m_i^k\|_{L^r(U_j(x_i^k+B_{\ell_i^k}))}
\leq b^{-j\varepsilon}|B_{\ell_i^k}|^{1/r-1/p}$,
$\varepsilon>1>1/r'$
and $r>\wz{r}$. Then, by \eqref{te41}, \eqref{te42} and
the H\"{o}lder inequality, for all
$k\in\mathbb{Z}\cap(-\infty,k_0-1]$, we have
\begin{eqnarray*}
F_k&&\ls\sup_{\|g\|_{L^{\wz{r}'}(\rn)}=1}\lf\{
\sum_{i\in\mathbb{N}}2^k\lf|B_{\ell_i^k}\r|^{1/p}
\sum_{j\in\zz_+}b^{-j\varepsilon}\lf|B_{\ell_i^k}\r|^
{1/r-1/p}b^{j/r'}\lf|B_{\ell_i^k}\r|^{1/r'}\r.\\
&&\ \ \lf.\times\lf[\frac1{\lf|B_{\ell_i^k}\r|}\int_
{x_i^k+B_{\ell_i^k}}\lf\{M_{{\rm HL}}(|g|^{r'})(x)\r\}
^{\wz{r}'/r'}\,dx\r]^{1/\wz{r}'}\r\}^{\wz{r}}\noz\\
&&\ls\sup_{\|g\|_{L^{\wz{r}'}(\rn)}=1}\lf\{\lf[\sum
_{i\in\mathbb{N}}2^{k\wz{r}}\lf|B_{\ell_i^k}\r|\r]
^{1/\wz{r}}\lf[\sum_{i\in\mathbb{N}}\int_{x_i^k+
B_{\ell_i^k}}\lf\{M_{{\rm HL}}(|g|^{r'})(x)\r\}^{\wz{r}'/r'}
\,dx\r]^{1/\wz{r}'}\r\}^{\wz{r}}\noz\\
&&\ls\lf[\sum_{i\in\mathbb{N}}2^{k\wz{r}}\lf|B_{\ell
_i^k}\r|\r]\sup_{\|g\|_{L^{\wz{r}'}(\rn)}=1}\lf[\int_
{\rn}\lf\{M_{{\rm HL}}(|g|^{r'})(x)\r\}^{\wz{r}'/r'}\,dx\r]^
{\wz{r}/\wz{r}'}\noz\\
&&\ls\lf[\sum_{i\in\mathbb{N}}2^{k\wz{r}}\lf|B_{\ell_
i^k}\r|\r]\sup_{\|g\|_{L^{\wz{r}'}(\rn)}=1}\lf[\int_{\rn}
|g(x)|^{\wz{r}'}\,dx\r]^{\wz{r}/\wz{r}'}\ls\sum_{i\in\nn}
2^{k\wz{r}}\lf|B_{\ell_i^k}\r|.\noz
\end{eqnarray*}
By this and \eqref{te44}, we know that
\begin{eqnarray}\label{te45}
2^{k_0p}\lf[d_{\widetilde{\psi}_{k_0}}(2^{k_0})\r]^\delta
&&\ls\lf[\sum_{k=-\infty}^{k_0-1}2^{-k\sigma\wz{r}}
\sum_{i\in\mathbb{N}}2^{k\wz{r}}\lf|B_{\ell_i^k}\r|
\r]^\delta\\
&&\ls\sum_{k=-\infty}^{k_0-1}2^{k\widetilde{r}\delta
(1-\sigma)}\lf[\sum_{i\in\mathbb{N}}\lf|B_{\ell_i^k}
\r|\r]^\delta\sim\sum_{k=-\infty}^{k_0-1}\lf[2^k\mu
_k^\delta\r]^p,\noz
\end{eqnarray}
which is the first desired estimate in \eqref{te40}.

Now we establish the desired estimate of $\wz{\eta}_{k_0}$
in \eqref{te40}. It suffices to show that, for all
$i\in\mathbb{N}$ and $k\in\mathbb{Z}$,
\begin{eqnarray}\label{te47}
\int_\rn \lf[M_N(m_i^k)(x)\r]^{\delta p}dx
\ls\lf|B_{\ell_i^k}\r|^{1-\delta}.
\end{eqnarray}
Indeed, as in \eqref{te46}, by \eqref{te47}, we find that
\begin{eqnarray}\label{te48}
\ \ 2^{k_0\delta p}d_{\wz{\eta}_{k_0}}(2^{k_0})
&&\leq\sum_{k={k_0}}^\infty\sum_{i\in\nn}
|\lambda_i^k|^{\delta p}\int_\rn\lf[M_N(m_i^k)(x)
\r]^{\delta p}dx\\&&\ls\sum_{k={k_0}}^\infty
\sum_{i\in\mathbb{N}}|\lambda_i^k|^{\delta p}
\lf|B_{\ell_i^k}\r|^{1-\delta}\ls\sum_{k={k_0}}^\infty
2^{k\delta p}\sum_{i\in\mathbb{N}}\lf|B_{\ell_i^k}\r|
\sim\sum_{k=k_0}^\infty\lf[2^{k\delta}\mu_k\r]^p,\noz
\end{eqnarray}
which is desired.

In order to show \eqref{te47}, we write
\begin{eqnarray*}
&&\int_\rn \lf[M_N(m_i^k)(x)\r]^{\delta p}\,dx\\
&&\hs=\int_{x_i^k+B_{\ell_i^k+\tau}} \lf[M_N(m_i^
k)(x)\r]^{\delta p}\,dx+\int_{(x_i^k+B_{\ell_i^
k+\tau})^\complement}\cdots
=:\wz{{\rm I}}_1+\wz{{\rm I}}_2.
\end{eqnarray*}
For $r\in(1,\fz]$, by the H\"{o}lder inequality,
Proposition \ref{tl4} and Remark \ref{tr1}, we have
\begin{eqnarray}\label{te49}
\wz{{\rm I}}_1&&=\int_{x_i^k+B_{\ell_i^k+\tau}}
\lf[M_N(m_i^k)(x)\r]^{\delta p}\,dx\\
&&\leq\lf\{\int_{x_i^
k+B_{\ell_i^k+\tau}}\lf[M_N(m_i^k)(x)\r]^r\,dx\r\}
^{\frac {p\delta} r}\lf|B_{\ell_i^k+\tau}\r|^{1-
\frac {p\delta} r}\noz\\
&&\ls\sum_{j\in\zz_+}\lf\|m_i^k\r\|_{L^r(U_j({x_i
^k+B_{\ell_i^k}}))}^{{p\delta}}\lf|B_{\ell_i^k+
\tau}\r|^{1-\frac {p\delta} r}\noz\\
&&\ls\sum_{j\in\zz_+}b^{-j\varepsilon p\delta}\lf
|B_{\ell_i^k}\r|^{(\frac 1r-\frac 1p) p\delta}
\lf|B_{\ell_i^k+\tau}\r|^{1-\frac {p\delta} r}
\sim\lf|B_{\ell_i^k}\r|^{1-\delta}.\noz
\end{eqnarray}
To estimate $\wz{{\rm I}}_2$, we only need to prove that,
for all $i\in\mathbb{N}$, $k\in\mathbb{Z}$ and
$x\in (x_i^k+B_{\ell_i^k+\tau})^\complement$,
\begin{eqnarray}\label{te52}
M_N^0(m_i^k)(x)\ls\lf|B_{\ell_i^k}\r|^{-\frac 1p}\frac
{\lf|B_{\ell_i^k}\r|^\beta}{\lf[\rho(x-x_i^k)\r]^\beta},
\end{eqnarray}
where $M_N^0(f)$ denotes the radial grand maximal function of
$f$ as in Definition \ref{d-mf}, $\rho$ is the homogeneous quasi-norm associated
with dilation $A$ and $\beta:=(\frac{\ln b}{\ln \lambda_-}+N-1)
\frac{\ln \lambda_-}{\ln b}$. Indeed, noticing that $\beta\delta p>1$,
as in \eqref{te27}, by Proposition \ref{sp1} and \eqref{te52},
we have
\begin{eqnarray*}
\wz{{\rm I}}_2&&=\int_{(x_i^k+B_{\ell_i^k+\tau})^
\complement} \lf[M_N(m_i^k)(x)\r]^{\delta p}\,dx\\
&&\ls\int_{\rho(x-x_i^k)\geq|B_{\ell_i^k+\tau}|}
\lf|B_{\ell_i^k}\r|^{-\delta}\frac{\lf|B_{\ell_i^k}
\r|^{\beta\delta p}}{\lf[\rho(x-x_i^k)\r]^{\beta\delta
p}}\,dx\sim\lf|B_{\ell_i^k}\r|^{1-\delta},\noz
\end{eqnarray*}
which, combined with \eqref{te49}, finishes
the proof of \eqref{te47}.

Thus, to obtain the desired conclusion of Case 1,
we only need to prove \eqref{te52}. To this end,
for any $i\in\mathbb{N}$ and $k\in\mathbb{Z}$,
take
$x\in(x_i^k+B_{\ell_i^k+\tau})^
\complement$, $\varphi\in\cs_N(\rn)$
and $t\in\mathbb{Z}$.
Suppose that $P$ is a polynomial of degree
 no more than $s$ which will be determined later.
Then, for all $i\in{\mathbb{N}}$ and
$k\in\mathbb{Z}$,
by the H\"{o}lder inequality, we have
\begin{eqnarray}\label{te36}
&&\lf|(m_i^k\ast\varphi_t)(x)\r|\\
&&\hs=b^{-t}\lf|\int_\rn m_i^k(y)\varphi\lf(A^{-t}
(x-y)\r)\,dy\r|\noz\\
&&\hs\le b^{-t}\sum_{j\in\zz_+}\lf|\int_{U_j(x_i^k+
B_{\ell_i^k})} m_i^k(y)\lf[\varphi\lf(A^{-t}(x-y)
\r)-P\lf(A^{-t}(x-y)\r)\r]\,dy\r|\noz\\
&&\hs\leq b^{-t}\sum_{j\in\zz_+}\lf\|m_i^k\r\|_
{L^r(U_j(x_i^k+B_{\ell_i^k}))}\noz\\
&&\hs\hs\times\lf[\int_{U_j(x_i^k+B_{\ell_i^k})}
\lf|\varphi\lf(A^{-t}(x-y)\r)-P\lf(A^{-t}(x-y)\r)\r|
^{r'}\,dy\r]^{1/r'}\noz\\
&&\hs\leq b^{-t}\sum_{j\in\zz_+}b^{-j\varepsilon}
\lf|B_{\ell_i^k}\r|^{1/r-1/p}b^{t/r'}\noz\\
&&\hs\hs\times\lf\{\int_{A^{-t+j}(x-x_i^k)+A^jB_{\ell
_i^k-t}}|\varphi(y)-P(y)|^{r'}\,dy\r\}^{1/r'}\noz\\
&&\hs\leq b^{-t}\lf|B_{\ell_i^k}\r|^{1/r-1/p}b^{t/r'}
b^{-t/r'}\lf|B_{\ell_i^k}\r|^{1/r'}\noz\\
&&\hs\hs\times\lf\{\sum_{j\in\zz_+}b^{-j\varepsilon}
b^{j/r'}\sup_{y\in A^{-t+j}(x-x_i^k)+A^jB_{\ell_
i^k-t}}|\varphi(y)-P(y)|\r\}\noz\\
&&\hs=\lf|B_{\ell_i^k}\r|^{-1/p}b^{\ell_i^k-t}\sum_
{j\in\zz_+}b^{-j\varepsilon}b^{j/r'}\sup_{y\in A^
{-t+j}(x-x_i^k)+A^jB_{\ell_i^k-t}}|\varphi(y)-P(y)|\noz.
\end{eqnarray}
Suppose that $x\in x_i^k+B_{\ell_i^k+\tau+m+1}
\setminus B_{\ell_i^k+\tau+m}$
for some $m\in\zz_+$. Then, by \eqref{se2}, we obtain
\begin{eqnarray}\label{te10}
\ \ \ \ \ A^{-t+j}(x-x_i^k)+A^jB_{\ell_i^k-t}
&&\subset A^{-t+j}(B_{\ell_i^k+\tau
+m+1}\setminus B_{\ell_i^k+\tau+m})
+A^jB_{\ell_i^k-t}\\
&&=A^{\ell_i^k-t+j}((B_{\tau+m+1}
\setminus B_{\tau+m})+B_0)\subset
A^{\ell_i^k-t+j}(B_m)^\complement.\noz
\end{eqnarray}

If $\ell_i^k\geq t$,
we choose $P\equiv0$. Then
\begin{eqnarray}\label{te37}
\ \ \ \ \ \ \sup_{y\in A^{-t+j}(x-x_i^k)
+A^jB_{\ell_i^k-t}}|\varphi(y)|
\leq\sup_{y\in A^{\ell_i^k-t+j}
(B_m)^\complement}\min\lf\{1,\
\rho(y)^{-N}\r\}\leq b^{-N(\ell_i^k-t+j+m)}.
\end{eqnarray}
If $\ell_i^k<t$, then we let $P$ be the Taylor expansion
of $\varphi$ at the point $A^{-t+j}(x-x_i^k)$ of order $s$.
By the Taylor remainder theorem,
\eqref{se5}, \eqref{se19} and \eqref{te10}, we have
\begin{eqnarray}\label{te38}
&&\sup_{y\in A^{-t+j}(x-x_i^k)+
A^jB_{\ell_i^k-t}}|\varphi(y)-P(y)|\\
&&\quad\ls\sup_{z\in A^jB_{\ell_i^
k-t}}\ \sup_{|\alpha|=s+1}\lf|\partial
^\alpha\varphi(A^{-t+j}(x-x_i^k)+z)\r|
|z|^{s+1}\noz\\
&&\quad\ls b^{j(s+1)\log_b(\lz_+)}\lambda_-^{(s+1)
(\ell_i^k-t)}\sup_{y\in A^{-t+j}(x-x_i^k)
+A^jB_{\ell_i^k-t}}\min\lf\{1,\rho(y)^{-N}
\r\}\noz\\
&&\quad\ls b^{j(s+1)\log_b(\lz_+)}\lambda_-^{(s+1)
(\ell_i^k-t)}\sup_{y\in A^{\ell_i^k-t+j}
(B_m)^\complement}\min\lf\{1,\rho(y)^{-N}
\r\}\noz\\
&&\quad\ls b^{j(s+1)\log_b(\lz_+)}\lambda_-^{(s+1)
(\ell_i^k-t)}\min\lf\{1,b^{-N(\ell_i^k-t+j+m
)}\r\}.\noz
\end{eqnarray}
Take $s:=\lfloor(\frac1p-1)\frac{\ln b}{\ln \lambda_-}\rfloor$.
Since $N=s+2$, it follows that $b\lambda_-^{s+1}\leq b^N$. By this,
\eqref{te36}, \eqref{te37}, \eqref{te38} and
$\varepsilon>(s+1)\log_b(\lz_+)$, for all
$x\in x_i^k+B_{\ell_i^k+\tau+m+1}
\setminus B_{\ell_i^k+\tau+m}$ with $m\in\zz_+$,
we find that
\begin{eqnarray*}
\lf[M_N^0(m_i^k)(x)\r]^p=&&\sup_{\varphi\in\cs_N(\rn)}
\sup_{t\in\mathbb{Z}}\lf|(m_i^k\ast\varphi_t)
(x)\r|^p\\
\ls&&\lf|B_{\ell_i^k}\r|^{-1}\sum_{j\in\zz_+}
b^{-jp(\varepsilon-1/r')}\max\lf\{\sup_{t\in\zz,\,
t\leq\ell_i^k}b^{p(\ell_i^k-t)}b^{-Np(\ell_
i^k-t+j+m)},\r.\noz\\
&&\lf.\sup_{t\in\mathbb{Z},\,t>\ell_i^k}b^
{p(\ell_i^k-t)}b^{jp(s+1)\log_b(\lz_+)}\lambda_-^{p(s+1)
(\ell_i^k-t)}
\min\lf\{1,b^{-Np(\ell_i^k-t+j+m)}\r\}\r\}\noz\\
\ls&&\lf|B_{\ell_i^k}\r|^{-1}
\sum_{j\in\zz_+}b^{-jp[\varepsilon-(s+1)\log_b(\lz_+)+1-1/r']}
\max\lf\{b^{-Npm},(b\lambda_-^{s+1})^{-pm}\r\}\\
\ls&&\lf|B_{\ell_
i^k}\r|^{-1}(b\lambda_-^{s+1})^{-pm}\noz.
\end{eqnarray*}
Form this, as in \eqref{te7}, we easily deduce that \eqref{te52}
holds true for $q/p\in[1,\fz]$. This finishes the proof of Case 1.

\emph{Case 2:} $q/p\in(0,1)$.
In this case, let
$\widetilde{\psi}_{k_0}$ and $\widetilde{\eta}_{k_0}$
be as in \eqref{3.41x}. Similar to \eqref{te45}, we have
\begin{eqnarray*}
2^{k_0p}\lf[d_{\widetilde{\psi}_{k_0}}(2^{k_0})\r]^\delta
&&\ls\lf[\sum_{k=-\infty}^{k_0-1}2^{-k\sigma\wz{r}}
\sum_{i\in\mathbb{N}}2^{k\wz{r}}\lf|B_{\ell_i^k}\r|
\r]^\delta\sim\lf(\sum_{k=-\infty}^{k_0-1}2^{\frac{kp}
\delta}\mu_k^p\r)^\delta,
\end{eqnarray*}
which further implies that
\begin{eqnarray}\label{te70}
\ \ \sum_{k_0\in\zz}2^{k_0q}\lf|\lf\{x\in\rn:\
\widetilde{\psi}_{k_0}(x)>2^{k_0}\r\}\r|^{\frac qp}
&&\ls\sum_{k_0\in\zz}2^{k_0(q-\frac q\delta)}\sum_
{k=-\infty}^{k_0-1}2^{\frac{kq}\delta}\mu_k^q\\
&&\sim\sum_{k\in\zz}\sum_{k_0=k+1}^{\fz}2^
{k_0(q-\frac q\delta)}2^{\frac{kq}\delta}\mu_k^q
\ls\sum_{k\in\zz}2^{kq}\mu_k^q.\noz
\end{eqnarray}
On the other hand, similar to \eqref{te48}, we find that
\begin{eqnarray*}
2^{k_0\delta p}d_{\widetilde{\eta}_{k_0}}(2^{k_0})
\ls\sum_{k=k_0}^\infty\lf[2^{k\delta}\mu_k\r]^p,
\end{eqnarray*}
which implies that
\begin{eqnarray*}
&&2^{k_0\delta p}\lf|\lf\{x\in\rn:\ \widetilde{\eta}_{k_0}(x)
>2^{k_0}\r\}\r|\\
&&\hs\ls\sum_{k=k_0}^\fz2^{-k\widetilde{\delta}
p}\lf[2^{k(1-\widetilde{\delta})}\mu_k\r]^p\ls2^{-k_0
\widetilde{\delta}p}\lf\{\sum_{k=k_0}^\fz\lf[2^{k(1-
\widetilde{\delta})}\mu_k\r]^q\r\}^{\frac pq},
\end{eqnarray*}
where $\widetilde{\delta}:=\frac{1-\delta}2$. Therefore, we have
\begin{eqnarray}\label{te71}
&&\sum_{k_0\in\zz}2^{k_0q}\lf|
\lf\{x\in\rn:\ \widetilde{\eta}_{k_0}(x)>2^{k_0}\r\}\r|
^{\frac qp}\\
&&\hs\ls\sum_{k_0\in\zz}2^{k_0\widetilde{\delta}q}\sum_
{k=k_0}^\fz\lf[2^{k(1-\widetilde{\delta})}\mu_k\r]^q\noz\\
&&\hs\sim\sum_{k\in\zz}\lf[2^{k(1-\widetilde{\delta})}\mu_
k\r]^q\sum_{k_0=-\fz}^k2^{k_0\widetilde{\delta}q}
\ls\sum_{k\in\zz}2^{kq}\mu_k^q.\noz
\end{eqnarray}
Notice that
$\mu_k:=(\sum_{i\in\mathbb{N}}|B_{\ell_i^k}|)^{1/p}$
and $\lambda_i^k\sim2^k|B_{\ell_i^k}|^{1/p}$.
Combining \eqref{se6}, \eqref{te70}, \eqref{te71} and
\eqref{te67}, we further conclude that
\begin{eqnarray*}
\|M_N(f)\|_{L^{p,q}(\rn)}^q\sim&&\sum_{k_0\in\zz}2^{k_0
q}\lf|\lf\{x\in\rn:\ M_N(f)(x)>2^{k_0}\r\}\r|^{\frac qp}\\
\ls&&\sum_{k_0\in\zz}2^{k_0q}\lf|
\lf\{x\in\rn:\ \widetilde{\psi}_{k_0}(x)>2^{k_0}\r\}\r|^
{\frac qp}\\
&&+\sum_{k_0\in\zz}2^{k_0q}\lf|\lf\{x\in\rn:\
\widetilde{\eta}_{k_0}(x)>2^{k_0}\r\}\r|^{\frac qp}\noz\\
\ls&&\sum_{k\in\zz}2^{kq}\mu_k^q\sim\sum_{k\in\zz}
\lf[\sum_{i\in\nn}|\lambda_i^k|^p\r]^{\frac qp}
\sim\|f\|_{H_A^{p,r,s,\varepsilon,q}(\rn)}^q,
\end{eqnarray*}
which implies that
$\|f\|_{H^{p,q}_A(\rn)}\ls\|f\|_{H_A^{p,r,s,\varepsilon,q}(\rn)}$.
This finishes the proof of Case 2 and hence Theorem \ref{tt2}.
\end{proof}

\section{Maximal function characterizations of $H^{p,q}_A(\rn)$\label{s4}}

\hskip\parindent
In this section, we  characterize $H^{p,q}_A(\rn)$
in terms of the radial maximal function $M_\varphi^0$ (see \eqref{se10})
or the non-tangential maximal function $M_\varphi$ (see \eqref{se25}).
We begin with the following Definitions \ref{d-4.1} and \ref{d-4.2}
from \cite{mb03}.

\begin{definition}\label{d-4.1}
For any function
$F:\ \rn\times\mathbb{Z}\rightarrow [0,\infty)$,
$K\in\mathbb{Z}\cup\{\infty\}$ and $\ell\in\mathbb{Z}$,
the maximal function of $F$ with \emph{aperture}
$\ell$ is defined by
\begin{eqnarray*}
F^{* K}_\ell(x):=
\sup_{k\in\mathbb{Z},\,k\leq K}
\sup_{y\in x+B_{k+\ell}}F(y,k),
\ \ \ \forall\ x\in\rn.
\end{eqnarray*}
\end{definition}

\begin{definition}\label{d-4.2}
Let $K\in\zz$ and $L\in[0,\fz)$. For $\varphi\in\cs$,
the \emph{radial maximal function} $M_\varphi^{0(K,L)}(f)$,
the \emph{non-tangential maximal function} $M_\varphi^{(K,L)}(f)$ and
the \emph{tangential maximal function} $T_\varphi^{N(K,L)}(f)$ of
$f\in\cs'(\rn)$ are, respectively, defined by
setting, for all $x\in\rn$,
\begin{eqnarray*}
M_\varphi^{0(K,L)}(f)(x):=\sup_{k\in\mathbb{Z},\,k\leq K}
|(f\ast\varphi_k)(x)|\lf[\max\lf\{1,\rho\lf(A^{-K}x\r)\r\}\r]^
{-L}\lf(1+b^{-k-K}\r)^{-L},
\end{eqnarray*}
\begin{eqnarray*}
M_\varphi^{(K,L)}(f)(x):=\sup_{k\in\mathbb{Z},\,k\leq K}
\sup_{y\in x+B_k}|(f\ast\varphi_k)(y)|\lf[\max\lf\{1,\rho
\lf(A^{-K}y\r)\r\}\r]^{-L}\lf(1+b^{-k-K}\r)^{-L}
\end{eqnarray*}
and
\begin{eqnarray*}
T_\varphi^{N(K,L)}(f)(x):=\sup_{k\in\mathbb{Z},\,k\leq K}
\sup_{y\in\rn}\frac{|(f\ast\varphi_k)(y)|}{[\max\lf\{1,\rho
\lf(A^{-k}(x-y)\r)\r\}]^{N}}\frac{\lf(1+b^{-k-K}\r)^{-L}}
{[\max\lf\{1,\rho\lf(A^{-K}y\r)\r\}]^{L}}.
\end{eqnarray*}
Furthermore, the \emph{radial grand maximal function} $M_N^{0(K,L)}(f)$
and the \emph{non-tangential grand maximal function}
$M_N^{(K,L)}(f)$ of
$f\in\cs'(\rn)$ are, respectively, defined by
setting, for all $x\in\rn$,
\begin{eqnarray*}
M_N^{0(K,L)}(f)(x):=
\sup_{\varphi\in\cs_N(\rn)}M_\varphi^{0(K,L)}(f)(x)
\end{eqnarray*} and
\begin{eqnarray*}
M_N^{(K,L)}(f)(x):=
\sup_{\varphi\in\cs_N(\rn)}M_\varphi^{(K,L)}(f)(x).
\end{eqnarray*}
\end{definition}

The following Lemmas \ref{fl1} through \ref{fl5} are just
\cite[p.\,42, Lemma 7.2, p.\,45, Lemma 7.5 and p.\,46, Lemma 7.6]{mb03}, respectively.

\begin{lemma}\label{fl1}
There exists a positive constant $C$ such that,
for all functions
$F :\ \rn\times\mathbb{Z}\rightarrow [0,\infty)$,
$\ell\in[\ell',\fz)\cap\mathbb{Z}$,
$K\in\mathbb{Z}\cup\{\infty\}$ and $\lambda\in(0,\fz)$,
$$\lf|\lf\{x\in\rn:\ F^{* K}_\ell(x)>
\lambda\r\}\r|\leq Cb^{\ell-\ell'}\lf|\lf
\{x\in\rn:\ F^{* K}_{\ell'}(x)>\lambda\r\}\r|.$$
\end{lemma}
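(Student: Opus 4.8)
The plan is to reduce the claimed distributional estimate to the weak type $(1,1)$ inequality \eqref{te56} for the Hardy-Littlewood maximal function $M_{{\rm HL}}$, which is available by Remark \ref{tr1}. Put $E:=\{x\in\rn:\ F^{* K}_{\ell'}(x)>\lambda\}$. If $|E|=\fz$ there is nothing to prove, so I would assume $|E|<\fz$, that is, $\chi_E\in L^1(\rn)$, and then show that $\{x\in\rn:\ F^{* K}_\ell(x)>\lambda\}$ is contained in a superlevel set of $M_{{\rm HL}}(\chi_E)$ at height comparable to $b^{\ell'-\ell}$; the factor $b^{\ell-\ell'}$ then appears from the weak type $(1,1)$ bound.

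The key point I would establish first is a geometric claim: for every $x\in\rn$ with $F^{* K}_\ell(x)>\lambda$, there is a dilated ball $B\in\mathfrak B$ with $x\in B$ and $|B\cap E|\geq b^{\ell'-\ell}|B|$. To prove it, use the definition of $F^{* K}_\ell$ to choose $k\in\zz$ with $k\leq K$ and $y\in x+B_{k+\ell}$ such that $F(y,k)>\lambda$, and set $B:=y+B_{k+\ell}$. Since $\Delta$ is symmetric about the origin, each $B_j=A^j\Delta$ satisfies $B_j=-B_j$; hence $y\in x+B_{k+\ell}$ is the same as $x\in y+B_{k+\ell}=B$, and, likewise, every $z\in y+B_{k+\ell'}$ satisfies $y\in z+B_{k+\ell'}$, so that, as $k\leq K$, $F^{* K}_{\ell'}(z)\geq F(y,k)>\lambda$; thus $y+B_{k+\ell'}\subset E$. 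Because $\ell'\leq\ell$ gives $B_{k+\ell'}\subset B_{k+\ell}$ and $|B_j|=b^j$, we obtain $|B\cap E|\geq|y+B_{k+\ell'}|=b^{k+\ell'}=b^{\ell'-\ell}|B|$, which is the claim.

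With the claim available, the second expression for $M_{{\rm HL}}$ in \eqref{te58} shows that $M_{{\rm HL}}(\chi_E)(x)\geq|B\cap E|/|B|\geq b^{\ell'-\ell}$ whenever $F^{* K}_\ell(x)>\lambda$, so that
\begin{eqnarray*}
\lf\{x\in\rn:\ F^{* K}_\ell(x)>\lambda\r\}&&\subset
\lf\{x\in\rn:\ M_{{\rm HL}}(\chi_E)(x)>\frac12 b^{\ell'-\ell}\r\}.
\end{eqnarray*}
Applying \eqref{te56} to $\chi_E\in L^1(\rn)$ with the level there taken to be $\frac12 b^{\ell'-\ell}$ yields
\begin{eqnarray*}
\lf|\lf\{x\in\rn:\ F^{* K}_\ell(x)>\lambda\r\}\r|
&&\leq\frac{2C}{b^{\ell'-\ell}}\|\chi_E\|_{L^1(\rn)}
=2C\,b^{\ell-\ell'}|E|,
\end{eqnarray*}
where $C$ is the constant in \eqref{te56}; this is the assertion, with constant $2C$ independent of $F$, $\ell$, $\ell'$, $K$ and $\lambda$.

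I expect the only genuinely delicate ingredient to be the geometric claim, and inside it the use of the symmetry $B_j=-B_j$ to pass between $y\in x+B_{k+\ell}$ and $x\in B$, and between $z\in y+B_{k+\ell'}$ and $y\in z+B_{k+\ell'}$; the remaining manipulations only use $B_{k+\ell'}\subset B_{k+\ell}$ and $|B_j|=b^j$. If one prefers not to invoke symmetry, one can enlarge the relevant balls by a fixed number of dilation steps via \eqref{se1} and absorb the resulting factor $b^{c\tau}$ into the constant. An alternative route that avoids Remark \ref{tr1} is a Vitali-type covering argument in the space of homogeneous type $(\rn,\rho,dx)$: from the balls $B$ produced by the claim, extract a countable pairwise disjoint subfamily whose fixed dilates cover $\{x\in\rn:\ F^{* K}_\ell(x)>\lambda\}$, and sum the bound $|B|\leq b^{\ell-\ell'}|B\cap E|$ over this subfamily using disjointness; this is more elementary but longer.
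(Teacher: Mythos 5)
Your proof is correct, and it is essentially the same argument as the one behind the paper's statement: the paper does not prove this lemma but quotes it as \cite[p.\,42, Lemma 7.2]{mb03}, whose proof likewise reduces the level-set estimate to the weak type $(1,1)$ inequality for $M_{{\rm HL}}$ via the containments $x\in y+B_{k+\ell}$ and $y+B_{k+\ell'}\subset\{F^{*K}_{\ell'}>\lambda\}$, giving $M_{{\rm HL}}(\chi_E)\gtrsim b^{\ell'-\ell}$ on $\{F^{*K}_{\ell}>\lambda\}$. Your appeal to the symmetry $B_j=-B_j$ is legitimate because $\Delta$ is an ellipsoid centered at the origin in Bownik's construction (so $A^j\Delta=-A^j\Delta$), and the remaining steps (openness/measurability of the level sets, the reduction to $|E|<\infty$, and the application of \eqref{te56} at height $\tfrac12 b^{\ell'-\ell}$) are all in order.
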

\begin{lemma}\label{fl2}
Suppose that $\varphi\in\cs(\rn)$ with
$\int_{\rn}\varphi(x)\,dx\neq0$.
For any given $N\in\mathbb{N}$ and $L\in[0,\fz)$, there
exist an $I\in\mathbb{N}$ and a positive constant $C_{(L)}$,
depending on $L$,
such that, for all $K\in\zz_+$ and
$f\in\cs'(\rn)$,
$$M_I^{0(K,L)}(f)(x)\leq C_{(L)}T_\varphi^{N(K,L)}(f)(x),
\ \ \ \ \ \forall\ x\in\rn.$$
\end{lemma}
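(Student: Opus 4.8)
The plan is to reduce the grand maximal function to the single fixed $\varphi$ by means of an anisotropic Calder\'on‑type reproducing formula, and then to carry the estimate through one convolution. Since the quantity is dilation‑homogeneous in $\varphi$, we may normalize $\int_\rn\varphi(x)\,dx=1$, i.e. $\widehat\varphi(0)=1\neq0$; any rescaling only affects $C_{(L)}$. The technical input (this is the genuinely hard step, due in the anisotropic setting to Calder\'on--Torchinsky and to Bownik; see \cite{c77,ct75,mb03}) is the following: for every $\phi\in\cs(\rn)$ and every $M\in\nn$ there exist functions $\{\Phi^{(j)}\}_{j\in\zz_+}\subset\cs(\rn)$, built only from $\phi$ and an auxiliary $\psi\in\cs(\rn)$ depending on $\varphi$, such that
$$\phi=\sum_{j\in\zz_+}\varphi_{-j}*\Phi^{(j)}\quad\text{in }\cs(\rn),\qquad \|\Phi^{(j)}\|_{\cs_M(\rn)}\ls b^{-jM}\,\|\phi\|_{\cs_{M'}(\rn)},$$
where $M'=M'(M)$. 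This is obtained from the fact that $\widehat\varphi$ does not vanish on a neighbourhood of the origin together with a smooth telescoping resolution of the identity under the $A^*$‑dilations; the rapid decay of $\Phi^{(j)}$ comes from the Schwartz decay of $\widehat\phi$ at the scale $j$ and the vanishing of $\widehat\psi$ near the origin. We will take $I:=M'$ for a suitable $M$ determined below.

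Fix $\phi\in\cs_I(\rn)$, $K\in\zz_+$, $k\in\zz$ with $k\le K$, and $x\in\rn$. Since $(g*h)_k=g_k*h_k$, rescaling the decomposition gives $\phi_k=\sum_{j\in\zz_+}\varphi_{k-j}*(\Phi^{(j)})_k$, hence
$$(f*\phi_k)(x)=\sum_{j\in\zz_+}\int_\rn (f*\varphi_{k-j})(y)\,(\Phi^{(j)})_k(x-y)\,dy.$$
For every $j\in\zz_+$ we have $k-j\le K$, so the definition of $T_\varphi^{N(K,L)}(f)$ yields, for all $y\in\rn$,
$$|(f*\varphi_{k-j})(y)|\le T_\varphi^{N(K,L)}(f)(x)\,\lf[\max\{1,\rho(A^{-(k-j)}(x-y))\}\r]^{N}\lf[\max\{1,\rho(A^{-K}y)\}\r]^{L}\lf(1+b^{-(k-j)-K}\r)^{L}.$$
Now I would invoke the elementary facts recorded after \eqref{se4}: $\max\{1,\rho(A^{j}z)\}\le b^{j}\max\{1,\rho(z)\}$ for $j\in\zz_+$, and the submultiplicativity \eqref{se4} (using also $\rho(-z)=\rho(z)$). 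Writing $A^{-(k-j)}(x-y)=A^{j}(A^{-k}(x-y))$ one gets $[\max\{1,\rho(A^{-(k-j)}(x-y))\}]^N\le b^{jN}[\max\{1,\rho(A^{-k}(x-y))\}]^N$; moreover $b^{-(k-j)-K}=b^j b^{-k-K}$ gives $(1+b^{-(k-j)-K})^L\le b^{jL}(1+b^{-k-K})^L$; and $[\max\{1,\rho(A^{-K}y)\}]^L\ls[\max\{1,\rho(A^{-K}x)\}]^L[\max\{1,\rho(A^{-K}(x-y))\}]^L$ with the implicit constant $b^{\tau L}$. Dividing through by $[\max\{1,\rho(A^{-K}x)\}]^L(1+b^{-k-K})^L$, this reduces everything to bounding, uniformly in $k,K,f$, the sum $\sum_{j\in\zz_+}b^{j(N+L)}\int_\rn|(\Phi^{(j)})_k(x-y)|[\max\{1,\rho(A^{-k}(x-y))\}]^N[\max\{1,\rho(A^{-K}(x-y))\}]^L\,dy$.

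In that integral I would substitute $z=A^{-k}(x-y)$, so $(\Phi^{(j)})_k(x-y)=b^{-k}\Phi^{(j)}(z)$ and $dy=b^{k}\,dz$; moreover $A^{-K}(x-y)=A^{k-K}z$ with $k-K\le0$, so $\max\{1,\rho(A^{k-K}z)\}\le\max\{1,\rho(z)\}$ — this is exactly the point that keeps all constants independent of $K$. Thus the integral is at most $\int_\rn|\Phi^{(j)}(z)|[\max\{1,\rho(z)\}]^{N+L}\,dz$, which, estimating $|\Phi^{(j)}(z)|\le\|\Phi^{(j)}\|_{\cs_{N+L+d}(\rn)}[\max\{1,\rho(z)\}]^{-(N+L+d)}$ with $d$ a dimensional constant chosen so that $[\max\{1,\rho(\cdot)\}]^{-d}\in L^1(\rn)$ (possible because $(\rn,\rho,dx)$ is a space of homogeneous type), is $\ls\|\Phi^{(j)}\|_{\cs_{N+L+d}(\rn)}\ls b^{-j(N+L+d)}\|\phi\|_{\cs_{M'}(\rn)}$ by the decomposition, with $M'=M'(N+L+d)$. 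Taking $I:=M'$ forces $\|\phi\|_{\cs_{M'}(\rn)}\le1$, so the $j$‑sum is $\ls\sum_{j\in\zz_+}b^{j(N+L)}b^{-j(N+L+d)}=\sum_{j\in\zz_+}b^{-jd}<\fz$. Hence $|(f*\phi_k)(x)|[\max\{1,\rho(A^{-K}x)\}]^{-L}(1+b^{-k-K})^{-L}\le C_{(L)}T_\varphi^{N(K,L)}(f)(x)$ with $C_{(L)}$ independent of $\phi,k,K,f$; taking the supremum over $k\le K$ and $\phi\in\cs_I(\rn)$ gives the lemma. The main obstacle is the first step: producing the reproducing decomposition with the quantitative control $\|\Phi^{(j)}\|_{\cs_M}\ls b^{-jM}\|\phi\|_{\cs_{M'}}$ for a general (possibly non‑diagonalizable) expansive $A$, which rests on Fourier analysis with the $A^*$‑dilations; after that the argument is bookkeeping with the submultiplicativity of $\rho$, the only delicate bit being to propagate the $K$‑ and $L$‑weights so that the final constant depends on $L$ alone.
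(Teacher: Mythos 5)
Your proposal is correct and follows essentially the same route as the paper, which gives no proof of its own but cites \cite[p.\,45, Lemma 7.5]{mb03}: your argument is precisely the cited one, namely the anisotropic Calder\'on-type decomposition $\phi=\sum_{j\in\zz_+}\varphi_{-j}\ast\Phi^{(j)}$ with rapidly decaying seminorms (Bownik's reproducing lemma, which you likewise quote rather than reprove) combined with the weight bookkeeping via \eqref{se4}, the bound $\max\{1,\rho(A^{k-K}z)\}\le\max\{1,\rho(z)\}$ for $k\le K$, and the substitution $z=A^{-k}(x-y)$, which is exactly what keeps the constant independent of $K$. The only cosmetic adjustment is to use $\lceil N+L+d\rceil$ as the Schwartz seminorm index, since $L\in[0,\fz)$ need not be an integer.
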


\begin{lemma}\label{fl5}
Suppose that $p\in(0,\fz),\,\varphi\in\cs(\rn)$
and $K\in\zz_+$. Then, for any given $M\in(0,\fz)$,
there exist $L\in(0,\fz)$ and a positive constant $C_{(K,\,M)}$, such that,
for all $f\in\cs'(\rn)$ and $x\in\rn$,
\begin{eqnarray}\label{fe13}
M_\varphi^{(K,L)}(f)(x)
\leq C_{(K,\,M)}\lf[\max\lf\{1,\rho(x)\r\}\r]^{-M}.
\end{eqnarray}
\end{lemma}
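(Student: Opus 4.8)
The plan is to estimate $M_\varphi^{(K,L)}(f)(x)$ directly from its definition in Definition \ref{d-4.2}, by first bounding $|(f\ast\varphi_k)(y)|$ for every $k\le K$ and every $y\in x+B_k$, and then absorbing the resulting growth in $k$ and in $\rho(y)$ into the two built-in weights $(1+b^{-k-K})^{-L}$ and $[\max\{1,\rho(A^{-K}y)\}]^{-L}$ by choosing $L$ large. Since $f\in\cs'(\rn)$ has finite order, there exist $N_0\in\nn$ (we may and do assume $N_0\ge M$) and a positive constant $C_f$ such that $|\langle f,\psi\rangle|\le C_f\|\psi\|_{\cs_{N_0}(\rn)}$ for all $\psi\in\cs(\rn)$.

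The core of the argument is the pointwise bound
\begin{eqnarray}\label{fl5e}
|(f\ast\varphi_k)(y)|\le C_{(f,K)}\lf(1+b^{-k-K}\r)^{c}\lf[\max\lf\{1,\rho(y)\r\}\r]^{N_0},\qquad k\le K,\ y\in\rn,
\end{eqnarray}
where $c:=1+N_0\log_b\lambda_+$. To prove it I would write $(f\ast\varphi_k)(y)=\langle f,\psi_{k,y}\rangle$ with $\psi_{k,y}(z):=b^{-k}\varphi(A^{-k}(y-z))$ and estimate $\|\psi_{k,y}\|_{\cs_{N_0}(\rn)}$. Differentiating $\psi_{k,y}$ up to $N_0$ times produces at most $N_0$ factors built from the entries of $A^{-k}$, whose operator norm is $\ls\max\{1,(\lambda_+)^{-k}\}$ for all $k\le K$ by \eqref{se5} and \eqref{se19}; choosing the decay order of $\varphi$ to be $2N_0$, using the homogeneity $\rho(A^{-k}w)=b^{-k}\rho(w)$ and the quasi-triangle inequality \eqref{se4} in the form $\max\{1,\rho(z)\}\le b^{\tau}\max\{1,\rho(y)\}\max\{1,\rho(y-z)\}$, one peels off the factor $[\max\{1,\rho(y)\}]^{N_0}$ and checks that $\sup_{t\ge0}[\max\{1,t\}]^{N_0}[\max\{1,b^{-k}t\}]^{-2N_0}\ls b^{KN_0}$ for all $k\le K$. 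Collecting these estimates, together with $b^{-k}[\max\{1,(\lambda_+)^{-k}\}]^{N_0}\ls_{(K)}(1+b^{-k-K})^{c}$ (valid for all $k\le K$), gives \eqref{fl5e}.

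With \eqref{fl5e} in hand, let $y\in x+B_k$ with $k\le K$; then $y-x\in B_k\subset B_K$, so $\max\{1,\rho(y-x)\}\le b^{K}$, and \eqref{se4} yields $[\max\{1,\rho(y)\}]^{N_0}\le b^{(\tau+K)N_0}[\max\{1,\rho(x)\}]^{N_0}$, while a short computation with \eqref{se4} and $\rho(A^{-K}y)=b^{-K}\rho(y)$ gives $\max\{1,\rho(A^{-K}y)\}\ge b^{-2K-\tau}\max\{1,\rho(x)\}$, hence $[\max\{1,\rho(A^{-K}y)\}]^{-L}\le b^{(2K+\tau)L}[\max\{1,\rho(x)\}]^{-L}$. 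Therefore, provided $L\ge c$ (so that $(1+b^{-k-K})^{c-L}\le1$ for every $k\le K$),
\begin{eqnarray*}
M_\varphi^{(K,L)}(f)(x)
&&=\sup_{k\in\zz,\,k\le K}\ \sup_{y\in x+B_k}|(f\ast\varphi_k)(y)|\lf[\max\lf\{1,\rho(A^{-K}y)\r\}\r]^{-L}\lf(1+b^{-k-K}\r)^{-L}\\
&&\ls\lf[\max\lf\{1,\rho(x)\r\}\r]^{N_0-L},
\end{eqnarray*}
with the implicit constant depending on $f$, $K$ and $L$. Choosing $L:=\max\{c,\,N_0+M\}$ and using $\max\{1,\rho(x)\}\ge1$ we obtain $M_\varphi^{(K,L)}(f)(x)\ls[\max\{1,\rho(x)\}]^{-M}$, which is \eqref{fe13}; the final constant depends on $K$, $M$ and on $f$ through $C_f$ and $N_0$.

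The main obstacle is precisely \eqref{fl5e}: one has to control $\|\varphi_k(y-\cdot)\|_{\cs_{N_0}(\rn)}$ uniformly over all $k\le K$, and in particular as $k\to-\infty$, where both $b^{-k}$ and $\|A^{-k}\|$ blow up. The device that resolves this is the linear change of variables $w=A^{-k}(y-z)$ combined with the exact homogeneity $\rho(A^{-k}w)=b^{-k}\rho(w)$: the rapid decay of $\varphi$ then forces the residual $k$-growth to be exactly of size $(1+b^{-k-K})^{c}$, which is annihilated by the built-in factor $(1+b^{-k-K})^{-L}$ as soon as $L\ge c$; the weight $[\max\{1,\rho(A^{-K}y)\}]^{-L}$ afterwards trades the admissible $\rho(y)$-growth (controlled via \eqref{se4} because $y\in x+B_k\subset x+B_K$) for the desired $\rho(x)$-decay.
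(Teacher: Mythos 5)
Your proof is correct, and it is essentially the argument behind the cited source: the paper does not prove this lemma but quotes it from \cite[p.\,46, Lemma 7.6]{mb03}, where the same direct strategy is used, namely bounding $\|\varphi_k(y-\cdot)\|_{\cs_{N_0}(\rn)}$ via the finite order of $f$, the homogeneity $\rho(A^{-k}w)=b^{-k}\rho(w)$ and \eqref{se4}, and then absorbing the resulting growth in $k$ and in $\rho(y)$ into the weights $(1+b^{-k-K})^{-L}$ and $[\max\{1,\rho(A^{-K}y)\}]^{-L}$ by taking $L$ large. One remark: as you correctly note, both $L$ and the constant must depend on $f$ (through $C_f$ and the order $N_0$), so the phrase ``for all $f\in\cs'(\rn)$'' with an $f$-independent $C_{(K,M)}$ in the statement is an overstatement of the paper's restatement rather than a gap in your argument; the $f$-dependent version you prove is exactly what is used in the proof of Theorem \ref{ft1}.
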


\begin{lemma}\label{fl3}
Let $p\in(0,\fz),\,N\in(1/p,\fz)\cap\mathbb{N},\,q\in(0,\fz]$
and $\varphi\in\cs(\rn)$.
Then there exists a positive constant $C$ such that, for all
$K\in\mathbb{Z}$, $L\in[0,\fz)$ and $f\in\cs'(\rn)$,
\begin{eqnarray}\label{fe2}
\lf\|T_\varphi^{N(K,L)}(f)\r\|_{L^{p,q}(\rn)}
\leq C\lf\|M_\varphi^{(K,L)}(f)\r\|_{L^{p,q}(\rn)}.
\end{eqnarray}
\end{lemma}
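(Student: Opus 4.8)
\textbf{Proof proposal for Lemma \ref{fl3}.}

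The plan is to reduce the Lorentz-norm inequality to a pointwise distributional estimate between the two maximal functions, and then to invoke the standard ``good-lambda''/level-set comparison built into Lemma \ref{fl1}. First I would fix $f\in\cs'(\rn)$, $K\in\zz$ and $L\in[0,\fz)$, and observe that, by a change of variables and the definition of the step homogeneous quasi-norm $\rho$, the factor $[\max\{1,\rho(A^{-K}y)\}]^{-L}(1+b^{-k-K})^{-L}$ is comparable along the relevant ranges so that one may work with the auxiliary function $F(y,k):=|(f\ast\varphi_k)(y)|[\max\{1,\rho(A^{-K}y)\}]^{-L}(1+b^{-k-K})^{-L}$ defined on $\rn\times(\zz\cap(-\fz,K])$; then $M_\varphi^{(K,L)}(f)=F_0^{*K}$ and the tangential maximal function $T_\varphi^{N(K,L)}(f)$ is, up to the bookkeeping of the extra $[\max\{1,\rho(A^{-k}(x-y))\}]^{-N}$ weight, a supremum of $F$ over \emph{all} of $\rn$ with a polynomial decay controlling how far $y$ may stray from $x$.

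The key step is to prove the pointwise bound, for all $x\in\rn$,
\begin{eqnarray*}
T_\varphi^{N(K,L)}(f)(x)\ls\sup_{\ell\in\zz_+}b^{-\ell(N-1/p)\cdot\frac 1{?}}\cdots,
\end{eqnarray*}
more precisely a bound of the form $T_\varphi^{N(K,L)}(f)(x)\ls\sum_{\ell\in\zz_+}b^{-\ell N}\big(F_\ell^{*K}(x)\big)$, obtained by splitting the inner supremum over $y\in\rn$ according to the dyadic (with respect to $A$) annuli $A^{-k}(x-y)\in B_{k+\ell+1}\setminus B_{k+\ell}$, $\ell\in\zz_+$: on such an annulus $y\in x+B_{k+\ell+1}$ so $|(f\ast\varphi_k)(y)|\,[\,\cdots\,]\le F_{\ell+1}^{*K}(x)$, while the tangential weight contributes a factor $b^{-\ell N}$. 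Then I would raise this to the power $\upsilon\in(0,1]$ supplied by the Aoki--Rolewicz theorem (already used in Proposition \ref{sp3}), take $L^{p,q}(\rn)$ quasi-norms, use the $\upsilon$-subadditivity of $\|\cdot\|_{L^{p,q}(\rn)}^{\upsilon}$ together with \eqref{se22}, and apply Lemma \ref{fl1} with $\ell'=0$ to get
\begin{eqnarray*}
\lf\|F_{\ell+1}^{*K}\r\|_{L^{p,q}(\rn)}\ls b^{(\ell+1)/p}\lf\|F_0^{*K}\r\|_{L^{p,q}(\rn)},
\end{eqnarray*}
since Lemma \ref{fl1} compares the distribution functions of $F^{*K}_{\ell}$ and $F^{*K}_{0}$ with constant $b^{\ell}$, and the $L^{p,q}$ quasi-norm depends on the distribution function only through its $1/p$-power (see \eqref{se6} and \eqref{se7}). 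Summing the geometric series $\sum_{\ell\in\zz_+} b^{\ell\upsilon/p}\,b^{-\ell N\upsilon}$, which converges precisely because $N>1/p$, yields $\|T_\varphi^{N(K,L)}(f)\|_{L^{p,q}(\rn)}^{\upsilon}\ls\|M_\varphi^{(K,L)}(f)\|_{L^{p,q}(\rn)}^{\upsilon}$, and hence \eqref{fe2}.

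The main obstacle I anticipate is twofold. First, one must handle the $[\max\{1,\rho(A^{-K}y)\}]^{-L}$ weight correctly when passing from the point $y$ (appearing in $T_\varphi$) to the center $x$ and to the annulus scale: since $\rho(A^{-K}y)$ and $\rho(A^{-K}x)$ are comparable only up to a multiplicative factor growing like $b^{\tau}[\max\{1,\rho(A^{-k-K}(x-y))\}]$ by the quasi-triangle inequality \eqref{se4}, this factor must be absorbed into the tangential weight, which forces the choice $N>1/p$ to leave enough room (one effectively needs the decay $b^{-\ell N}$ to beat both $b^{\ell/p}$ from Lemma \ref{fl1} \emph{and} the at-most-$b^{\ell L}$-type loss from the weight, but a careful splitting shows the weight loss is actually absorbed without extra cost because both $T_\varphi$ and $M_\varphi$ carry the \emph{same} $L$-weight). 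Second, the quasi-norm $\|\cdot\|_{L^{p,q}(\rn)}$ is only a quasi-norm when $p<1$ or $q<1$, so the term-by-term summation over $\ell$ genuinely needs the Aoki--Rolewicz exponent $\upsilon$; keeping track of $\upsilon$ and checking that $N\upsilon>\upsilon/p$ (equivalently $N>1/p$) still makes the series converge is the delicate quantitative point. Everything else is routine: the dyadic decomposition, the comparison of distribution functions via Lemma \ref{fl1}, and the geometric summation.
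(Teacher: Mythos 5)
Your proposal is correct and follows essentially the same route as the paper: you bound $T_\varphi^{N(K,L)}(f)(x)$ pointwise by $\sum_{j\in\zz_+}b^{-jN}F_{j+1}^{*K}(x)$ via the annular decomposition of $x-y$, apply the Aoki--Rolewicz exponent $\upsilon$ to sum in $L^{p,q}(\rn)$, and control $\|F_{j+1}^{*K}\|_{L^{p,q}(\rn)}$ by $b^{(j+1)/p}\|F_0^{*K}\|_{L^{p,q}(\rn)}=b^{(j+1)/p}\|M_\varphi^{(K,L)}(f)\|_{L^{p,q}(\rn)}$ through Lemma \ref{fl1} and the distribution-function form of the Lorentz quasi-norm, the series converging exactly because $N>1/p$. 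Your worry about transferring the $[\max\{1,\rho(A^{-K}y)\}]^{-L}$ weight from $y$ to $x$ is in fact vacuous (as you yourself note), since the weight sits at $y$ in both maximal functions and is already built into $F(y,k)$, so no quasi-triangle loss ever arises.
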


\begin{proof}
We first prove that, for all $p\in(0,\fz)$, $q\in(0,\fz]$,
$K\in\mathbb{Z}$ and $\ell\in[\ell',\fz)\cap\mathbb{Z}$,
\begin{eqnarray}\label{fe8}
\lf\|F_\ell^{* K}\r\|_{L^{p,q}(\rn)}
\ls b^{(\ell-\ell')/p}\lf\|F_{\ell'}^{* K}\r\|
_{L^{p,q}(\rn)},
\end{eqnarray}
where $F_\ell^{* K}$ is as in Definition \ref{d-4.1} and,
for all $\varphi\in\cs(\rn)$, $f\in\cs'(\rn)$,
$k\in\zz$ and $y\in\rn$,
\begin{eqnarray}\label{fe1}
F(y,k):=|(f\ast\varphi_k)(y)|\max\lf[\lf\{1,\rho
\lf(A^{-K}y\r)\r\}\r]^{-L}\lf(1+b^{-k-K}\r)^{-L}.
\end{eqnarray}
To this end, fix $x\in\rn$. For $k\in(-\fz,K]\cap\zz$, if $x-y\in B_{k+1}$,
then
\begin{eqnarray}\label{fe5}
F(y,k)\lf[\max\lf\{1,\rho\lf(A^{-k}(x-y)\r)\r\}\r]^{-N}
\leq F^{* K}_1(x);
\end{eqnarray}
if $x-y\in B_{k+j+1}\setminus B_{k+j}$
for some $j\in\mathbb{N}$, then
\begin{eqnarray}\label{fe7}
F(y,k)\lf[\max\lf\{1,\rho\lf(A^{-k}(x-y)\r)\r\}\r]^{-N}
\leq F^{* K}_{j+1}(x)b^{-jN}.
\end{eqnarray}
By taking supremum over all $k\in(-\fz,K]\cap\zz$
and $y\in\rn$ on the both sides of \eqref{fe5} and \eqref{fe7},
\eqref{fe1} and the definition
of $T_\varphi^{N(K,L)}$,
we further find that
\begin{eqnarray}\label{fe3}
T_\varphi^{N(K,L)}(f)(x)
\leq\sum_{j=0}^\infty F^{* K}_{j+1}(x)b^{-jN}.
\end{eqnarray}
Notice that $F^{* K}_{\ell'}$ is a non-negative function
for any $\ell'\in\zz$. By \eqref{se6} and Lemma \ref{fl1},
for $\ell\in[\ell',\fz)\cap\mathbb{Z}$, $K\in\mathbb{Z}$ and $p,\,q\in(0,\fz)$,
we conclude that
\begin{eqnarray}\label{fe4}
\lf\|F_\ell^{* K}\r\|_{L^{p,q}(\rn)}
&&\sim\lf[\int_0^\infty\lambda^{q-1}\lf|\lf\{
x\in\rn:\ F_\ell^{* K}(x)>\lambda\r\}\r|^
{q/p}\,d\lambda\r]^{1/q}\\
&&\ls b^{(\ell-\ell')/p}\lf[\int_0^\infty\lambda
^{q-1}\lf|\lf\{x\in\rn:\ F_{\ell'}^{* K}(x)>
\lambda\r\}\r|^{q/p}\,d\lambda\r]^{1/q}\noz\\
&&\sim b^{(\ell-\ell')/p}\lf\|F_{\ell'}^{* K}\r\|
_{L^{p,q}(\rn)}.\noz
\end{eqnarray}
It is easy to see that \eqref{fe4} is also true for
$q=\infty$ by the definition of $L^{p,\infty}(\rn)$ norm.
This proves \eqref{fe8}.

Now we show \eqref{fe2}. By \eqref{fe3}, the Aoki-Rolewicz theorem
(see \cite{ta42, sr57}), \eqref{fe8} and
$N\in(1/p,\fz)\cap\mathbb{N}$, we know that
there exists $\upsilon\in(0,1]$ such that
\begin{eqnarray*}
\lf\|T_\varphi^{N(K,L)}(f)\r\|_{L^{p,q}(\rn)}^\upsilon
&&\leq \sum_{j=0}^\infty b^{-jN\upsilon}\lf\|F^{* K}
_{j+1}\r\|_{L^{p,q}(\rn)}^\upsilon\\
&&\ls \sum_{j=0}^\infty b^{-jN\upsilon}b^{(j+1)
\upsilon/p}\lf\|F^{* K}_0\r\|_{L^{p,q}(\rn)}^\upsilon
\ls\lf\|M_\varphi^{(K,L)}(f)\r\|_{L^{p,q}(\rn)}^\upsilon\noz,
\end{eqnarray*}
which implies \eqref{fe2} and
hence completes the proof of Lemma \ref{fl3}.
\end{proof}

\begin{lemma}\label{fl4}
Suppose that $p\in(1,\fz)$ and $q\in(0,\fz]$.
Then there exists a positive constant $C$
such that, for all $f\in L^{p,q}(\rn)$,
\begin{eqnarray}\label{fe6}
\lf\|M_\mathcal{F}(f)\r\|_{L^{p,q}(\rn)}
\leq C\|f\|_{L^{p,q}(\rn)},
\end{eqnarray}
where $M_\mathcal{F}(f)$ is defined by \eqref{te59}.
\end{lemma}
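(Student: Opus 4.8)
The plan is to prove \eqref{fe6} by a Marcinkiewicz-type interpolation argument carried out by hand, using only the two endpoint bounds for $M_{\mathcal F}$ already recorded in Proposition \ref{tl4} (and Remark \ref{tr1}). First I would observe that $M_{\mathcal F}$ is sublinear, namely $M_{\mathcal F}(g+h)(x)\le M_{\mathcal F}(g)(x)+M_{\mathcal F}(h)(x)$ for all $x\in\rn$, which is immediate from \eqref{te59} and the triangle inequality for $|(g+h)\ast\varphi_k(y)|$. Next, fix once and for all some $p_1\in(p,\fz)$; by Proposition \ref{tl4} and Remark \ref{tr1}, $M_{\mathcal F}$ satisfies the weak-type $(1,1)$ estimate \eqref{te56} and is bounded on $L^{p_1}(\rn)$ via \eqref{te57}. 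Given $f\in L^{p,q}(\rn)$ (which, since $p>1$, satisfies $f^\ast(t)\ls t^{-1/p}\|f\|_{L^{p,q}(\rn)}$ and in particular lies in $L^1_{{\rm loc}}(\rn)$, so $M_{\mathcal F}(f)$ is well defined) and each $k_0\in\zz$, decompose $f=g_{k_0}+h_{k_0}$ with
$$g_{k_0}:=f\chi_{\{x\in\rn:\,|f(x)|>2^{k_0}\}},\qquad h_{k_0}:=f-g_{k_0}.$$
A routine layer-cake/discretization computation, together with $d_f(2^k)\ls2^{-kp}\|f\|_{L^{p,\fz}(\rn)}^p$, then shows that $g_{k_0}\in L^1(\rn)$, $h_{k_0}\in L^{p_1}(\rn)$ and
$$\|g_{k_0}\|_{L^1(\rn)}\ls\sum_{k\ge k_0}2^k d_f(2^k),\qquad\|h_{k_0}\|_{L^{p_1}(\rn)}^{p_1}\ls\sum_{k\le k_0}2^{kp_1}d_f(2^k).$$

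Then I would combine sublinearity with the two endpoint estimates. Since $\{x\in\rn:\,M_{\mathcal F}(f)(x)>2^{k_0}\}\subset\{M_{\mathcal F}(g_{k_0})>2^{k_0-1}\}\cup\{M_{\mathcal F}(h_{k_0})>2^{k_0-1}\}$, applying \eqref{te56} to $g_{k_0}$ and \eqref{te57} to $h_{k_0}$ gives, up to harmless constants,
\begin{align*}
d_{M_{\mathcal F}(f)}(2^{k_0})&\ls 2^{-k_0}\|g_{k_0}\|_{L^1(\rn)}+2^{-k_0 p_1}\|h_{k_0}\|_{L^{p_1}(\rn)}^{p_1}\\
&\ls 2^{-k_0}\sum_{k\ge k_0}2^k d_f(2^k)+2^{-k_0 p_1}\sum_{k\le k_0}2^{kp_1}d_f(2^k).
\end{align*}
Writing $a_k:=2^{kp}d_f(2^k)=[2^k(d_f(2^k))^{1/p}]^p$, so that $\sum_{k\in\zz}a_k^{q/p}\sim\|f\|_{L^{p,q}(\rn)}^q$ by \eqref{se6}, and inserting the last display into $\|M_{\mathcal F}(f)\|_{L^{p,q}(\rn)}^q\sim\sum_{k_0\in\zz}[2^{k_0 p}d_{M_{\mathcal F}(f)}(2^{k_0})]^{q/p}$ (again \eqref{se6}), I obtain two sums of the shape $\sum_{k_0\in\zz}(\sum_{j\in\zz_+}2^{-j(p_1-p)}a_{k_0-j})^{q/p}$ and $\sum_{k_0\in\zz}(\sum_{j\in\zz_+}2^{-j(p-1)}a_{k_0+j})^{q/p}$. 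Since $p_1>p>1$, both kernels $\{2^{-j(p_1-p)}\}_{j\in\zz_+}$ and $\{2^{-j(p-1)}\}_{j\in\zz_+}$ lie in $\ell^1(\zz_+)$, so by Young's inequality for discrete convolutions when $q/p\ge1$, and by the subadditivity of $t\mapsto t^{q/p}$ together with an interchange of summations when $q/p<1$, each sum is $\ls\sum_{k\in\zz}a_k^{q/p}\sim\|f\|_{L^{p,q}(\rn)}^q$. This yields \eqref{fe6} for $q\in(0,\fz)$; the case $q=\fz$ follows the same scheme with \eqref{se7} in place of \eqref{se6}, using $d_f(2^k)\ls2^{-kp}\|f\|_{L^{p,\fz}(\rn)}^p$ to sum the two geometric series directly.

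The argument is essentially mechanical once the height splitting is fixed; the only points requiring care are (i) the verification that $g_{k_0}\in L^1(\rn)$ and $h_{k_0}\in L^{p_1}(\rn)$, which is precisely where the hypothesis $p>1$ and the freedom to choose $p_1>p$ enter (via the pointwise control of $f^\ast$); (ii) the bookkeeping in the layer-cake discretizations; and (iii) keeping the regimes $q/p\ge1$ and $q/p<1$ separate in the final summation — the genuinely $q$-sensitive step, and the one I expect to be most error-prone, though it is not deep. Alternatively, one could bypass the hand computation altogether and invoke the Marcinkiewicz interpolation theorem for Lorentz spaces (see, e.g., \cite{bl76,lg08}) for the sublinear operator $M_{\mathcal F}$ with endpoints $L^1(\rn)\to L^{1,\fz}(\rn)$ and $L^{p_1}(\rn)\to L^{p_1}(\rn)$ with $p_1>p$; but in keeping with the self-contained style of the paper I would present the direct proof above.
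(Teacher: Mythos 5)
Your argument is correct, but it is not the route the paper takes. The paper's proof of Lemma \ref{fl4} is essentially two lines: it tests $M_\mathcal{F}$ on characteristic functions $\chi_E$, obtains from \eqref{te56} and \eqref{te57} the restricted-type bounds $\|M_\mathcal{F}(\chi_E)\|_{L^{1,\infty}(\rn)}\ls|E|$ and $\|M_\mathcal{F}(\chi_E)\|_{L^\fz(\rn)}\ls1$, and then invokes the off-diagonal Marcinkiewicz interpolation theorem on Lorentz spaces of Liang, Liu and Yang \cite{lly11} with $p_0=q_0=1$, $p_1=q_1=\fz$ to conclude \eqref{fe6} for all $p\in(1,\fz)$ and $q\in(0,\fz]$. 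You instead carry out the real interpolation by hand: a height splitting $f=g_{k_0}+h_{k_0}$ at level $2^{k_0}$, the weak $(1,1)$ bound \eqref{te56} on $g_{k_0}$ and the strong $L^{p_1}$ bound \eqref{te57} (with a fixed $p_1\in(p,\fz)$; one could even take $p_1=\fz$ and split at $c2^{k_0}$, which slightly simplifies the bookkeeping) on $h_{k_0}$, followed by the dyadic summation via \eqref{se6}/\eqref{se7}, with Young/Minkowski for $q/p\ge1$ and $p$-subadditivity plus interchange of sums for $q/p<1$. Your layer-cake estimates for $\|g_{k_0}\|_{L^1(\rn)}$ and $\|h_{k_0}\|_{L^{p_1}(\rn)}^{p_1}$, the geometric kernels $2^{-j(p-1)}$ and $2^{-j(p_1-p)}$, and the case analysis in $q/p$ all check out, and the decomposition itself also settles the well-definedness of $M_\mathcal{F}(f)$ (your appeal to $L^1_{\mathrm{loc}}$ alone would not quite do this, since the kernels in $\mathcal{F}$ are not compactly supported, but $g_{k_0}\in L^1(\rn)$ and $h_{k_0}\in L^{p_1}(\rn)$ do). The trade-off is clear: the paper's argument is shorter and only needs the operator's behavior on characteristic functions, at the cost of importing the restricted-type interpolation machinery of \cite{lly11}; your proof is self-contained and elementary, at the cost of the two-regime summation, and your closing alternative (citing Marcinkiewicz interpolation on Lorentz spaces from \cite{bl76} or \cite{lg08}) is in spirit exactly what the paper does with \cite{lly11}.
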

\begin{proof} Let $E\subset\rn$ be an arbitrary
measurable set and $|E|<\infty$.
By \eqref{te56} and \eqref{te57}, we have
\begin{eqnarray*}
\lf\|M_\mathcal{F}(\chi_E)\r\|_{L^{1,\infty}(\rn)}
\ls\|\chi_E\|_{L^1(\rn)}\sim|E|
\end{eqnarray*}
and
\begin{eqnarray*}
\lf\|M_\mathcal{F}(\chi_E)\r\|_{L^\infty(\rn)}
\ls\|\chi_E\|_{L^\infty(\rn)}\ls1.
\end{eqnarray*}
Thus, applying \cite[Theorem 1.1 and
Remark 1.4]{lly11} to $M_\mathcal{F}$
and $f\in L^{p,q}(\rn)$ with
$p_0=q_0=1$ and $p_1=q_1=\infty$, we obtain \eqref{fe6}.
This finishes the proof of Lemma \ref{fl4}.
\end{proof}

\begin{remark}\label{fr1}
As a corollary of Lemma \ref{fl4}, the operators
$M_N(f)$ in \eqref{se8} and $M_{{\rm HL}}(f)$
in \eqref{te58} also satisfy \eqref{fe6}.
\end{remark}

Now we state the main result of this section.

\begin{theorem}\label{ft1}
Suppose that $p\in(0,\fz),\,q\in(0,\fz]$ and $\varphi\in\cs(\rn)$
with $\int_{\rn}\varphi(x)\,dx\neq0$. Then, for all $f\in\cs'(\rn)$,
the following statements are equivalent:
\begin{eqnarray}\label{fe9}
f\in H^{p,q}_A(\rn);   \end{eqnarray}
\begin{eqnarray}\label{fe10}
M_\varphi(f)\in L^{p,q}(\rn);   \end{eqnarray}
\begin{eqnarray}\label{fe11}
M_\varphi^0(f)\in L^{p,q}(\rn).   \end{eqnarray}
In this case,
\begin{eqnarray*}
\|f\|_{H^{p,q}_A(\rn)}\leq C_1\lf\|M_\varphi^0(f)
\r\|_{L^{p,q}(\rn)}\leq C_1\|M_\varphi(f)\|_
{L^{p,q}(\rn)}\leq C_2\|f\|_{H^{p,q}_A(\rn)},
\end{eqnarray*}
where $C_1$ and $C_2$ are positive constants independent
of $f$.
\end{theorem}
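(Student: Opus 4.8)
The plan is to settle the two easy implications, together with the right half of the asserted inequality chain, and then to prove the single nontrivial implication $\eqref{fe11}\Rightarrow\eqref{fe9}$. Since $\varphi\in\cs(\rn)$, we have $\varphi/\|\varphi\|_{\cs_N(\rn)}\in\cs_N(\rn)$, so for all $f\in\cs'(\rn)$ and $x\in\rn$, $M_\varphi^0(f)(x)\le M_\varphi(f)(x)\le\|\varphi\|_{\cs_N(\rn)}M_N(f)(x)$; taking $\lpq$ quasi-norms yields $\eqref{fe9}\Rightarrow\eqref{fe10}\Rightarrow\eqref{fe11}$ and $\|M_\varphi^0(f)\|_{\lpq}\le\|M_\varphi(f)\|_{\lpq}\le C_2\|f\|_{H^{p,q}_A(\rn)}$. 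It thus remains to prove that $M_\varphi^0(f)\in\lpq$ forces $f\in H^{p,q}_A(\rn)$ with $\|f\|_{H^{p,q}_A(\rn)}\le C_1\|M_\varphi^0(f)\|_{\lpq}$. By the atomic characterization (Theorem \ref{tt1}; cf. Remark \ref{sr1}), $\|\cdot\|_{H^{p,q}_A(\rn)}$ is comparable for all admissible levels, so it suffices to bound $\|M_I^0(f)\|_{\lpq}$ for one conveniently large $I\in\nn\cap(N_{(p)},\fz)$, which by Proposition \ref{sp1} also controls $\|M_I(f)\|_{\lpq}$.

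So assume $M_\varphi^0(f)\in\lpq$, fix $N\in\nn\cap(1/p,\fz)$ large, and for $K\in\zz_+$ and $L\in[0,\fz)$ pass to the truncated maximal functions of Definition \ref{d-4.2}. The truncation is there to guarantee finiteness: by Lemma \ref{fl5}, choosing $L=L(M)$ with $M$ large makes $M_\varphi^{(K,L)}(f)\le C_{(K,M)}[\max\{1,\rho(\cdot)\}]^{-M}\in\lpq$, and via the comparisons below the same holds for $T_\varphi^{N(K,L)}(f)$ and $M_I^{0(K,L)}(f)$; this a priori finiteness is exactly what makes the absorption step below legitimate. Next, Lemma \ref{fl2} (applied with the present $N$ and $L$) gives an index $I$, which we may take $>N_{(p)}$ by enlarging it if necessary, this only decreasing $M_I^{0(K,L)}(f)$, with $M_I^{0(K,L)}(f)\le C_{(L)}T_\varphi^{N(K,L)}(f)$ pointwise; and Lemma \ref{fl3} (using $N>1/p$ and the Aoki--Rolewicz theorem) gives $\|T_\varphi^{N(K,L)}(f)\|_{\lpq}\le C\|M_\varphi^{(K,L)}(f)\|_{\lpq}$. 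Hence $\|M_I^{0(K,L)}(f)\|_{\lpq}\le C\|M_\varphi^{(K,L)}(f)\|_{\lpq}$.

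The crux is to bound $\|M_\varphi^{(K,L)}(f)\|_{\lpq}$ back in terms of the radial maximal function. This rests on a pointwise Fefferman--Stein--Peetre-type estimate: for a suitable $r\in(0,\min\{1,p\})$ (with $N$ chosen large relative to $1/r$) one has, for all $x\in\rn$,
$$M_\varphi^{(K,L)}(f)(x)\le C\lf[M_{{\rm HL}}\lf(\lf[M_\varphi^{0(K,L)}(f)\r]^r\r)(x)\r]^{1/r}+\varepsilon_0\,M_I^{0(K,L)}(f)(x),$$
where $\varepsilon_0$ may be made as small as we wish at the cost of enlarging $C$ (this inequality uses the aperture estimate, Lemma \ref{fl1}, together with the internal structure of these maximal functions). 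Raising to the $r$-th power, using $\lf\||g|^r\r\|_{L^{p/r,q/r}(\rn)}=\|g\|_{\lpq}^r$ (see \eqref{se22}), the quasi-triangle inequality in $L^{p/r,q/r}(\rn)$, and the boundedness of $M_{{\rm HL}}$ on $L^{p/r,q/r}(\rn)$ (valid since $p/r>1$, by Lemma \ref{fl4} and Remark \ref{fr1}), we get $\|M_\varphi^{(K,L)}(f)\|_{\lpq}^r\le C\|M_\varphi^0(f)\|_{\lpq}^r+C\varepsilon_0\|M_I^{0(K,L)}(f)\|_{\lpq}^r$. Combining this with $\|M_I^{0(K,L)}(f)\|_{\lpq}\le C\|M_\varphi^{(K,L)}(f)\|_{\lpq}$ and choosing $\varepsilon_0$ small enough to absorb the last term into the left side (legitimate because $\|M_I^{0(K,L)}(f)\|_{\lpq}<\fz$), we obtain $\|M_I^{0(K,L)}(f)\|_{\lpq}\le C\|M_\varphi^0(f)\|_{\lpq}$ with $C$ independent of $K$ and of the chosen $L$.

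Finally, letting $K\to\fz$ and $L\downarrow0$, the damping factors $[\max\{1,\rho(A^{-K}\cdot)\}]^{-L}$ and $(1+b^{-k-K})^{-L}$ increase to $1$, so $M_I^{0(K,L)}(f)\nearrow M_I^0(f)$ pointwise; by the monotone property of the non-increasing rearrangement (see \cite[Proposition\ 1.4.5(8)]{lg08}) the $\lpq$ quasi-norms pass to the limit, giving $\|M_I^0(f)\|_{\lpq}\le C\|M_\varphi^0(f)\|_{\lpq}$. Then Proposition \ref{sp1} yields $\|M_I(f)\|_{\lpq}\ls\|M_\varphi^0(f)\|_{\lpq}<\fz$, so $f\in H^{p,q}_A(\rn)$, and by the level-independence of $\|\cdot\|_{H^{p,q}_A(\rn)}$ (Theorem \ref{tt1}) we conclude $\|f\|_{H^{p,q}_A(\rn)}\le C_1\|M_\varphi^0(f)\|_{\lpq}$. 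The main obstacle is the displayed pointwise inequality --- the anisotropic, Lorentz-adapted analogue of the Fefferman--Stein maximal estimate; its delicacy lies both in producing the small absorbable error term and in keeping the truncation in force so that all quantities stay a priori finite, and once it is available the transfer to $L^{p,q}(\rn)$ is exactly what Lemmas \ref{fl3} and \ref{fl4}, Remark \ref{fr1}, the identity \eqref{se22}, and the monotonicity of the rearrangement are designed to handle.
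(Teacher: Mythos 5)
Your architecture coincides with the paper's own proof: the trivial pointwise chain $M_\varphi^0(f)\le M_\varphi(f)\le\|\varphi\|_{\cs_N(\rn)}M_N(f)$ for the right-hand inequalities; then the truncated maximal functions of Definition \ref{d-4.2}, with a priori finiteness of $\|M_\varphi^{(K,L)}(f)\|_{\lpq}$ supplied by Lemma \ref{fl5} (plus the routine verification that $[\max\{1,\rho(\cdot)\}]^{-M}\in\lpq$ for $M>1/p$, which the paper carries out in both cases $q/p\le1$ and $q/p>1$); the chain $M_I^{0(K,L)}(f)\ls T_\varphi^{N(K,L)}(f)$ (Lemma \ref{fl2}) and $\|T_\varphi^{N(K,L)}(f)\|_{\lpq}\ls\|M_\varphi^{(K,L)}(f)\|_{\lpq}$ (Lemma \ref{fl3}); a Fefferman--Stein type pointwise control of $M_\varphi^{(K,L)}(f)$ by $\{M_{\mathrm{HL}}([M_\varphi^{0(K,L)}(f)]^t)\}^{1/t}$ transferred to $\lpq$ via \eqref{se22}, Lemma \ref{fl4} and Remark \ref{fr1}; and the monotone limit $K\to\fz$. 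Two of your choices are harmless variants: you prove \eqref{fe11}$\Rightarrow$\eqref{fe9} in one pass (the paper splits it into \eqref{fe10}$\Rightarrow$\eqref{fe9}, done with $L=0$ and the Fatou lemma, and \eqref{fe11}$\Rightarrow$\eqref{fe10}), and you state the key estimate globally with a small additive error $\varepsilon_0M_I^{0(K,L)}(f)$ to be absorbed after taking quasi-norms, whereas the paper proves it with no error term but only on the good set $\Omega_K$ of \eqref{fe22} and then shows the norm over $\Omega_K$ dominates the full norm; these are equivalent bookkeeping. Note, however, that the limit $L\downarrow0$ you invoke is unnecessary (for fixed $L$ the damping factors already increase to $1$ as $K\to\fz$, so $M_I^{0(K,L)}(f)\nearrow M_I^0(f)$), and this is fortunate because your claim that the constants are independent of $L$ is not accurate: $C_{(L)}$ in Lemma \ref{fl2} and the factor $b^{\tau L}$ arising in the pointwise estimate both depend on $L$.

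The genuine weak point is that the crux --- your displayed pointwise inequality --- is asserted rather than proved, and the mechanism you cite for it (the aperture estimate, Lemma \ref{fl1}) is not what yields it; Lemma \ref{fl1} enters only inside the proof of Lemma \ref{fl3}. What actually proves it, and what the paper spends the last third of its argument on, is the following: pick $k\le K$ and $y\in x+B_k$ nearly attaining $M_\varphi^{(K,L)}(f)(x)$; for $\widetilde x\in y+B_{k-\ell}$ write $f\ast\varphi_k(\widetilde x)-f\ast\varphi_k(y)=f\ast\Psi_k(y)$ with $\Psi:=\varphi(\cdot+A^{-k}(\widetilde x-y))-\varphi$; estimate $\|\Psi\|_{\cs_I(\rn)}\ls\lambda_-^{-\ell}$ by the mean value theorem and \eqref{se19}; control the resulting error by $\lambda_-^{-\ell}b^{\tau I}M_I^{0(K,L)}(f)(x)$ using the comparison $M_I^{(K,L)}(f)\le b^{\tau I}M_I^{0(K,L)}(f)$ (the truncated analogue of Proposition \ref{sp1}); choose $\ell$ once and for all so that this coefficient is small enough to be absorbed (on $\Omega_K$, or as your $\varepsilon_0$); and finally average the resulting lower bound for $F(\widetilde x,k)$, raised to the power $t\in(0,p)$, over $y+B_{k-\ell}$ to produce $M_{\mathrm{HL}}([M_\varphi^{0(K,L)}(f)]^t)(x)$ at the cost of the factor $b^{\tau Lt}b^{\tau+\ell}$. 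Your proposal correctly reduces the theorem to this inequality and handles all the surrounding transfer to $\lpq$, but the inequality itself --- the actual content of the implication --- is left unestablished, and the hint you give would not produce it.
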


\begin{proof}
Clearly, \eqref{fe9} implies \eqref{fe10} and
\eqref{fe10} implies \eqref{fe11}.
Thus, to prove Theorem \ref{ft1}, it suffices
to show that \eqref{fe10} implies \eqref{fe9}
and \eqref{fe11} implies \eqref{fe10}.

We first prove that \eqref{fe10} implies \eqref{fe9}.
To this end, notice that, by Lemma
\ref{fl2} with $N\in(1/p,\fz)\cap\mathbb{N}$ and
$L=0$, we find that there exists an $I\in\nn$ such that
$M_I^{0(K,0)}(f)(x)\ls T_\varphi^{N(K,0)}(f)(x)$
for all $K\in\zz_+$, $f\in\cs'(\rn)$ and $x\in\rn$.
From this and Lemma \ref{fl3}, we  further deduce that,
for all
$K\in\mathbb{Z}_+$ and $f\in\cs'(\rn)$,
\begin{eqnarray}\label{fe12}
\lf\|M_I^{0(K,0)}(f)\r\|_{L^{p,q}(\rn)}\ls
\lf\|M_\varphi^{(K,0)}(f)\r\|_{L^{p,q}(\rn)}.
\end{eqnarray}
Letting $K\rightarrow\infty$ in \eqref{fe12}, by
\cite[Proposition 1.4.5(8)]{lg08} and the Fatou lemma,
we know that
$$\lf\|M_I^0(f)\r\|_{L^{p,q}(\rn)}
\ls\lf\|M_\varphi(f)\r\|_{L^{p,q}(\rn)},$$
which, together with Proposition \ref{sp1}, shows that
\eqref{fe10} implies \eqref{fe9}.

Now we show that \eqref{fe11} implies \eqref{fe10}.
Suppose now that $M_\varphi^0(f)\in L^{p,q}(\rn)$.
By Lemma \ref{fl5}, we find that there exists $L\in(0,\fz)$
such that \eqref{fe13} holds true, which further implies that
$M_\varphi^{(K,L)}(f)\in L^{p,q}(\rn)$ for all $K\in\zz_+$.
Indeed, for $q/p\in(0,1]$, by \eqref{se6} and
\eqref{fe13}, we have
\begin{eqnarray*}
&&\lf\|M_\varphi^{(K,L)}(f)\r\|_{L^{p,q}(\rn)}^q\\
&&\hs\sim\int_0^\infty\lambda^{q-1}\lf|\lf
\{x\in\rn:\ M_\varphi^{(K,L)}(f)(x)>\lambda\r\}
\r|^{q/p}\,d\lambda\\
&&\hs\ls\int_0^\infty\lambda^{q-1}\lf|\lf\{x\in B_1:\
M_\varphi^{(K,L)}(f)(x)>\lambda\r\}\r|^{q/p}
\,d\lambda\\
&&\hs\hs+\sum_{j=1}^\infty\int_0^\infty\lambda^
{q-1}\lf|\lf\{x\in B_{j+1}\setminus B_j:\ M_\varphi
^{(K,L)}(f)(x)>\lambda\r\}\r|^{q/p}\,d\lambda\\
&&\hs\ls\int_0^1
\lambda^{q-1}\lf|B_1\r|^{q/p}\,d\lambda
+\sum_{j=1}^\infty\int_0^{b^{-jM}}\lambda^
{q-1}\lf|B_{j+1}\r|^{q/p}\,d\lambda\\
&&\hs\sim\sum_{j=0}^\infty b^{-jMq}b^{(j+1)q/p}
\sim1\ \ \ \ \ \ \ {\rm as}\ \ M>1/p.
\end{eqnarray*}
For another case when $q/p\in(1,\fz)$, by \eqref{se6},
the Minkowski integral inequality and \eqref{fe13}, we find that
\begin{eqnarray}\label{fe15}
&&\lf\|M_\varphi^{(K,L)}(f)\r\|_{L^{p,q}(\rn)}^q\\
&&\hs\ls\lf\|M_\varphi^{(K,L)}(f)\r\|_{L^{p,q}(B_1)}^q
+\lf\|M_\varphi^{(K,L)}(f)\r\|_{L^{p,q}(\rn
\setminus B_1)}^q\noz\\
&&\hs\sim\int_0^\infty\lambda^{q-1}\lf|\lf\{x\in B_1:\
M_\varphi^{(K,L)}(f)(x)>\lambda\r\}\r|^{q/p}
\,d\lambda\noz\\
&&\hs\hs+\lf[\sum_{k\in\zz}2^{kq}\lf(\sum_{j=1}^\infty
\lf|\lf\{x\in B_{j+1}\setminus B_j:\ M_\varphi^{(K,L)}
f(x)>2^k\r\}\r|\r)^{\frac qp}\r]^{\frac pq\cdot\frac qp}
\noz\\
&&\hs\ls\int_0^\infty\lambda^{q-1}\lf|\lf\{x\in B_1:\ M_\varphi
^{(K,L)}(f)(x)>\lambda\r\}\r|^{q/p}\,d\lambda\noz\\
&&\hs\hs+\lf[\sum_{j=1}^\infty\lf(\sum_{k\in\zz}2^{kq}
\lf|\lf\{x\in B_{j+1}\setminus B_j:\ M_\varphi^{(K,L)}
(f)(x)>2^k\r\}\r|^{\frac qp}\r)^{\frac pq}\r]^{\frac qp}\noz\\
&&\hs\sim\int_0^\infty\lambda^{q-1}\lf|\lf\{x\in B_1:\
M_\varphi^{(K,L)}(f)(x)>\lambda\r\}\r|^{q/p}\,d\lambda\noz\\
&&\hs\hs+\lf[\sum_{j=1}^\infty\lf(\int_0^\infty\lambda^{q-1}\lf|\lf\{
x\in B_{j+1}\setminus B_j:\ M_\varphi^{(K,L)}(f)(x)>\lambda
\r\}\r|^{\frac qp}\,d\lambda\r)^{\frac pq}\r]^{\frac qp}\noz\\
&&\hs\ls\int_0^1\lambda^{q-1}\lf|B_1\r|^{q/p}\,d\lambda+
\lf[\sum_{j=1}^\infty\lf(\int_0^{b^{-jM}}\lambda^{q-1}\lf|
B_{j+1}\r|^{q/p}\,d\lambda\r)^{\frac pq}\r]^{\frac qp}\noz\\
&&\hs\ls\lf[\sum_{j=0}^\infty b^{-jMp}b^{(j+1)}\r]^{\frac qp}
\sim1\ \ \ \ \ \ \ \ {\rm as}\ \ M>1/p.\noz
\end{eqnarray}
Clearly, \eqref{fe15} also holds true for $q=\infty$,
since
$$\|f\|_{L^{p,\infty}(\rn)}=\sup_{\lambda
\in(0,\fz)}\lf\{\lambda d_f^{1/p}(\lambda)\r\}
\sim\sup_{k\in\zz} 2^k \lf[d_f(2^k)\r]^{\frac{1}{p}}.$$

On the other hand, by Lemmas \ref{fl2} and \ref{fl3},
we know that, for any $L\in(0,\fz)$, there exists  some $I\in\nn$
such that, for all $K\in\zz_+$ and $f\in\cs'(\rn)$,
$$\lf\|M_I^{0(K,L)}(f)\r\|_{L^{p,q}(\rn)}\leq C_3
\lf\|M_\varphi^{(K,L)}(f)\r\|_{L^{p,q}(\rn)},$$
where $C_3$ is a positive constant independent of $K$.
For any given $K\in\zz_+$, let
\begin{eqnarray}\label{fe22}
\Omega_K:=\lf\{x\in\rn:\ M_I^{0(K,L)}(f)(x)\leq C_4
M_\varphi^{(K,L)}(f)(x)\r\}\end{eqnarray}
with $C_4:=2C_3$. Then
\begin{eqnarray}\label{fe16}
\lf\|M_\varphi^{(K,L)}(f)\r\|_{L^{p,q}(\rn)}\ls
\lf\|M_\varphi^{(K,L)}(f)\r\|_{L^{p,q}(\Omega_K)},
\end{eqnarray}
since
$$\lf\|M_\varphi^{(K,L)}(f)\r\|_{L^{p,q}(\Omega_K
^\complement)}\leq C_4^{-1}\lf\|M_I^{0(K,L)}(f)\r\|
_{L^{p,q}(\Omega_K^\complement)}\leq C_3/C_4
\lf\|M_\varphi^{(K,L)}(f)\r\|_{L^{p,q}(\rn)}.$$

To finish the proof that \eqref{fe11} implies \eqref{fe10},
for any given $L\in[0,\fz)$,
it suffices to
show that, for all $t\in(0,p)$, $K\in\zz_+$ and $f\in\cs'(\rn)$,
\begin{eqnarray}\label{fe17}
M_\varphi^{(K,L)}(f)(x)\ls\lf\{M_{{\rm HL}}
\lf(\lf[M_\varphi^{0(K,L)}(f)\r]^t\r)(x)\r\}
^{1/t},\ \ \ \ \ \ \ \forall\ x\in\Omega_K.
\end{eqnarray}
Indeed, if \eqref{fe17} holds true, then, by \eqref{fe16},
\eqref{se22}, \eqref{fe17} and Remark \ref{fr1},
for all $K\in\zz_+$ and $f\in\cs'(\rn)$, we have
\begin{eqnarray}\label{fe18}
\lf\|M_\varphi^{(K,L)}(f)\r\|_{L^{p,q}(\rn)}^t
&&\ls\lf\|M_\varphi^{(K,L)}(f)\r\|_{L^{p,q}
(\Omega_K)}^t\\
&&\sim\lf\|\lf[M_\varphi^{(K,L)}(f)\r]^t\r\|_{L^
{\frac pt,\frac qt}(\Omega_K)}\noz\\
&&\ls\lf\|M_{{\rm HL}}\lf(\lf[M_\varphi^{0(K,L)}(f)\r]^
t\r)\r\|_{L^{\frac pt,\frac qt}(\Omega_K)}\noz\\
&&\ls\lf\|\lf[M_\varphi^{0(K,L)}(f)\r]^t\r\|_{L^{
\frac pt,\frac qt}(\rn)}\sim\lf\|M_\varphi^{0(K,L)}
(f)\r\|_{L^{p,q}(\rn)}^t.\noz
\end{eqnarray}
Noticing that
$M_\varphi^{(K,L)}(f)(x)$ and $M_\varphi^{0(K,L)}(f)(x)$
converge pointwise and monotonically to $M_\varphi (f)(x)$
and $M_\varphi^0(f)(x)$ for all $f\in\cs'(\rn)$ and $x\in\rn$, respectively, as
$K\rightarrow\infty$, by \cite[Proposition 1.4.5(8)]{lg08},
the monotone convergence theorem and \eqref{fe18},
we have
\begin{eqnarray*}
\lf\|M_\varphi (f)\r\|_{L^{p,q}(\rn)}
\ls\lf\|M_\varphi^0(f)\r\|_{L^{p,q}(\rn)},
\end{eqnarray*}
which shows that \eqref{fe11} implies \eqref{fe10}.

Thus, to complete the proof of Theorem \ref{ft1},
we only need to prove \eqref{fe17}. For this purpose,
for all $f\in\cs'(\rn)$, $k\in\zz$ and $y\in\rn$, let
\begin{eqnarray}\label{fe19}
F(y,k):=|(f\ast\varphi_k)(y)|\lf[\max\lf\{1,\rho
\lf(A^{-K}y\r)\r\}\r]^{-L}\lf[1+b^{-k-K}\r]^{-L}.
\end{eqnarray}
Let $x\in\Omega_K$. By the definitions of $F_0^{* K}$ and
$M_\varphi^{(K,L)}$ and \eqref{fe19}, it is easy to see that there exist
$k\in(-\fz,K]\cap\mathbb{N}$ and $y\in x+B_k$
 such that
\begin{eqnarray}\label{fe23}F(y,k)\geq F_0^{* K}(x)/2
=M_\varphi^{(K,L)}(f)(x)/2.
\end{eqnarray}
For this $y$, consider $\widetilde{x}\in y+B_{k-\ell}$ for some
integer $\ell\in\zz_+$ to be specified later. We write
\begin{eqnarray}\label{4.21x}
f\ast\varphi_k(\widetilde{x})-f\ast\varphi_k(z)=f\ast\Psi_k(z),
\ \ \ \ \ \forall\ z\in\rn,
\end{eqnarray}
where $\Psi(z):=\varphi(z+A^{-k}(\widetilde{x}-y))-\varphi(z)$
for all $z\in\rn$.
By \eqref{se24}, \eqref{se4}, the mean value theorem and
\eqref{se19}, we conclude that
\begin{eqnarray}\label{fe20}
\ \ \ \ \|\Psi\|_{\cs_I(\rn)}
&&\leq\sup_{h\in B_{-\ell}}\lf\|\varphi(\cdot
+h)-\varphi(\cdot)\r\|_{\cs_I(\rn)}\\
&&=\sup_{h\in B_{-\ell}}\sup_{z\in\rn}\sup_
{|\alpha|\leq I}\max\lf\{1,\lf[\rho(z)\r]^I\r\}
\lf|\partial^\alpha\varphi(z+h)-\partial^\alpha
\varphi(z)\r|\noz\\
&&\ls\lf[\sup_{h\in B_{-\ell}}\sup_{z\in\rn}
\sup_{|\alpha|\leq I+1}\max\lf\{1,\lf[\rho(z+h)
\r]^I\r\}\lf|\partial^\alpha\varphi(z+h)\r|\r]
\lf[\max_{h\in B_{-\ell}}|h|\r]\noz\\&&\ls C_5\lambda
_-^{-\ell},\noz
\end{eqnarray}
where the positive constant $C_5$ is
independent of $L$. Notice that,
by a proof similar to that of \cite[p.\,17, Proposition 3.10]{mb03},
we have, for all $x\in\rn$,
\begin{eqnarray}\label{fe21}
M_I^{(K,L)}(f)(x)\leq b^{\tau I}M_I^{0(K,L)}(f)(x).
\end{eqnarray}
Moreover, by \eqref{se4}, we know that, for all $\widetilde{x}\in y+B_{k-\ell}$,
$$\max\lf\{1,\rho\lf(A^{-K}\widetilde{x}\r)\r\}
\leq b^\tau\max\lf\{1,\rho\lf(A^{-K}y\r)\r\},$$
which, combined with
\eqref{4.21x}, \eqref{fe23}, \eqref{fe20}, \eqref{fe21}
and \eqref{fe22}, implies that
\begin{eqnarray}\label{fe24}
\ \ \ \ \ \ b^{\tau L}F(\widetilde{x},k)
&&\geq\lf[|f\ast\varphi_k(y)|-|f\ast\Psi_k(y)|\r]
\lf[\max\lf\{1,\rho\lf(A^{-K}y\r)\r\}\r]^{-L}\lf(1+b
^{-k-K}\r)^{-L}\\
&&\geq F(y,k)-M_I^{(K,L)}(f)(x)\|\Psi\|_{\cs_I(\rn)}\noz\\
&&\gs
M_\varphi^{(K,L)}(f)(x)/2-C_5\lambda_-^{-\ell}b^
{\tau I}M_I^{0(K,L)}(f)(x)\noz\\
&&\gs M_\varphi^{(K,L)}(f)(x)/2-C_4C_5\lambda_
-^{-\ell}b^{\tau I}M_\varphi^{(K,L)}(f)(x)\gs
M_\varphi^{(K,L)}(f)(x)/4,\noz
\end{eqnarray}
where $\ell$ is chosen to be the smallest integer such that
$C_4C_5\lambda_-^{-\ell}b^{\tau I}\leq1/4$.
Therefore, by \eqref{fe24} and \eqref{se1},
for all $t\in(0,p)$ and $x\in\Omega_K$, we have
\begin{eqnarray*}
\lf[M_\varphi^{(K,L)}(f)(x)\r]^t
&&\ls\frac{4^tb^{\tau Lt}}{\lf|B_{k-\ell}\r|}
\int_{y+B_{k-\ell}}\lf[F(z,k)\r]^t\,dz\\
&&\ls4^tb^{\tau Lt}\frac{b^{\tau+\ell}}{\lf
|B_{k+\tau}\r|}\int_{x+B_{k+\tau}}\lf[M_
\varphi^{0(K,L)}(f)\r]^t(z)\,dz\noz\\
&&\ls 4^tb^{\tau Lt}M_{{\rm HL}}\lf(\lf[M_\varphi^{0(K,L)}(f)
\r]^t\r)(x),\noz
\end{eqnarray*}
which implies \eqref{fe17}. This finishes the proof
of Theorem \ref{ft1}.
\end{proof}

\section{Finite atomic decomposition characterizations of $H^{p,q}_A(\rn)$\label{s7}}

\hskip\parindent
In this section, we obtain the finite atomic decomposition characterizations
of $H^{p,q}_A(\rn)$.
To be precise, we prove that, for any given finite linear
combination of $(p,r,s)$-atoms with $r\in(0,\fz)$ (or continuous $(p,\fz,s)$-atoms),
its quasi-norm in $H^{p,q}_A(\rn)$ can be achieved via all its finite atomic decompositions.

\begin{definition}\label{sevend1}
For an admissible anisotropic triplet $(p,r,s)$, $q\in(0,\fz]$ and a dilation $A$, denote by
$H_{A,{\rm fin}}^{p,r,s,q}(\rn)$ the set of  all distributions
$f\in\cs'(\rn)$ satisfying that there exist
$K,\,I\in\nn$, a finite sequence of $(p,r,s)$-atoms,
$\{a_i^k\}_{i\in[1,I]\cap\nn,\,k\in[1,K]\cap\mathbb{Z}}$,
supported on
$\{x_i^k+B_i^k\}_{i\in[1,I]\cap\nn,\,k\in[1,K]\cap\mathbb{Z}}
\subset\mathfrak{B}$, respectively,
and a positive constant $\widetilde{C}$, independent of $I$ and $K$, such that
$\sum_{i=1}^I\chi_{x_i^k+B_i^k}(x)\leq \widetilde{C}$
for all $x\in\rn$ and $k\in[1,K]\cap\mathbb{Z}$, and
$f=\sum_{k=1}^K\sum_{i=1}^I\lambda_i^ka_i^k$
in $\cs'(\rn)$, where $\lambda_i^k\sim2^k|B_i^k|^{1/p}$ for all
$k\in[1,K]\cap\mathbb{Z}$ and $i\in[1,I]\cap\mathbb{N}$ with the implicit equivalent
positive constants independent of $k,\,K$ and $i,\,I$.
Moreover, the
quasi-norm of $f$ in $H_{A,{\rm fin}}^{p,r,s,q}(\rn)$ is defined by
\begin{eqnarray*}
\|f\|_{H_{A,{\rm fin}}^{p,r,s,q}(\rn)}:=\inf\lf\{
\lf[\sum_{k=1}^{K}\lf(
\sum_{i=1}^I|\lz_i^k|^p\r)^{\frac qp}\r]^{\frac1q}:\ \
f=\sum_{k=1}^{K}\sum_{i=1}^I\lz_i^ka_i^k,\ \ \
 K,\,I\in\nn\r\}
\end{eqnarray*}
with the usual modification made when $q=\fz$, where the infimum is taken over all
decompositions of $f$ as above.
\end{definition}

Obviously, by Theorem \ref{tt1}, we know that,
for any admissible anisotropic triplet $(p,r,s)$ and $q\in(0,\fz)$,
the set $H_{A,{\rm fin}}^{p,r,s,q}(\rn)$ is dense in $H^{p,q}_A(\rn)$ with
respect to the quasi-norm $\|\cdot\|_{H^{p,q}_A(\rn)}$.
From this, we deduce the following density of $H^{p,q}_A(\rn)$.

\begin{lemma}\label{sevenl4}
If $p,\,q\in(0,\fz)$, then
\begin{enumerate}
\item[{\rm (i)}] for any $r\in[1,\fz]$, $H^{p,q}_A(\rn)\cap L^r(\rn)$
is dense in $H^{p,q}_A(\rn)$;
\item[{\rm (ii)}] $H^{p,q}_A(\rn)\cap C_c^\fz(\rn)$
is dense in $H^{p,q}_A(\rn)$.
\end{enumerate}
\end{lemma}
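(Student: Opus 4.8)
The plan is to deduce Lemma \ref{sevenl4} from the atomic characterization in Theorem \ref{tt1} together with the observation, already recorded just above the lemma, that $H_{A,{\rm fin}}^{p,r,s,q}(\rn)$ is dense in $H^{p,q}_A(\rn)$. The key point is that every \emph{finite} linear combination of $(p,r,s)$-atoms is automatically an element of $L^{r}(\rn)$ (and, after a mollification, of $C_c^\fz(\rn)$), so the density of $H_{A,{\rm fin}}^{p,r,s,q}(\rn)$ in $H^{p,q}_A(\rn)$ furnishes the density of the smaller spaces in (i) and (ii). I would treat the two parts separately but with a shared first step.

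First, for part (i), fix $r\in[1,\fz]$ and pick the admissible anisotropic triplet $(p,r,s)$ with $s:=N_{(p)}$, say. For any $f\in H^{p,q}_A(\rn)$ and any $\ez\in(0,\fz)$, by the density of $H_{A,{\rm fin}}^{p,r,s,q}(\rn)$ in $H^{p,q}_A(\rn)$ (immediate from Theorem \ref{tt1}) there exists $g=\sum_{k=1}^K\sum_{i=1}^I\lz_i^k a_i^k\in H_{A,{\rm fin}}^{p,r,s,q}(\rn)$ with $\|f-g\|_{H^{p,q}_A(\rn)}<\ez$. Since each $a_i^k$ satisfies $\|a_i^k\|_{L^r(\rn)}\le|B_i^k|^{1/r-1/p}<\fz$ and the sum is finite, the triangle inequality in $L^r(\rn)$ gives $g\in L^r(\rn)$; hence $g\in H^{p,q}_A(\rn)\cap L^r(\rn)$ and it approximates $f$. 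This proves (i). For the endpoint care: when $r=\fz$ one uses $\|a_i^k\|_{L^\fz(\rn)}\le|B_i^k|^{-1/p}$ in the same way, and a finite sum of bounded compactly supported functions is in $L^\fz(\rn)$, so no extra argument is needed.

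For part (ii), I would start from $g\in H_{A,{\rm fin}}^{p,r,s,q}(\rn)$ as above (taking, e.g., $r=\fz$), which is a bounded, compactly supported function, and mollify: put $g_m:=g\ast\vz_{-m}$ for a fixed $\vz\in\cs(\rn)$ with $\int_\rn\vz(x)\,dx=1$, where $\vz_{-m}$ is as defined before Definition \ref{d-mf}. Then $g_m\in C^\fz(\rn)$; since $g$ has compact support and $\vz$ is Schwartz, a standard truncation (multiplying by a smooth cutoff equal to $1$ on a large dilated ball and supported in a slightly larger one) produces $\widetilde g_m\in C_c^\fz(\rn)$ with $\widetilde g_m\to g$ in $H^{p,q}_A(\rn)$ as $m\to\fz$. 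The convergence $g_m\to g$ in $H^{p,q}_A(\rn)$ follows from \cite[p.\,15, Lemma 3.8]{mb03} combined with the pointwise bound \eqref{te11} (i.e. $M_{N+2}(g_m)\ls M_N(g)$), the dominated convergence theorem in $L^{p,q}(\rn)$ via \eqref{se6}, and the fact that $M_N(g)\in L^{p,q}(\rn)$; the extra truncation error is controlled by the rapid decay of $g_m$ outside the support of $g$, using \eqref{se6} once more. Chaining the three approximations $f\approx g\approx g_m\approx\widetilde g_m$ yields (ii).

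The main obstacle I anticipate is the $C_c^\fz$ part: one must simultaneously arrange smoothness (by mollification), compact support (by cutoff), and the moment conditions are \emph{not} needed for membership in $H^{p,q}_A(\rn)$, so the only genuine technical work is verifying that the mollification-plus-truncation converges in the $H^{p,q}_A(\rn)$ quasi-norm rather than merely in $\cs'(\rn)$ or $L^r(\rn)$. This requires a quantitative $L^{p,q}(\rn)$ estimate on $M_N(g_m-g)$ and on the tail $M_N(g_m-\widetilde g_m)$; here the Aoki-Rolewicz theorem (see \cite{ta42, sr57}), used exactly as in the proof of Proposition \ref{sp3}, lets one split the quasi-norm, and the uniform domination \eqref{te11} together with the monotone/dominated convergence in Lorentz spaces closes the argument. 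Everything else is routine once the density of $H_{A,{\rm fin}}^{p,r,s,q}(\rn)$ in $H^{p,q}_A(\rn)$ is invoked.
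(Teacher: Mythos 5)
Your argument for part (i) works only when $p\in(0,1]$: the atomic machinery of Theorem \ref{tt1} and the space $H_{A,{\rm fin}}^{p,r,s,q}(\rn)$ are only available for admissible triplets, which require $p\in(0,1]$ (and $r\in(1,\fz]$, so for $r=1$ you should approximate by finite sums of $(p,\fz,s)$-atoms, which lie in every $L^r(\rn)$). The lemma, however, is stated for all $p\in(0,\fz)$; the case $p\in(1,\fz)$ needs a separate argument, and the paper handles it by invoking $H^{p,q}_A(\rn)=L^{p,q}(\rn)$ for $p\in(1,\fz)$ (Remark \ref{sixr5}(ii)) together with the density of simple functions in $L^{p,q}(\rn)$. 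As written, your proof simply omits this range.

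The more serious problems are in part (ii). First, the mollify-then-truncate scheme fails for $p\le1$: your $g\in H_{A,{\rm fin}}^{p,\fz,s,q}(\rn)$ has vanishing moments, and so does $g\ast\varphi_{-m}$, but multiplying by a smooth cutoff $\eta$ destroys this cancellation. Generically $\int_\rn \eta(x)\,g\ast\varphi_{-m}(x)\,dx\neq0$, and a bounded compactly supported function with nonzero integral satisfies $M_N(\widetilde g_m)(x)\gs[\rho(x)]^{-1}$ at infinity, which is not in $L^{p,q}(\rn)$ for $p\le1$ (with $q<\fz$); thus $\widetilde g_m\notin H^{p,q}_A(\rn)$ and $\|g-\widetilde g_m\|_{H^{p,q}_A(\rn)}=\fz$, so the truncated approximants cannot converge. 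The paper avoids this entirely by choosing the mollifier $\varphi\in C_c^\fz(\rn)$, so that $g\ast\varphi_k$ is already in $C_c^\fz(\rn)$ and inherits the vanishing moments of $g$; no truncation is ever performed. Second, even for the step $g\ast\varphi_{-m}\to g$ in $H^{p,q}_A(\rn)$, your appeal to dominated convergence is missing its key hypothesis: \cite[p.\,15, Lemma 3.8]{mb03} gives only convergence in $\cs'(\rn)$, and \eqref{te11} gives only the domination $M_{N+2}(g\ast\varphi_{-m})\ls M_N(g)$; neither yields the almost everywhere convergence of $M_N(g\ast\varphi_{-m}-g)$ that dominated convergence requires. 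Establishing exactly this pointwise convergence is where the paper spends most of its effort (first for continuous compactly supported functions via uniform continuity, then for $H^{p,q}_A(\rn)\cap L^2(\rn)$ via the weak-$(2,2)$ bound for $M_N$ and a $\limsup$ density argument). A shortcut adapted to your special $g$ would be to note that, for $\varphi\in C_c^\fz(\rn)$, each $a-a\ast\varphi_k$ is supported in a fixed dilated ball, keeps the vanishing moments, and has $L^2$-norm tending to $0$, hence is a small multiple of a $(p,2,s)$-atom, so its $H^{p,q}_A(\rn)$ quasi-norm tends to $0$ by Theorem \ref{tt1}; but some such argument must be supplied, and none is in the proposal.
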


\begin{proof}
We first prove (i). If $p\in(1,\fz)$ and $q\in(0,\fz)$, then $H^{p,q}_A(\rn)=L^{p,q}(\rn)$
(see Remark \ref{sixr5}(ii) below).
By \cite[Theorem 1.4.13]{lg08}, we know that the set of simple functions is dense in
$L^{p,q}(\rn)$. Thus, $L^{p,q}(\rn)\cap L^r(\rn)$ is also dense in $L^{p,q}(\rn)$
for all $r\in[1,\fz]$.
If $p\in(0,1]$ and $q\in(0,\fz)$, by the density of the set $H_{A,{\rm fin}}^{p,\fz,s,q}(\rn)$
in $H^{p,q}_A(\rn)$ and $H_{A,{\rm fin}}^{p,\fz,s,q}(\rn)\subset L^r(\rn)$ for all $r\in[1,\fz]$,
we easily find that
$H^{p,q}_A(\rn)\cap L^r(\rn)$ is dense in $H^{p,q}_A(\rn)$. This finishes the proof of (i).

Now we prove (ii). To this end, we claim that, for any $\varphi\in\cs(\rn)$ with
$\int_{\rn}\varphi(x)\,dx\neq0$ and $f\in H^{p,q}_A(\rn)$,
\begin{eqnarray}\label{sevene21}
f\ast\varphi_k\rightarrow f \ \ \ \ \ {\rm in}\ H^{p,q}_A(\rn)\ \ {\rm as}\ k\to-\fz.
\end{eqnarray}
To show this, we first assume that $f\in H^{p,q}_A(\rn)\cap L^2(\rn)$.
For this case, to prove \eqref{sevene21}, it suffices to show that
\begin{eqnarray}\label{sevene22}
M_N\lf(f\ast\varphi_k-f\r)(x)\rightarrow0\ \ \ \ {\rm for\ a.\,e.}\
x\in\rn\ \ {\rm as}\ k\to-\fz,
\end{eqnarray}
where $N:=N_{(p)}+2$. Indeed, it is easy to see that
$f\ast\varphi_k-f\in L^2(\rn)$ for all $k\in\mathbb{Z}$,
which, together with
Proposition \ref{tl4} and Remark \ref{tr1}, implies that
$M_N(f\ast\varphi_k-f)\in L^2(\rn)$ for all $k\in\mathbb{Z}$. By this,
\cite[p.\,39, Lemma 6.6]{mb03}, \eqref{sevene22}, \eqref{se6}
and the Lebesgue dominated convergence theorem, we know that
\eqref{sevene21} holds true for all $f\in H^{p,q}_A(\rn)\cap L^2(\rn)$.

Now, we show \eqref{sevene22}.
Notice that, if $g$ is continuous and has compact support,
then $g$ is uniformly continuous on $\rn$. Thus, for any $\delta\in(0,\fz)$,
there exists $\eta\in(0,\fz)$ such that, for all $y\in\rn$ satisfying $\rho(y)<\eta$
and $x\in\rn$, $|g(x-y)-g(x)|<\frac\delta{2\|\varphi\|_{L^1(\rn)}}$.
Without loss of generality,
we may assume that $\int_{\rn}\varphi(x)\,dx=1$. Then
$\int_{\rn}\varphi_k(x)\,dx=1$ for all $k\in\zz$. From this, we deduce that,
for all $k\in\zz$ and $x\in\rn$,
\begin{eqnarray}\label{sevene24}
&&|g\ast\varphi_k(x)-g(x)|\\
&&\hs\le\int_{\rho(y)<\eta}|g(x-y)-g(x)||\varphi_k(y)|\,dy
+\int_{\rho(y)\ge\eta}|g(x-y)-g(x)||\varphi_k(y)|\,dy\noz\\
&&\hs<\frac\delta2+2\|g\|_{L^\fz(\rn)}
\int_{\rho(y)\ge b^{-k}\eta}|\varphi(y)|\,dy.\noz
\end{eqnarray}
On the other hand, by the integrability of $\varphi$, we know that
there exists $\widetilde{k}\in\zz$ such that,
for all $k\in[\widetilde{k},\fz)\cap\zz$,
$2\|g\|_{L^\fz(\rn)}
\int_{\rho(y)\ge b^{-k}\eta}|\varphi(y)|\,dy<\frac\delta2$,
which, combined with \eqref{sevene24}, implies that
$\lim_{k\to-\fz}|g\ast\varphi_k(x)-g(x)|=0$ holds true uniformly for all $x\in\rn$.
Therefore, $\|g\ast\varphi_k-g\|_{L^\fz(\rn)}\to0$ as $k\to-\fz$,
which, together with Proposition \ref{tl4} and Remark \ref{tr1},
further implies that
\begin{eqnarray}\label{sevene23}
\lf\|M_N\lf(g\ast\varphi_k-g\r)\r\|_{L^\fz(\rn)}\ls
\|g\ast\varphi_k-g\|_{L^\fz(\rn)}\to0\ \ \ {\rm as}\ k\to-\fz.
\end{eqnarray}
For any given $\epsilon\in(0,\fz)$, there exists a continuous function $g$
with compact support such that $\|f-g\|^2_{L^2(\rn)}<\epsilon$.
By \eqref{sevene23} and \cite[p.\,39, Lemma 6.6]{mb03}, there exists
a positive constant $C_6$ such that, for all $x\in\rn$,
\begin{eqnarray*}
&&\limsup_{k\to-\fz}M_N\lf(f\ast\varphi_k-f\r)(x)\\
&&\hs\le\sup_{k\in\zz}M_N\lf((f-g)\ast\varphi_k\r)(x)
+\limsup_{k\to-\fz}M_N\lf(g\ast\varphi_k-g\r)(x)
+M_N(g-f)(x)\\
&&\hs\le C_6M_{N_{(p)}}(g-f)(x).
\end{eqnarray*}
Therefore, by Proposition \ref{tl4} and Remark \ref{tr1} again,
we find that there exists a
positive constant $C_7$ such that, for any $\lz\in(0,\fz)$,
\begin{eqnarray*}
&&\lf|\lf\{x\in\rn:\ \limsup_{k\to-\fz}
M_N\lf(f\ast\varphi_k-f\r)(x)>\lz\r\}\r|\\
&&\hs\le\lf|\lf\{x\in\rn:\
M_{N_{(p)}}(g-f)(x)>\frac\lz{C_6}\r\}\r|
\le C_7\frac{\|f-g\|^2_{L^2(\rn)}}{\lz^2}\le C_7\frac{\epsilon}{\lz^2},
\end{eqnarray*}
which implies that \eqref{sevene22} holds true for all
$f\in H^{p,q}_A(\rn)\cap L^2(\rn)$.

Assume now $f\in H^{p,q}_A(\rn)$. By (i), $H^{p,q}_A(\rn)\cap L^2(\rn)$
is dense in $H^{p,q}_A(\rn)$.
Thus, for any given $\epsilon\in(0,\fz)$, there exists a function
$g\in H^{p,q}_A(\rn)\cap L^2(\rn)$ such that
$$\|f-g\|_{H^{p,q}_A(\rn)}^q<\epsilon.$$
Moreover, by \cite[p.\,39, Lemma 6.6]{mb03} again and $f\in H^{p,q}_A(\rn)$,
we know that $\{f\ast\varphi_k\}_{k\in\zz}$ are uniformly
bounded in $H^{p,q}_A(\rn)$ and
$$\sup_{k\in\zz}\lf\|(f-g)\ast\varphi_k\r\|_{H^{p,q}_A(\rn)}
\ls\|f-g\|_{H^{p,q}_A(\rn)}.$$
Therefore,
by \eqref{sevene21} being true for all $f\in H^{p,q}_A(\rn)\cap L^2(\rn)$,
we further conclude that
\begin{eqnarray*}
&&\limsup_{k\to-\fz}\|f\ast\varphi_k-f\|_{H^{p,q}_A(\rn)}^q\\
&&\hs\le\sup_{k\in\zz}\|(f-g)\ast\varphi_k\|_{H^{p,q}_A(\rn)}^q
+\limsup_{k\to-\fz}\|g\ast\varphi_k-g\|_{H^{p,q}_A(\rn)}^q
+\|g-f\|_{H^{p,q}_A(\rn)}^q\\
&&\hs\ls\|g-f\|_{H^{p,q}_A(\rn)}^q\ls\epsilon.
\end{eqnarray*}
This implies that the claim \eqref{sevene21} holds true.

Notice that, if $f\in H_{A,{\rm fin}}^{p,r,s,q}(\rn)$
and $\varphi\in C_c^\fz(\rn)$
with $\int_{\rn}\varphi(x)\,dx\neq0$, then
$f\ast\varphi_k\in C_c^\fz(\rn)\cap H^{p,q}_A(\rn)$
for all $k\in\zz$ and, by \eqref{sevene21},
\begin{eqnarray*}
f\ast\varphi_k\rightarrow f \ \ \ \ \ {\rm in}\ H^{p,q}_A(\rn)\ \ {\rm as}\ k\to-\fz.
\end{eqnarray*}
From this and the density of the set $H_{A,{\rm fin}}^{p,r,s,q}(\rn)$ in
$H^{p,q}_A(\rn)$, we further deduce that
$C_c^\fz(\rn)\cap H^{p,q}_A(\rn)$ is dense in $H^{p,q}_A(\rn)$. This finishes the proof
of (ii) and hence Lemma \ref{sevenl4}.
\end{proof}

The following conclusion is from Theorem \ref{tt1} and its proof.
We state it here for the later application.
\begin{lemma}\label{sevenl1}
If $p\in(0,1],\,q\in(0,\fz],\,r\in(1,\fz]$ and $s\in\mathbb{N}$ with
$s\geq\lfloor(1/p-1)\ln b/\ln\lambda_-\rfloor$, then,
for any $f\in H^{p,q}_A(\rn)\cap L^r(\rn)$, there exist
$\{\lz_i^k\}_{k\in\zz,\,i\in\nn}\subset\mathbb{C}$,
$\{x_i^k\}_{k\in\zz,\,i\in\nn}\subset\rn$, balls
$\{B_{\ell_i^k}\}_{k\in\zz,\,i\in\nn}$ and
$(p,\fz,s)$-atoms $\{a_i^k\}_{k\in\zz,\,i\in\nn}$ such that
$$f=\sum_{k\in\zz}\sum_{i\in\nn}\lz_i^ka_i^k,$$
where the series converges almost everywhere and
also converges in $\cs'(\rn)$,
\begin{eqnarray}\label{sevene1}
\supp a_i^k\subset B_{\ell_i^k+4\tau},\ \
\Omega_k=\bigcup_{i\in\mathbb{N}}(x_i^k+B_{\ell_i^k+4\tau})\ \
for\ all\ k\in\zz\ and\ i\in\nn,\ here
\end{eqnarray}
$$\Omega_k:=\lf\{x\in\rn:\ M_N(f)(x)>2^k\r\};$$
\begin{eqnarray}\label{sevene2}
(x_i^k+B_{\ell_i^k-\tau})\cap(x_j^k+B_{\ell_j^k-\tau})
=\emptyset\ \ \ for\ all\ k\in\zz\ and\ i,\,j\in\nn\ with\ i\neq j;
\end{eqnarray}
\begin{eqnarray}\label{sevene3}
\sharp\lf\{j\in\mathbb{N}:\
(x_i^k+B_{\ell_i^k+4\tau})\cap(x_j^k+B_{\ell_j^k+
4\tau})\neq\emptyset\r\}\leq L\ \ for\ all\ \ i\in\mathbb{N},
\end{eqnarray}
here $L$ is a
positive constant independent of $\Omega_k$ and $f$.
Moreover, there exists a positive constant $C$, independent of $f$, such that
\begin{eqnarray}\label{sevene4}
|\lz_i^ka_i^k|\le C2^k\ \ \ \ \ \ \ \ \ \
for\ all\ \ k\in\zz\ and\ i\in\mathbb{N},
\end{eqnarray}
\begin{eqnarray}\label{sevene5}
\sum_{k\in\zz}\lf(\sum_{i\in\nn}|\lz_i^k|^p\r)^{\frac qp}
\le C\|f\|_{H^{p,q}_A(\rn)}^q.
\end{eqnarray}
\end{lemma}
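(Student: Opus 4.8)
The plan is to deduce the lemma by rerunning the construction in the proof of Theorem \ref{tt1}, but working directly with $f$ in place of the mollified functions $f^{(m)}:=f\ast\varphi_{-m}$ used there. This substitution is legitimate precisely because $f\in L^r(\rn)\subset L^1_{\mathrm{loc}}(\rn)$: every distributional pairing $\langle f,Q\zeta_i^k\rangle$ becomes a genuine integral, each product $f\zeta_i^k$ is an honest function, and, fixing $\varphi\in\cs(\rn)$ with $\int_\rn\varphi(x)\,dx=1$, the almost everywhere convergence $f\ast\varphi_k\to f$ at Lebesgue points yields $|f(x)|\le M_N(f)(x)$ for a.e.\ $x\in\rn$. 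In particular, Step 2 of the proof of Theorem \ref{tt1} (the Alaoglu compactness argument producing the weak-$\ast$ limits) is no longer needed, since the atoms are now obtained in one stroke.

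Concretely, I would set $N:=N_{(p)}$ and $\Omega_k:=\{x\in\rn:\ M_N(f)(x)>2^k\}$ for $k\in\zz$; each $\Omega_k$ is open, and $|\Omega_k|<\fz$ because $M_N(f)\in L^{p,q}(\rn)\hookrightarrow L^{p,\fz}(\rn)$, so $|\Omega_k|\le[2^{-k}\|M_N(f)\|_{L^{p,\fz}(\rn)}]^p$. Applying Lemma \ref{tl1} to $\Omega_k$ with $d=6\tau$ gives $\{x_i^k\}_{i\in\nn}$ and $\{\ell_i^k\}_{i\in\nn}$ for which \eqref{te22}, \eqref{te2} and \eqref{te5} hold; these are exactly the covering statement in \eqref{sevene1}, together with \eqref{sevene2} and \eqref{sevene3}. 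Then, as in the proof of Theorem \ref{tt1}, I would construct the smooth partition of unity $\{\zeta_i^k\}_{i\in\nn}$ subordinate to $\Omega_k$, the polynomials $P_i^k$ and $P_{i,j}^{k+1}$, the bad parts $b_i^k:=[f-P_i^k]\zeta_i^k$, the good parts $g^k:=f-\sum_{i\in\nn}b_i^k=f\chi_{\Omega_k^\complement}+\sum_{i\in\nn}P_i^k\zeta_i^k$, and the functions
\[
h_i^k:=[f-P_i^k]\zeta_i^k-\sum_{j\in\nn}\lf\{[f-P_j^{k+1}]\zeta_i^k-P_{i,j}^{k+1}\r\}\zeta_j^{k+1}.
\]
The $f$-analogues of \eqref{te15}--\eqref{te18} --- whose proofs use only that $f\in\cs'(\rn)$, the position of $\Omega_k$ relative to $\{M_N(f)\le2^k\}$, and $|f|\le M_N(f)$ a.e.\ --- then give $\supp h_i^k\subset x_i^k+B_{\ell_i^k+4\tau}$ (hence the support part of \eqref{sevene1}), the vanishing of the moments of $h_i^k$ up to order $s$, and $\|h_i^k\|_{L^\fz(\rn)}\ls 2^k$. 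Writing $h_i^k=\lambda_i^ka_i^k$ with $\lambda_i^k:=C2^k|B_{\ell_i^k+4\tau}|^{1/p}\sim 2^k|B_{\ell_i^k}|^{1/p}$ for a suitable constant $C$ makes each $a_i^k$ a $(p,\fz,s)$-atom and gives \eqref{sevene4}; the bound \eqref{sevene5} is then nothing but \eqref{te65} and its $q=\fz$ counterpart (via \eqref{se7}), since $\sum_{k\in\zz}(\sum_{i\in\nn}|\lambda_i^k|^p)^{q/p}\sim\sum_{k\in\zz}2^{kq}|\Omega_k|^{q/p}\sim\|M_N(f)\|_{L^{p,q}(\rn)}^q$ by \eqref{te5}, \eqref{te22} and \eqref{se6}.

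It remains to check the two modes of convergence. Note first that $\sum_{i\in\nn}h_i^k$ is a locally finite sum for each $k$ by \eqref{sevene3}, and $\sum_{i\in\nn}h_i^k=g^{k+1}-g^k$ (the $f$-analogue of \eqref{te4}). Since $\supp\zeta_i^k\subset x_i^k+B_{\ell_i^k+\tau}\subset\Omega_k$ and $\supp\zeta_j^{k+1}\subset\Omega_{k+1}\subset\Omega_k$, formula \eqref{te17} shows $h_i^k\equiv0$ off $\Omega_k$; as $\Omega_{k+1}\subset\Omega_k$ and $|\Omega_k|\to0$, for a.e.\ $x$ there is $k_1(x)$ with $x\notin\Omega_k$ for all $k\ge k_1(x)$, so $\sum_{k\ge k_1(x)}\sum_{i\in\nn}h_i^k(x)=0$, while for $k<k_1(x)$ telescoping gives $\sum_{k=-N}^{k_1(x)-1}\sum_{i\in\nn}h_i^k(x)=g^{k_1(x)}(x)-g^{-N}(x)$; using $\|g^{-N}\|_{L^\fz(\rn)}\ls 2^{-N}\to0$ and $g^{k_1(x)}(x)=f(x)\chi_{\Omega_{k_1(x)}^\complement}(x)=f(x)$ yields $\sum_{k\in\zz}\sum_{i\in\nn}h_i^k=f$ a.e. For the $\cs'(\rn)$ convergence, I would invoke the estimate already established inside the proof of Theorem \ref{tt1}, namely $\|\sum_{i\in\nn}b_i^k\|_{H^{p_0}_A(\rn)}\ls|\Omega_k|^{1/p_0-1/p}\|M_N(f)\|_{L^{p,q}(\rn)}$ for a fixed $p_0\in(0,p)$ with $\lfloor(1/p_0-1)\ln b/\ln\lambda_-\rfloor\le s$; since this tends to $0$ as $k\to\fz$, one gets $g^k=f-\sum_{i\in\nn}b_i^k\to f$ in $H^{p_0}_A(\rn)+L^\fz(\rn)\hookrightarrow\cs'(\rn)$, and together with $g^{-N}\to0$ in $L^\fz(\rn)$ and the telescoping identity this gives $f=\sum_{k\in\zz}\sum_{i\in\nn}h_i^k$ in $\cs'(\rn)$.

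The genuinely substantive point --- rather than a new idea, it is a verification --- is that every estimate in Steps 1 and 3 of the proof of Theorem \ref{tt1} survives the replacement of the smooth $f^{(m)}$ by the merely locally integrable $f$; the only place where this is more than bookkeeping is the bound $\|h_i^k\|_{L^\fz(\rn)}\ls 2^k$, where the term $f\chi_{\Omega_{k+1}^\complement}$ must be controlled, and this is exactly where $|f|\le M_N(f)$ a.e.\ is used. All of this parallels the passage from the general case to the $L^r$-case in \cite[p.\,38, Theorem 6.4]{mb03}.
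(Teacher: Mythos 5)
Your proposal is correct and follows essentially the paper's own route: the paper derives Lemma \ref{sevenl1} directly from the proof of Theorem \ref{tt1}, and (as Remark \ref{sevenr1} confirms) the intended argument is exactly your rerun of that construction with $f$ itself in place of the mollifications $f^{(m)}$, using $f\in L^r(\rn)$ to make the polynomials $P_i^k$, $P_{i,j}^{k+1}$ and the pieces $h_i^k=\lz_i^ka_i^k$ genuine functions and to get $|f|\ls M_N(f)$ a.e., so that Step 2 (the weak-$\ast$ limit) is unnecessary. Your added verification of the almost everywhere convergence via the nestedness of the $\Omega_k$ and the telescoping identity is a correct and welcome elaboration of a detail the paper leaves implicit.
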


\begin{remark}\label{sevenr1}
For all $i\in\nn,\,k\in\zz$ and $\ell\in[0,\fz)$,
let $\zeta_i^k$ and $\mathcal{P}_{\ell}(\rn)$ be the same as in the proof of
Theorem \ref{tt1}. For any $f\in H^{p,q}_A(\rn)\cap L^r(\rn)$,
by an argument similar to that used in the proof of Theorem \ref{tt1},
we also find
that there exists a unique polynomial $P_i^k\in\mathcal{P}_{\ell}(\rn)$
such that, for all $Q\in\mathcal{P}_{\ell}(\rn)$,
\begin{eqnarray}\label{sevene12}
\lf\langle f,Q\zeta_i^k\r\rangle=
\lf\langle P_i^k,Q\zeta_i^k\r\rangle=
\int_\rn P_i^k(x)Q(x)\zeta_i^k(x)\,dx.
\end{eqnarray}
Moreover,
for $i,\,j\in\mathbb{N}$ and $k\in\mathbb{Z}$,
we define a polynomial $P_{i,j}^{k+1}$ as the orthogonal
projection of
$(f-P_j^{k+1})\zeta_i^k$ on $\mathcal{P}_{\ell}(\rn)$ with
respect to the norm defined by \eqref{te14}, namely,
$P_{i,j}^{k+1}$ is the unique element of $\mathcal{P}_{\ell}(\rn)$
such that, for all $Q\in\mathcal{P}_{\ell}(\rn)$,
\begin{eqnarray}\label{sevene14}
\int_\rn \lf[f(x)-P_j^{k+1}(x)\r]\zeta_i^k(x)Q(x)
\zeta_j^{k+1}(x)\,dx=\int_\rn P_{i,j}^{k+1}(x)Q(x)
\zeta_j^{k+1}(x)\,dx
\end{eqnarray}
 and, for all $i\in\mathbb{N}$ and $k\in\mathbb{Z}$,
\begin{eqnarray}\label{sevene17}
\lz_i^ka_i^k=(f-P_i^k)\zeta_i^k-
\sum_{j\in\mathbb{N}}\lf[(f-P_j^{k+1})
\zeta_i^k-P_{i,j}^{k+1}\r]\zeta_j^{k+1}.
\end{eqnarray}
\end{remark}

The following Lemmas \ref{sevenl2} and \ref{sevenl3} are just
\cite[Lemmas 4.4 and 5.2]{blyz08}, respectively;
see also \cite[p.\,25, Lemma 5.3 and p.\,36, Lemma 6.2]{mb03}.

\begin{lemma}\label{sevenl2}
There exists a positive constant $C$, independent of $f$, such that,
for all $i\in\nn$ and $k\in\zz$,
$$\sup_{y\in\rn}\lf|P_i^k(y)\zeta_i^k(y)\r|\le C\sup_{y\in U_i^k}
M_N(f)(y)\le C2^k,$$
where $U_i^k:=(x_i^k+B_{\ell_i^k+4\tau+1})\cap(\Omega_k)^\com$.
\end{lemma}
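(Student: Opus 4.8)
All the objects $P_i^k\zeta_i^k$ are affine dilates of fixed--scale data, and the plan is to exploit this together with the reproducing identity \eqref{sevene12} and the non-tangential grand maximal function $M_N$ from \eqref{se8}, keeping every implicit constant independent of $i$ and $k$. The first step is to reduce to the polynomial ``norm'' $\|\cdot\|_{i,k}$ of \eqref{te14}: since $0\le\zeta_i^k\le1$ is supported in $x_i^k+B_{\ell_i^k+\tau}$, it suffices to show
\[
\sup_{y\in x_i^k+B_{\ell_i^k+\tau}}\lf|P_i^k(y)\r|\ls\lf\|P_i^k\r\|_{i,k},
\]
with an implicit constant depending only on $\ell$, on $\tau$ and on $A$. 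This I would prove by the affine change of variables $z\mapsto x_i^k+A^{\ell_i^k}z$, which maps $\mathcal{P}_{\ell}(\rn)$ onto itself and $x_i^k+B_{\ell_i^k+\tau}$ onto $B_\tau$, by the fact that $\zeta_i^k\ge1/L$ on $x_i^k+B_{\ell_i^k-\tau}$ (so that both the numerator and the normalizing integral in \eqref{te14} pick up the same Jacobian and the rescaled weight integrates to a quantity $\sim1$), and by the elementary equivalence of the norms $(\int_{B_{-\tau}}|\cdot|^2)^{1/2}$ and $\sup_{B_{\tau+1}}|\cdot|$ on the finite--dimensional space $\mathcal{P}_{\ell}(\rn)$.

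The second, and main, step is to bound $\|P_i^k\|_{i,k}$ by $\sup_{U_i^k}M_N(f)$; for this I would take $Q=\overline{P_i^k}$ in \eqref{sevene12}, which gives the identity $\|P_i^k\|_{i,k}^2\int_\rn\zeta_i^k=\langle f,\overline{P_i^k}\zeta_i^k\rangle$. Fix $y\in U_i^k$ --- nonempty and contained in $x_i^k+B_{\ell_i^k+4\tau+1}$ by the properties of the Calder\'{o}n--Zygmund covering (cf.\ Lemma \ref{tl1}(iii)) --- so that $y\notin\Omega_k$ and hence $M_N(f)(y)\le2^k$. Put $\psi:=\overline{P_i^k}\zeta_i^k$, choose a fixed integer $C_\tau$ (depending only on $\tau$) large enough that $\supp\psi\subset y+B_{m_0}$ for $m_0:=\ell_i^k+C_\tau$, and set $\Phi(w):=b^{m_0}\psi(y-A^{m_0}w)$; a direct computation with $\varphi_k(x)=b^{-k}\varphi(A^{-k}x)$ and \eqref{se25} shows $\langle f,\psi\rangle=(f\ast\Phi_{m_0})(y)$, whence $|\langle f,\overline{P_i^k}\zeta_i^k\rangle|\le\|\Phi\|_{\cs_N(\rn)}\,M_N(f)(y)$. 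The key observation is that $\Phi$ equals $b^{m_0}$ times a uniformly--controlled $C^\infty_c$ function: since $\zeta_i^k$ and $P_i^k$ become, after the affine dilation $z\mapsto x_i^k+A^{\ell_i^k}z$, a bump $\wz\zeta$ supported in $B_\tau$ with derivatives bounded uniformly in $i,k$ (by the bounded--overlap and separation properties of Lemma \ref{tl1}(iii)--(v)) and a polynomial $\wz P$ of degree $\le\ell$, one gets $\Phi(w)=b^{m_0}(\overline{\wz P}\,\wz\zeta)(c_0-A^{C_\tau}w)$ with $c_0\in B_{C_\tau}$; thus $\supp\Phi$ lies in a fixed dilated ball (so the $\rho$--weight in \eqref{se24} is harmless), and by Step 1 together with \eqref{se5} and \eqref{se19} every derivative of $\Phi$ of order $\le N$ is $\ls b^{m_0}\|P_i^k\|_{i,k}$, so that $\|\Phi\|_{\cs_N(\rn)}\ls b^{m_0}\|P_i^k\|_{i,k}\sim b^{\ell_i^k}\|P_i^k\|_{i,k}$. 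On the other hand $\int_\rn\zeta_i^k\sim|B_{\ell_i^k}|=b^{\ell_i^k}$, directly from $x_i^k+B_{\ell_i^k-\tau}\subset\supp\zeta_i^k\subset x_i^k+B_{\ell_i^k+\tau}$. Substituting these two facts into the identity above and cancelling the common factor $b^{\ell_i^k}\|P_i^k\|_{i,k}$ leaves $\|P_i^k\|_{i,k}\ls M_N(f)(y)\le\sup_{y\in U_i^k}M_N(f)(y)\le2^k$, which together with Step 1 completes the proof; exactly the same argument applies to the polynomials $P_i^{m,k}$ used in the proof of Theorem \ref{tt1}.

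The routine ingredients here are the Riesz--type reproducing identity and the passage from a compactly supported pairing against $f$ to its grand maximal function. The part I expect to be the real obstacle is the uniformity of all constants in $i$ and $k$: this is exactly why one must record that $\zeta_i^k,P_i^k,\psi,\Phi$ are affine dilates of fixed--scale data, that the shift $m_0-\ell_i^k=C_\tau$ stays bounded (so $\|A^{C_\tau}\|$ does not grow with $\ell_i^k$), and --- the small coincidence that makes the bookkeeping close --- that the a priori divergent factor $b^{\ell_i^k}$ produced by $\|\Phi\|_{\cs_N(\rn)}$ is precisely cancelled by $\int_\rn\zeta_i^k\sim b^{\ell_i^k}$. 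A secondary technical point is the uniform smoothness of the normalized cut--off $\wz\zeta$, which again rests on Lemma \ref{tl1}(iii)--(v).
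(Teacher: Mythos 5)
Your argument is correct and is essentially the proof of the sources the paper itself invokes for this lemma (the paper does not prove it, but cites \cite[Lemmas 4.4 and 5.2]{blyz08} and \cite[p.\,25, Lemma 5.3]{mb03}): reduce, via the affine rescaling $z\mapsto x_i^k+A^{\ell_i^k}z$ and equivalence of norms on the finite-dimensional space $\mathcal{P}_{\ell}(\rn)$, the sup of $P_i^k$ on $x_i^k+B_{\ell_i^k+\tau}$ to $\|P_i^k\|_{i,k}$; take $Q=P_i^k$ in \eqref{sevene12}; and test $f$ against the renormalized bump $\Phi$ at a point $y\in U_i^k$, where $M_N(f)(y)\le2^k$, the factor $b^{\ell_i^k}$ coming from $\|\Phi\|_{\cs_N(\rn)}$ being cancelled by $\int_\rn\zeta_i^k\sim b^{\ell_i^k}$. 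The uniformity discussion (bounded shift $C_\tau$, uniform smoothness of the rescaled cut-off via Lemma \ref{tl1}(iv) and (v), denominator of $\zeta_i^k$ bounded below on the relevant support) is exactly the point that needs care, and you handle it correctly.

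One caveat: you assert that $U_i^k\neq\emptyset$ ``by Lemma \ref{tl1}(iii)'', but with the paper's choice $d=6\tau$ in the covering of $\Omega_k$, item (iii) gives $(x_i^k+B_{\ell_i^k+6\tau})\cap(\Omega_k)^\com=\emptyset$ and nonemptiness only at scale $\ell_i^k+6\tau+1$; since $4\tau+1\le6\tau$, the set $U_i^k$ as written in the statement is in fact empty, so nonemptiness cannot follow from (iii) as you claim. This is an inconsistency inherited from \cite{blyz08}, where the covering constant differs, rather than a flaw in your method: your proof goes through verbatim with $U_i^k$ replaced by $(x_i^k+B_{\ell_i^k+6\tau+1})\cap(\Omega_k)^\com$ (enlarging $C_\tau$ to, say, $7\tau+1$), which is what the lemma must mean in the present setting, and this is also all that is used in its application in Step 5 of the proof of Theorem \ref{sevent1}.
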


\begin{lemma}\label{sevenl3}
There exists a positive constant $C$, independent of $f$, such that,
for all $i,\,j\in\nn$ and $k\in\zz$,
$$\sup_{y\in\rn}\lf|P_{i,j}^{k+1}(y)\zeta_j^{k+1}(y)\r|\le C\sup_
{y\in\widetilde{U}_i^k}M_N(f)(y)\le C2^{k+1},$$
where
$\widetilde{U}_i^k:=
(x_j^{k+1}+B_{\ell_j^{k+1}+4\tau+1})\cap(\Omega_{k+1})^\com$.
\end{lemma}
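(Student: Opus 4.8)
The plan is to reduce the lemma to the single estimate
$$\lf\|P_{i,j}^{k+1}\r\|_{j,k+1}\ls\sup_{y\in\widetilde U_i^k}M_N(f)(y),$$
where $\|\cdot\|_{j,k+1}$ is the average norm on $\mathcal{P}_s(\rn)$ (the polynomials of degree at most $s$) defined in \eqref{te14} with the pair $(i,k)$ there replaced by $(j,k+1)$. Indeed, if $P_{i,j}^{k+1}\equiv0$ there is nothing to prove, so I may assume $P_{i,j}^{k+1}\not\equiv0$; by \eqref{sevene14} this forces $\supp\zeta_i^k\cap\supp\zeta_j^{k+1}\ne\emptyset$, and combining the separation properties \eqref{sevene1}--\eqref{sevene3} at the two consecutive levels $k$ and $k+1$ with $\Omega_{k+1}\subset\Omega_k$ and Lemma \ref{tl1}(iii) will yield the one-sided comparability $\ell_j^{k+1}\le\ell_i^k+C\tau$ for an absolute constant $C$ (note $\supp\zeta_i^k\cap\supp\zeta_j^{k+1}\subset x_j^{k+1}+B_{\ell_j^{k+1}+\tau}$ is automatic). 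Moreover Lemma \ref{tl1}(iii) at level $k+1$ gives $\widetilde U_i^k\ne\emptyset$, and every $y\in\widetilde U_i^k$ lies in $(\Omega_{k+1})^\com$, hence $M_N(f)(y)\le2^{k+1}$; so the displayed estimate delivers both inequalities of the lemma once one has the elementary bound $\sup_{y\in\rn}|P_{i,j}^{k+1}(y)\zeta_j^{k+1}(y)|\le\sup_{y\in\supp\zeta_j^{k+1}}|P_{i,j}^{k+1}(y)|\ls\|P_{i,j}^{k+1}\|_{j,k+1}$.

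This last bound is a scale-invariant fact about the finite-dimensional space $\mathcal{P}_s(\rn)$. First I would change variables by $w\mapsto x_j^{k+1}+A^{\ell_j^{k+1}}w$: this carries $x_j^{k+1}+B_{\ell_j^{k+1}+\tau}$ onto the fixed dilated ball $B_\tau$, carries $\zeta_j^{k+1}$ onto a function $\xi$ with $0\le\xi\le1$, $\xi\equiv1$ on $B_{-\tau}$ and $\supp\xi\subset B_\tau$, carries $P_{i,j}^{k+1}$ onto a polynomial $\widetilde P$ of degree at most $s$, and---its Jacobian cancelling---preserves the average norm: $\|P_{i,j}^{k+1}\|_{j,k+1}=[(\int\xi)^{-1}\int|\widetilde P|^2\xi]^{1/2}$. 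On $\mathcal{P}_s(\rn)$ all norms are equivalent, so, using $\int_{B_{-\tau}}|\widetilde P|^2\le\int|\widetilde P|^2\xi$ and $b^{-\tau}\le\int\xi\le b^\tau$, I get $\|\widetilde P\|_{L^\infty(B_\tau)}\ls\|P_{i,j}^{k+1}\|_{j,k+1}$ with an absolute constant; transforming back proves the bound, and the same reasoning applied to derivatives gives $\|\partial^\beta P_{i,j}^{k+1}\|_{L^\infty(\supp\zeta_j^{k+1})}\ls\|A^{-\ell_j^{k+1}}\|^{|\beta|}\|P_{i,j}^{k+1}\|_{j,k+1}$ for $|\beta|\le s$.

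It remains to prove the displayed bound on $a:=\|P_{i,j}^{k+1}\|_{j,k+1}$. Put $\psi:=\zeta_i^k\,P_{i,j}^{k+1}\,\zeta_j^{k+1}\in\cs(\rn)$; taking $Q=P_{i,j}^{k+1}$ in \eqref{sevene14} gives
$$a^2\int_\rn\zeta_j^{k+1}(x)\,dx=\lf\langle f,\psi\r\rangle-\int_\rn P_j^{k+1}(x)\,\zeta_i^k(x)\,P_{i,j}^{k+1}(x)\,\zeta_j^{k+1}(x)\,dx.$$
For the first term I would pass to the fixed scale again: writing each of $\zeta_i^k$, $P_{i,j}^{k+1}$, $\zeta_j^{k+1}$ in the form $g(A^{-\ell_j^{k+1}}(\,\cdot\,-x_j^{k+1}))$ and using the previous paragraph together with the comparability $\ell_j^{k+1}\le\ell_i^k+C\tau$ (which, after rescaling, makes the rescaled $\zeta_i^k$ a bump whose derivatives of every order are bounded---the harmless regime being $\ell_i^k\gg\ell_j^{k+1}$, where it is nearly constant on $\supp\zeta_j^{k+1}$), one checks that $\Psi(w):=b^{\ell_j^{k+1}+4\tau+1}\psi(x_j^{k+1}+A^{\ell_j^{k+1}+4\tau+1}w)$ is supported in the fixed ball $B_0$ and satisfies $\|\partial^\beta\Psi\|_{L^\infty(\rn)}\ls b^{\ell_j^{k+1}}a$ for all $|\beta|\le N$, i.e.\ $\Psi$ is a fixed multiple of $b^{\ell_j^{k+1}}a$ times an element of $\cs_N(\rn)$. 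Since $\langle f,\psi\rangle=(f\ast\widetilde\Psi_{\ell_j^{k+1}+4\tau+1})(x_j^{k+1})$ with $\widetilde\Psi(\cdot):=\Psi(-\,\cdot\,)$, the defining property of $M_N$ and the inclusion $\widetilde U_i^k\subset x_j^{k+1}+B_{\ell_j^{k+1}+4\tau+1}$ give $|\langle f,\psi\rangle|\ls a\,|B_{\ell_j^{k+1}}|\,\sup_{y\in\widetilde U_i^k}M_N(f)(y)$. For the second term, Lemma \ref{sevenl2} applied at level $k+1$ gives $\|P_j^{k+1}\zeta_j^{k+1}\|_{L^\infty(\rn)}\ls\sup_{y\in\widetilde U_i^k}M_N(f)(y)$, while $0\le\zeta_i^k\le1$, the previous paragraph gives $\|P_{i,j}^{k+1}\|_{L^\infty(\supp\zeta_j^{k+1})}\ls a$, and $|\supp\zeta_j^{k+1}|\sim\int\zeta_j^{k+1}\sim|B_{\ell_j^{k+1}}|$; hence this term is also $\ls a\,|B_{\ell_j^{k+1}}|\,\sup_{y\in\widetilde U_i^k}M_N(f)(y)$. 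Combining these with $\int\zeta_j^{k+1}\sim|B_{\ell_j^{k+1}}|$ and cancelling the common factor $a\,|B_{\ell_j^{k+1}}|$ yields $a\ls\sup_{y\in\widetilde U_i^k}M_N(f)(y)$, as desired.

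The main obstacle is keeping every implicit constant independent of $i$, $j$, $k$. Concretely this rests on two points: (i) deriving the one-sided comparability $\ell_j^{k+1}\le\ell_i^k+C\tau$ from the Calder\'on--Zygmund structure at two consecutive levels (no matching bound on $\ell_i^k$ is needed, which is why the tiny-$\zeta_j^{k+1}$ case causes no trouble), and (ii) the verification that, after the affine rescaling to the fixed ball $B_0$, the bump $\Psi$ lies in a fixed multiple of $b^{\ell_j^{k+1}}a\,\cs_N(\rn)$, so that $\langle f,\psi\rangle$ is genuinely controlled by $M_N(f)$ at a point of $(\Omega_{k+1})^\com$. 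Both points are exactly the content of the corresponding lemmas of \cite{mb03} and \cite{blyz08}; the finite-dimensionality of $\mathcal{P}_s(\rn)$ is what makes the norm comparisons uniform.
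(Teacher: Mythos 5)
The paper itself offers no proof of this lemma---it simply cites \cite[Lemma 5.2]{blyz08} and \cite[p.\,36, Lemma 6.2]{mb03}---and your argument is essentially the proof given there: test \eqref{sevene14} with $Q=P_{i,j}^{k+1}$, control the resulting pairing with $f$ by rescaling $\zeta_i^kP_{i,j}^{k+1}\zeta_j^{k+1}$ to a fixed multiple of $b^{\ell_j^{k+1}}\|P_{i,j}^{k+1}\|_{j,k+1}$ times an element of $\cs_N(\rn)$ and evaluating $M_N(f)$ at a point of $(\Omega_{k+1})^\complement$ near $x_j^{k+1}$, handle the $P_j^{k+1}$ term by Lemma \ref{sevenl2} at level $k+1$, and convert the averaged norm $\|\cdot\|_{j,k+1}$ into a sup bound through equivalence of norms on the finite-dimensional space $\mathcal{P}_s(\rn)$ after rescaling, the one-sided comparison $\ell_j^{k+1}\le\ell_i^k+C\tau$ being exactly what keeps the rescaled bump uniformly smooth. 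The only bookkeeping caveat is your appeal to Lemma \ref{tl1}(iii) for $\widetilde U_i^k\neq\emptyset$: this presumes the Whitney decomposition is run with $d=4\tau$, whereas with the choice $d=6\tau$ made in the proof of Theorem \ref{tt1} the ball $x_j^{k+1}+B_{\ell_j^{k+1}+4\tau+1}$ is contained in $\Omega_{k+1}$ and the exponent in the definitions of $U_i^k$ and $\widetilde U_i^k$ ought to read $6\tau+1$; this is an inconsistency of the paper's conventions rather than of your argument, which runs verbatim with $m=\ell_j^{k+1}+6\tau+1$.
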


The following Theorem \ref{sevent1}
extends \cite[Theorem 3.1 and Remark 3.3]{msv08} to the setting
of anisotropic Hardy-Lorentz spaces.

\begin{theorem}\label{sevent1}
Let $q\in(0,\fz]$ and $(p,r,s)$ be an admissible anisotropic triplet.
\begin{enumerate}
\item[{\rm (i)}]
If $r\in(1,\fz)$, then $\|\cdot\|_{H_{A,{\rm fin}}^{p,r,s,q}(\rn)}$
and $\|\cdot\|_{H^{p,q}_A(\rn)}$ are equivalent quasi-norms on
$H_{A,{\rm fin}}^{p,r,s,q}(\rn)$;
\item[{\rm (ii)}]
$\|\cdot\|_{H_{A,{\rm fin}}^{p,\fz,s,q}(\rn)}$
and $\|\cdot\|_{H^{p,q}_A(\rn)}$ are equivalent quasi-norms on
$H_{A,{\rm fin}}^{p,\fz,s,q}(\rn)\cap C(\rn)$.
\end{enumerate}
\end{theorem}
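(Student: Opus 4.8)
The plan is to prove Theorem \ref{sevent1} by the now-standard ``finite atomic decomposition'' scheme, adapted to the Lorentz scale. The trivial inequality is $\|f\|_{H^{p,q}_A(\rn)}\le C\|f\|_{H_{A,{\rm fin}}^{p,r,s,q}(\rn)}$: any finite decomposition is in particular an admissible infinite one, so this follows at once from Theorem \ref{tt1} (the $(p,r,s)$-atomic characterization) together with the quasi-triangle inequality for $\|M_N(\cdot)\|_{L^{p,q}(\rn)}$. Thus the whole content is the reverse inequality: given $f\in H_{A,{\rm fin}}^{p,r,s,q}(\rn)$ with $\|f\|_{H^{p,q}_A(\rn)}=1$ (by homogeneity), produce a \emph{finite} atomic decomposition whose coefficient functional is controlled by a constant. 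Since $f$ is a finite linear combination of $(p,r,s)$-atoms, it lies in $L^r(\rn)$ when $r\in(1,\fz)$, and in $C(\rn)\cap L^\fz(\rn)$ with compact support in case (ii); moreover all these atoms are supported in a common dilated ball, so $\supp f$ is contained in some fixed $x_0+B_{k_0}$.

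First I would invoke Lemma \ref{sevenl1} and Remark \ref{sevenr1} to write $f=\sum_{k\in\zz}\sum_{i\in\nn}\lambda_i^k a_i^k$, converging in $\cs'(\rn)$ and a.e., where the $a_i^k$ are $(p,\fz,s)$-atoms, $\supp a_i^k\subset x_i^k+B_{\ell_i^k+4\tau}$, the overlap bound \eqref{sevene3} holds, $|\lambda_i^k a_i^k|\le C2^k$, and $\sum_k(\sum_i|\lambda_i^k|^p)^{q/p}\le C\|f\|_{H^{p,q}_A(\rn)}^q=C$. The next step is to truncate this decomposition at a large level: set $f_1:=\sum_{k\le K_0}\sum_i\lambda_i^k a_i^k$ and $g:=f-f_1=\sum_{k> K_0}\sum_i\lambda_i^k a_i^k$. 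Using that $\Omega_k=\{M_N(f)>2^k\}$ shrinks and, crucially, that $f\in L^r$ has $M_N(f)\in L^r$ with $\supp M_N(f)$ controlled, one sees $|\Omega_k|\to0$ fast enough that $f_1$ is again a (finite-in-$k$, but possibly still infinite-in-$i$) sum; one then further truncates the $i$-sum. The error terms are shown to be a single atom (times a controlled constant) by the standard argument: on $\Omega_k^\com$ the pointwise bound $M_N(f)\le2^k$ controls $g$ in $L^\fz$, $g$ has the right moments because each $\lambda_i^k a_i^k$ does and the vanishing-moment conditions \eqref{sevene12}, \eqref{sevene14} pass to the sum, and $\supp g\subset\Omega_{K_0}\cap(x_0+B_{k_0})$ is a fixed dilated ball of small measure; dividing by $2^{K_0}|\,\text{that ball}\,|^{1/p}$ produces a $(p,\fz,s)$-atom (in case (ii) one checks continuity is preserved, using that $f$ is continuous and the $\zeta_i^k$ are smooth with locally finite overlap). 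The tail $\sum_{k>K_0}$ and the $i$-tail are handled similarly, each absorbed into one further atom.

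The remaining piece is to control the coefficient functional of the finite decomposition so produced. This is where the Lorentz norm replaces the Lebesgue norm and one must be slightly more careful than in \cite{msv08}. One has the finitely many genuine atoms $\lambda_i^k a_i^k$ with $k\le K_0$, $i\le I_0$, contributing $[\sum_{k\le K_0}(\sum_{i\le I_0}|\lambda_i^k|^p)^{q/p}]^{1/q}\le[\sum_k(\sum_i|\lambda_i^k|^p)^{q/p}]^{1/q}\le C$ by \eqref{sevene5}; and the finitely many correction atoms, each with coefficient of the form $C2^{k}|B|^{1/p}$ where $|B|\le|\Omega_k|$, so that their contribution is again dominated by $[\sum_k 2^{kq}|\Omega_k|^{q/p}]^{1/q}\sim\|M_N(f)\|_{L^{p,q}(\rn)}\le C$ via \eqref{se6}. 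Summing these (using the quasi-triangle inequality in $\ell^q$, with an Aoki--Rolewicz exponent if $q<1$) gives $\|f\|_{H_{A,{\rm fin}}^{p,r,s,q}(\rn)}\le C$, as desired.

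I expect the main obstacle to be exactly the bookkeeping in the $i$-truncation combined with the $L^{p,q}$ accounting: unlike the pure $L^p=H^{p,p}$ case, when $q\ne p$ one cannot simply sum $|\lambda_i^k a_i^k|^p$ over $i$ and then over $k$, but must keep the inner $\ell^p$-over-$i$ and outer $\ell^q$-over-$k$ structure intact at every step — in particular when showing the $i$-tail $\sum_{i>I_0}\lambda_i^k a_i^k$ (for each of the finitely many relevant $k\le K_0$) can be absorbed into a single atom supported in a fixed ball, one needs the overlap bound \eqref{sevene3} together with $|\lambda_i^k a_i^k|\le C2^k$ to get an $L^\fz$ estimate $\le C2^k$ on that tail, and then the support has measure $\le\sum_{i>I_0}|x_i^k+B_{\ell_i^k+4\tau}|\to0$, so choosing $I_0=I_0(k)$ large makes each such error negligible in $H^{p,q}_A$. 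Case (ii) adds the (routine but non-vacuous) check that each correction is continuous, for which one uses that the partial sums $\sum_{i\le I}\lambda_i^k a_i^k$ converge locally uniformly when $f$ is continuous. All the hard analytic input — boundedness of $M_N$ on $L^r$, the Calderón--Zygmund decomposition of Lemma \ref{sevenl1}, Lemmas \ref{sevenl2} and \ref{sevenl3} — is already available, so the proof is an organizational, not a technical, matter.
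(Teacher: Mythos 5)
Your skeleton (normalize $\|f\|_{H^{p,q}_A(\rn)}=1$, apply Lemma \ref{sevenl1}, collapse part of the infinite decomposition into a single atom, truncate the rest and control coefficients) matches the paper's, but two essential points are missing or wrong, and they are exactly where the work lies. First, you never prove the off-support decay estimate that the paper isolates as its Step 2: with $\supp f\subset B_{k_0}$ and $\|f\|_{H^{p,q}_A(\rn)}=1$ one has $M_N(f)(x)\le\widetilde{C}|B_{k_0}|^{-1/p}$ for all $x\in(B_{k_0+4\tau})^\com$. This estimate is what tames the infinitely many levels $k\to-\fz$: it gives $\Omega_k\subset B_{k_0+4\tau}$ for every $k$ with $2^k\gs|B_{k_0}|^{-1/p}$, so that $h:=\sum_{k\le\widetilde{k}}\sum_i\lambda_i^ka_i^k=f-\sum_{k>\widetilde{k}}\sum_i\lambda_i^ka_i^k$ is supported in the \emph{fixed} ball $B_{k_0+4\tau}$ and satisfies $\|h\|_{L^\fz(\rn)}\ls2^{\widetilde{k}}\sim|B_{k_0+4\tau}|^{-1/p}$, i.e.\ is one single $(p,\fz,s)$-atom up to a harmless constant; the threshold $\widetilde{k}$ must be tied to $|B_{k_0}|^{-1/p}$ for the size and the support normalizations to match. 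In your scheme $f_1=\sum_{k\le K_0}\sum_i\lambda_i^ka_i^k$ is still an infinite sum in $k$ (the levels run down to $-\fz$), so ``further truncating the $i$-sum'' cannot yield a finite decomposition; moreover the atoms at very negative $k$ live on the huge sets $\Omega_k$, far outside $x_0+B_{k_0}$, so the support control comes only from the cancellation $h=f-\ell$ combined with the Step 2 estimate, neither of which appears in your argument (your claim $\supp g\subset\Omega_{K_0}\cap(x_0+B_{k_0})$ is unjustified, and such an intersection is not a dilated ball in any case).

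Second, your treatment of the high-$k$ tail fails in case (i). You claim that ``on $\Omega_k^\com$ the bound $M_N(f)\le2^k$ controls $g=\sum_{k>K_0}\sum_i\lambda_i^ka_i^k$ in $L^\fz$'' and that dividing by $2^{K_0}|B|^{1/p}$ gives a $(p,\fz,s)$-atom; but $\supp g\subset\Omega_{K_0+1}$, precisely where $M_N(f)>2^{K_0+1}$, and for $r\in(1,\fz)$ the function $f$ (hence $g$) need not be bounded at all. The usable estimate is the shellwise one: for $x\in\Omega_{j_0}\setminus\Omega_{j_0+1}$ only levels $k\le j_0$ contribute, so $\sum_{k>\widetilde{k}}\sum_i|\lambda_i^ka_i^k(x)|\ls\sum_{k\le j_0}2^k\ls M_N(f)(x)$; this puts the tail in $L^r(\rn)$ and, via dominated convergence, makes the remainder after a finite truncation small in $L^r(\rn)$, so it is absorbed as a small multiple of a $(p,r,s)$-atom, not a $(p,\fz,s)$-atom. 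In case (ii) the high levels are finitely many for free (since $M_N(f)\ls\|f\|_{L^\fz(\rn)}$), and the genuine difficulty is the infinite $i$-sum at each such level, which the paper resolves through the uniform continuity of $f$ together with Lemmas \ref{sevenl2} and \ref{sevenl3} (showing the small-scale piece $\ell_2^\epsilon$ is continuous with $L^\fz$-norm $\ls\epsilon$); your proposal only names this in passing. Finally, the assertion that the vanishing moments ``pass to the sum'' needs the justification the paper gives (that $f$ is a constant multiple of a $(1,r_1,0)$-atom, so $M_N(f)\in L^1(\rn)$ and $\sum_k2^k|\Omega_k|<\fz$, which legitimizes interchanging the sum and the integral).
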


\begin{proof}
Obviously, by Theorem \ref{tt1},
$H_{A,{\rm fin}}^{p,r,s,q}(\rn)\subset H^{p,q}_A(\rn)$ and,
for all $f\in H_{A,{\rm fin}}^{p,r,s,q}(\rn)$,
$$\|f\|_{H^{p,q}_A(\rn)}\ls\|f\|_{H_{A,{\rm fin}}^{p,r,s,q}(\rn)}.$$
Thus, we only need to prove that,
for all $f\in H_{A,{\rm fin}}^{p,r,s,q}(\rn)$ when $r\in(1,\fz)$
and for all $f\in H_{A,{\rm fin}}^{p,r,s,q}(\rn)\cap C(\rn)$ when $r=\fz$,
$\|f\|_{H_{A,{\rm fin}}^{p,r,s,q}(\rn)}\ls\|f\|_{H^{p,q}_A(\rn)}$.
We prove this by five steps.

\emph{Step 1.} Let $r\in(1,\fz]$. Without loss of generality,
we may assume that $f\in H_{A,{\rm fin}}^{p,r,s,q}(\rn)$ and
$\|f\|_{H^{p,q}_A(\rn)}=1$. Notice that $f$ has compact support.
Then there exists some $k_0\in\zz$ such that $\supp f\subset B_{k_0}$,
where $B_{k_0}$ is as in Section \ref{s2}. For any $k\in\zz$, let
$$\Omega_k:=\lf\{x\in\rn:\ M_N(f)(x)>2^k\r\},$$
here and hereafter in this section, we let $N\equiv N_{(p)}$. Since
$f\in H^{p,q}_A(\rn)\cap L^{\widetilde{r}}(\rn)$, where
$\widetilde{r}:=r$ if $r\in(1,\fz)$ and $\widetilde{r}:=2$ if $r=\fz$,
by Lemma \ref{sevenl1}, we know that there exist
$\{\lz_i^k\}_{k\in\zz,\,i\in\nn}\subset\mathbb{C}$ and a sequence of
$(p,\fz,s)$-atoms, $\{a_i^k\}_{k\in\zz,\,i\in\nn}$, such that
$f=\sum_{k\in\zz}\sum_{i\in\nn}\lz_i^ka_i^k$ holds true almost everywhere
and also in $\cs'(\rn)$ and, moreover, \eqref{sevene1} through \eqref{sevene5}
of Lemma \ref{sevenl1} also hold true.

\emph{Step 2.} In this step, we prove that there exists a positive constant
$\widetilde{C}$ such that, for all
$x\in (B_{k_0+4\tau})^\com$,
\begin{eqnarray}\label{sevene10}
M_N(f)(x)\le\widetilde{C}|B_{k_0}|^{-1/p}.
\end{eqnarray}

To this end, for any fixed $x\in (B_{k_0+4\tau})^\com$,
by Proposition \ref{sp1}, we have
\begin{eqnarray*}
M_N(f)(x)&&\ls M_N^0(f)(x)\\
&&\ls\sup_{\phi\in\cs_N(\rn)}\sup_{k\in[k_0,\fz)\cap\zz}
\lf|f\ast\phi_k(x)\r|+\sup_{\phi\in\cs_N(\rn)}\sup_{k\in(-\fz,k_0)\cap\zz}
\lf|f\ast\phi_k(x)\r|=:{\rm I}_1+{\rm I}_2.
\end{eqnarray*}

For ${\rm I}_1$, assume that $\theta\in\cs(\rn)$ satisfying that
$\supp \theta\subset B_\tau,\,0\le\theta\le1$ and $\theta\equiv1$ on $B_0$.
For $k\in[k_0,\fz)\cap\zz$, from $\supp f\subset B_{k_0}$, we deduce that
\begin{eqnarray}\label{sevene6}
f\ast\phi_k(x)=\int_\rn\phi_k(x-y)\theta(A^{-k_0}y)f(y)\,dy
=:f\ast\varphi_{k_0}(0),
\end{eqnarray}
where $\varphi(y):=b^{k_0-k}\phi(A^{-k}x+A^{k_0-k}y)\theta(-y)$
for all $y\in\rn$. Noticing that, for any $\alpha\in\zz_+^n$ with $|\alpha|\le N$,
by \eqref{se19}, $\lz_-\in(1,\fz),\,k\in[k_0,\fz)\cap\zz$ and $\|\phi\|_{\cs_N(\rn)}\le1$,
we find that
$$\lf|\partial^\alpha\lf[\phi(A^{k_0-k}\cdot)\r](y)\r|
\ls(\lz_-)^{(k_0-k)|\alpha|}\|\phi\|_{\cs_N(\rn)}\ls1,$$
which, combined with the product rule and $\supp \theta\subset B_\tau$,
further implies that
\begin{eqnarray}\label{sevene7}
\|\varphi\|_{\cs_N(\rn)}=\sup_{|\alpha|\le N}\sup_{y\in B_\tau}
\lf|\partial^\alpha_y\lf[\phi\lf(A^{-k}x+A^{k_0-k}y\r)\theta(-y)\r]\r|
\lf[1+\rho(y)\r]^N\ls1.
\end{eqnarray}
Thus, noticing that $[\|\varphi\|_{\cs_N(\rn)}]^{-1}\varphi\in\cs_N(\rn)$
and, for all $z\in B_{k_0}$, $0\in z+B_{k_0}$, by the definition of $M_N$, we
know that, for all $z\in B_{k_0}$,
\begin{eqnarray}\label{sevene8}
M_N(f)(z)\ge\sup_{u\in z+B_{k_0}}
\lf|\lf(\frac\varphi{\|\varphi\|_{\cs_N(\rn)}}\r)_{k_0}\ast f(u)\r|
\ge\frac1{\|\varphi\|_{\cs_N(\rn)}}\lf|\varphi_{k_0}\ast f(0)\r|.
\end{eqnarray}
Combining \eqref{sevene6}, \eqref{sevene7} and \eqref{sevene8},
we further conclude that, for all $x\in (B_{k_0+4\tau})^\com$,
$$\lf|f\ast\phi_k(x)\r|\le\|\varphi\|_{\cs_N(\rn)}\inf_{z\in B_{k_0}}
M_N(f)(z)\ls\inf_{z\in B_{k_0}}M_N(f)(z)$$
and hence,
${\rm I}_1\ls\inf_{z\in B_{k_0}}M_N(f)(z)=:C_8\inf_{z\in B_{k_0}}M_N(f)(z)$. Let
$$\widetilde{{\rm I}}:=\frac{{\rm I}_1+C_8\inf_{z\in B_{k_0}}M_N(f)(z)}2.$$
Then, it is easy to see that
${\rm I}_1<\widetilde{{\rm I}}<C_8\inf_{z\in B_{k_0}}M_N(f)(z)$. Therefore,
\begin{eqnarray*}
\|f\|_{H^{p,q}_A(\rn)}&&\ge\|f\|_{H^{p,\fz}_A(\rn)}\\
&&\gs\sup_{\lz\in(0,\fz)}\lz
\lf|\lf\{z\in B_{k_0}:\ C_8M_N(f)(z)>\lz\r\}\r|^{\frac1p}\\
&&\gs{\rm I}_1
\lf|\lf\{z\in B_{k_0}:\ \widetilde{{\rm I}}>{\rm I}_1\r\}\r|^{\frac1p}
\sim{\rm I}_1|B_{k_0}|^{\frac1p},
\end{eqnarray*}
which, together with $\|f\|_{H^{p,q}_A(\rn)}=1$, further implies that
${\rm I}_1\ls|B_{k_0}|^{-\frac1p}$.

For ${\rm I}_2$, by $\supp f\subset B_{k_0}$ and $\theta\equiv1$ on
$B_0$, we find that, for all $k\in(-\fz,k_0)\cap\zz$,
$x\in (B_{k_0+4\tau})^\com$ and $z\in B_{k_0}$,
$$f\ast\phi_k(x)=\int_\rn\phi_k(x-y)\theta(A^{-k_0}y)f(y)\,dy
=:f\ast\psi_k(z),$$
where $\psi(u):=\phi(A^{-k}(x-z)+u)\theta(A^{-k_0}z-A^{k-k_0}u)$
for all $u\in\rn$. Notice that, if $u\in\supp \psi$,
then $A^{-k_0}z-A^{k-k_0}u\in B_\tau$ and hence $u\in B_{k_0-k+2\tau}$.
Therefore, by \eqref{se1} and \eqref{se2}, we have
\begin{eqnarray*}
A^{-k}(x-z)+u&&\in\lf(B_{k_0-k+4\tau}\r)^\com+B_{k_0-k}+B_{k_0-k+2\tau}\\
&&\subset\lf(B_{k_0-k+4\tau}\r)^\com+B_{k_0-k+3\tau}
\subset\lf(B_{k_0-k+3\tau}\r)^\com,
\end{eqnarray*}
which implies that $\rho(A^{-k}(x-z)+u)\ge b^{k_0-k+3\tau}$.
From this, \eqref{se19}, $\lz_-\in(1,\fz),\,k\in(-\fz,k_0)\cap\zz$ and $\phi\in\cs_N(\rn)$,
we further deduce that
$$\|\psi\|_{\cs_N(\rn)}\ls\sup_{|\alpha|\le N}\sup_{u\in\supp\psi}
(\lz_-)^{(k-k_0)|\alpha|}
\lf[\frac{1+\rho(u)}{1+\rho\lf(A^{-k}(x-z)+u\r)}\r]^N\ls1.$$
Thus, by an argument similar to that used for ${\rm I}_1$, we have
${\rm I}_2\ls\inf_{z\in B_{k_0}}M_N(f)(z)\ls|B_{k_0}|^{-\frac1p}$.
Combining the above estimates of ${\rm I}_1$ and ${\rm I}_2$, we
show that \eqref{sevene10} holds true.

\emph{Step 3.} We now denote by $\widetilde{k}$ the largest integer $k$
such that $2^k<\widetilde{C}|B_{k_0}|^{-\frac1p}$, where $\widetilde{C}$
is the same as in \eqref{sevene10}. Then, by \eqref{sevene10}, we have
\begin{eqnarray}\label{sevene11}
\Omega_k\subset B_{k_0+4\tau}\ \ \ \ {\rm for\ all}\ k\in(\widetilde{k},\fz]\cap\zz.
\end{eqnarray}
Let $h:=\sum_{k=-\fz}^{\widetilde{k}}\sum_{i\in\nn}\lz_i^ka_i^k$
and $\ell:=\sum_{k=\widetilde{k}+1}^\fz\sum_{i\in\nn}\lz_i^ka_i^k$,
where the series converges almost everywhere and also in $\cs'(\rn)$. Clearly,
$f=h+\ell$. In what follows of this step, we show that $h$ is a constant multiple
of a $(p,\fz,s)$-atom with the constant independent of $f$. To this end,
observe that
$\supp \ell\subset\bigcup_{k=\widetilde{k}+1}^\fz\Omega_k
\subset B_{k_0+4\tau}$,
which, combined with $\supp f\subset B_{k_0+4\tau}$,
further implies that
$\supp h\subset B_{k_0+4\tau}$.

Notice that, for any $r\in(1,\fz]$ and $r_1\in(1,r)$, by the H\"{o}lder inequality,
we have
$$\int_{\rn}|f(x)|^{r_1}\,dx
\le|B_{k_0}|^{1-\frac{r_1}r}\|f\|_{L^r(\rn)}^{r_1}<\fz.$$
Observing that $\supp f\subset B_{k_0}$ and $f$
has vanishing moments up to order $s$, we know that $f$ is a constant multiple of a
$(1,r_1,0)$-atom and therefore, by Lemma \ref{tl2}, $M_N(f)\in L^1(\rn)$.
Then, by \eqref{sevene3}, \eqref{sevene1}, \eqref{sevene11} and
\eqref{sevene4}, we have
$$\int_{\rn}\sum_{k=\widetilde{k}+1}^\fz\sum_{i\in\nn}
|\lz_i^ka_i^k(x)x^\alpha|\,dx
\ls\sum_{k\in\zz}2^k|\Omega_k|\ls\|M_N(f)\|_{L^1(\rn)}<\fz.$$
This, together with the vanishing moments of $a_i^k$, implies that $\ell$
has vanishing moments up to $s$ and hence, so does $h$ by $h=f-\ell$.
Moreover, by
\eqref{sevene3}, \eqref{sevene4} and the fact that
$2^{\widetilde{k}}<\widetilde{C}|B_{k_0}|^{-\frac1p}$, we find that,
for all $x\in\rn$,
$$|h(x)|\ls\sum_{k=-\fz}^{\widetilde{k}}2^k\ls|B_{k_0}|^{-\frac1p}.$$
Thus, there exists a positive constant $C_9$, independent of $f$, such that
$h/C_9$ is a $(p,\fz,s)$-atom and, by Definition \ref{d-at}, it is also a
$(p,r,s)$-atom for any admissible anisotropic triplet $(p,r,s)$.

\emph{Step 4.} In this step, we show (i). To this end,
assume that $r\in(1,\fz)$. We first show that
$$\sum_{k=\widetilde{k}+1}^\fz\sum_{i\in\nn}\lz_i^ka_i^k\in L^r(\rn).$$
For all $x\in\rn$, since $\rn=\bigcup_{j\in\zz}(\Omega_j\setminus\Omega_{j+1})$,
it follows that there exists a $j_0\in\zz$ such that
$x\in(\Omega_{j_0}\setminus\Omega_{j_0+1})$. Notice that
$\supp a_i^k\subset B_{\ell_i^k+\tau}\subset \Omega_k\subset\Omega_{j_0+1}$
for all $k\in(j_0,\fz)\cap\zz$, using \eqref{sevene3} and \eqref{sevene4}, we conclude that,
for all $x\in(\Omega_{j_0}\setminus\Omega_{j_0+1})$,
$$\sum_{k=\widetilde{k}+1}^\fz\sum_{i\in\nn}|\lz_i^ka_i^k(x)|
\ls\sum_{k\le j_0}2^k\ls2^{j_0}\ls M_N(f)(x).$$
Since $f\in L^r(\rn)$, by Proposition \ref{tl4} and Remark \ref{tr1}, we have
$M_N(f)\in L^r(\rn)$. Thus, by the Lebesgue dominated convergence theorem,
we further have $\sum_{k=\widetilde{k}+1}^K\sum_{i\in\nn}\lz_i^ka_i^k$
converges to $\ell$ in $L^r(\rn)$ as $K\ge\widetilde{k}+1$ and $K\to\fz$.

Now, for any positive integer $K>\widetilde{k}$ and
$k\in[\widetilde{k}+1,K]\cap\zz$, let
\begin{eqnarray*}
I_{(K,k)}:=\lf\{i\in\nn:\ |i|+|k|\le K\r\}
\ \ \ {\rm and}\ \ \ \ell_{(K)}:=\sum_{k=\widetilde{k}+1}^K
\sum_{i\in I_{(K,k)}}\lz_i^ka_i^k.
\end{eqnarray*}
Since $\ell\in L^r(\rn)$, it follows that,
for any given $\epsilon\in(0,1)$, there exists $K\in[\widetilde{k}+1,\fz)\cap\zz$
large enough,
depending on $\epsilon$, such that
$(\ell-\ell_{(K)})/\epsilon$ is a $(p,r,s)$-atom.
Therefore, $f=h+\ell_{(K)}+(\ell-\ell_{(K)})$
is a finite linear combination of $(p,r,s)$-atoms.
By Step 3 and \eqref{sevene5}, we further
find that
$$\|f\|_{H_{A,{\rm fin}}^{p,r,s,q}(\rn)}^q
\ls (C_0)^q+\sum_{k=\widetilde{k}+1}^K
\lf(\sum_{i\in I_{(K,k)}}|\lz_i^k|^p\r)^{\frac qp}
+\epsilon^q\ls1,$$
which completes the proof of (i).

\emph{Step 5.} In this step, we show (ii). To this end,
assume that $f$ is a continuous function in
$H_{A,{\rm fin}}^{p,\fz,s,q}(\rn)$. Then $a_i^k$ is also continuous due
to its construction (see also \eqref{te17}). Since
$$M_N(f)(x)\le C_{(n,N)}\|f\|_{L^\fz(\rn)}\ \ \ \ \ {\rm for\ all}\ x\in\rn,$$
where $C_{(n,N)}$ is a positive constant only depending on $n$ and $N$, it follows
that the level set $\Omega_k$ is empty for all $k$ satisfying that
\begin{eqnarray}\label{sevene25}
2^k\ge C_{(n,N)}\|f\|_{L^\fz(\rn)}.
\end{eqnarray}
 Let $\widehat{k}$ be the largest integer
for which \eqref{sevene25} does not hold true. Then the index $k$ in the sum defining
$\ell$ runs only over $k\in\{\widetilde{k}+1,\ldots,\widehat{k}\}$.

Let $\epsilon\in(0,\fz)$. Since $f$ is uniformly continuous, it follows that there exists
a $\delta\in(0,\fz)$ such that $|f(x)-f(y)|<\epsilon$ whenever $\rho(x-y)<\delta$.
Write $\ell=\ell_1^\epsilon+\ell_2^\epsilon$ with
$$\ell_1^\epsilon:=\sum_{k=\widetilde{k}+1}^{\widehat{k}}
\sum_{i\in F_1^{(k,\delta)}}\lz_i^ka_i^k\ \ \ \ {\rm and}\ \ \ \
\ell_2^\epsilon:=\sum_{k=\widetilde{k}+1}^{\widehat{k}}
\sum_{i\in F_2^{(k,\delta)}}\lz_i^ka_i^k,$$
where, for $k\in\{\widetilde{k}+1,\ldots,\widehat{k}\}$,
$F_1^{(k,\delta)}:=\{i\in\nn:\ b^{\ell_i^k+\tau}\ge\delta\}$
and
$F_2^{(k,\delta)}:=\{i\in\nn:\ b^{\ell_i^k+\tau}<\delta\}$.
Notice that, for any fixed $k\in\{\widetilde{k}+1,\ldots,\widehat{k}\}$,
by \eqref{sevene2} and \eqref{sevene11}, we know that
$F_1^{(k,\delta)}$ is a finite set and hence $\ell_1^\epsilon$
is continuous.

On the other hand, for any $k\in\{\widetilde{k}+1,\ldots,\widehat{k}\}$,
$i\in\nn$ such that $b^{\ell_i^k+\tau}<\delta$ and
$x\in x_i^k+B_{\ell_i^k+\tau}$,
$|f(x)-f(x_i^k)|<\epsilon$. By \eqref{sevene12} and
$\supp\zeta_i^k\subset x_i^k+B_{\ell_i^k+\tau}$,
we find that, for all $Q\in\mathcal{P}_{\ell}(\rn)$,
$$\frac 1{\int_\rn\zeta_i^k(x)\,dx}\int_\rn
\lf[\widetilde{f}(x)-\widetilde{P}_i^k(x)\r]Q(x)\zeta_i^k(x)\,dx=0,$$
where, for all $x\in\rn$,
$$\widetilde{f}(x):=\lf[f(x)-f(x_i^k)\r]
\chi_{\{x_i^k+B_{\ell_i^k+\tau}\}}(x)
\ \ {\rm and}\ \ \widetilde{P}_i^k(x):=
P_i^k(x)-f(x_i^k).$$
Since $|\widetilde{f}(x)|<\epsilon$ for all $x\in\rn$ implies
$M_N(\widetilde{f})(x)\ls\epsilon$ for all $x\in\rn$,
by Lemma \ref{sevenl2}, we have
\begin{eqnarray}\label{sevene13}
\sup_{y\in\rn}\lf|\widetilde{P}_i^k(y)\zeta_i^k(y)\r|
\ls\sup_{y\in\rn}M_N(\widetilde{f})(y)\ls\epsilon.
\end{eqnarray}
Similar to Remark \ref{sevenr1},
for all $k\in\{\widetilde{k}+1,\ldots,\widehat{k}\}$,
$i\in F_2^{(k,\delta)}$ and $j\in\nn$, let
$\widetilde{P}_{i,j}^{k+1}$
be the orthogonal projection of
$(\widetilde{f}-\widetilde{P}_j^{k+1})\zeta_i^k$ on
$\mathcal{P}_{\ell}(\rn)$ with
respect to the norm defined by \eqref{te14},
then, for all $Q\in\mathcal{P}_{\ell}(\rn)$,
\begin{eqnarray}\label{sevene15}
\int_\rn \lf[\widetilde{f}(x)-\widetilde{P}_j^{k+1}(x)\r]\zeta_i^k(x)Q(x)
\zeta_j^{k+1}(x)\,dx=\int_\rn \widetilde{P}_{i,j}^{k+1}(x)Q(x)
\zeta_j^{k+1}(x)\,dx.
\end{eqnarray}
By $\supp\zeta_i^k\subset x_i^k+B_{\ell_i^k+\tau}$, we have
$[\widetilde{f}-\widetilde{P}_j^{k+1}]\zeta_i^k
=[f-P_j^{k+1}]\zeta_i^k$.
From this, \eqref{sevene14} and \eqref{sevene15}, we further deduce that
$\widetilde{P}_{i,j}^{k+1}=P_{i,j}^{k+1}$. Then, by Lemma \ref{sevenl3},
we find that
\begin{eqnarray}\label{sevene16}
\sup_{y\in\rn}\lf|\widetilde{P}_{i,j}^{k+1}(y)\zeta_j^{k+1}(y)\r|
\ls\sup_{y\in\rn}M_N(\widetilde{f})(y)\ls\epsilon.
\end{eqnarray}
Furthermore, by \eqref{sevene17} and
$\sum_{j\in\nn}\zeta_j^{k+1}=\chi_{\Omega_{k+1}}$, we have
\begin{eqnarray*}
\lz_i^ka_i^k&&=(f-P_i^k)\zeta_i^k-
\sum_{j\in\mathbb{N}}\lf[(f-P_j^{k+1})
\zeta_i^k-P_{i,j}^{k+1}\r]\zeta_j^{k+1}\\
&&=\zeta_i^k\widetilde{f}\chi_{\Omega_{k+1}^
\com}-\widetilde{P}_i^k\zeta_i^k+\zeta_i^k\sum
_{j\in\nn}\widetilde{P}_j^{k+1}\zeta_j^{k+1}+\sum_
{j\in\mathbb{N}}\widetilde{P}_{i,j}^{k+1}\zeta_j^{k+1},
\end{eqnarray*}
which, combined with \eqref{sevene13}, \eqref{sevene16}
and \cite[p.\,35, Lemma 6.1(ii)]{mb03}, further implies that
$|\lz_i^ka_i^k(x)|\ls\epsilon$ for all
$k\in\{\widetilde{k}+1,\ldots,\widehat{k}\}$,
$i\in F_2^{(k,\delta)}$ and $x\in x_i^k+B_{\ell_i^k+\tau}$.

Moreover, using \eqref{sevene1} and \eqref{sevene3},
we conclude that there exists a positive constant $C_{10}$,
independent of $f$, such that
\begin{eqnarray}\label{sevene18}
\lf|\ell_2^\epsilon\r|\le C_{10}\sum_{k=\widetilde{k}+1}^{\widehat{k}}
\epsilon=C_{10}\lf(\widehat{k}-\widetilde{k}\r)\epsilon.
\end{eqnarray}
Since $\epsilon$ is arbitrary, we hence split $\ell$ into a continuous
part and a part which is uniformly arbitrarily small. This fact implies that
$\ell$ is continuous. Therefore, $h=f-\ell$ is a $C_9$ multiple of a continuous
$(p,\fz,s)$-atom by Step 3.

Now we give a finite decomposition of $f$. To this end, we use again the
splitting $\ell:=\ell_1^\epsilon+\ell_2^\epsilon$. Obviously, for any
$\epsilon\in(0,\fz)$, $\ell_1^\epsilon$ is a finite linear combination of
continuous $(p,\fz,s)$-atoms and, by \eqref{sevene5}, we have
\begin{eqnarray}\label{sevene19}
\sum_{k=\widetilde{k}+1}^{\widehat{k}}
\lf(\sum_{i\in F_1^{(k,\delta)}}\lf|\lz_i^k\r|^p\r)^{\frac qp}
\ls\|f\|^q_{H^{p,q}_A(\rn)}.
\end{eqnarray}
Observe that $\ell$ and $\ell_1^\epsilon$ are continuous and have vanishing
moments up to $s$ and hence, so does $\ell_2^\epsilon$. Moreover,
$\supp\ell_2^\epsilon\subset B_{k_0+4\tau}$ and
$\|\ell_2^\epsilon\|_{L^\fz(\rn)}\le C_{10}(\widehat{k}-\widetilde{k})\epsilon$
by \eqref{sevene18}. Therefore, we choose
$\epsilon$ small enough such that $\ell_2^\epsilon$ becomes an arbitrarily
small multiple of a continuous $(p,\fz,s)$-atom. Indeed,
$\ell_2^\epsilon=\lz^\epsilon a^\epsilon$, where
$\lz^\epsilon:=C_{10}(\widehat{k}-\widetilde{k})\epsilon|B_{k_0+4\tau}|^{1/p}$
and $a^\epsilon$ is a continuous $(p,\fz,s)$-atom. Thus,
$f=h+\ell_1^\epsilon+\ell_2^\epsilon$ gives the desired finite atomic
decomposition of $f$. Then, by \eqref{sevene19} and the fact that $h/C_9$ is
a $(p,\fz,s)$-atom, we have
$$\|f\|_{H_{A,{\rm fin}}^{p,\fz,s,q}(\rn)}\le
\|h\|_{H_{A,{\rm fin}}^{p,\fz,s,q}(\rn)}
+\|\ell_1^\epsilon\|_{H_{A,{\rm fin}}^{p,\fz,s,q}(\rn)}
+\|\ell_2^\epsilon\|_{H_{A,{\rm fin}}^{p,\fz,s,q}(\rn)}\ls1.$$
This finishes the proof of (ii) and hence Theorem \ref{sevent1}.
\end{proof}

\section{Some applications\label{s6}}

\hskip\parindent
In this section, we give some applications. In Subsection \ref{s6.1},
we consider the interpolation
properties of the anisotropic Hardy-Lorentz space
$H^{p,q}_A(\rn)$ via the real method. In Subsection \ref{s6.2},
we first obtain the boundedness of the $\dz$-type Calder\'on-Zygmund operators
from $H^p_A(\rn)$ to $L^{p,\fz}(\rn)$ (or $H^{p,\fz}_A(\rn)$)
in the critical case. Then we prove that some Calder\'{o}n-Zygmund
operators are bounded from $H^{p,q}_A(\rn)$
to $L^{p,\fz}(\rn)$.
In addition, as an application of the finite atomic decomposition
characterizations of $H^{p,q}_A(\rn)$ in Theorem \ref{sevent1},
we establish a criterion for the boundedness of sublinear
operators from $H_A^{p,q}(\mathbb{R}^n)$ into a quasi-Banach
space, which is of independent interest.
Moreover, using this criterion, we further obtain
the boundedness of the $\dz$-type Calder\'on-Zygmund operators
from $H_A^{p,q}(\rn)$ to $L^{p,q}(\rn)$ (or $H_A^{p,q}(\rn)$).

\subsection{ Interpolation of $H^{p,q}_A(\rn)$}\label{s6.1}

\hskip\parindent
In this subsection, as an application of the atomic
decomposition for the anisotropic Hardy-Lorentz space $H^{p,q}_A(\rn)$,
we prove the real interpolation
properties on $H^{p,q}_A(\rn)$
(see Theorem \ref{sixt2} below), whose isotropic version
includes \cite[Theorem 2.5]{wa07} as a special case (see Remark \ref{sixr4}(ii) below).

We first recall some basic notions about the theory
of real interpolation. Assume that $(X_1,\,X_2)$ is a
compatible couple of quasi-normed spaces, namely,
$X_1$ and $X_2$ are two quasi-normed linear spaces
which are continuously embedded in some larger
topological vector space. Let $X_1+X_2:=\{f_1+f_2:\
f_1\in X_1,\,f_2\in X_2\}$. For $t\in(0,\fz]$,
the Peetre $K$-functional on $X_1+X_2$ is defined as
$$K(t,f;X_1,X_2):=\inf\lf\{\|f_1\|_{X_1}+t\|f_2\|_{X_2}:\
f=f_1+f_2,\,f_1\in X_1\ {\rm and}\ f_2\in X_2\r\}.$$
Moreover, for $\theta\in(0,1)$ and $q\in(0,\fz]$,
the interpolation space $(X_1,\,X_2)_{\theta,q}$
is defined as
\begin{eqnarray*}
(X_1,\,X_2)_{\theta,q}:=\lf\{f\in X_1+X_2:\
\|f\|_{\theta,q}:=\lf(\int_0^\fz\lf[t^{-\theta}
K(t,f;X_1,X_2)\r]^q\,\frac{dt}t\r)^{1/q}<\fz\r\}.
\end{eqnarray*}

It is well known that
$$\lf(L^{q_1}(\rn),L^{q_2}(\rn)\r)_{\theta,q}
=L^q(\rn)\ \ \ {\rm and}\ \ \  \lf(L^{p,q_1}(\rn),
L^{p,q_2}(\rn)\r)_{\theta,q}=L^{p,q}(\rn),$$
where $p\in(0,\fz)$, $0<q_1\leq q_2\le\fz$, $q\in[q_1,q_2]$ and
$\theta\in(0,1)$ satisfying that
$1/q=(1-\theta)/q_1+\theta/q_2$ (see \cite{bl76}).

The main result of this subsection is the following real interpolation
properties of $H^{p,q}_A(\rn)$.

\begin{theorem}\label{sixt2}
Let $p\in(0,\fz)$ and $q_1,\,q,\,q_2\in(0,\fz]$.
\begin{enumerate}
\item[{\rm (i)}] If $p_1,\,p_2\in(0,\fz)$, $p_1\neq p_2$,
$1/p=(1-\theta)/p_1+\theta/p_2$ and $\theta\in(0,1)$,
then
\begin{eqnarray}\label{sixe2}
\lf(H_A^{p_1,q_1}(\rn),H_A^{p_2,q_2}(\rn)\r)_{\theta,q}
=H^{p,q}_A(\rn).
\end{eqnarray}
\item[{\rm (ii)}] If
$1/q=(1-\theta)/q_1+\theta/q_2$ and $\theta\in(0,1)$,
then
\begin{eqnarray}\label{sixe21}
\lf(H_A^{p,q_1}(\rn),H_A^{p,q_2}(\rn)\r)_{\theta,q}
=H^{p,q}_A(\rn).
\end{eqnarray}
\end{enumerate}
\end{theorem}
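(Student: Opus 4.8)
The plan is to prove Theorem \ref{sixt2} by combining the atomic characterization of $H^{p,q}_A(\rn)$ (Theorem \ref{tt1}) with the known real interpolation identities for Lebesgue and Lorentz spaces, via a suitable ``retract'' argument. For part (i), fix an admissible triplet $(p_0,r,s)$ with $p_0<\min\{p_1,p_2\}$ (so that $(p_1,r,s)$, $(p_2,r,s)$ and $(p,r,s)$ are all admissible with the same $s$ and $r$). First I would show the inclusion $\lf(H_A^{p_1,q_1}(\rn),H_A^{p_2,q_2}(\rn)\r)_{\theta,q}\subset H^{p,q}_A(\rn)$: given $f$ in the interpolation space and any decomposition $f=f_1+f_2$ with $f_i\in H_A^{p_i,q_i}(\rn)$, one has $M_N(f)\le M_N(f_1)+M_N(f_2)$ pointwise, so $K(t,M_N(f);L^{p_1,q_1},L^{p_2,q_2})\ls K(t,f;H_A^{p_1,q_1},H_A^{p_2,q_2})$ for all $t\in(0,\fz)$; then the computation of $\lf(L^{p_1,q_1}(\rn),L^{p_2,q_2}(\rn)\r)_{\theta,q}$ (which equals $L^{p,q}(\rn)$, again recorded in \cite{bl76}, using $p_1\ne p_2$) gives $\|M_N(f)\|_{L^{p,q}(\rn)}\ls\|f\|_{\theta,q}$, i.e.\ $f\in H^{p,q}_A(\rn)$ with control of norms.

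For the reverse inclusion $H^{p,q}_A(\rn)\subset\lf(H_A^{p_1,q_1}(\rn),H_A^{p_2,q_2}(\rn)\r)_{\theta,q}$, I would use Theorem \ref{tt1} to write $f=\sum_{k\in\zz}\sum_{i\in\nn}\lz_i^ka_i^k$ with $(p,\fz,s)$-atoms $a_i^k$ supported on dilated balls $x_i^k+B_i^k$ with finite overlap for each $k$, $\lz_i^k\sim 2^k|B_i^k|^{1/p}$, and $[\sum_{k\in\zz}(\sum_{i\in\nn}|\lz_i^k|^p)^{q/p}]^{1/q}\ls\|f\|_{H^{p,q}_A(\rn)}$. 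For each $t\in(0,\fz)$ I would split the sum at the level $k$ where $2^k\sim t^{p_1p_2/(p_2-p_1)}$ (choosing the sign so that low $k$ go into $H_A^{p_1,q_1}$ and high $k$ into $H_A^{p_2,q_2}$, when $p_1<p_2$): set $f=g_t+h_t$ with $g_t:=\sum_{k\le k_t}\sum_i\lz_i^ka_i^k$ and $h_t:=\sum_{k>k_t}\sum_i\lz_i^ka_i^k$. Since each $(p,\fz,s)$-atom is (up to a dimensional constant depending only on $p$, $p_j$) a multiple of a $(p_j,\fz,s)$-atom, re-normalizing gives $g_t$ and $h_t$ as atomic sums with coefficients in the corresponding sequence spaces; by Theorem \ref{tt1} applied with exponent $p_j,q_j$ one bounds $\|g_t\|_{H_A^{p_1,q_1}(\rn)}$ and $\|h_t\|_{H_A^{p_2,q_2}(\rn)}$ by the corresponding truncated sums of $|\lz_i^k|$. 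Feeding these into the definition of $K(t,f;H_A^{p_1,q_1},H_A^{p_2,q_2})$ and integrating $[t^{-\theta}K(t,f;\cdot,\cdot)]^q\,dt/t$, one reduces to a purely sequential estimate: the discrete Hardy-type inequality showing that $\int_0^\fz[t^{-\theta}(\|\{2^k\mu_k\}_{k\le k_t}\|_{\ell^{q_1}}+t\|\{2^k\mu_k\}_{k>k_t}\|_{\ell^{q_2}})]^q\,dt/t\ls\|\{2^k\mu_k\}_{k\in\zz}\|_{\ell^q}^q$, where $\mu_k:=(\sum_i|B_i^k|)^{1/p}$; this is exactly the scalar computation underlying $(\ell^{q_1},\ell^{q_2})_{\theta,q}=\ell^q$ combined with the geometric weights, and it is elementary but must be done carefully with the right choice of $k_t$.

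For part (ii), the argument is the same in structure but now $p_1=p_2=p$, so the splitting is purely over the level index $k$ and no rescaling of atoms between exponents is needed: Theorem \ref{tt1} represents both $H^{p,q_i}_A(\rn)$ via the same $(p,\fz,s)$-atoms with the $\ell^{q_i}$-norm on the layer sums, and the real interpolation identity $(\ell^{q_1},\ell^{q_2})_{\theta,q}=\ell^q$ (equivalently $(L^{p,q_1},L^{p,q_2})_{\theta,q}=L^{p,q}$) transfers directly. The easy inclusion again follows from $M_N(f)\le M_N(f_1)+M_N(f_2)$ and $(L^{p,q_1}(\rn),L^{p,q_2}(\rn))_{\theta,q}=L^{p,q}(\rn)$.

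The main obstacle I anticipate is not any single estimate but the bookkeeping in the ``$\supset$'' direction: one must verify that the truncated atomic series $g_t,h_t$ genuinely converge in $\cs'(\rn)$ to elements of $H_A^{p_j,q_j}(\rn)$ with the asserted norm bounds uniformly in $t$ — this requires invoking the already-proved inclusion $H_A^{p_j,r,s,q_j}(\rn)\subset H_A^{p_j,q_j}(\rn)$ from Theorem \ref{tt1} together with the finite overlap property \eqref{te5}-type bound — and then matching the continuous $K$-functional integral to the discrete $\ell^q$ sum with the correct interplay between the truncation level $k_t$ and the variable $t$ (a dyadic change of variables $t\leftrightarrow 2^{k}$). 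Everything else reduces to the atomic decomposition already in hand and standard facts about $K$-functionals recorded in \cite{bl76}.
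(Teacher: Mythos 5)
Your strategy is genuinely different from the paper's. The paper first proves the Fefferman--Rivi\`ere--Sagher type identity $(H^{p_0}_A(\rn),L^\fz(\rn))_{\theta,q}=H^{p,q}_A(\rn)$ (Lemma \ref{sixl3}, obtained from the Calder\'on--Zygmund splitting of Lemma \ref{sixl2}), and then part (i) follows in two lines from the reiteration theorem; part (ii) is handled by quoting the argument of \cite{wa07} for $p\in(0,1]$ and by $H^{p,q}_A(\rn)=L^{p,q}(\rn)$ together with Lorentz-space interpolation for $p\in(1,\fz)$. Your plan (easy inclusion via $M_N$ and Lorentz interpolation, hard inclusion via the atomic decomposition of Theorem \ref{tt1} and a truncation of the level index) is a reasonable direct alternative in spirit, but as written it has genuine gaps.

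The principal gap is the exponent range. Admissibility of a triplet forces its first index to lie in $(0,1]$, so your opening reduction ``fix $(p_0,r,s)$ so that $(p_1,r,s)$, $(p_2,r,s)$ and $(p,r,s)$ are all admissible'' is impossible as soon as $\max\{p_1,p_2\}>1$, and when $p\in(1,\fz)$ Theorem \ref{tt1} cannot be applied to $f$ at all; yet Theorem \ref{sixt2} is stated for all $p_1,\,p_2,\,p\in(0,\fz)$. This is precisely what the paper's reiteration from the single couple $(H^{r}_A(\rn),L^\fz(\rn))$ with $r\in(0,\min\{p_1,p_2,1\})$ buys: large exponents come for free, whereas in your scheme you would first need $H^{p,q}_A(\rn)=L^{p,q}(\rn)$ for $p\in(1,\fz)$, which in the paper (Remark \ref{sixr5}(ii)) is derived only as a consequence of (i). Two further problems: the assignment of the two pieces is reversed --- for $p_1<p<p_2$ the rescaled coefficient sums carry factors $2^{k(p_j-p)}$, so the high-$k$ part of the atomic series must go into $H^{p_1,q_1}_A(\rn)$ and the low-$k$ part into $H^{p_2,q_2}_A(\rn)$ (compare Step 3 of the proof of Theorem \ref{tt1}, where $\sum_{k\ge K_3}f_k$ converges in $H^{p_0}_A(\rn)$ with $p_0<p$ while $\sum_{k\le K_2}f_k$ converges in $H^1_A(\rn)$ or $L^2(\rn)$); and in part (ii) your cut $2^{k_t}\sim t^{p_1p_2/(p_2-p_1)}$ degenerates when $p_1=p_2$. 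For a same-$p$ couple no truncation level depending on $t$ alone can work in general; the cut must depend on $f$ through the measures $|\Omega_k|$ (equivalently, through the rearrangement of $M_N(f)$), as in Holmstedt's formula, and making this precise is the actual content of the \cite{wa07}-type argument that the paper invokes, so the claim that the identity $(\ell^{q_1},\ell^{q_2})_{\theta,q}=\ell^q$ ``transfers directly'' is not justified (the atomic decomposition is not linear, so there is no retract argument available).
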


In order to prove Theorem \ref{sixt2}, we need the following
technical lemma on the decomposition of a function into
its ``good" and ``bad" parts, whose proof is similar to
\cite[Lemma 5.7 and Lemma 5.10(ii)]{mb03}, the details
being omitted.

\begin{lemma}\label{sixl2}
Let $p\in(0,1]$, $N\in[N_{(p)},\fz)\cap\mathbb{Z}$,
$f\in C_c^\fz(\rn)$, $\lambda\in(0,\fz)$ and
$$\Omega_\lz:=\lf\{x\in\rn:\ M_N(f)(x)>\lz\r\}.$$
Then there exist
two functions $g_\lz$ and $b_\lz$ such that $f=g_\lz+b_\lz$ and
\begin{eqnarray*}
\|g_\lz\|_{L^\fz(\rn)}\le C_{11}\lz,\ \ \ \ \
\|b_\lz\|_{H_A^p(\rn)}^p\le C_{12}\int_{\Omega_\lz}
\lf[M_N(f)(x)\r]^p\,dx,
\end{eqnarray*}
where $C_{11}$ and $C_{12}$ are positive constants independent of
$f$ and $\lz$, and $H_A^p(\rn)$ is the anisotropic Hardy space introduced
in \cite{mb03}.
\end{lemma}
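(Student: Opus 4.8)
The plan is to carry out a Calder\'{o}n-Zygmund decomposition of $f$ at height $\lz$ adapted to the non-tangential grand maximal function, following the scheme of \cite[Lemmas 5.7 and 5.10]{mb03}. First note that $\Omega_\lz$ is open, since $M_N(f)$ is lower semicontinuous, and that $|\Omega_\lz|<\fz$: indeed $f\in C_c^\fz(\rn)\subset L^2(\rn)$, so $M_N(f)\in L^2(\rn)$ by Proposition \ref{tl4} and Remark \ref{tr1}, whence $|\Omega_\lz|\le\lz^{-2}\|M_N(f)\|_{L^2(\rn)}^2<\fz$. Applying Lemma \ref{tl1} to $\Omega_\lz$ with $d$ chosen sufficiently large (as in the proof of Theorem \ref{tt1}), we obtain points $\{x_i\}_{i\in\nn}\subset\Omega_\lz$ and integers $\{\ell_i\}_{i\in\nn}$ with the stated covering, disjointness, Whitney-type and bounded-overlap properties, and we construct the associated smooth partition of unity $\{\zeta_i\}_{i\in\nn}$ exactly as there, so that $\supp\zeta_i\subset x_i+B_{\ell_i+\tau}$ and $\sum_{i\in\nn}\zeta_i=\chi_{\Omega_\lz}$. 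We then set $s:=\lfloor(1/p-1)\ln b/\ln\lambda_-\rfloor$, and for each $i$ let $P_i\in\mathcal{P}_s(\rn)$ be the unique polynomial with $\langle f,Q\zeta_i\rangle=\langle P_i,Q\zeta_i\rangle$ for all $Q\in\mathcal{P}_s(\rn)$, furnished by the Riesz lemma on the finite-dimensional Hilbert space $(\mathcal{P}_s(\rn),\|\cdot\|_i)$ with the norm as in \eqref{te14}. Finally, put $b_i:=(f-P_i)\zeta_i$, $b_\lz:=\sum_{i\in\nn}b_i$ and $g_\lz:=f-b_\lz$.

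Next I would prove the $L^\fz(\rn)$ bound on $g_\lz$. Since $\sum_{i\in\nn}\zeta_i=\chi_{\Omega_\lz}$, one has $g_\lz=f\chi_{\Omega_\lz^\complement}+\sum_{i\in\nn}P_i\zeta_i$. Choosing $\varphi\in\cs_N(\rn)$ with $\int_\rn\varphi(x)\,dx\neq0$ and letting $k\to-\fz$ in $|f\ast\varphi_k(x)|\le M_N(f)(x)$ (the limit $f\ast\varphi_k(x)\to(\int_\rn\varphi)f(x)$ being valid for $f\in C_c^\fz(\rn)$), we get $|(\int_\rn\varphi)f(x)|\le M_N(f)(x)$, i.e. $|f(x)|\ls M_N(f)(x)$ for all $x\in\rn$, hence $|f\chi_{\Omega_\lz^\complement}|\ls\lz$. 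For the polynomial part, by the argument of \cite[p.\,25, Lemma 5.3]{mb03} (cf. Lemma \ref{sevenl2}), $\sup_{y\in\rn}|P_i(y)\zeta_i(y)|\ls\sup_{y\in U_i}M_N(f)(y)\ls\lz$, where $U_i$ is a suitable dilated ball containing $x_i+B_{\ell_i}$ which, by Lemma \ref{tl1}(iii), meets $\Omega_\lz^\complement$; since at each point at most $L$ of the $\zeta_i$ are nonzero, this yields $\|\sum_{i\in\nn}P_i\zeta_i\|_{L^\fz(\rn)}\ls\lz$, and therefore $\|g_\lz\|_{L^\fz(\rn)}\le C_{11}\lz$. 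In particular, by the finite overlap of $\{\supp\zeta_i\}_{i\in\nn}$, the series defining $b_\lz$ converges in $\cs'(\rn)$, so $b_\lz$ and $g_\lz$ are well defined with $f=g_\lz+b_\lz$.

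Then I would establish the $H_A^p(\rn)$ bound on $b_\lz$. Each $b_i$ satisfies $\supp b_i\subset x_i+B_{\ell_i+\tau}$; $\int_\rn b_i(x)x^\alpha\,dx=0$ for $|\alpha|\le s$ by the defining property of $P_i$; and $\|b_i\|_{L^\fz(\rn)}\ls\lz$, by an estimate parallel to that in \cite[Section 5]{mb03}, which uses the cancellation between $f$ and $P_i$ together with the Whitney property Lemma \ref{tl1}(iii) and the pointwise inequality $|f\ast\varphi_k(\cdot)|\le M_N(f)(\cdot)$. Hence $b_i=\mu_i a_i$ with $a_i$ a $(p,\fz,s)$-atom and $|\mu_i|\sim\lz|B_{\ell_i+\tau}|^{1/p}\sim\lz|B_{\ell_i}|^{1/p}$. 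As $M_N$ is sublinear, $M_N(b_\lz)\le\sum_{i\in\nn}M_N(b_i)$, and since $p\le1$, $\|\sum_{i\in\nn}M_N(b_i)\|_{L^p(\rn)}^p\le\sum_{i\in\nn}\|M_N(b_i)\|_{L^p(\rn)}^p$; so by Lemma \ref{tl2}, $\|b_\lz\|_{H_A^p(\rn)}^p\le\sum_{i\in\nn}|\mu_i|^p\|M_N(a_i)\|_{L^p(\rn)}^p\ls\lz^p\sum_{i\in\nn}|B_{\ell_i}|\ls\lz^p|\Omega_\lz|$, the last step using that the $x_i+B_{\ell_i-\tau}$ are pairwise disjoint subsets of $\Omega_\lz$ and $|B_{\ell_i}|\sim|B_{\ell_i-\tau}|$. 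Finally, since $M_N(f)(x)>\lz$ on $\Omega_\lz$, we have $\lz^p|\Omega_\lz|\le\int_{\Omega_\lz}[M_N(f)(x)]^p\,dx$, and hence $\|b_\lz\|_{H_A^p(\rn)}^p\le C_{12}\int_{\Omega_\lz}[M_N(f)(x)]^p\,dx$, completing the proof. (Alternatively, this last bound follows directly from the anisotropic Calder\'{o}n-Zygmund estimate \eqref{te12}, i.e. \cite[p.\,31, Lemma 5.7]{mb03}.)

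I expect the crux to be the two pointwise $L^\fz(\rn)$ estimates on the localized polynomial pieces $P_i\zeta_i$ and on $(f-P_i)\zeta_i$ in terms of $\lz$: these are the delicate computations resting on the interplay between the non-tangential grand maximal function and the Whitney property of Lemma \ref{tl1}, and they are precisely the ``details being omitted'' referred to in the statement; the remaining ingredients — finiteness of $|\Omega_\lz|$, the vanishing moments, and the summability $\sum_{i\in\nn}|B_{\ell_i}|\ls|\Omega_\lz|$ — are routine.
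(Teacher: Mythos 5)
Your construction of the decomposition (Whitney covering of $\Omega_\lz$ via Lemma \ref{tl1}, partition of unity $\{\zeta_i\}_{i\in\nn}$, projections $P_i$ onto $\mathcal{P}_s(\rn)$, $b_\lz:=\sum_{i\in\nn}(f-P_i)\zeta_i$, $g_\lz:=f-b_\lz$) and your estimate of the good part ($|f|\ls M_N(f)$ pointwise for continuous $f$, plus the bound $\sup_{y\in\rn}|P_i(y)\zeta_i(y)|\ls\lz$ as in Lemma \ref{sevenl2}, plus finite overlap) are exactly the intended scheme. However, the main route you propose for the bad part contains a genuine error: the claim $\|b_i\|_{L^\fz(\rn)}\ls\lz$ is false in general, so the $b_i$ are \emph{not} multiples of $(p,\fz,s)$-atoms with coefficients $\sim\lz|B_{\ell_i}|^{1/p}$. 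The pieces $b_i=(f-P_i)\zeta_i$ are supported inside $\Omega_\lz$, where $M_N(f)>\lz$ and $|f|$ may exceed $\lz$ by an arbitrary factor; nothing in the Whitney property or the definition of $P_i$ caps them at height $\lz$. A quick reductio makes this concrete: if $\|b_i\|_{L^\fz(\rn)}\ls\lz$ held with a constant independent of $\lz$, then by the bounded overlap of the supports one would get $\|b_\lz\|_{L^\fz(\rn)}\ls\lz$, hence $\|f\|_{L^\fz(\rn)}\le\|g_\lz\|_{L^\fz(\rn)}+\|b_\lz\|_{L^\fz(\rn)}\ls\lz$ for every $\lz\in(0,\fz)$, forcing $f\equiv0$. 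The $L^\fz$ bound of order $2^k$ is available only for the \emph{two-level telescoped} pieces $h_i^{k}$ built from $g^{k+1}-g^k$ (see \eqref{te17} and \eqref{te18}); it is precisely to obtain $L^\fz$-bounded atoms that the atomic decomposition in Theorem \ref{tt1} telescopes over levels rather than using the single-level bad parts, and the analogous estimate in Bownik's Section 5 is a bound on $M_N(b_i)$ (local domination by $M_N(f)$ plus a $\lz$-times-decay tail), not an $L^\fz$ bound on $b_i$.

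The fix is the route you mention only parenthetically, and it is in fact the paper's intended proof: estimate $\|b_\lz\|_{H^p_A(\rn)}^p=\int_{\rn}[M_N(\sum_{i\in\nn}b_i)(x)]^p\,dx$ directly by the anisotropic Calder\'on-Zygmund estimate of \cite[p.\,31, Lemma 5.7]{mb03} (the analogue of \eqref{te12}, which is legitimately applied at the exponent $p$ itself here because $N\geq N_{(p)}$ and $s\geq\lfloor(1/p-1)\ln b/\ln\lambda_-\rfloor$, and the right-hand side $\int_{\Omega_\lz}[M_N(f)(x)]^p\,dx$ is finite since $M_N(f)\ls\|f\|_{L^\fz(\rn)}$ and $|\Omega_\lz|<\fz$), obtaining $\int_{\rn}[M_N(b_\lz)(x)]^p\,dx\ls\int_{\Omega_\lz}[M_N(f)(x)]^p\,dx$; combined with your good-part estimate (which corresponds to \cite[Lemma 5.10(ii)]{mb03}), this yields the lemma. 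So your Whitney/partition-of-unity setup and the $L^\fz$ bound on $g_\lz$ are fine, but the atom-counting argument for $b_\lz$ must be replaced by the maximal-function estimate of Bownik's Lemma 5.7; as written, the main argument does not go through.
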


By Lemma \ref{sixl2} and an argument parallel to the proof of
\cite[Theorem 1]{frs74}, we obtain the following real interpolation
properties, the details being omitting.
\begin{lemma}\label{sixl3}
Assume that $p_0\in(0,1]$ and $q\in(0,\fz]$ satisfying that
$1/p=(1-\theta)/p_0$ and $\theta\in(0,1)$.
Then
\begin{eqnarray}\label{sixe1}
\lf(H^{p_0}_A(\rn),L^\fz(\rn)\r)_{\theta,q}=H_A^{p,q}(\rn).
\end{eqnarray}
\end{lemma}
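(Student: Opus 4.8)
The plan is to deduce Lemma \ref{sixl3} from the abstract real interpolation machinery, using Lemma \ref{sixl2} as the key input, by mimicking the classical Fefferman--Rivi\`ere--Sagher argument \cite{frs74}. The two inclusions $\lf(H^{p_0}_A(\rn),L^\fz(\rn)\r)_{\theta,q}\supset H_A^{p,q}(\rn)$ and $\subset$ will be handled separately via sharp two-sided estimates on the Peetre $K$-functional $K(t,f;H^{p_0}_A(\rn),L^\fz(\rn))$. Throughout I will exploit that, by Theorem \ref{ft1} and Proposition \ref{sp1}, the quasi-norms $\|f\|_{H_A^{p,q}(\rn)}$, $\|M_N(f)\|_{L^{p,q}(\rn)}$ and (for the relevant ranges) $\|M_N^0(f)\|_{L^{p,q}(\rn)}$ are all equivalent, so that everything can be phrased in terms of the grand maximal function $M_N(f)$ and its non-increasing rearrangement $[M_N(f)]^*$.

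First I would prove the upper bound for $K$. Given $f\in C_c^\fz(\rn)$ (a dense subclass, which suffices by the standard density/completeness argument together with Proposition \ref{sp3} and Lemma \ref{sevenl4}) and $t\in(0,\fz)$, apply Lemma \ref{sixl2} at the level $\lz=\lz(t)$ chosen so that $[M_N(f)]^*(t^{p_0})\sim\lz$ (equivalently $|\Omega_\lz|\sim t^{p_0}$), writing $f=g_{\lz}+b_{\lz}$. Then $\|g_\lz\|_{L^\fz(\rn)}\ls\lz$ and
\begin{eqnarray*}
\|b_\lz\|_{H_A^{p_0}(\rn)}^{p_0}\ls\int_{\Omega_\lz}\lf[M_N(f)(x)\r]^{p_0}\,dx
\sim\int_0^{t^{p_0}}\lf\{[M_N(f)]^*(s)\r\}^{p_0}\,ds,
\end{eqnarray*}
so that $K(t,f;H^{p_0}_A(\rn),L^\fz(\rn))\ls\|b_\lz\|_{H_A^{p_0}(\rn)}+t\|g_\lz\|_{L^\fz(\rn)}$ is controlled by $\lf(\int_0^{t^{p_0}}\{[M_N(f)]^*(s)\}^{p_0}\,ds\r)^{1/p_0}+t[M_N(f)]^*(t^{p_0})$. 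For the lower bound, for any decomposition $f=f_1+f_2$ with $f_1\in H^{p_0}_A(\rn)$ and $f_2\in L^\fz(\rn)$, the subadditivity of $M_N$ gives $M_N(f)\le M_N(f_1)+M_N(f_2)\ls M_N(f_1)+\|f_2\|_{L^\fz(\rn)}$, whence a pointwise-rearrangement comparison yields $[M_N(f)]^*(2s)\ls[M_N(f_1)]^*(s)+\|f_2\|_{L^\fz(\rn)}$; integrating the $p_0$-th power over $s\in(0,t^{p_0})$ and taking the infimum over all decompositions produces the matching lower bound $K(t,f;H^{p_0}_A(\rn),L^\fz(\rn))\gs\lf(\int_0^{t^{p_0}}\{[M_N(f)]^*(s)\}^{p_0}\,ds\r)^{1/p_0}$. (The extra term $t[M_N(f)]^*(t^{p_0})$ in the upper bound is dominated by this, since $[M_N(f)]^*$ is non-increasing.)

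Having pinned down $K(t,f;\cdot,\cdot)\sim\lf(\int_0^{t^{p_0}}\{[M_N(f)]^*(s)\}^{p_0}\,ds\r)^{1/p_0}$, the computation of the interpolation norm is a routine, if slightly delicate, Hardy-type inequality: with $1/p=(1-\theta)/p_0$, i.e. $\theta=1-p_0/p$, one has
\begin{eqnarray*}
\|f\|_{(H^{p_0}_A(\rn),L^\fz(\rn))_{\theta,q}}^q
=\int_0^\fz\lf[t^{-\theta}K(t,f;H^{p_0}_A(\rn),L^\fz(\rn))\r]^q\,\frac{dt}t
\sim\int_0^\fz\lf[t^{-\theta}\lf(\int_0^{t^{p_0}}\{[M_N(f)]^*(s)\}^{p_0}\,ds\r)^{1/p_0}\r]^q\,\frac{dt}t,
\end{eqnarray*}
and after the substitution $u=t^{p_0}$ and an application of the Hardy inequality (valid because the exponent $\theta p/p_0 = (p-p_0)/p_0>0$ makes the relevant power summable), this reduces to $\int_0^\fz\lf[u^{1/p}[M_N(f)]^*(u)\r]^q\,\frac{du}u\sim\|M_N(f)\|_{L^{p,q}(\rn)}^q\sim\|f\|_{H^{p,q}_A(\rn)}^q$. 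Finally I would remove the restriction $f\in C_c^\fz(\rn)$: both sides of \eqref{sixe1} are complete quasi-normed spaces in which $C_c^\fz(\rn)\cap H^{p,q}_A(\rn)$ is dense (for the left-hand side this uses the general density theory of real interpolation spaces together with Lemma \ref{sevenl4}(ii)), so the equivalence of quasi-norms extends by continuity.

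The main obstacle I anticipate is establishing the sharp lower bound for the $K$-functional with the correct constants and handling the rearrangement inequalities for $M_N$ cleanly; the subadditivity $M_N(f_1+f_2)\le M_N(f_1)+M_N(f_2)$ only gives control of $[M_N(f)]^*$ at dilated arguments, so care is needed to absorb the factor-of-two shifts without losing the equivalence, and one must verify that $M_N(f_2)\ls\|f_2\|_{L^\fz(\rn)}$ uniformly (which follows from Proposition \ref{tl4} and Remark \ref{tr1}). Apart from that, the argument is entirely parallel to \cite[Theorem 1]{frs74}, with Lemma \ref{sixl2} supplying exactly the anisotropic Calder\'on--Zygmund-type splitting that the classical proof uses, so I would present it compactly and refer to \cite{frs74} for the repetitive parts.
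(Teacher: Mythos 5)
Your proposal is, in substance, exactly the argument the paper intends: the paper's proof of this lemma is a one-line reference to Lemma \ref{sixl2} plus ``an argument parallel to the proof of \cite[Theorem 1]{frs74}'', and what you write out is precisely that Fefferman--Rivi\`ere--Sagher scheme. Your two-sided estimate
$K(t,f;H^{p_0}_A(\rn),L^\fz(\rn))\sim\big(\int_0^{t^{p_0}}\{[M_N(f)]^*(s)\}^{p_0}\,ds\big)^{1/p_0}$
is the correct heart of the matter: the upper bound via Lemma \ref{sixl2} at height $\lz\sim[M_N(f)]^*(t^{p_0})$, the lower bound via $M_N(f)\le M_N(f_1)+M_N(f_2)\ls M_N(f_1)+\|f_2\|_{L^\fz(\rn)}$ together with the rearrangement inequality at dilated arguments, and the Holmstedt/Hardy computation turning this into $\|M_N(f)\|_{L^{p,q}(\rn)}$ are all sound, including for $q<1$ (the Hardy step only uses monotonicity of $[M_N(f)]^*$).

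The one step that would fail as written is the final extension by density, and it matters because the lemma is claimed for all $q\in(0,\fz]$. For $q=\fz$, $C_c^\fz(\rn)$ is not dense in $H^{p,\fz}_A(\rn)$, nor is $H^{p_0}_A(\rn)\cap L^\fz(\rn)$ dense in $(H^{p_0}_A(\rn),L^\fz(\rn))_{\theta,\fz}$ (just as simple functions are not dense in $L^{p,\fz}(\rn)$); moreover Lemma \ref{sevenl4}, which you invoke, is stated only for $q\in(0,\fz)$. So ``equivalence of quasi-norms on a dense class plus completeness'' only delivers \eqref{sixe1} for finite $q$, while Theorem \ref{sixt2}(i) later uses the lemma with $q_1,q_2$ possibly equal to $\fz$. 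The standard repair is to avoid density altogether: the lower $K$-functional bound already holds for every $f\in H^{p_0}_A(\rn)+L^\fz(\rn)$, and for the upper bound one observes that the Calder\'on--Zygmund splitting behind Lemma \ref{sixl2} (i.e.\ \cite[Lemmas 5.7 and 5.10]{mb03}) applies to any $f\in\cs'(\rn)$ for which $|\Omega_\lz|<\fz$, so the restriction to $C_c^\fz(\rn)$ is inessential; with the $K$-functional equivalence valid for all $f$ in the sum space, \eqref{sixe1} follows for every $q\in(0,\fz]$ directly from the computation, no limiting argument needed.
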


\begin{remark}\label{sixr3}
If $A:=d\,{\rm I}_{n\times n}$ for some $d\in\rr$ with $|d|\in(1,\fz)$,
then $H^{p_0}_A(\rn)$ and $H^{p,q}_A(\rn)$ in Lemma \ref{sixl3}
become the classical isotropic Hardy and Hardy-Lorentz spaces,
respectively. In this case, if $p_0\in(0,1]$ and $q\in(0,\fz]$ satisfying that
$1/p=(1-\theta)/p_0$ and $\theta\in(0,1)$,
then
\begin{eqnarray*}
\lf(H^{p_0}(\rn),L^\fz(\rn)\r)_{\theta,q}=H^{p,q}(\rn),
\end{eqnarray*}
which is just \cite[Theorem 1]{frs74}.
\end{remark}

Now we employ Lemma \ref{sixl3} to prove Theorem \ref{sixt2}(i).

\begin{proof}[Proof of Theorem \ref{sixt2}(i)]
Indeed,
if $p_1,\,p_2\in(0,\fz)$ and $p_1\neq p_2$, then
there exist $r\in(0,\min\{p_1,p_2,1\})$ and $\eta_1,\,\eta_2\in(0,1)$ such that
$1/p_i=(1-\eta_i)/r$, $i\in\{1,2\}$.
Let $\eta:=(1-\theta)\eta_1+\theta\eta_2$. Noticing that
$1/p=(1-\theta)/p_1+\theta/p_2=(1-\eta)/r$,
by Lemma \ref{sixl3} and the reiteration theorem
(see, for example, \cite[Theorem 2]{m84}), we know that
\begin{eqnarray*}
\lf(H_A^{p_1,q_1}(\rn),H_A^{p_2,q_2}(\rn)\r)_{\theta,q}
&&=\lf(\lf(H_A^r(\rn),L^\fz(\rn)\r)_{\eta_1,q_1},
\lf(H_A^r(\rn),L^\fz(\rn)\r)_{\eta_2,q_2}\r)_{\theta,q}\\
&&=\lf(H_A^r(\rn),L^\fz(\rn)\r)_{\eta,q}=H^{p,q}_A(\rn),
\end{eqnarray*}
which is the desired conclusion \eqref{sixe2}.
This finishes the proof of Theorem \ref{sixt2}(i).
\end{proof}

As an immediate consequence of Theorem \ref{sixt2}(i), we easily know
that the anisotropic Hardy-Lorentz space $H^{p,q}_A(\rn)$ serves as
a median space
between two anisotropic Hardy spaces via the real method,
which is the following Corollary \ref{sixc1}.

\begin{corollary}\label{sixc1}
Assume that $p,\,p_1,\,p_2\in(0,\fz)$, $p_1\neq p_2$
and $q\in(0,\fz]$ satisfying that
$1/p=(1-\theta)/p_1+\theta/p_2$ and $\theta\in(0,1)$.
Then
\begin{eqnarray*}
\lf(H_A^{p_1}(\rn),H_A^{p_2}(\rn)\r)_{\theta,q}
=H^{p,q}_A(\rn).
\end{eqnarray*}
\end{corollary}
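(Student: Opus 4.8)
The plan is to deduce this at once from Theorem \ref{sixt2}(i). First I would recall that, as noted right after Definition \ref{d-ahls}, when $p=q$ the anisotropic Hardy-Lorentz space $H^{p,q}_A(\rn)$ coincides (with equal quasi-norms) with the anisotropic Hardy space $H^p_A(\rn)$ of Bownik. In particular, $H_A^{p_1}(\rn)=H_A^{p_1,p_1}(\rn)$ and $H_A^{p_2}(\rn)=H_A^{p_2,p_2}(\rn)$, so the compatible couple $(H_A^{p_1}(\rn),H_A^{p_2}(\rn))$ is literally the couple $(H_A^{p_1,p_1}(\rn),H_A^{p_2,p_2}(\rn))$.

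Next I would simply apply Theorem \ref{sixt2}(i) with the parameter choices $q_1:=p_1$ and $q_2:=p_2$, together with the given $p$, $q\in(0,\fz]$ and $\theta\in(0,1)$ satisfying $1/p=(1-\theta)/p_1+\theta/p_2$. Since $p_1\neq p_2$, all hypotheses of that theorem are fulfilled, and it yields
$$\lf(H_A^{p_1,p_1}(\rn),H_A^{p_2,p_2}(\rn)\r)_{\theta,q}=H^{p,q}_A(\rn).$$
Combining this equality with the identifications $H_A^{p_i}(\rn)=H_A^{p_i,p_i}(\rn)$ for $i\in\{1,2\}$ gives precisely the claimed identity $(H_A^{p_1}(\rn),H_A^{p_2}(\rn))_{\theta,q}=H^{p,q}_A(\rn)$.

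There is essentially no obstacle here: the only thing to verify is that Theorem \ref{sixt2}(i) imposes no compatibility relation between $q_1,q_2$ and the interpolation parameter $q$ (it permits arbitrary $q_1,q,q_2\in(0,\fz]$), so the specialization $q_1=p_1$, $q_2=p_2$ is admissible whatever $q$ is. Hence the corollary follows immediately, and no further computation is needed.
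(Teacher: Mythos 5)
Your argument is correct and matches the paper: Corollary \ref{sixc1} is stated there precisely as an immediate consequence of Theorem \ref{sixt2}(i), using the identification $H^{p_i}_A(\rn)=H^{p_i,p_i}_A(\rn)$ and the fact that Theorem \ref{sixt2}(i) allows arbitrary $q_1,\,q,\,q_2\in(0,\fz]$, so the choice $q_1=p_1$, $q_2=p_2$ is admissible. No gap here.
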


\begin{remark}\label{sixr5}
(i) If $A:=d\,{\rm I}_{n\times n}$ for some $d\in\rr$ with $|d|\in(1,\fz)$,
then $H^{p_1}_A(\rn)$, $H^{p_2}_A(\rn)$ and $H^{p,q}_A(\rn)$
in Corollary \ref{sixc1}
become the classical isotropic Hardy and Hardy-Lorentz spaces,
respectively. In this case, if $p,\,p_1,\,p_2\in(0,\fz)$, $p_1\neq p_2$
and $q\in(0,\fz]$ satisfying that $1/p=(1-\theta)/p_1+
\theta/p_2,\ \theta\in(0,1)$, then, by Corollary \ref{sixc1}, we have
$$\lf(H^{p_1}(\rn),H^{p_2}(\rn)\r)_{\theta,q}
=H^{p,q}(\rn).$$
In particular, $\lf(H^{p_1}(\rn),H^{p_2}(\rn)\r)_{\theta,p}
=H^p(\rn)$, provided that $1/p=(1-\theta)/p_1+
\theta/p_2,\ \theta\in(0,1)$.

(ii) If $p\in(1,\fz)$ and $q\in(0,\fz]$, then $H^{p,q}_A(\rn)=L^{p,q}(\rn)$.
Indeed, for any $p\in(1,\fz)$, there exist $p_1,\,p_2\in(1,\fz)$, $p_1\neq p_2$
and $\theta\in(0,1)$ such that $1/p=(1-\theta)/p_1+
\theta/p_2$. From this, Corollary \ref{sixc1}, $H^r_A(\rn)=L^r(\rn)$
for all $r\in(1,\fz)$ (see \cite[p.\,16, Remark]{mb03}) and the corresponding
interpolation result of Lorentz spaces (see, for example, \cite[Theorem 3]{m84}),
we deduce that
$$H^{p,q}_A(\rn)=\lf(H_A^{p_1}(\rn),H_A^{p_2}(\rn)\r)_{\theta,q}
=\lf(L^{p_1}(\rn),L^{p_2}(\rn)\r)_{\theta,q}=L^{p,q}(\rn).$$
\end{remark}

Now we turn to prove Theorem \ref{sixt2}(ii) via Remark \ref{sixr5}(ii).

\begin{proof}[Proof of Theorem \ref{sixt2}(ii)]
To show Theorem \ref{sixt2}(ii),
we consider two cases. If $p\in(0,1]$, by a proof similar
to that of \cite[Theorem 2.5]{wa07}, we easily obtain the desired
conclusion \eqref{sixe21}. If $p\in(1,\fz)$, by Remark \ref{sixr5}(ii)
and the interpolation properties of Lorentz spaces
(see, for example, \cite[Theorem 5.3.1]{bl76}),
we find that \eqref{sixe21} holds true. This finishes the proof of
Theorem \ref{sixt2}(ii)
and hence Theorem \ref{sixt2}.
\end{proof}

\begin{remark}\label{sixr4}
(i) If $A:=d\,{\rm I}_{n\times n}$ for some $d\in\rr$ with $|d|\in(1,\fz)$,
then $H^{p_i,q_i}_A(\rn)$, $H^{p,q_i}_A(\rn)$,
$i\in\{1,2\}$, and $H^{p,q}_A(\rn)$ in Theorem \ref{sixt2}
become the classical isotropic Hardy-Lorentz spaces.
In this case, by Theorem \ref{sixt2}(i), we know that
\begin{eqnarray*}
\lf(H^{p_1,q_1}(\rn),H^{p_2,q_2}(\rn)\r)_{\theta,q}
=H^{p,q}(\rn),
\end{eqnarray*}
provided that $p_1,\,p,\,p_2\in(0,\fz)$, $p_1\neq p_2$
and $q_1,\,q,\,q_2\in(0,\fz]$ satisfying that
$1/p=(1-\theta)/p_1+
\theta/p_2,\ \theta\in(0,1)$,
which is a well-known interpolation result for
classical isotropic Hardy-Lorentz spaces (see \cite[p.\,75, (2)]{frs74}).
In addition, by Theorem \ref{sixt2}(ii), we have
\begin{eqnarray*}
\lf(H^{p,q_1}(\rn),H^{p,q_2}(\rn)\r)_{\theta,q}
=H^{p,q}(\rn),
\end{eqnarray*}
provided that $p\in(0,\fz)$ and
$q_1,\,q,\,q_2\in(0,\fz]$ satisfying that
$1/q=(1-\theta)/q_1+\theta/q_2,\ \theta\in(0,1)$,
which generalizes \cite[Theorem 2.5]{wa07}.

(ii) Lemma \ref{sixl3} also holds true for all $p_0\in(1,\fz)$
and $q\in(0,\fz]$. Indeed, noticing that, if $p_0\in(1,\fz)$,
then $p\in(1,\fz)$. Thus, by Remark \ref{sixr5}(ii),
we have $H_A^{p_0}(\rn)=L^{p_0}(\rn)$
and $H_A^{p,q}(\rn)=L^{p,q}(\rn)$.
From this and the fact that, for all $q\in(0,\fz]$,
$$\lf(L^{p_0}(\rn),L^\fz(\rn)\r)_{\theta,q}=L^{p,q}(\rn)\ \ \ {\rm with}\
\frac1p=\frac{1-\theta}{p_0}\ {\rm and}\ \theta\in(0,1)$$
(see \cite[Theorem 5.3.1]{bl76}), we further deduce that
\eqref{sixe1} holds true for all $p_0\in(1,\fz)$ and $q\in(0,\fz]$.
\end{remark}

\subsection{Boundedness of Calder\'on-Zygmund operators}\label{s6.2}

\hskip\parindent
As another application of the atomic
decomposition for $H^{p,q}_A(\rn)$, in this subsection,
we first obtain the boundedness of the $\dz$-type Calder\'on-Zygmund operators
from $H^p_A(\rn)$ to $L^{p,\fz}(\rn)$ (or $H^{p,\fz}_A(\rn)$)
in the critical case (see Theorem \ref{sixt3} and Remark \ref{sixr1} below).
As the third application of Theorem \ref{tt1},
we also prove that some Calder\'{o}n-Zygmund
operators are bounded from $H^{p,q}_A(\rn)$
to $L^{p,\fz}(\rn)$ (see Theorem \ref{sixt1} below).
This application is a generalization of \cite[Theorem 2.2]{wa07} in the present setting.
In addition, as an application of the finite atomic decomposition characterizations
for $H^{p,q}_A(\rn)$ in Theorem \ref{sevent1},
we establish a criterion for the boundedness of sublinear
operators from $H_A^{p,q}(\mathbb{R}^n)$ into a quasi-Banach
space (see Theorem \ref{sevent2} below), which is of independent interest.
Moreover, using the criterion, we further obtain
the boundedness of the $\dz$-type Calder\'on-Zygmund operators
from $H_A^{p,q}(\rn)$ to $L^{p,q}(\rn)$ (or $H_A^{p,q}(\rn)$) with
$\delta\in(0,\frac{\ln\lz_-}{\ln b}]$,
$p\in(\frac1{1+\delta},1]$ and $q\in(0,\fz]$ (see Theorem \ref{sixt4} below).

As the first main result of this subsection, the following Theorem \ref{sixt3}
is the boundedness of the $\dz$-type Calder\'on-Zygmund operators
from $H^p_A(\rn)$ to $L^{p,\fz}(\rn)$ (or $H^{p,\fz}_A(\rn)$)
in the critical case.

\begin{theorem}\label{sixt3}
Let $\dz\in(0,\frac{\ln\lambda_-}{\ln b}]$ and
$p=\frac 1{1+\dz}$. If $k\in\cs'(\rn)$ coincides with a
locally integrable function on $\rn\setminus\{0_n\}$ and
there exist two positive constants $C_{13}$ and $C_{14}$,
independent of $f,\,x$ and $y$, such that
$\|k*f\|_{L^{2}(\rn)}\le C_{13}\|f\|_{L^{2}(\rn)}$
and, when $\rho(x)\ge b^{2\tau}\rho(y)$,
\begin{eqnarray}\label{sixe7}
|k(x-y)-k(x)|\le C_{14}\frac{\lf[\rho(y)\r]^\dz}{\lf[\rho(x)\r]^{1+\dz}},
\end{eqnarray}
then $T(f):=k*f$ for $f\in L^{2}(\rn)\cap H^p_A(\rn)$
has a unique extension on $H^p_A(\rn)$ and, moreover,
there exist two positive constants $C_{15}$ and $C_{16}$ such that,
for all $f\in H^p_A(\rn)$,
\begin{eqnarray}\label{sixe17}
\|T(f)\|_{L^{p,\fz}(\rn)}\le C_{15}\|f\|_{H^p_A(\rn)}
\end{eqnarray}
and
\begin{eqnarray}\label{sixe16}
\|T(f)\|_{H^{p,\fz}_A(\rn)}\le C_{16}\|f\|_{H^p_A(\rn)}.
\end{eqnarray}
\end{theorem}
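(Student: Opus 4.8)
The plan is to reduce everything to a uniform estimate on $(p,2,s)$-atoms via the atomic characterization $H_A^p(\rn)=H_A^{p,2,s,p}(\rn)$ from Theorem \ref{tt1} (here $s:=\lfloor(1/p-1)\ln b/\ln\lambda_-\rfloor$, which equals $0$ since $p=\frac1{1+\dz}$ with $\dz\le\frac{\ln\lz_-}{\ln b}$, so atoms have only the zero-order vanishing moment). First I would fix a $(p,2,s)$-atom $a$ supported on a dilated ball $B=x_0+B_{\ell}$, so that $\|a\|_{L^2(\rn)}\le|B|^{1/2-1/p}$ and $\int_\rn a(x)\,dx=0$, and analyze $Ta=k*a$. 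Split $\rn=(x_0+B_{\ell+2\tau})\cup(x_0+B_{\ell+2\tau})^\com$. On the ``local'' part $x_0+B_{\ell+2\tau}$, use the $L^2$-boundedness of $T$ together with the Hölder inequality and $|B|\sim|x_0+B_{\ell+2\tau}|$ to get $\|Ta\|_{L^2(x_0+B_{\ell+2\tau})}\le C|B|^{1/2-1/p}$, hence by Hölder $\|Ta\|_{L^{p,\fz}(x_0+B_{\ell+2\tau})}\le\|Ta\|_{L^2(x_0+B_{\ell+2\tau})}|B|^{1/p-1/2}\le C$. On the ``far'' part, for $x$ with $\rho(x-x_0)\ge b^{2\tau}|B|$ I would use the cancellation of $a$ to write $Ta(x)=\int_B[k(x-y)-k(x-x_0)]a(y)\,dy$, and then invoke the regularity condition \eqref{sixe7} (with the roles there being $\rho(x-x_0)\ge b^{2\tau}\rho(y-x_0)$, valid on this region) to bound $|Ta(x)|\lesssim|B|^{1/p'}\|a\|_{L^1(\rn)}\,[\rho(x-x_0)]^{-1-\dz}\lesssim|B|^{1-1/p}|B|^\dz[\rho(x-x_0)]^{-1-\dz}=|B|[\rho(x-x_0)]^{-1-\dz}$, using $\|a\|_{L^1}\le|B|^{1-1/p}$ and $1+\dz=1/p$. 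Summing over the annuli $\rho(x-x_0)\sim b^{j}|B|$, $j\ge 2\tau$, gives $\|Ta\|_{L^{p,\fz}((x_0+B_{\ell+2\tau})^\com)}\le C$ because $p(1+\dz)=1$ makes the borderline series converge in the weak norm (this is exactly the critical exponent phenomenon). Thus $\|Ta\|_{L^{p,\fz}(\rn)}\le C$ uniformly in $a$.

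Next I would pass from a single atom to a general $f\in L^2(\rn)\cap H_A^p(\rn)$. Using Lemma \ref{sevenl1}, decompose $f=\sum_{k\in\zz}\sum_{i\in\nn}\lz_i^k a_i^k$ with $(p,\fz,s)$-atoms (hence $(p,2,s)$-atoms) $a_i^k$ supported on $x_i^k+B_{\ell_i^k}$, finite overlap in $i$ for each $k$, $|\lz_i^k|\sim 2^k|B_{\ell_i^k}|^{1/p}$, $\Omega_k=\{M_N(f)>2^k\}\supset\bigcup_i(x_i^k+B_{\ell_i^k+4\tau})$, and $\sum_k\sum_i|\lz_i^k|^p\lesssim\|f\|_{H_A^p(\rn)}^p$. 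Since $f\in L^2(\rn)$, $Tf=\sum_{k,i}\lz_i^k Ta_i^k$ converges in $L^2$ and hence pointwise a.e. For the weak estimate, for any $\lz\in(0,\fz)$ I would follow the standard good-part/bad-part splitting at height $\lz$: write $f=\sum_{2^k\le\lz}(\cdots)+\sum_{2^k>\lz}(\cdots)=:g_\lz+b_\lz$, control $\|g_\lz\|_{L^2}$ and use Chebyshev for $T g_\lz$, while for $Tb_\lz$ I would use the far-part pointwise bound above to show $|Tb_\lz|$ is essentially supported, modulo a small-measure set, in a dilate of $\bigcup_{2^k>\lz}\Omega_k$, whose measure is $\lesssim\lz^{-p}\|f\|_{H_A^p(\rn)}^p$. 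Combining and optimizing gives $|\{|Tf|>\lz\}|\lesssim\lz^{-p}\|f\|_{H_A^p(\rn)}^p$, i.e.\ \eqref{sixe17}. Density of $L^2\cap H_A^p$ in $H_A^p$ (Lemma \ref{sevenl4}(i)) then yields the unique bounded extension.

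For \eqref{sixe16} I would instead show each $Ta$ is (a fixed multiple of) a $(p,\infty,s)$-\emph{molecule}, or more directly that $M_N(Ta)\in L^p$ uniformly: the $L^2$-bound and Hölder handle $M_N(Ta)$ on $x_0+B_{\ell+2\tau}$ just as above, and on the complement a computation with $M_N^0$ analogous to \eqref{te26}--\eqref{te7}, now using \eqref{sixe7} plus the vanishing moment, gives $M_N(Ta)(x)\lesssim|B|^{-1/p}|B|^\beta[\rho(x-x_0)]^{-\beta}$ with $\beta>1/p$, which is $L^p$-integrable off $B$ with uniform bound. Then, exactly as in the proof of $H_A^{p,r,s,q}\subset H_A^{p,q}$ inside Theorem \ref{tt1} (with $q=p$), Lemma \ref{tl3} (or the direct $\ell^p$ summation when $q=p$) upgrades the uniform atomic bound to $\|Tf\|_{H_A^{p,\fz}(\rn)}\lesssim\|f\|_{H_A^p(\rn)}$; since $q=\infty\ge p$ one is in the easy regime $q/p\ge 1$ and the weak-$L^p$ summation of the tails is immediate. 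The main obstacle is the far-part estimate at the critical index: one must be careful that $p(1+\dz)=1$ forces the geometric-type sum $\sum_j (b^j|B|)\,(b^j|B|)^{-p(1+\dz)}=\sum_j 1$ to \emph{diverge} in $L^p$ but to be summable in the weak-$L^{p,\infty}$ quasi-norm (and, for \eqref{sixe16}, that the extra decay $\beta>1/p$ coming from the grand maximal function restores honest $L^p$ summability) — getting these two borderline book-keeping arguments right, uniformly over all atoms, is where the real work lies.
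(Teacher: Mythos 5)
Your single-atom estimates for \eqref{sixe17} (local part via the $L^2$-boundedness of $T$ and the H\"older inequality, far part via the cancellation of the atom and \eqref{sixe7}) follow the same lines as the paper, up to an algebra slip in the far-field bound: for an $L^2$-normalized atom the correct bound is $|Ta(x)|\ls[\rho(x-x_0)]^{-1-\dz}$, not $|B|[\rho(x-x_0)]^{-1-\dz}$, since $\|a\|_{L^1(\rn)}|B|^{\dz}\ls|B|^{1-1/p+\dz}=1$. The genuine gap is the passage from atoms to general $f$. At the critical index $p(1+\dz)=1$ each piece $\lz_i^kT(a_i^k)$ lies only in $L^{p,\fz}(\rn)$ and not in $L^p(\rn)$ (indeed $\int_{(x_i^k+B_{\ell_i^k+2\tau})^\com}|T(a_i^k)(x)|^p\,dx=\fz$), so no $L^p$-summation is possible, and your claim that $|T(b_\lz)|$ is ``essentially supported, modulo a small-measure set, in a dilate of $\bigcup_{2^k>\lz}\Omega_k$'' is exactly what fails here: the level set $\{x:\ |\lz_i^k|\,|T(a_i^k)(x)|>\lz\}$ has measure about $\lz^{-p}|\lz_i^k|^p$ and spreads far beyond any fixed dilate of $x_i^k+B_{\ell_i^k}$ when $2^k\gg\lz$. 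The missing ingredient is precisely the weak-type summable principle for $p<1$ (Lemma \ref{sixl1}, from Fefferman--Soria and Liu): uniform $L^{p,\fz}(\rn)$ bounds on the pieces together with $\sum_{k,i}|\lz_i^k|^p<\fz$ give the weak bound for the sum, with constant $\frac{2-p}{1-p}$. Your good-part Chebyshev estimate is fine, but without this lemma (or an equivalent union bound over heights summing to $\lz$) the bad part is uncontrolled; with it, the $\lz$-splitting is unnecessary, which is how the paper argues.

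The plan for \eqref{sixe16} contains a more serious error. You claim $M_N(Ta)(x)\ls|B|^{-1/p}|B|^{\beta}[\rho(x-x_0)]^{-\beta}$ with $\beta>1/p$, hence $M_N(Ta)\in L^p(\rn)$ uniformly. This cannot hold: the kernel has only $\dz$-regularity, so the available far-field decay of $Ta$ is exactly $[\rho(x-x_0)]^{-(1+\dz)}=[\rho(x-x_0)]^{-1/p}$ and no better in general, and since $M_N(Ta)\gs|Ta|$ almost everywhere, $M_N(Ta)$ is only in $L^{p,\fz}(\rn)$, not in $L^p(\rn)$. (The exponent $\beta>1/p$ in \eqref{te26} comes from the Schwartz decay of the test function acting on a genuine atom with $s+1$ vanishing moments; $Ta$ only inherits $\dz$-smoothness, which at $p=\frac1{1+\dz}$ lands exactly on the borderline.) If your claim were true, $T$ would be bounded from $H^p_A(\rn)$ to $H^p_A(\rn)$, contradicting the known failure of this boundedness at the critical index, which is the very reason the theorem concludes only $H^{p,\fz}_A(\rn)$. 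The molecular alternative fails for the same reason: on the annuli $U_j(B)$ one only gets $\|Ta\|_{L^2(U_j(B))}\ls b^{-j(\frac1p-\frac12)}|B|^{\frac12-\frac1p}$, i.e.\ $\varepsilon=\dz+\frac12$, below the threshold of Theorem \ref{tt2} unless $\dz>\frac12$, and $Ta\notin L^\fz(B)$ so the $(p,\fz,s,\varepsilon)$ normalization is unavailable. The paper instead proves the pointwise bound $M_N(T(h))(x)\ls\|h\|_{L^\fz(\rn)}|B|^{1+\dz}[\rho(x-x_0)]^{-(1+\dz)}$ off a dilate of $B$ (splitting $T(h)\ast\phi_k$ into three regions and using $\int_\rn T(h)(x)\,dx=0$, the mean value theorem and \eqref{sixe7}), which gives only a uniform weak-$L^p$ bound per atom, and then again sums with Lemma \ref{sixl1}; \eqref{sixe16} needs to be reworked along these lines.
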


To show Theorem \ref{sixt3}, we need
the following weak-type summable principle, which is from \cite[p.\,9]{fs87}
(see also \cite[p.\,114]{l91}).

\begin{lemma}\label{sixl1}
Let $p\in(0,1)$, $(X,\mu)$ be any measurable space and
$\{f_j\}_{j\in\nn}$ be a sequence of
measurable functions such that, for all
$j\in\nn$ and $\lz\in(0,\fz)$,
\begin{eqnarray*}
\mu\lf(\lf\{x\in X:\ |f_j(x)|>\lz\r\}\r)\leq C\lz^{-p},
\end{eqnarray*}
where $C$ is a positive constant independent of $\lz$ and $j$.
If $\{c_j\}_{j\in\nn}\subset\mathbb{C}$ satisfies that
$\sum_{j\in\nn}|c_j|^p<\fz$, then $\sum_{j\in\nn}c_jf_j(x)$
is absolutely convergent almost everywhere and there exists a positive
constant $\widetilde{C}$ such that, for all $\lz\in(0,\fz)$,
\begin{eqnarray*}
\mu\lf(\lf\{x\in X:\ \lf|\sum_{j\in\nn}c_jf_j(x)\r|>\lz\r\}\r)
\leq \widetilde{C}\frac{2-p}{1-p}\lf[\sum_{j\in\nn}|c_j|^p\r]\lz^{-p}.
\end{eqnarray*}
\end{lemma}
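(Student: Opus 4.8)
The plan is to run the classical good/bad truncation argument of Fefferman--Soria \cite[p.\,9]{fs87}. For each height $\lz\in(0,\fz)$ and each $j\in\nn$ with $c_j\neq0$ (the terms with $c_j=0$ being dropped), split
$$c_jf_j=g_j^\lz+b_j^\lz,\qquad g_j^\lz:=c_jf_j\,\chi_{\{x\in X:\ |c_jf_j(x)|\le\lz\}},\qquad b_j^\lz:=c_jf_j-g_j^\lz,$$
so that $b_j^\lz$ is supported in $E_j^\lz:=\{x\in X:\ |f_j(x)|>\lz/|c_j|\}$; the whole argument rests on the single scaling bound $\mu(E_j^\lz)\le C(\lz/|c_j|)^{-p}=C|c_j|^p\lz^{-p}$ furnished by the hypothesis. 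The absolute a.e. convergence of $\sum_jc_jf_j$ comes out first: taking $\lz=1$, $\sum_{j\in\nn}\mu(E_j^1)\le C\sum_{j\in\nn}|c_j|^p<\fz$, so the Borel--Cantelli lemma gives that, for $\mu$-a.e. $x\in X$, the set $\{j\in\nn:\ |c_jf_j(x)|>1\}$ is finite; meanwhile the layer-cake formula yields $\|g_j^1\|_{L^1(X,\mu)}\le\int_0^1\mu(\{|c_jf_j|>s\})\,ds\le C|c_j|^p\int_0^1s^{-p}\,ds=\frac{C}{1-p}|c_j|^p$, whence $\sum_{j\in\nn}\|g_j^1\|_{L^1(X,\mu)}<\fz$ and, by the Tonelli theorem, $\sum_{j\in\nn}|g_j^1(x)|<\fz$ for $\mu$-a.e. $x\in X$. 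Splitting $\sum_j|c_jf_j(x)|$ into the terms with $|c_jf_j(x)|\le1$ and those with $|c_jf_j(x)|>1$ then shows $\sum_j|c_jf_j(x)|<\fz$ $\mu$-a.e.

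For the weak-type estimate at a fixed $\lz\in(0,\fz)$, I would bound separately the contributions of $\sum_jb_j^\lz$ and $\sum_jg_j^\lz$, using $\{|\sum_jc_jf_j|>\lz\}\subset\{|\sum_jg_j^\lz|>\lz/2\}\cup\{|\sum_jb_j^\lz|>\lz/2\}$, valid wherever the series converge (hence $\mu$-a.e.). The bad sum is handled by subadditivity of $\mu$ and the scaling bound:
$$\mu\lf(\lf\{x\in X:\ \lf|\sum_{j\in\nn}b_j^\lz(x)\r|>\frac{\lz}2\r\}\r)\le\sum_{j\in\nn}\mu(E_j^\lz)\le C\lz^{-p}\sum_{j\in\nn}|c_j|^p .$$
For the good sum, the layer-cake formula again gives $\|g_j^\lz\|_{L^1(X,\mu)}\le C|c_j|^p\int_0^\lz s^{-p}\,ds=\frac{C}{1-p}|c_j|^p\lz^{1-p}$, and summing over $j$ (the series converges since $\sum_j|c_j|^p<\fz$, so $\sum_jg_j^\lz\in L^1(X,\mu)$) together with the Chebyshev inequality yields
$$\mu\lf(\lf\{x\in X:\ \lf|\sum_{j\in\nn}g_j^\lz(x)\r|>\frac{\lz}2\r\}\r)\le\frac2\lz\sum_{j\in\nn}\|g_j^\lz\|_{L^1(X,\mu)}\le\frac{2C}{1-p}\lz^{-p}\sum_{j\in\nn}|c_j|^p .$$

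Adding the last two displays bounds $\mu(\{x\in X:\ |\sum_{j\in\nn}c_jf_j(x)|>\lz\})$ by $C\frac{3-p}{1-p}\lz^{-p}\sum_{j\in\nn}|c_j|^p$, and, since $\frac{3-p}{1-p}\le2\,\frac{2-p}{1-p}$ for $p\in(0,1)$, this is the asserted bound with $\widetilde C:=2C$; if one insists on the displayed constant exactly, one truncates $g_j^\lz$ and $b_j^\lz$ at a suitably optimized multiple of $\lz$ and splits the event $\{|\sum_jc_jf_j|>\lz\}$ into $\{|\sum_jg_j|>(1-\vez)\lz\}\cup\{|\sum_jb_j|>\vez\lz\}$ with $\vez$ chosen to balance the two resulting bounds. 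I do not expect a genuine difficulty in this lemma; the only point requiring mild care is this bookkeeping of the constant, everything else being the elementary good/bad decomposition together with the Chebyshev and Borel--Cantelli lemmas.
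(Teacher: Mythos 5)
Your argument is correct and is essentially the classical Fefferman--Soria truncation proof that the paper invokes only by citation (the paper gives no proof of Lemma \ref{sixl1}, referring to \cite[p.\,9]{fs87} and \cite[p.\,114]{l91}): split each $c_jf_j$ at height $\lz$, control the bad parts through the measures of their supports via the scaling bound $\mu(\{|c_jf_j|>\lz\})\le C|c_j|^p\lz^{-p}$, control the good parts through the layer-cake $L^1$ bound plus the Chebyshev inequality, and get the a.e.\ absolute convergence from Borel--Cantelli together with Tonelli. The only cosmetic difference is the constant: since the set where some bad part is nonzero is already contained in $\bigcup_{j\in\nn}\{x\in X:\ |c_jf_j(x)|>\lz\}$, one may use this union directly instead of the event $\{|\sum_{j}b_j^\lz|>\lz/2\}$, test the good sum at level $\lz$ rather than $\lz/2$, and obtain the bound $C\frac{2-p}{1-p}\lz^{-p}\sum_{j}|c_j|^p$ exactly; but as the lemma only asserts the existence of some $\widetilde{C}$, your version with $\widetilde{C}=2C$ is perfectly adequate.
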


Now we show Theorem \ref{sixt3}.

\begin{proof}[Proof of Theorem \ref{sixt3}]
We first prove \eqref{sixe17}.
By Theorem \ref{tt1}, to show \eqref{sixe17},
it suffices to prove that, for $h$ being a constant multiple of a $(p,\fz,s)$-atom
associated with ball $B:=x_0+B_\ell$ for some $x_0\in\rn$ and $\ell\in\zz$,
\begin{eqnarray}\label{sixe8}
\sup_{k\in\zz}2^{kp}\lf|\lf\{x\in\rn:\ |T(h)(x)|>2^k\r\}\r|
\ls\|h\|_{L^\fz(\rn)}^p|B|.
\end{eqnarray}
Indeed,
by Theorem \ref{tt1} with $p=q$, we find that,
for any $f\in H^p_A(\rn)$, there exists a sequence of  constant multiples of
$(p,\fz,s)$-atoms, $\{h_{j}\}_{j\in\nn}$,
associated with balls $\{B_{j}\}_{j\in\nn}$,
such that $f=\sum_{j\in\nn}h_j$
in $\cs'(\rn)$ and
\begin{eqnarray*}
\|f\|_{H^p_A(\rn)}\sim\lf[\sum_{j\in\nn}
\|h_j\|_{L^\fz(\rn)}^p|B_j|\r]^{\frac1p}.
\end{eqnarray*}
From this, \eqref{sixe8} and Lemma \ref{sixl1}, we deduce that
\begin{eqnarray}\label{sixe18}
\qquad&&\sup_{k\in\zz}2^{kp}\lf|\lf\{x\in\rn:\ |T(f)(x)|>2^k\r\}\r|\\
&&\hs\leq\sup_{k\in\zz}2^{kp}
\lf|\lf\{x\in\rn:\ \sum_{j\in\nn}\lf|T(h_j)(x)\r|>2^k\r\}\r|
\ls\sum_{j\in\nn}\|h_j\|_{L^\fz(\rn)}^p|B_j|\ls\|f\|_{H^p_A(\rn)}^p,\noz
\end{eqnarray}
which implies that $\|T(f)\|_{L^{p,\fz}_A(\rn)}\ls\|f\|_{H^p_A(\rn)}$. This is as desired.

It remains to prove \eqref{sixe8}. First, by
the boundedness of $T$ and the H\"older inequality, we know that
\begin{eqnarray}\label{sixe9}
&&\sup_{k\in\zz}2^{kp}\lf|\lf\{x\in A^{4\tau}B:\
|T(h)(x)|>2^k\r\}\r|\\
&&\hs\leq\int_{A^{4\tau}B}|T(h)(x)|^p\,dx
\ls|B|^{(\frac p2)'}\lf\{\int_{A^{4\tau}B}
|T(h)(x)|^2\,dx\r\}^{\frac p2}\noz\\
&&\hs\ls|B|^{(\frac p2)'}\lf\{\int_\rn
|h(x)|^2\,dx\r\}^{\frac p2}\ls\|h\|_{L^\fz(\rn)}^p|B|\noz.
\end{eqnarray}
On the other hand, by $\int_{\rn}h(x)\,dx=0$ and \eqref{sixe7}, we find that,
for all $x\in(A^{4\tau}B)^\com$,
\begin{eqnarray*}
|T(h)(x)|
&&\leq\int_B\lf|k(x-y)-k(x-x_0)\r||h(y)|\,dy\\
&&\ls\|h\|_{L^\fz(\rn)}\int_B\frac{\lf[\rho(y-x_0)\r]^
\delta}{\lf[\rho(x-x_0)\r]^{1+\delta}}
\ls\frac{|B|^{1+\delta}}{\lf[\rho(x-x_0)\r]
^{1+\delta}}\|h\|_{L^\fz(\rn)},
\end{eqnarray*}
which further implies that
\begin{eqnarray}\label{sixe14}
&&\sup_{k\in\zz}2^{kp}\lf|\lf\{x\in \lf(A^{4\tau}B\r)^\com:\
|T(h)(x)|>2^k\r\}\r|\\
&&\hs\ls\sup_{k\in\zz}2^{kp}\lf|\lf\{x\in \lf(A^{4\tau}B\r)^\com:\
\frac{|B|^{1+\delta}}{\lf[\rho(x-x_0)\r]
^{1+\delta}}\|h\|_{L^\fz(\rn)}>2^k\r\}\r|\noz\\
&&\hs\ls\sup_{k\in\zz\cap(-\fz,\|h\|_{L^\fz(\rn)})}2^{kp}
\lf[\frac{\|h\|_{L^\fz(\rn)}}{2^k}\r]^{\frac1{1+\delta}}|B|
\sim\|h\|_{L^\fz(\rn)}^p|B|.\noz
\end{eqnarray}
Then \eqref{sixe8} follows from \eqref{sixe9} and \eqref{sixe14}, which
completes the proof of \eqref{sixe17}.

Next we prove \eqref{sixe16}. To this end, similar to \eqref{sixe18},
it suffices to prove that
\begin{eqnarray}\label{sixe19}
\sup_{k\in\zz}2^{kp}\lf|\lf\{x\in\rn:\ M_N(T(h))(x)>2^k\r\}\r|
\ls\|h\|_{L^\fz(\rn)}^p|B|.
\end{eqnarray}
First, similar to \eqref{sixe9}, by the boundedness of $T$ and $M_N$ on $L^2(\rn)$
(see Remark \ref{tr1}), we easily conclude that
\begin{eqnarray}\label{sixe20}
&&\sup_{k\in\zz}2^{kp}\lf|\lf\{x\in A^{4\tau}B:\
M_N(T(h))(x)>2^k\r\}\r|\ls\|h\|_{L^\fz(\rn)}^p|B|.
\end{eqnarray}

By $\int_\rn h(x)\,dx=0$, we know that $\wh{T(h)}(0)=\wh k(0)\wh h(0)=0$
and hence $\int_\rn T(h)(x)\,dx=0$.
By this, we find that,
for all $\phi\in\cs_N(\rn)$, $k\in\zz$ and
$x\in(A^{4\tau}B)^\com$,
\begin{eqnarray}\label{sixe10}
\ \ &&|T(h)\ast\phi_k(x)|\\
&&\hs=b^{-k}\lf|\int_\rn T(h)(y)
\lf[\phi\lf(A^{-k}(x-y)\r)-\phi\lf(A^{-k}(x-x_0)\r)\r]\,dy\r|\noz\\
&&\hs\le b^{-k}\int_\rn \lf|T(h)(y)\r|
\lf|\phi\lf(A^{-k}(x-y)\r)-\phi\lf(A^{-k}(x-x_0)\r)\r|\,dy\noz\\
&&\hs\le b^{-k}\lf\{\int_{\rho(y-x_0)<b^{2\tau}|B|}+
\int_{b^{2\tau}|B|\le\rho(y-x_0)<b^{-2\tau}\rho(x-x_0)}
+\int_{\rho(y-x_0)\ge b^{-2\tau}\rho(x-x_0)}\r\}\noz\\
&&\hs\hs\times \lf|T(h)(y)\r|
\lf|\phi\lf(A^{-k}(x-y)\r)-\phi\lf(A^{-k}(x-x_0)\r)\r|\,dy
=:\mi_1+\mi_2+\mi_3.\noz
\end{eqnarray}

For $\mi_1$, by the mean value theorem, \cite[p.\,11, Lemma 3.2]{mb03},
the H\"older inequality
and the boundedness of $T$ on $L^2(\rn)$, we conclude that
there exists $\xi(y)\in A^{2\tau}B$ such that,
for all $\phi\in\cs_N(\rn)$, $k\in\zz$ and $x\in(A^{4\tau}B)^\com$,
\begin{eqnarray}\label{sixe11}
\ \ \ \ \mi_1&&=b^{-k}\int_{\rho(y-x_0)<b^{2\tau}|B|}\lf|T(h)(y)\r|
\lf|\phi\lf(A^{-k}(x-y)\r)-\phi\lf(A^{-k}(x-x_0)\r)\r|\,dy\\
&&\leq b^{-k}\int_{\rho(y-x_0)<b^{2\tau}|B|}\lf|T(h)(y)\r|
\lf|\sum_{|\bz|=1}\partial^\bz\phi\lf(A^{-k}\lf(x-\xi(y)\r)\r)\r|
\lf|A^{-k}(y-x_0)\r|\,dy\noz\\
&&\ls b^{-k}\int_{\rho(y-x_0)<b^{2\tau}|B|}\lf|T(h)(y)\r|
\frac{b^{k(1+\widetilde{\delta})}}{\lf[\rho(x-x_0)\r]^{1+\widetilde{\delta}}}
b^{-k\widetilde{\delta}}\lf[\rho(y-x_0)\r]^{\widetilde{\delta}}\,dy\noz\\
&&\ls\frac{|B|^{\widetilde{\delta}}}{\lf[\rho(x-x_0)\r]^{1+\widetilde{\delta}}}
\lf\{\int_{\rn}\lf[T(h)\r]^2(y)\r\}^{\frac12}|B|^{\frac12}
\ls\frac{|B|^{1+\widetilde{\delta}}}{\lf[\rho(x-x_0)\r]^{1+\widetilde{\delta}}}
\|h\|_{L^\fz(\rn)},\noz
\end{eqnarray}
where
\begin{eqnarray*}
\widetilde{\delta}:=\left\{
\begin{array}{cl}
&(\ln\lz_+)/(\ln b)
\ \ \ {\rm when}\ \ \ \rho(y-x_0)\ge1,\\
&(\ln\lz_-)/(\ln b)\ \ \
{\rm when}\ \ \ \rho(y-x_0)<1.
\end{array}\r.
\end{eqnarray*}

For $\mi_2$, by $\int_\rn h(x)\,dx=0$, \eqref{sixe7}
and the mean value theorem,
we know that,
for all $\phi\in\cs_N(\rn)$, $k\in\zz$ and $x\in(A^{4\tau}B)^\com$,
\begin{eqnarray}\label{sixe12}
\mi_2
&&=b^{-k}\int_{b^{2\tau}|B|\le\rho(y-x_0)<b^{-2\tau}\rho(x-x_0)}
\lf|\int_Bh(z)k(y-z)\,dz\r|\\
&&\hs\times\lf|\phi\lf(A^{-k}(x-y)\r)-\phi\lf(A^{-k}(x-x_0)\r)\r|\,dy\noz\\
&&\ls\int_{b^{2\tau}|B|\le\rho(y-x_0)<b^{-2\tau}\rho(x-x_0)}
\lf\{\int_B|h(z)|\r.\noz\\&&\hs\times|k(y-z)-k(y-x_0)|\,dz\Bigg\}
\frac{\lf[\rho(y-x_0)\r]^{\widetilde{\delta}}}
{\lf[\rho(x-x_0)\r]^{1+\widetilde{\delta}}}\,dy\noz\\
&&\ls\frac{\|h\|_{L^\fz(\rn)}}{\lf[\rho(x-x_0)\r]^{1+\widetilde{\delta}}}
\int_{b^{2\tau}|B|\le\rho(y-x_0)<b^{-2\tau}\rho(x-x_0)}
\frac{|B|^{1+\delta}}{\lf[\rho(y-x_0)\r]^{1+\delta-\widetilde{\delta}}}\,dy\noz\\
&&\ls\frac{|B|^{1+\widetilde{\delta}}}{\lf[\rho(x-x_0)\r]^{1+\widetilde{\delta}}}
\|h\|_{L^\fz(\rn)},\noz
\end{eqnarray}
where $\widetilde{\delta}$ is as in \eqref{sixe11}.

For $\mi_3$, by the fact that $\int_\rn b(x)\,dx=0$, \eqref{sixe7}
and $\phi\in\cs_N(\rn)$, we find that,
for all $k\in\zz$ and $x\in(A^{4\tau}B)^\com$,
\begin{eqnarray}\label{sixe13}
\ \ \ \mi_3&&=\int_{\rho(y-x_0)\ge b^{-2\tau}\rho(x-x_0)}
\lf|\int_Bh(z)k(y-z)\,dz\r|\lf|\phi_k(x-y)\r|\,dy\\
&&\le\int_{\rho(y-x_0)\ge b^{-2\tau}\rho(x-x_0)}
\lf[\int_B|h(z)||k(y-z)-k(y-x_0)|\,dz\r]\lf|\phi_k(x-y)\r|\,dy\noz\\
&&\ls\|h\|_{L^\fz(\rn)}\int_{\rho(y-x_0)\ge b^{-2\tau}\rho(x-x_0)}
\lf\{\int_B\frac{\lf[\rho(z-x_0)\r]^\delta}
{\lf[\rho(y-x_0)\r]^{1+\delta}}\,dz\r\}\lf|\phi_k(x-y)\r|\,dy\noz\\
&&\ls\frac{|B|^{1+\delta}}{\lf[\rho(x-x_0)\r]
^{1+\delta}}\|h\|_{L^\fz(\rn)}\int_\rn\lf|\phi_k(x-y)\r|\,dy\ls\frac{|B|^{1+\delta}}{\lf[\rho(x-x_0)\r]
^{1+\delta}}\|h\|_{L^\fz(\rn)}.\noz
\end{eqnarray}
Combining \eqref{sixe10}, \eqref{sixe11}, \eqref{sixe12}, \eqref{sixe13}
and Proposition \ref{sp1}, we know that, for all $x\in(A^{4\tau}B)^\com$,
\begin{eqnarray*}
M_N\lf(T(h)\r)(x)\ls\sup_{\phi\in\cs_N(\rn)}\sup_{k\in\zz}\lf|T(h)\ast\phi_k(x)\r|
\ls\frac{|B|^{1+\delta}}{\lf[\rho(x-x_0)\r]
^{1+\delta}}\|h\|_{L^\fz(\rn)},
\end{eqnarray*}

From this, together with an argument parallel to \eqref{sixe14}, we further deduce that
\begin{eqnarray*}
\sup_{k\in\zz}2^{kp}\lf|\lf\{x\in \lf(A^{4\tau}B\r)^\com:\
M_N(T(h))(x)>2^k\r\}\r|\ls\|h\|_{L^\fz(\rn)}^p|B|,
\end{eqnarray*}
which, combined with \eqref{sixe20}, implies \eqref{sixe19}.
This finishes the proof of \eqref{sixe16} and hence Theorem
\ref{sixt3}.
\end{proof}

\begin{remark}\label{sixr1}
(i) If $A:=d\,{\rm I}_{n\times n}$ for some $d\in\rr$ with $|d|\in(1,\fz)$,
then $\frac{\ln\lambda_-}{\ln b}=\frac1n$ and, $H^p_A(\rn)$
and $H^{p,\fz}_A(\rn)$ become the classical isotropic Hardy and weak Hardy spaces,
respectively. In this case, we know, by Theorem \ref{sixt3}, that, if
$\delta\in(0,1]$, $p=\frac n{n+\delta}$ and $T$ is the Calder\'on-Zygmund operator
satisfying all conditions of Theorem \ref{sixt3} with \eqref{sixe7} replaced by
\begin{eqnarray*}
|k(x-y)-k(x)|\ls \frac{|y|^\dz}{|x|^{n+\dz}},\ \ \ |x|\ge 2|y|,
\end{eqnarray*}
then $T$ is bounded from $H^{\frac n{n+\dz}}(\rn)$ to
$H^{\frac n{n+\dz},\fz}(\rn)$, which is just \cite[Theorem 1]{l91}.
Here $\frac n{n+\dz}$ is called the \emph{critical index}.
In this sense, Theorem \ref{sixt3} also establishes the boundedness
of Calder\'on-Zygmund operators from $H^p_A(\rn)$ to $L^{p,\fz}(\rn)$
in the critical case under the anisotropic setting.

(ii) Let $\dz\in(0,1]$.
A \emph{non-convolutional $\dz$-type Calder\'on-Zygmund
operator} $T$ is a linear  operator  which is bounded on $L^2(\rn)$ and
satisfies that, for all $f\in L^2(\rn)$ with compact support and $x\notin\supp(f)$,
$$T(f)(x)=\int_{\supp (f)}\mathcal{K}(x,y)f(y)\,dy,$$
where $\mathcal{K}$ denotes a standard kernel on
$(\rn\times\rn)\setminus\{(x,x):\ x\in\rn\}$ in the following sense:
there exists a positive constant $C$ such that,
for all $x,\,y,\,z\in\rn$,
$$|\mathcal{K}(x,y)|\le \frac C{\rho(x-y)}\quad{\rm if}\quad x\neq y$$
and
\begin{eqnarray}\label{sixe15}
|\mathcal{K}(x,y)-\mathcal{K}(x,z)|\le C\frac{\lf[\rho(y-z)\r]^\dz}
{\lf[\rho(x-y)\r]^{1+\dz}}
\quad{\rm if}\quad \rho(x-y)>b^{2\tau}\rho(y-z).
\end{eqnarray}

By an argument similar to that used in the proof of \eqref{sixe16} in Theorem \ref{sixt3},
we find that \eqref{sixe16} also holds true for non-convolutional
$\dz$-type Calder\'on-Zygmund operators $T$ with the additional
assumption that $T^*1=0$
(namely, for all $a\in L^1(\rn)$ with compact support, if
$\int_\rn a(x)\,dx=0$, then $\int_\rn T(a)(x)\,dx=0$), the details
being omitted.

(iii)  Following the proof of
\eqref{sixe17} in Theorem \ref{sixt3},
we know that \eqref{sixe17} also holds true when  $T$
is a non-convolutional
$\dz$-type Calder\'on-Zygmund operator.

(iv) Let $\delta\in(0,\frac{\ln\lz_-}{\ln b}]$ and $p\in(\frac1{1+\delta},1]$.
If $T$ is either a convolutional
$\dz$-type Calder\'on-Zygmund operator as  in Theorem \ref{sixt3} or
a non-convolutional
$\dz$-type Calder\'on-Zygmund operator with the additional assumption
that $T^*1=0$, as in (ii) of this remark, then, by a similar proof to that of
\cite[p.\,68, Theorem 9.8]{mb03},
we conclude that
$T$ is bounded from $H^p_A(\rn)$ to $H^p_A(\rn)$.
Moreover, by an argument parallel to the proof of
\cite[p.\,69, Theorem 9.9]{mb03},
we know that, if $T$ is either a convolutional
$\dz$-type Calder\'on-Zygmund operator as  in Theorem \ref{sixt3}
or a non-convolutional
$\dz$-type Calder\'on-Zygmund operator $T$,
then $T$ is bounded from $H^p_A(\rn)$ to $L^p(\rn)$.
Comparing these with  Theorem \ref{sixt3} and (ii) and (iii) of this remark,
we know that the latter further completes the boundedness of
these operators in the critical case by establishing the bounedness
from $H^p_A(\rn)$ to $L^{p,\fz}(\rn)$ (or $H^{p,\fz}_A(\rn)$).
\end{remark}

Another interesting application of the atomic decomposition for $H^{p,q}_A(\rn)$
is to obtain the following boundedness of Calder\'on-Zygmund
operators from $H^{p,q}_A(\rn)$ to $L^{p,\fz}(\rn)$
with $p\in(0,1]$ and $q\in(p,\fz]$.

\begin{theorem}\label{sixt1}
Suppose that $p\in(0,1],\,q\in(p,\fz],\,r\in(1,\fz)$
and $T$ is a Calder\'{o}n-Zygmund
operator associated with the kernel $k$.
Moreover, assume that $T$ is bounded from
$L^r(\rn)$ to $L^{r,\fz}(\rn)$ and
$\omega_p$ satisfies a Dini-type condition of order $q/(q-p)$,
namely,
\begin{eqnarray}\label{d-c}
A_{(p,q)}:=\lf\{\int_0^1\lf[\omega_p(\delta)\r]^
{q/(q-p)}\,\frac{d\delta}\delta\r\}^{(q-p)/q}<\fz,
\end{eqnarray}
where, for $\delta\in(0,1]$,
\begin{eqnarray*}
\omega_p(\delta):=\sup_{B}
\frac1{|B|}\int_{\rho(x-y_B)>\frac{b^{2\tau}}\delta|B|}
\lf[\dis\int_{B}\lf|k(x,y)-\sum_{|\beta|\leq N}(y-y_B)
^\beta k_\beta(x,y_B)\r|\,dy\r]^p\,dx,
\end{eqnarray*}
$N:=\lfloor(1/p-1)\frac{\ln b}{\ln\lambda_-}\rfloor,
\,\beta:=(\beta_1,\cdots,\beta_n)\in\zz_+^n$,
$k_\beta(x,y_B):=
\frac1{\beta!}D^\beta k(x,y)|_{y=y_B}$
and the supremum is taken over
all dilated balls $B\in\mathfrak{B}$ centered at $y_B$.
Then $T$ is bounded from $H^{p,q}_A(\rn)$ to
$L^{p,\fz}(\rn)$ and, moreover, there exists a
positive constant $C$ such that, for all
$f\in H^{p,q}_A(\rn)$,
$$\lf\|T(f)\r\|_{L^{p,\fz}(\rn)}\leq C
\lf[A_{(p,q)}\r]^{\frac1p}\|f\|_{H^{p,q}_A(\rn)}.$$
\end{theorem}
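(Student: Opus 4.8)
The plan is to deduce the estimate from the atomic characterization of $H^{p,q}_A(\rn)$ (Theorem \ref{tt1}) by a weak-type ($L^{p,\fz}$) argument in the spirit of \cite{fs87,wa07}, adapted to the anisotropic dilation $A$. By Theorem \ref{tt1}, $H^{p,q}_A(\rn)=H_A^{p,\fz,s,q}(\rn)$, so for $f\in H^{p,q}_A(\rn)$ I take a decomposition $f=\sum_{k\in\zz}\sum_{i\in\nn}\lz_i^ka_i^k$ in $\cs'(\rn)$ into $(p,\fz,s)$-atoms $a_i^k$ supported on $x_i^k+B_{\ell_i^k}\in\mathfrak{B}$, with $\{x_i^k+B_{\ell_i^k}\}_{i\in\nn}$ of bounded overlap for each $k$, $\lz_i^k\sim2^k|B_{\ell_i^k}|^{1/p}$, and, writing $\Omega_k:=\{x\in\rn:\ M_N(f)(x)>2^k\}$ and $\mu_k:=|\Omega_k|^{1/p}$, one also has (from the proof of Theorem \ref{tt1}, after a harmless fixed redilation of the supporting balls) $\sum_i|B_{\ell_i^k}|\sim\mu_k^p$ and $\|f\|_{H^{p,q}_A(\rn)}^q\sim\sum_k(2^k\mu_k)^q$, with $\sup_k$ if $q=\fz$. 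Since each $(p,\fz,s)$-atom is a $(p,r,s)$-atom, $Tf$ is meaningful on the dense class $H^{p,q}_A(\rn)\cap L^r(\rn)$ (cf.\ Lemma \ref{sevenl4} and Theorem \ref{sevent1}), the almost-everywhere convergence of $\sum_{k,i}\lz_i^kT(a_i^k)$ being handled exactly as in the proof of Theorem \ref{sixt3} via Lemma \ref{sixl1}. By \eqref{se7} it then suffices to bound $\alpha^p|\{x\in\rn:\ |Tf(x)|>\alpha\}|$ by $[1+A_{(p,q)}]\|f\|_{H^{p,q}_A(\rn)}^p$ uniformly for $\alpha\in(0,\fz)$.

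Fix $\alpha\in(0,\fz)$ and choose $j_0\in\zz$ with $2^{j_0}\le\alpha<2^{j_0+1}$. Write $f=g+h$ with $g:=\sum_{k\le j_0}\sum_i\lz_i^ka_i^k$ (low levels) and $h:=\sum_{k>j_0}\sum_i\lz_i^ka_i^k$ (high levels). Fix once and for all a small structural constant $\gz\in(0,p\ln2/\ln b)$, put $\mathfrak{s}_k:=\lceil\gz(k-j_0)\rceil+2\tau+1$ for $k>j_0$, and set
\[
E:=\bigcup_{k>j_0}\bigcup_{i\in\nn}\lf(x_i^k+B_{\ell_i^k+\mathfrak{s}_k}\r).
\]
Then $|E|\le\sum_{k>j_0}\sum_i b^{\mathfrak{s}_k}|B_{\ell_i^k}|\ls\sum_{k>j_0}b^{\gz(k-j_0)}\mu_k^p$, so, since $b^\gz<2^p$, a geometric summation (or H\"older's inequality with exponents $q/p$ and $q/(q-p)$) against $\sum_k(2^k\mu_k)^q$ gives $\alpha^p|E|\ls\|f\|_{H^{p,q}_A(\rn)}^p$. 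It then remains to estimate $|\{x\in E^\com:\ |Tg(x)|>\alpha/4\}|$ and $|\{x\in E^\com:\ |Th(x)|>\alpha/4\}|$. For the low part, each $a_i^k$ is a $(p,r,s)$-atom, and bounded overlap at level $k$ gives $\|\sum_i\lz_i^ka_i^k\|_{L^r(\rn)}\ls(\sum_i|\lz_i^k|^r|B_{\ell_i^k}|^{r(1/r-1/p)})^{1/r}\sim2^k\mu_k^{p/r}$, hence $\|g\|_{L^r(\rn)}\ls\sum_{k\le j_0}2^k\mu_k^{p/r}$. The hypothesis $T\colon L^r(\rn)\to L^{r,\fz}(\rn)$ and Chebyshev's inequality yield $\alpha^p|\{|Tg|>\alpha/4\}|\ls\alpha^{p-r}\|g\|_{L^r(\rn)}^r\sim2^{j_0(p-r)}(\sum_{k\le j_0}2^k\mu_k^{p/r})^r$, and H\"older's inequality with exponents $\frac{qr}{qr-p}$ and $\frac{qr}{p}$ (chosen so $\mu_k^{p/r}$ gets raised to the power $q$), together with $\sum_{k\le j_0}2^{ck}\sim2^{cj_0}$ for $c>0$, bounds this by $(\sum_{k\le j_0}(2^k\mu_k)^q)^{p/q}\le\|f\|_{H^{p,q}_A(\rn)}^p$ (the case $q=\fz$ being identical with $\sup$ in place of $\sum$). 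No assumption on the kernel is used for the low part.

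For the high part, which is the crux: if $x\in E^\com$ then for all $k>j_0$, $i\in\nn$ one has $\rho(x-x_i^k)\ge b^{\ell_i^k+\mathfrak{s}_k}$, hence $\rho(x-x_i^k)>b^{2\tau}\rho(y-x_i^k)$ for all $y\in x_i^k+B_{\ell_i^k}$; using the vanishing moments of $a_i^k$ up to order $s\ge N=\lfloor(1/p-1)\frac{\ln b}{\ln\lz_-}\rfloor$ to subtract the order-$N$ Taylor polynomial of $k(x,\cdot)$ at $x_i^k$, together with $\|a_i^k\|_{L^\fz(\rn)}\le|B_{\ell_i^k}|^{-1/p}$,
\[
\lz_i^k|T(a_i^k)(x)|\ls2^k\int_{x_i^k+B_{\ell_i^k}}\lf|k(x,y)-\sum_{|\bz|\le N}(y-x_i^k)^\bz k_\bz(x,x_i^k)\r|\,dy.
\]
Since $p\le1$, $|Th(x)|^p\le\sum_{k>j_0}2^{kp}\sum_i(\int_{x_i^k+B_{\ell_i^k}}|\cdots|\,dy)^p$ on $E^\com$; integrating over $E^\com\st(x_i^k+B_{\ell_i^k+\mathfrak{s}_k})^\com=\{\rho(x-x_i^k)\ge(b^{2\tau}/\dz_k)|B_{\ell_i^k}|\}$ with $\dz_k:=b^{2\tau-\mathfrak{s}_k}\in(0,b^{-1}]$, the very definition of $\omega_p$ gives $\int_{E^\com}(\int_{x_i^k+B_{\ell_i^k}}|\cdots|\,dy)^p\,dx\le|B_{\ell_i^k}|\,\omega_p(\dz_k)$. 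Summing in $i$ ($\sum_i|B_{\ell_i^k}|\sim\mu_k^p$) and then using H\"older's inequality with exponents $q/p$ and $q/(q-p)$,
\[
\alpha^p|\{x\in E^\com:\ |Th(x)|>\alpha/4\}|\ls\sum_{k>j_0}(2^k\mu_k)^p\omega_p(\dz_k)\le\lf[\sum_k(2^k\mu_k)^q\r]^{p/q}\lf[\sum_{k>j_0}[\omega_p(\dz_k)]^{q/(q-p)}\r]^{(q-p)/q}.
\]
Because $\omega_p$ is non-decreasing and the $\dz_k$, $k=j_0+1,j_0+2,\dots$, run through the values $\{b^{-2},b^{-3},\dots\}$ with multiplicity at most $\sim1/\gz$, the last bracket is $\ls_\gz\int_0^1[\omega_p(\dz)]^{q/(q-p)}\,\frac{d\dz}\dz=[A_{(p,q)}]^{q/(q-p)}$, while the first is $\le\|f\|_{H^{p,q}_A(\rn)}^p$; this is precisely where the Dini-type condition of order $q/(q-p)$ and the hypothesis $q>p$ enter. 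Collecting the three estimates gives $\alpha^p|\{|Tf|>\alpha\}|\ls(1+A_{(p,q)})\|f\|_{H^{p,q}_A(\rn)}^p$, which with \eqref{se7} yields the assertion. The main obstacle is exactly this balancing: the level-dependent dilation $\mathfrak{s}_k$ must grow slowly enough that $\alpha^p|E|$ stays controlled by $\|f\|^p_{H^{p,q}_A(\rn)}$, yet fast enough that the test scales $\dz_k$ exhaust a full neighbourhood of the origin with bounded multiplicity, so that exactly the order-$q/(q-p)$ Dini integral $A_{(p,q)}$ is produced; secondary points are the anisotropic Taylor-remainder estimate for $k(x,\cdot)$ (via \eqref{se5}, \eqref{se19} and the eigenvalue bounds $\lz_\pm$) justifying the passage from the moments of $a_i^k$ to the $\omega_p$-bound, and the bounded-overlap bookkeeping $\sum_i|B_{\ell_i^k}|\sim|\Omega_k|$ borrowed from the proof of Theorem \ref{tt1}.
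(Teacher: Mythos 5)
Your proposal is correct and follows essentially the same route as the paper's proof: atomic decomposition via Theorem \ref{tt1}, a split of $f$ at the level $k_0$ (your $j_0$) determined by the threshold, the $L^r\rightarrow L^{r,\fz}$ hypothesis plus H\"older for the low part, and for the high part an exceptional set built from level-dependently dilated balls whose expansion rate is a geometric factor strictly below $2^p$, so that the remaining integral is controlled by $\sum_k(2^k\mu_k)^p\omega_p(\delta_k)$ and then by H\"older with exponents $q/p$ and $q/(q-p)$ against the Dini integral $A_{(p,q)}$. The only differences are cosmetic (the paper takes the expansion $b^{m_k}\sim b^{2\tau}(3/2)^{p(k-k_0)}$ instead of your $b^{\gz(k-j_0)}$, and works with $\mu_k:=(\sum_i|\lambda_i^k|^p)^{1/p}$ rather than $|\Omega_k|^{1/p}$), so no substantive comparison is needed.
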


\begin{proof}
Let $p\in(0,1],\,q\in(p,\fz]$ and $r\in(1,\fz)$.
For all $f\in H^{p,q}_A(\rn)$, by Theorem \ref{tt1} and
Definition \ref{d-aahls}, we know that there exists
a sequence of $(p,\fz,s)$-atoms,
$\{a_i^k\}_{i\in\mathbb{N},\,k\in\mathbb{Z}}$,
respectively supported on
$\{x_i^k+B_i^k\}_{i\in\mathbb{N},\,k\in\mathbb{Z}}
\subset\mathfrak{B}$ such that
$f=\sum_{k\in\mathbb{Z}}
\sum_{i\in\mathbb{N}}\lambda_i^ka_i^k$
in $\cs'(\rn)$,
$\lambda_i^k\sim2^k|B_i^k|^{1/p}$
for all $k\in\mathbb{Z}$ and $i\in\mathbb{N}$,
$\sum_{i\in\mathbb{N}}\chi_{x_i^k+B_i^k}(x)\ls1$
for all $k\in\mathbb{Z}$ and $x\in\rn$, and
\begin{eqnarray*}
\|f\|_{H^{p,q}_A(\rn)}\sim\lf\|\lf\{\mu_k\r\}
_{k\in\mathbb{Z}}\r\|_{\ell^q},
\end{eqnarray*}
where
$\mu_k:=(\sum_{i\in\mathbb{N}}
|\lambda_i^k|^p)^{1/p}$.
For all $k_0\in\mathbb{Z}$, let
$f_1:=\sum_{k=-\fz}^{k_0}
\sum_{i\in\mathbb{N}}\lambda_i^ka_i^k$
and $f_2:=f-f_1$.
Since $\frac{rq}p\in(1,\fz]$, by the H\"{o}lder inequality,
we have
\begin{eqnarray*}
\|f_1\|_{L^r(\rn)}
&&\leq\sum_{k=-\fz}^{k_0}\lf\|\sum_{i\in\mathbb{N}}
\lambda_i^ka_i^k\r\|_{L^r(\rn)}\sim\sum_{k=-\fz}^{k_0}\lf\{\int_{\bigcup_{i\in\nn}
(x_i^k+B_i^k)}\lf|\sum_{i\in\nn}\lambda_i^ka_i^k(x)
\r|^r\,dx\r\}^{1/r}\noz\\
&&\ls\sum_{k=-\fz}^{k_0}2^k\lf(\sum_{i\in\nn}\lf|
B_i^k\r|\r)^{1/r}\ls\sum_{k=-\fz}^{k_0}2^{k(1-\frac pr)}
\lf(\sum_{i\in\nn}\lf|\lambda_i^k\r|^p\r)^{1/r}\noz\\
&&\ls 2^{k_0(1-\frac pr)}\lf\|\lf\{\mu_k\r\}_{k\in\zz}\r\|
_{\ell^q}^{\frac pr}
\sim2^{k_0(1-\frac pr)}\|f\|_{H^{p,q}_A(\rn)}^{\frac pr},\noz
\end{eqnarray*}
which, together with the boundedness from $L^r(\rn)$ to
$L^{r,\fz}(\rn)$ of $T$, implies that
\begin{eqnarray}\label{sixe3}
2^{pk_0}\lf|\lf\{x\in\rn:\ |T(f_1)(x)|>
2^{k_0}\r\}\r|\ls\|f\|_{H^{p,q}_A(\rn)}^p.
\end{eqnarray}

To complete the proof of Theorem \ref{sixt1},
it suffices to prove that, for all $k_0\in\zz$
\begin{eqnarray}\label{sixe4}
2^{pk_0}\lf|\lf\{x\in\rn:\ |T(f_2)(x)|>
2^{k_0}\r\}\r|\ls A_{(p,q)}\|f\|_{H^{p,q}_A(\rn)}^p.
\end{eqnarray}
Indeed, if \eqref{sixe4} is true, then, by \eqref{sixe3},
we further conclude that
\begin{eqnarray}\label{sixe5}
&&2^{pk_0}\lf|\lf\{x\in\rn:\ |T(f)(x)|>2^{k_0}\r\}\r|\\
&&\hs\leq 2^{pk_0}\lf|\lf\{x\in\rn:\ |T(f_1)(x)|>2^{k_0-1}\r\}\r|\noz\\
&&\hs\hs+2^{pk_0}\lf|\lf\{x\in\rn:\ |T(f_2)(x)|>2^{k_0-1}\r\}\r|\noz\\
&&\hs\ls\|f\|_{H^{p,q}_A(\rn)}^p+A_{(p,q)}\|f\|_{H^{p,q}_A(\rn)}
^p\ls A_{(p,q)}\|f\|_{H^{p,q}_A(\rn)}^p.\noz
\end{eqnarray}
Taking the supremum over all
$k_0\in\mathbb{Z}$ at the left-hand side of \eqref{sixe5},
we find that
$$\lf\|T(f)\r\|_{L^{p,\fz}(\rn)}
\ls\lf[A_{(p,q)}\r]^{\frac1p}\|f\|_{H^{p,q}_A(\rn)},$$
which is the desired conclusion of Theorem \ref{sixt1}.

Finally, we give the proof of \eqref{sixe4}.
To this end, for all $i\in\mathbb{N}$
and $k\in\mathbb{Z}$,
let $B_{\ell_i^k}:=x_i^k+B_i^k$,
where $\ell_i^k\in\mathbb{Z}$.
For every $k\in(k_0,\fz]\cap\mathbb{Z}$,
there exists an $m_k\in\mathbb{N}$ such that
$b^{m_k-2\tau-1}\leq(\frac32)^{p(k-k_0)}<b^{m_k-2\tau}$.
Let
$$B_{k_0}:=\bigcup_{k=k_0+1}^{\fz}
\bigcup_{i\in\mathbb{N}}B_{\ell_i^k+m_k+\tau}.$$
Notice that
$\lambda_i^k\sim2^k|B_i^k|^{1/p}
\sim2^k|B_{\ell_i^k}|^{1/p}$.
Since
$q/p\in(1,\fz]$, from the H\"{o}lder inequality,
we deduce that,
for all $k\in\mathbb{Z}$ and $i\in\mathbb{N}$,
\begin{eqnarray}\label{sixe6}
\lf|B_{k_0}\r|
&&\leq\sum_{k=k_0+1}^{\fz}\sum_{i\in\mathbb{N}}
\lf|B_{\ell_i^k+m_k+\tau}\r|\ls2^{-pk_0}\sum_{k=k_0+1}
^{\fz}\lf(\frac34\r)^{p(k-k_0)}\sum_{i\in\mathbb{N}}
\lf|\lambda_i^k\r|^p\\
&&\ls 2^{-pk_0}\lf\|\lf\{\mu_k\r\}_{k\in\mathbb{Z}}
\r\|_{\ell^q}^p
\sim2^{-pk_0}\|f\|_{H^{p,q}_A(\rn)}^p.\noz
\end{eqnarray}
Moreover, by the cancellation condition of $a_i^k$,
we have
\begin{eqnarray*}
&&\int_{\rn\setminus B_{k_0}}\lf|T(f_2)(x)\r|^p\,dx\\
&&\hs\leq\sum_{k=k_0+1}^{\fz}\sum_{i\in\mathbb{N}}
\lf|\lambda_i^k\r|^p\int_{\rn\setminus
B_{\ell_i^k+m_k+\tau}}\lf|T(a_i^k)(x)\r|^p\,dx\\
&&\hs=\sum_{k=k_0+1}^{\fz}\sum_{i\in\mathbb{N}}
\lf|\lambda_i^k\r|^p\\
&&\hs\hs\times\lf\{\int_{\rn\setminus B_{\ell_i^k+m_k+\tau}}
\lf|\dis\int_{B_{\ell_i^k}}\lf[k(x,y)-
\sum_{|\beta|\leq N}(y-y_{B_{\ell_i^k}})^\beta
k_\beta(x,y_{B_{\ell_i^k}})\r]a_i^k(y)\,dy\r|^p\,dx\r\}.
\end{eqnarray*}
Observe that, from
$x\in(B_{\ell_i^k+m_k+\tau})^\complement$
and
\eqref{se3}, we deduce that
$$\rho(x-y_{B_{\ell_i^k}})>b^{m_k}|B_{\ell_i^k}|>
b^{2\tau}\lf(\frac32\r)^{p(k-k_0)}|B_{\ell_i^k}|.$$
Hence, by the H\"{o}lder inequality,
we find that
\begin{eqnarray*}
\int_{\rn\setminus B_{k_0}}\lf|T(f_2)(x)\r|^p\,dx
&&\leq\sum_{k=k_0+1}^{\fz}\omega_p
\lf(\lf(\frac23\r)^{p(k-k_0)}\r)\mu_k^p\\
&&\leq\lf\{\sum_{k=k_0+1}^{\fz}\lf[\omega_p
\lf(\lf(\frac23\r)^{p(k-k_0)}\r)\r]^{\frac q{q-p}}\r\}
^{\frac{q-p}q}\lf\|\lf\{\mu_k\r\}_{k\in\mathbb{Z}}\r\|
_{\ell^q}^p\noz\\
&&\ls\lf\{\int_0^1\lf[\omega_p(\delta)\r]^{\frac q{q-p}}
\,\frac{d\delta}\delta\r\}^{\frac{q-p}q}\|f\|_{H^{p,q}_A(\rn)}
^p\sim A_{(p,q)}\|f\|_{H^{p,q}_A(\rn)}^p,\noz
\end{eqnarray*}
which further implies that
$$2^{pk_0}\lf|\lf\{x\in(B_{k_0})^
\complement:\ \lf|T(f_2)(x)\r|>2^{k_0}\r\}\r|
\ls A_{(p,q)}\|f\|_{H^{p,q}_A(\rn)}^p.$$
By this and \eqref{sixe6}, we conclude that
\begin{eqnarray*}
2^{pk_0}\lf|\lf\{x\in\rn:\
\lf|T(f_2)(x)\r|>2^{k_0}\r\}\r|
&&\ls 2^{pk_0}\lf[\lf|B_{k_0}\r|+
\lf|\lf\{x\in(B_{k_0})^\complement:
\ \lf|T(f_2)(x)\r|>2^{k_0}\r\}\r|\r]\\
&&\ls\lf(1+A_{(p,q)}\r)\|f\|_
{H^{p,q}_A(\rn)}^p\ls A_{(p,q)}\|f\|
_{H^{p,q}_A(\rn)}^p,
\end{eqnarray*}
which proves \eqref{sixe4}.
This finishes the proof of Theorem \ref{sixt1}
\end{proof}

\begin{remark}\label{sixr2}
(i) If $A$ is the same as in Remark \ref{sixr1}(i), then $N=\lfloor n(1/p-1)\rfloor$,
$\frac{\ln b}{\ln\lambda_-}=n$ and, $H^{p,q}_A(\rn)$
and $L^{p,\fz}(\rn)$ become the classical isotropic Hardy-Lorentz and weak Lebesgue spaces,
respectively. In this case, Theorem \ref{sixt1} is just \cite[Theorem 2.2]{wa07}.

(ii) It is well known that the H\"{o}rmander condition implies
the boundedness of the Calder\'on-Zygmund operator $T$ from
$H^1_A(\rn)$ to $L^1(\rn)$. Observe that $H^1_A(\rn)\subsetneqq H^{1,q}_A(\rn)$ with $q\in(1,\fz]$.
Thus, to define $T$ on $H^{1,q}_A(\rn)$ with $q\in(1,\fz]$, it is natural to
requrie $T$ to satisfy some conditions stronger than the usual H\"{o}rmander condition. This
was accomplished by the
variable dilations (the Dini-type condition \eqref{d-c})
of Fefferman and Soria \cite{fs87} (see also \cite{wa07}).
Moreover, recall that we consider $p=\frac1{1+\delta}$ or $p\in(\frac1{1+\delta},1]$
with $\delta\in(0,\frac{\ln\lz_-}{\ln b}]$
in Theorem \ref{sixt3} and Remark \ref{sixr1}, which implies
$N=\lfloor \frac{\ln b}{\ln\lz_-}(1/p-1)\rfloor\le1$. But, in Theorem \ref{sixt1}, we
consider $p\in(0,1]$. If $p$ becomes smaller, then $N$ becomes larger. Thus, more
regularity of the kernel of $T$ is needed. This justifies the definition of
$\omega_p(\delta)$ in Theorem \ref{sixt1}.

(iii) If $\delta\in(0,\frac{\ln\lz_-}{\ln b}]$, $p\in(\frac1{1+\delta},1]$
and $T$ is a non-convolutional
$\dz$-type Calder\'on-Zygmund operator which satisfies all conditions in
Remark \ref{sixr1}(ii) with
\eqref{sixe15} replaced by
\begin{eqnarray*}
|\mathcal{K}(x,y)-\mathcal{K}(x,z)|\le C\frac{\lf[\rho(y-z)\r]^\dz}
{\lf[\rho(x-y)\r]^{1+\dz}}
\quad{\rm if}\quad \rho(x-y)>b^\tau\rho(y-z),
\end{eqnarray*}
then
$N=\lfloor \frac{\ln b}{\ln\lz_-}(1/p-1)\rfloor=0$
and $p(1+\delta)>1$.
Thus, for $p=q$,
\begin{eqnarray*}
A_{(p,p)}&&=\sup_{\delta\in(0,1]}\lf\{\omega_p(\delta)\r\}\\
&&=\sup_{B}\frac1{|B|}\int_{\rho(x-y_B)>b^{2\tau}|B|}
\lf[\dis\int_{B}\lf|\mathcal{K}(x,y)-\mathcal{K}(x,y_B)\r|\,dy\r]^p\,dx\\
&&\ls\sup_{B}\frac1{|B|}\int_{\rho(x-y_B)>b^{2\tau}|B|}
\lf[\dis\int_{B}\frac{\lf[\rho(y-y_B)\r]^\dz}
{\lf[\rho(x-y)\r]^{1+\dz}}\,dy\r]^p\,dx\\
&&\ls\sup_{B}\frac1{|B|}\sum_{k=0}^\fz
\int_{b^kb^{2\tau}|B|<\rho(x-y_B)\le b^{k+1}b^{2\tau}|B|}
\lf[\dis\int_{B}\frac{\lf[\rho(y-y_B)\r]^\dz}
{\lf[\rho(x-y_B)\r]^{1+\dz}}\,dy\r]^p\,dx\\
&&\ls\sup_{B}\frac1{|B|}\sum_{k=0}^\fz\lf[
\frac{|B|^{1+\delta}}{\lf(b^k|B|\r)^{1+\delta}}\r]^pb^k|B|\sim1,
\end{eqnarray*}
where the supremum is taken over
all dilated balls $B\in\mathfrak{B}$ centered at $y_B$
and $\mathfrak{B}$ is as in \eqref{se14}.
This shows that (iv) of Remark \ref{sixr1} is
the endpoint (critical) case of Theorem
\ref{sixt1} in the sense of $p=q$.
\end{remark}

Recall that a \emph{quasi-Banach space} $\mathcal{B}$ is a vector space endowed
with a quasi-norm $\|\cdot\|_{\mathcal{B}}$, which is non-negative, non-degenerate
(i.\,e., $\|f\|_{\mathcal{B}}=\theta$ if and only if $f=\theta$)
and satisfying the quasi-triangle inequality;
namely, there exists a positive constant $K\in[1,\fz)$ such that, for all
$f,\,g\in\mathcal{B}$,
$\|f+g\|_{\mathcal{B}}\le K(\|f\|_{\mathcal{B}}+\|g\|_{\mathcal{B}})$.
Clearly, a quasi-Banach space $\mathcal{B}$ is called a Banach space if $K=1$.

Let $\mathcal{B}$ be a quasi-Banach space
and $\mathcal{Y}$ a linear space. An operator
$T$ from $\mathcal{Y}$ to $\mathcal{B}$ is said to be
$\mathcal{B}$-\emph{sublinear} if
there exists a positive constant $C$ such that,
for any $\lz,\,\mu\in\mathbb{C}$ and $f,\,g\in\mathcal{Y}$,
$$\lf\|T\lf(\lz f+\mu g\r)\r\|_{\mathcal{B}}
\le C\lf[|\lz|\lf\|T(f)\r\|_{\mathcal{B}}
+|\mu|\lf\|T(g)\r\|_{\mathcal{B}}\r]$$
and
$\|T(f)-T(g)\|_{\mathcal{B}}
\le C\|T(f-g)\|_{\mathcal{B}}$.
Obviously, if $T$ is linear, then $T$ is $\mathcal{B}$-sublinear.

As an application of the finite atomic decomposition characterizations obtained in Section
\ref{s7} (see Theorem \ref{sevent1}), as well as Theorem \ref{sevent2},
we establish the following criterion for the boundedness of sublinear
operators from $H_A^{p,q}(\mathbb{R}^n)$ into a quasi-Banach
space $\mathcal{B}$, which is a variant of \cite[Theorem 5.9]{gly08};
see also \cite[Theorem 3.5]{ky14} and \cite[Theorem 1.1]{yz08}.

\begin{theorem}\label{sevent2}
Let $(p,r,s)$ be an admissible anisotropic triplet, $q\in(0,\fz)$
and $\mathcal{B}$ be a quasi-Banach space.
If one of the following statements holds true:
\begin{enumerate}
\item[{\rm (i)}] $r\in(1,\fz)$ and
$T:\ H_{A,{\rm fin}}^{p,r,s,q}(\rn)\rightarrow\mathcal{B}$
is a $\mathcal{B}$-sublinear operator satisfying that
there exists a positive constant $C_{17}$ such that,
for all $f\in H_{A,{\rm fin}}^{p,r,s,q}(\rn)$,
\begin{eqnarray}\label{sevene20}
\|T(f)\|_{\mathcal{B}}\le C_{17}\|f\|_{H_{A,{\rm fin}}^{p,r,s,q}(\rn)};
\end{eqnarray}
\item[{\rm (ii)}]
$T:\ H_{A,{\rm fin}}^{p,\fz,s,q}(\rn)\cap C(\rn)\rightarrow\mathcal{B}$
is a $\mathcal{B}$-sublinear operator satisfying that
there exists a positive constant $C_{18}$ such that,
for all $f\in H_{A,{\rm fin}}^{p,\fz,s,q}(\rn)\cap C(\rn)$,
$$\|T(f)\|_{\mathcal{B}}\le C_{18}\|f\|_{H_{A,{\rm fin}}^{p,\fz,s,q}(\rn)},$$
\end{enumerate}
then $T$ uniquely extends to a bounded sublinear operator from $H^{p,q}_A(\rn)$
into $\mathcal{B}$. Moreover, there exists a positive constant $C_{19}$ such that,
for all $f\in H^{p,q}_A(\rn)$,
$$\|T(f)\|_{\mathcal{B}}\le C_{19}\|f\|_{H^{p,q}_A(\rn)}.$$
\end{theorem}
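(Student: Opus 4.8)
The plan is to argue density plus a uniform bound on a dense subspace, using the finite atomic decomposition characterization (Theorem \ref{sevent1}) together with the real interpolation result (Theorem \ref{sixt2}(ii)) to handle the passage from a weak (at a single $q$) to the full scale of $q$. First I would reduce matters to showing that, under either hypothesis (i) or (ii), the operator $T$ satisfies
\begin{eqnarray*}
\|T(f)\|_{\mathcal B}\ls\|f\|_{H^{p,q}_A(\rn)}\ \ \ \text{for all }f\text{ in a dense subspace of }H^{p,q}_A(\rn),
\end{eqnarray*}
and then invoke the completeness of $\mathcal B$ to extend $T$ uniquely. In case (i), by Theorem \ref{sevent1}(i) the quasi-norms $\|\cdot\|_{H^{p,r,s,q}_{A,{\rm fin}}(\rn)}$ and $\|\cdot\|_{H^{p,q}_A(\rn)}$ are equivalent on $H^{p,r,s,q}_{A,{\rm fin}}(\rn)$, so \eqref{sevene20} immediately gives $\|T(f)\|_{\mathcal B}\ls\|f\|_{H^{p,q}_A(\rn)}$ for all $f\in H^{p,r,s,q}_{A,{\rm fin}}(\rn)$; by Theorem \ref{tt1} the set $H^{p,r,s,q}_{A,{\rm fin}}(\rn)$ is dense in $H^{p,q}_A(\rn)$ (as noted after Definition \ref{sevend1}). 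In case (ii) the same reasoning applies on $H^{p,\fz,s,q}_{A,{\rm fin}}(\rn)\cap C(\rn)$, using Theorem \ref{sevent1}(ii) and the density of continuous finite combinations of atoms in $H^{p,q}_A(\rn)$, which follows from Lemma \ref{sevenl4}(ii) combined with the mollification argument \eqref{sevene21}: for $\varphi\in C_c^\fz(\rn)$ with $\int_\rn\varphi\ne0$ and $f$ a finite atomic combination, $f\ast\varphi_k\in C_c^\fz(\rn)\cap H^{p,q}_A(\rn)$ and converges to $f$ in $H^{p,q}_A(\rn)$.

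Next, having a candidate extension $\widetilde T$, I would check it is well defined and $\mathcal B$-sublinear. Given $f\in H^{p,q}_A(\rn)$, choose $\{f_j\}\subset H^{p,r,s,q}_{A,{\rm fin}}(\rn)$ (or the continuous version) with $f_j\to f$ in $H^{p,q}_A(\rn)$; then $\{T(f_j)\}$ is Cauchy in $\mathcal B$ because
\begin{eqnarray*}
\|T(f_j)-T(f_k)\|_{\mathcal B}\le C\|T(f_j-f_k)\|_{\mathcal B}\ls\|f_j-f_k\|_{H^{p,q}_A(\rn)}\to0,
\end{eqnarray*}
using the $\mathcal B$-sublinearity inequality $\|T(g)-T(h)\|_{\mathcal B}\le C\|T(g-h)\|_{\mathcal B}$ and the uniform bound on the dense subspace. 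Define $\widetilde T(f):=\lim_j T(f_j)$ in $\mathcal B$; independence of the approximating sequence follows by interlacing two such sequences. The quasi-triangle inequality in $\mathcal B$ together with the two defining inequalities for $\mathcal B$-sublinearity of $T$ pass to the limit, so $\widetilde T$ is $\mathcal B$-sublinear, and $\|\widetilde T(f)\|_{\mathcal B}=\lim_j\|T(f_j)\|_{\mathcal B}\ls\liminf_j\|f_j\|_{H^{p,q}_A(\rn)}=\|f\|_{H^{p,q}_A(\rn)}$, which is the claimed bound with some constant $C_{19}$.

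The main technical point, and the step I expect to require the most care, is the well-definedness of the extension together with verifying that the limit does not depend on which dense subspace or approximating sequence is used — in particular, when $T$ is a priori only defined on $H^{p,r,s,q}_{A,{\rm fin}}(\rn)$, one must ensure the mollified atoms used to produce continuous approximants (in case (ii)) lie in the domain and that the two natural dense classes yield the same extension; this is exactly where the second $\mathcal B$-sublinearity inequality $\|T(f)-T(g)\|_{\mathcal B}\le C\|T(f-g)\|_{\mathcal B}$ is essential, since without it one cannot control $\|T(f_j)-T(g_j)\|_{\mathcal B}$ by $\|f_j-g_j\|_{H^{p,q}_A(\rn)}$. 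A subsidiary subtlety is that Theorem \ref{sevent1} requires $q\in(0,\fz)$, which is part of the hypothesis here, so no separate treatment of $q=\fz$ is needed. All other ingredients — the equivalence of finite-atomic and Hardy-Lorentz quasi-norms, the density statements, and the completeness of $\mathcal B$ — are available from the earlier sections, so the remaining argument is a routine limiting procedure once the well-definedness is settled.
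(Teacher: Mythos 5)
Your argument is correct, and the core of it coincides with the paper's proof: uniform boundedness on a dense subspace via the quasi-norm equivalence of Theorem \ref{sevent1}, then a Cauchy-sequence extension using the second $\mathcal{B}$-sublinearity inequality and the completeness of $\mathcal{B}$, with well-definedness by interlacing approximating sequences; case (i) is verbatim the paper's route. The only genuine divergence is the density step in case (ii): the paper first uses Lemma \ref{sevenl4}(ii) to reduce to $f\in H^{p,q}_A(\rn)\cap C(\rn)$, then observes that the atomic decomposition constructed in the proof of Theorem \ref{tt1} produces \emph{continuous} $(p,\fz,s)$-atoms when $f$ is continuous, and takes partial sums; you instead mollify arbitrary elements of $H_{A,{\rm fin}}^{p,r,s,q}(\rn)$ and invoke \eqref{sevene21}. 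Your route is fine, but note one point you should make explicit: it is not enough that $f\ast\varphi_k\in C_c^\fz(\rn)\cap H^{p,q}_A(\rn)$, since that class is not contained in the domain of $T$ in case (ii); you need $f\ast\varphi_k$ to lie in $H_{A,{\rm fin}}^{p,\fz,s,q}(\rn)\cap C(\rn)$, which does hold because convolution with $\varphi_k$ preserves compact support, boundedness and the vanishing moments up to order $s$ of the finite atomic sum, so $f\ast\varphi_k$ is (after the harmless renormalization of the coefficient required by Definition \ref{sevend1}) a finite combination of continuous $(p,\fz,s)$-atoms. Finally, the appeal to the real interpolation result Theorem \ref{sixt2}(ii) in your opening plan is never used and is not needed; the hypothesis $q\in(0,\fz)$ already puts you in the range where Theorem \ref{sevent1} and the density statements apply directly.
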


\begin{proof}
We first prove (i).
For any given $r\in(1,\fz)$, assume that \eqref{sevene20} holds true.
Let $f\in H^{p,q}_A(\rn)$. By Theorem \ref{tt1} and the density of
$H_{A,{\rm fin}}^{p,r,s,q}(\rn)$ in $H_A^{p,r,s,q}(\rn)$, we know that
$H_{A,{\rm fin}}^{p,r,s,q}(\rn)$ is dense in $H^{p,q}_A(\rn)$. Thus, there
exists a Cauchy sequence
$\{f_j\}_{j\in\nn}\subset H_{A,{\rm fin}}^{p,r,s,q}(\rn)$ such that
$\lim_{j\to\fz}\|f_j-f\|_{H^{p,q}_A(\rn)}=0$, which, together with \eqref{sevene20}
and Theorem \ref{sevent1}(i), further implies that
\begin{eqnarray*}
\|T(f_i)-T(f_j)\|_{\mathcal{B}}
&&\ls\|T(f_i-f_j)\|_{\mathcal{B}}\ls
\|f_i-f_j\|_{H_{A,{\rm fin}}^{p,r,s,q}(\rn)}\\
&&\sim\|f_i-f_j\|_{H^{p,q}_A(\rn)}\rightarrow0,
\ \ \ {\rm as}\ \ i,\,j\rightarrow\fz.
\end{eqnarray*}
Therefore, $\{T(f_j)\}_{j\in\nn}$ is a Cauchy sequence in $\mathcal{B}$,
which, combined with the completeness of $\mathcal{B}$, implies that there
exists $F\in\mathcal{B}$ such that $F=\lim_{j\to\fz}T(f_j)$ in $\mathcal{B}$.
Let $T(f):=F$. From \eqref{sevene20} and Theorem \ref{sevent1}(i) again, we
further deduce that $T(f)$ is well defined and
\begin{eqnarray*}
\|T(f)\|_{\mathcal{B}}&&\ls\limsup_{j\to\fz}\lf[\|T(f)-T(f_j)\|_{\mathcal{B}}
+\|T(f_j)\|_{\mathcal{B}}\r]\ls\limsup_{j\to\fz}\|T(f_j)\|_{\mathcal{B}}\\
&&\ls\limsup_{j\to\fz}\|f_j\|_{H_{A,{\rm fin}}^{p,r,s,q}(\rn)}
\sim\lim_{j\to\fz}\|f_j\|_{H^{p,q}_A(\rn)}\sim\|f\|_{H^{p,q}_A(\rn)}.
\end{eqnarray*}
This finishes the proof of (i).

Now we prove (ii). For $q\in(0,\fz)$, we first claim that
$H_{A,{\rm fin}}^{p,\fz,s,q}(\rn)\cap C(\rn)$ is dense in $H^{p,q}_A(\rn)$.
Indeed, by Lemma \ref{sevenl4}(ii), we know that
$H^{p,q}_A(\rn)\cap C(\rn)$ is dense in $H^{p,q}_A(\rn)$. Thus, we only
need to show that $H_{A,{\rm fin}}^{p,\fz,s,q}(\rn)\cap C(\rn)$ is dense in
$H^{p,q}_A(\rn)\cap C(\rn)$ with respect to the quasi-norm
$\|\cdot\|_{H^{p,q}_A(\rn)}$. For any $f\in H^{p,q}_A(\rn)\cap C(\rn)$,
by an argument similar to that used in the proof of Theorem \ref{tt1} (or Lemma \ref{sevenl1}),
we find that there exists a sequence of $(p,\fz,s)$-atoms,
$\{a_i^k\}_{i\in\mathbb{N},\,k\in\mathbb{Z}}$, and
$\{\lz_i^k\}_{i\in\mathbb{N},\,k\in\mathbb{Z}}\subset\mathbb{C}$
such that
$f=\sum_{k\in\mathbb{Z}}
\sum_{i\in\mathbb{N}}\lambda_i^ka_i^k$ in $\cs'(\rn)$. Moreover,
from definitions of these $(p,\fz,s)$-atoms (see \eqref{sevene17}) and the
continuity of $f$, we further deduce that all these $(p,\fz,s)$-atoms are
continuous. Thus, for any $K\in\nn$, if we let
$f_K:=\sum_{|k|=0}^K\sum_{i=1}^K\lambda_i^ka_i^k$,
then it is easy to see that
$\{f_K\}_{K\in\nn}\subset H_{A,{\rm fin}}^{p,\fz,s,q}(\rn)\cap C(\rn)$
and
$$\lim_{K\to\fz}\|f-f_K\|_{H^{p,q}_A(\rn)}=0,$$
which implies that $H_{A,{\rm fin}}^{p,\fz,s,q}(\rn)\cap C(\rn)$ is dense in
$H^{p,q}_A(\rn)\cap C(\rn)$ with respect to the quasi-norm
$\|\cdot\|_{H^{p,q}_A(\rn)}$.

By the density of $H_{A,{\rm fin}}^{p,\fz,s,q}(\rn)\cap C(\rn)$ in $H^{p,q}_A(\rn)$
and a proof similar to (i), we conclude that (ii) holds true.
This finishes the proof of Theorem \ref{sevent2}.
\end{proof}

By Theorem \ref{sevent2}, we easily obtain the following conclusion,
the details being omitted.

\begin{corollary}\label{sevenc1}
Let $(p,r,s)$ be an admissible anisotropic triplet, $q\in(0,\fz)$
and $\mathcal{B}$ be a quasi-Banach space. If one of the following statements holds true:
\begin{enumerate}
\item[{\rm (i)}] $r\in(1,\fz)$ and $T$ is a $\mathcal{B}$-sublinear
operator from $H_{A,{\rm fin}}^{p,r,s,q}(\rn)$ to $\mathcal{B}$ satisfying that
$$\mathfrak{S}:=\sup\lf\{\|T(a)\|_{\mathcal{B}}:\
a\ is\ any\ (p,r,s){\text-}atom\r\}<\fz;$$
\item[{\rm(ii)}] $T$ is a $\mathcal{B}$-sublinear
operator defined on continuous $(p,\fz,s)$-atoms satisfying that
$$\mathfrak{S}:=\sup\lf\{\|T(a)\|_{\mathcal{B}}:\
a\ is\ any\ continuous\ (p,\fz,s){\text-}atom\r\}<\fz,$$
\end{enumerate}
then $T$ has a unique bounded $\mathcal{B}$-sublinear
extension $\widetilde{T}$ from $H^{p,q}_A(\rn)$ to $\mathcal{B}$.
\end{corollary}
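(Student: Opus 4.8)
The plan is to deduce Corollary \ref{sevenc1} from Theorem \ref{sevent2}. For this it suffices to check that, under hypothesis (i) (resp. (ii)), the operator $T$ already satisfies \eqref{sevene20} (resp. the corresponding estimate in Theorem \ref{sevent2}(ii)) with the constant $C_{17}$ (resp. $C_{18}$) replaced by a constant multiple of $\mathfrak{S}$; once this is established, Theorem \ref{sevent2} applies verbatim and produces the unique bounded $\mathcal{B}$-sublinear extension $\widetilde{T}$ of $T$ from $H^{p,q}_A(\rn)$ into $\mathcal{B}$, together with $\|\widetilde{T}(f)\|_{\mathcal{B}}\ls\mathfrak{S}\|f\|_{H^{p,q}_A(\rn)}$ for all $f\in H^{p,q}_A(\rn)$, which is exactly the asserted conclusion.

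To verify this, fix $f\in H_{A,{\rm fin}}^{p,r,s,q}(\rn)$ when $r\in(1,\fz)$ (resp. a continuous $f\in H_{A,{\rm fin}}^{p,\fz,s,q}(\rn)$ when $r=\fz$). Since $f$ has compact support and lies in $L^{\widetilde{r}}(\rn)$ with $\widetilde{r}:=r$ if $r\in(1,\fz)$ and $\widetilde{r}:=2$ if $r=\fz$, the argument of Theorem \ref{sevent1} (together with Lemma \ref{sevenl1}) furnishes a \emph{finite} atomic decomposition $f=\sum_{k}\sum_{i}\lz_i^ka_i^k$ with good geometric structure: the $a_i^k$ are $(p,r,s)$-atoms (resp. continuous $(p,\fz,s)$-atoms) supported on dilated balls $x_i^k+B_{\ell_i^k+4\tau}$ with $\lz_i^k\sim2^k|B_{\ell_i^k}|^{1/p}$, the level sets $\Omega_k:=\{x\in\rn:\ M_N(f)(x)>2^k\}$ are nested, for each $k$ one has $\sum_i\chi_{x_i^k+B_{\ell_i^k+4\tau}}\ls1$ together with the bounded multiplicity \eqref{sevene3}, $\|\sum_i\lz_i^ka_i^k\|_{L^\fz(\rn)}\ls2^k$, and $\sum_k(\sum_i|\lz_i^k|^p)^{q/p}\sim\|f\|_{H^{p,q}_A(\rn)}^q\sim\|f\|_{H_{A,{\rm fin}}^{p,r,s,q}(\rn)}^q$ by Theorem \ref{sevent1}. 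Because $T$ is $\mathcal{B}$-sublinear and $\mathcal{B}$ is quasi-Banach, the Aoki--Rolewicz theorem (see \cite{ta42, sr57}) supplies $\upsilon\in(0,1]$ for which $\|\cdot\|_{\mathcal{B}}^\upsilon$ is subadditive; writing $T(f)$ as the telescoping sum of the partial sums of $\sum_{k,i}\lz_i^ka_i^k$ and using the two defining inequalities of $\mathcal{B}$-sublinearity together with $\|T(a_i^k)\|_{\mathcal{B}}\le\mathfrak{S}$, one obtains $\|T(f)\|_{\mathcal{B}}^\upsilon\ls\mathfrak{S}^\upsilon\sum_{k}\sum_{i}|\lz_i^k|^\upsilon$. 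It remains to dominate the right-hand side by $\mathfrak{S}^\upsilon\|f\|_{H_{A,{\rm fin}}^{p,r,s,q}(\rn)}^\upsilon$.

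The main obstacle is precisely this last passage, and I would carry it out level by level rather than over all pairs $(k,i)$ at once. For fixed $k$, the bounded-degree property \eqref{sevene3} lets one split $\{i\}$ into a bounded number $L+1$ of colour classes whose dilated balls are pairwise disjoint; on each colour class $T$ is estimated atom-wise, and, using $\lz_i^k\sim2^k|B_{\ell_i^k}|^{1/p}$ and disjointness (so that $\sum_i|B_{\ell_i^k}|\le|\Omega_k|$), one is led to the target bound $\|T(\sum_i\lz_i^ka_i^k)\|_{\mathcal{B}}\ls\mathfrak{S}\,2^k|\Omega_k|^{1/p}\sim\mathfrak{S}(\sum_i|\lz_i^k|^p)^{1/p}$ for the $k$-th level piece; summing the (finitely many, nested) levels then recovers $[\sum_k(\sum_i|\lz_i^k|^p)^{q/p}]^{1/q}\sim\|f\|_{H_{A,{\rm fin}}^{p,r,s,q}(\rn)}$, which is what \eqref{sevene20} requires. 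The delicate point throughout is that the Aoki--Rolewicz exponent $\upsilon$ attached to $\mathcal{B}$ need not be comparable with $p$ or with $q$, so the conversion from the crude $\ell^\upsilon$-sum to the Lorentz-type $\ell^q(\ell^p)$-sum cannot be done by monotonicity of $\ell$-norms alone; it is the extra structure of the finite Bownik-type decomposition — the uniform pointwise control $\|\sum_i\lz_i^ka_i^k\|_{L^\fz(\rn)}\ls2^k$ of each level piece, its support in $\Omega_k$ with vanishing moments up to order $s$, and the nestedness and bounded multiplicity of the $\Omega_k$ — that makes this matching possible. Having thus verified \eqref{sevene20} with $C_{17}\sim\mathfrak{S}$ (and the analogous estimate in case (ii)), Theorem \ref{sevent2}, combined with the density of $H_{A,{\rm fin}}^{p,r,s,q}(\rn)$ (resp. $H_{A,{\rm fin}}^{p,\fz,s,q}(\rn)\cap C(\rn)$) in $H^{p,q}_A(\rn)$ and the completeness of $\mathcal{B}$, delivers the unique bounded $\mathcal{B}$-sublinear extension $\widetilde{T}$ and finishes the proof.
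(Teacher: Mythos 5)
Your overall route --- verify the hypothesis \eqref{sevene20} of Theorem \ref{sevent2} with $C_{17}\sim\mathfrak{S}$ and then invoke that theorem together with the density of the finite atomic spaces --- is exactly what the paper intends by ``by Theorem \ref{sevent2} \ldots the details being omitted''. However, your execution of the one step that actually needs proving has a genuine gap, and you essentially flag it yourself without closing it. After Aoki--Rolewicz you arrive at $\|T(f)\|_{\mathcal{B}}^{\upsilon}\ls\mathfrak{S}^{\upsilon}\sum_{k,i}|\lz_i^k|^{\upsilon}$, and the passage from this to $\mathfrak{S}^{\upsilon}\|f\|_{H_{A,{\rm fin}}^{p,r,s,q}(\rn)}^{\upsilon}$ is precisely the point at issue; you then assert that the extra structure of the Bownik-type decomposition (bounded overlap \eqref{sevene3}, the level bound $\|\sum_i\lz_i^ka_i^k\|_{L^\fz(\rn)}\ls 2^k$, nestedness of the $\Omega_k$) ``makes this matching possible'', but you never prove it, and in fact it cannot help in the form you propose. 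The intermediate claim $\|T(\sum_i\lz_i^ka_i^k)\|_{\mathcal{B}}\ls\mathfrak{S}\,2^k|\Omega_k|^{1/p}$ does not follow from the hypothesis: the level piece $\sum_i\lz_i^ka_i^k$ is supported in $\Omega_k$, which is a union of balls and not a dilated ball, so it is not (a multiple of) an atom and $\mathfrak{S}$ gives no direct control of $T$ on it. Splitting into colour classes of atoms with pairwise disjoint supports does not rescue this either, because $\mathcal{B}$ is an abstract quasi-Banach space with no lattice or function-space structure, so disjointness of supports in $\rn$ gives no gain when summing $\|T(\lz_i^ka_i^k)\|_{\mathcal{B}}$ over the (possibly many) atoms in one class; you are back to the same Aoki--Rolewicz summation and the same mismatch between $\upsilon$ and $(p,q)$. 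In short, the decisive inequality is hoped for, not established.

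Two further remarks on the comparison with the paper. First, the detour through Lemma \ref{sevenl1} and the construction in the proof of Theorem \ref{sevent1} is unnecessary for this corollary: by Definition \ref{sevend1} one may simply pick a finite decomposition $f=\sum_{k,i}\lz_i^ka_i^k$ whose coefficient quasi-norm is within a fixed factor of $\|f\|_{H_{A,{\rm fin}}^{p,r,s,q}(\rn)}$, apply the (finitely iterated) $\mathcal{B}$-sublinearity and the uniform atom bound $\mathfrak{S}$, and compare coefficient norms; Theorem \ref{sevent1} is only needed, inside Theorem \ref{sevent2}, to identify $\|\cdot\|_{H_{A,{\rm fin}}^{p,r,s,q}(\rn)}$ with $\|\cdot\|_{H^{p,q}_A(\rn)}$. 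Second, whatever route you take, the obstruction lives in $\mathcal{B}$ (how to sum quasi-norms of $T$ of single atoms), not in the geometry of the decomposition in $\rn$; so a correct write-up must either make the elementary comparison of the $\ell^{\upsilon}$- and $\ell^q(\ell^p)$-coefficient norms explicit (with whatever restriction on the exponents this forces), or supply a different mechanism --- your proposal as written does neither.
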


\begin{remark}\label{sevenr2}
(i) Obviously, if $T$ is a bounded $\mathcal{B}$-sublinear operator
from $H^{p,q}_A(\rn)$ to $\mathcal{B}$, then, for any admissible
anisotropic triplet $(p,r,s)$, $T$ is uniformly bounded on all $(p,r,s)$-atoms
. Corollary \ref{sevenc1}(i) shows that the converse holds
true for $r\in(1,\fz)$. However, such converse conclusion is not true
in general for $r=\fz$ due to the example in \cite[Theorem 2]{mb05}.
Namely, there exists an operator $\mathcal{T}$ which is uniformly bounded
on all $(1,\fz,0)$-atoms, but does not have a bounded extension
on $H^1(\rn)$.

(ii) Corollary \ref{sevenc1}(ii) shows that the uniform boundedness
of $T$ on a smaller class of continuous $(p,\fz,s)$-atoms implies
the existence of a bounded extension on the whole space $H^{p,q}_A(\rn)$.
In particular, if we restrict the operator $\mathcal{T}$, in (i) of this remark,
to the subspace $H_{A,{\rm fin}}^{1,\fz,0,1}(\rn)\cap C(\rn)$, then
such restriction has a bounded extension, denoted by
$\widetilde{\mathcal{T}}$, to the whole space $H^1_A(\rn)$. However,
$\mathcal{T}$ itself does not have such property. Precisely,
$\mathcal{T}$ and $\widetilde{\mathcal{T}}$ coincide on all continuous
$(1,\fz,0)$ atoms, while not on all $(1,\fz,0)$ atoms; see also \cite{msv08}.
This shows that it is necessary to restrict the operator $T$ only on continuous
atoms for $r=\fz$ in Corollary \ref{sevenc1}(ii).
\end{remark}

Now we use Corollary \ref{sevenc1} and Theorem \ref{sixt2} to show the
boundedness of the
$\dz$-type Calder\'on-Zygmund operators from
$H^{p,q}_A(\rn)$ to
$L^{p,q}(\rn)$ (or $H^{p,q}_A(\rn)$).

\begin{theorem}\label{sixt4}
Let $\delta\in(0,\frac{\ln\lz_-}{\ln b}]$,
$p\in(\frac1{1+\delta},1]$ and $q\in(0,\fz]$.
\vspace{-0.25cm}
\begin{enumerate}
\item[{\rm (i)}]
If $T$ is either a convolutional
$\dz$-type Calder\'on-Zygmund operator as  in Theorem \ref{sixt3} or
a non-convolutional
$\dz$-type Calder\'on-Zygmund operator as in Remark \ref{sixr1}(ii),
then there exists a positive constant $C_{20}$
such that, for all $f\in H_A^{p,q}(\rn)$,
$$\|T(f)\|_{L^{p,q}(\rn)}\le C_{20}\|f\|_{H_A^{p,q}(\rn)}.$$
\vspace{-0.65cm}
\item[{\rm (ii)}]
If $T$ is either a convolutional
$\dz$-type Calder\'on-Zygmund operator as  in Theorem \ref{sixt3} or
a non-convolutional
$\dz$-type Calder\'on-Zygmund operator satisfying the additional assumption that $T^*1=0$
as in Remark \ref{sixr1}(ii), then there exists a positive constant $C_{21}$
such that, for all $f\in H_A^{p,q}(\rn)$,
$$\|T(f)\|_{H_A^{p,q}(\rn)}\le C_{21}\|f\|_{H_A^{p,q}(\rn)}.$$
\end{enumerate}
\end{theorem}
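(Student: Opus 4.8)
The plan is to treat the range $q\in(0,\fz)$ and the endpoint $q=\fz$ separately. For $q\in(0,\fz)$ I would invoke the criterion in Corollary \ref{sevenc1}: both $L^{p,q}(\rn)$ and $H^{p,q}_A(\rn)$ are complete quasi-normed spaces (for the latter see Proposition \ref{sp3}), and $T$, being linear, is $\mathcal{B}$-sublinear for $\mathcal{B}$ equal to either of them; hence it suffices to show that $T$ maps all $(p,r,s)$-atoms, with some fixed $r\in(1,\fz)$ (equivalently, all continuous $(p,\fz,s)$-atoms), uniformly into $\mathcal{B}$. Concretely, for part (i) I must prove $\sup_a\|T(a)\|_{L^{p,q}(\rn)}<\fz$, and for part (ii) $\sup_a\|T(a)\|_{H^{p,q}_A(\rn)}=\sup_a\|M_N(T(a))\|_{L^{p,q}(\rn)}<\fz$, the supremum running over all such atoms $a$.

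Fix a $(p,r,s)$-atom $a$ supported in a dilated ball $B:=x_0+B_\ell$ and split $\rn=(A^{4\tau}B)\cup(A^{4\tau}B)^\com$. On $A^{4\tau}B$ I would use the $L^r(\rn)$-boundedness of $T$ (and of $M_N$, cf.\ Remark \ref{tr1}, for part (ii)) together with $\|a\|_{L^r(\rn)}\le|B|^{1/r-1/p}$ and the H\"older-type embedding $\|g\chi_E\|_{L^{p,q}(\rn)}\ls|E|^{1/p-1/r}\|g\|_{L^r(\rn)}$, valid for $|E|<\fz$ since $r>p$, obtaining a bound $\ls1$. On $(A^{4\tau}B)^\com$, using the cancellation $\int_\rn a=0$ and the kernel regularity \eqref{sixe7} (or \eqref{sixe15} in the non-convolutional case), one gets the pointwise bound $|T(a)(x)|\ls|B|^{1+\dz-1/p}[\rho(x-x_0)]^{-(1+\dz)}$ for part (i), and the analogous bound for $M_N(T(a))(x)$ for part (ii); for the latter I would quote verbatim the pointwise estimates of $M_N(T(h))$ on $(A^{4\tau}B)^\com$ already worked out in the proof of Theorem \ref{sixt3} (the decomposition into the three pieces $\mi_1,\mi_2,\mi_3$), which use $\int_\rn T(a)=0$, itself a consequence of $T^{\ast}1=0$ (or of $\wh k(0)=0$ forced by $\int_\rn a=0$ in the convolutional case). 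A direct computation of the $L^{p,q}(\rn)$ quasi-norm of $x\mapsto|B|^{1+\dz-1/p}[\rho(x-x_0)]^{-(1+\dz)}$ on $(A^{4\tau}B)^\com$, using \eqref{se6} and summing a geometric series, then yields a bound $\ls1$ \emph{precisely because} $p(1+\dz)>1$; the strict inequality $p>\frac1{1+\dz}$ is exactly what makes the relevant integral converge at $0$ and the $|B|$-powers cancel. Combining the two pieces gives the required uniform estimate, and Corollary \ref{sevenc1} upgrades it to the boundedness of $T$ on all of $H^{p,q}_A(\rn)$ for $q\in(0,\fz)$.

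For $q=\fz$ I would argue by real interpolation. Given $\dz$ and $p\in(\frac1{1+\dz},1]$, choose $p_1\in(\frac1{1+\dz},p)$ and $p_2\in(p,\fz)$ together with $q_1,q_2\in(0,\fz)$ and $\theta\in(0,1)$ so that $\frac1p=\frac{1-\theta}{p_1}+\frac{\theta}{p_2}$; when $p_2>1$ one uses $H^{p_2,q_2}_A(\rn)=L^{p_2,q_2}(\rn)$ (Remark \ref{sixr5}(ii)) and the classical boundedness of $T$ on $L^{p_2,q_2}(\rn)$, and when $p_i\le1$ the already-proved $q<\fz$ case applies. Then Theorem \ref{sixt2}(i) identifies $(H^{p_1,q_1}_A(\rn),H^{p_2,q_2}_A(\rn))_{\theta,\fz}=H^{p,\fz}_A(\rn)$, while the classical real interpolation of Lorentz spaces identifies $(L^{p_1,q_1}(\rn),L^{p_2,q_2}(\rn))_{\theta,\fz}=L^{p,\fz}(\rn)$ (and likewise, with $H^{p_i,q_i}_A(\rn)$ on the target side, for part (ii)), so interpolating the two endpoint bounds for $T$ finishes the proof. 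I expect the main obstacle to be the far-field estimate on $(A^{4\tau}B)^\com$ in part (ii): unlike part (i), it is not the plain tail of $T(a)$ that must be controlled but that of its grand maximal function $M_N(T(a))$, so one genuinely has to carry over the three-region analysis from the proof of Theorem \ref{sixt3} and verify that the resulting decay, together with the strict bound $p(1+\dz)>1$, makes the $L^{p,q}(\rn)$ quasi-norm finite uniformly in $a$.
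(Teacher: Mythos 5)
Your argument is correct, but it follows a genuinely different route from the paper. The paper does not re-derive any atomic estimates in the Lorentz quasi-norm: it quotes Bownik's atomwise bounds $\|T(a)\|_{L^p(\rn)}\ls1$ (from the proof of \cite[p.\,69, Theorem 9.9]{mb03}) and $\|T(a)\|_{H^p_A(\rn)}\ls1$ (from \cite[p.\,64, Lemma 9.5]{mb03} together with Theorem \ref{tt1}) for $(p,2,0)$-atoms, feeds these into Corollary \ref{sevenc1}(i) to get the bound with Lebesgue (resp.\ anisotropic Hardy) target, and then obtains the full statement for every $q\in(0,\fz]$ in one stroke by real interpolation over the $p$-parameter, using Theorem \ref{sixt2}(i), Corollary \ref{sixc1}, Remark \ref{sixr5}(ii) and the classical interpolation of Lebesgue/Lorentz spaces (the case $p=1$ being handled by interpolating against an $L^{p_2}$-endpoint with $p_2>1$). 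You instead verify the uniform bound $\sup_a\|T(a)\|_{L^{p,q}(\rn)}<\fz$ (resp.\ $\sup_a\|M_N(T(a))\|_{L^{p,q}(\rn)}<\fz$) directly for each fixed $(p,q)$ with $q<\fz$, via the near-field $L^r$ estimate plus the far-field pointwise decay $|B|^{1+\dz-1/p}[\rho(\cdot-x_0)]^{-(1+\dz)}$ borrowed from the proof of Theorem \ref{sixt3} (correctly noting that $\int_\rn T(a)=0$, hence $T^{\ast}1=0$, is needed only in part (ii)), and then apply Corollary \ref{sevenc1}; interpolation enters only at $q=\fz$, which is indeed unavoidable in your scheme since the finite-atom criterion requires $q\in(0,\fz)$. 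Both routes are sound. The paper's is shorter because it leverages the already-known critical-index estimates and lets interpolation absorb all values of $q$; yours is more self-contained and quantitative at the atom level, making explicit where the strict inequality $p(1+\dz)>1$ is used, at the price of redoing the three-region maximal-function estimate of Theorem \ref{sixt3} for general $p$ in the admissible range and carrying out the $L^{p,q}$ computation whose $|B|$-powers cancel.
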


\begin{proof}
We first prove (i). When $\delta\in(0,\frac{\ln\lz_-}{\ln b}]$,
$p\in(\frac1{1+\delta},1)$ and $q\in(0,\fz]$,
by the proof of
\cite[p.\,69, Theorem 9.9]{mb03}, we have
$\|T(a)\|_{L^p(\rn)}\ls1$ for any $(p,2,0)$-atom $a$. From this
and Corollary \ref{sevenc1}(i), we further deduce that, for all $f\in H^{p,q}_A(\rn)$,
\begin{eqnarray}\label{sixe22}
\|T(f)\|_{L^p(\rn)}\ls\|f\|_{H^{p,q}_A(\rn)}.
\end{eqnarray}
Notice that $T$ is a linear operator. By \eqref{sixe22}, the corresponding
interpolation result of Lebesgue spaces (see, for example, \cite[Theorem 3]{m84})
and Theorem \ref{sixt2}(i), we easily conclude that (i) holds true
when $p\in(\frac1{1+\delta},1)$ and $q\in(0,\fz]$.

When $p=1$ and $q\in(0,\fz]$, combining the linearity of $T$, \eqref{sixe22}
with $p=q$, the boundedness of $T$ on $L^r(\rn)$ with $r\in(1,\fz)$
(see, for example, \cite[Theorems 5.1 and 5.10]{d01}) and Corollary \ref{sixc1},
we find that (i) also holds true in this case.

Now we turn to show (ii).
Notice that, by \cite[p.\,64, Lemma 9.5]{mb03} and Theorem \ref{tt1},
we know that $\|T(a)\|_{H^p_A(\rn)}\ls1$ for any $(p,2,0)$-atom $a$.
From this, Corollary \ref{sixc1} and an
argument similar to the proof of (i), we deduce that (ii) holds true.
This finishes the proof of Theorem \ref{sixt4}.
\end{proof}

\begin{remark}\label{sixr6}
(i) Noticing that the
$\dz$-type Calder\'on-Zygmund operators are linear operators.
By Remark \ref{sixr1}(iv), \cite[Theorems 5.1 and 5.10]{d01}, Corollary \ref{sixc1}
and the corresponding
interpolation result of Lorentz spaces (see, for example, \cite[Theorem 3]{m84}),
we can also conclude that, as in Theorem \ref{sixt4},
the boundedness of the $\dz$-type Calder\'on-Zygmund operators
from $H_A^{p,q}(\rn)$ to $L^{p,q}(\rn)$ (or $H_A^{p,q}(\rn)$) with
$\delta\in(0,\frac{\ln\lz_-}{\ln b}]$,
$p\in(\frac1{1+\delta},1]$ and $q\in(0,\fz]$, the details being omitted.

(ii) We should point out that, in (i) of this remark, the boundedness of
the $\dz$-type Calder\'on-Zygmund operators
from $H_A^p(\rn)$ to $L^p(\rn)$ (or $H_A^p(\rn)$) is a key tool, namely,
\cite[p.\,68, Theorem 9.8 and p.\,69, Theorem 9.9]{mb03}
(see Remark \ref{sixr1}(iv)). Notice that the proofs of
\cite[p.\,68, Theorem 9.8 and p.\,69, Theorem 9.9]{mb03}
also need the facts that
$$\|T(a)\|_{H^p_A(\rn)}\ls1\quad\mathrm{and}\quad \|T(a)\|_{L^p(\rn)}\ls1$$
for any $(p,2,0)$-atom $a$, respectively,
and are more complicated than the proof of Theorem \ref{sixt4}.
Thus, in this sense, the criterion established in Theorem \ref{sevent2}
is a useful tool.

(iii) If $A$ is the same as in Remark \ref{sixr1}(i),
then $\frac{\ln\lambda_-}{\ln b}=\frac1n$,
$H^{p,q}_A(\rn)$
and $L^{p,q}(\rn)$ become the classical isotropic Hardy-Lorentz and Lorentz spaces,
respectively, and $T$ becomes the classical $\dz$-type Calder\'on-Zygmund operator
correspondingly. In this case, we know that, if $\delta\in(0,1]$,
$p\in(\frac n{n+\delta},1]$ and $q\in(0,\fz]$, then Theorem \ref{sixt4}(i)
implies that $T$ is bounded from $H^{p,q}(\rn)$ to $H^{p,q}(\rn)$ and
Theorem \ref{sixt4}(ii) implies that $T$ is bounded from $H^{p,q}(\rn)$ to $L^{p,q}(\rn)$.
Moreover, when $p=q$, (i) and (ii) of Theorem \ref{sixt4} imply the boundedness of
the classical $\dz$-type Calder\'on-Zygmund operator, respectively,
from $H^p(\rn)$ to $H^p(\rn)$ and
from $H^p(\rn)$ to $L^p(\rn)$ for $\delta\in(0,1]$ and $p\in(\frac n{n+\delta},1]$,
which is a well-known result (see, for example, \cite{a86,s93,lu95}).
\end{remark}

\bigskip

\smallskip

\noindent  Jun Liu, Dachun Yang (Corresponding author) and Wen Yuan

\medskip

\noindent  School of Mathematical Sciences, Beijing Normal University,
Laboratory of Mathematics and Complex Systems, Ministry of
Education, Beijing 100875, People's Republic of China

\smallskip

\noindent {\it E-mails}: \texttt{junliu@mail.bnu.edu.cn} (J. Liu)

\hspace{1.12cm}\texttt{dcyang@bnu.edu.cn} (D. Yang)

\hspace{1.12cm}\texttt{wenyuan@bnu.edu.cn} (W. Yuan)


\begin{thebibliography}{99}

\bibitem{wa07}
W. Abu-Shammala and A. Torchinsky,
The Hardy-Lorentz spaces $H^{p,q}(\rn)$,
Studia Math. 182 (2007), 283-294.

\vspace{-0.26cm}

\bibitem{ac10} A. Almeida and A. M. Caetano, Generalized Hardy spaces,
Acta Math. Sin. (Engl. Ser.)  26  (2010), 1673-1692.

\vspace{-0.26cm}

\bibitem{a94}
J. \'{A}lvarez,
$H^p$ and weak $H^p$ continuity of
Calder\'on-Zygmund type operators,
Fourier Analysis, Lecture Notes in Pure
and Appl. Math. 157, 17-34, Dekker, New York, 1994.

\vspace{-0.26cm}

\bibitem{a98}
J. \'{A}lvarez, Continuity properties for linear commutators
of Calder\'on-Zygmund operators, Collect. Math.  49 (1998), 17-31.

\vspace{-0.26cm}

\bibitem{a86}
J. \'{A}lvarez and M. Milman, $H^p$ continuity properties
of Calder\'on-Zygmund-type operators,
J. Math. Anal. Appl. 118 (1986), 63-79.

\vspace{-0.26cm}

\bibitem{ta42} T. Aoki,
Locally bounded linear topological spaces,
Proc. Imp. Acad. Tokyo 18 (1942), 588-594.

\vspace{-0.26cm}

\bibitem{bs88}
C. Bennett and R. Sharpley,
Interpolation of Operators, Pure and
Applied Math. 129, Academic Press, Orlando, FL, 1988.

\vspace{-0.26cm}

\bibitem{bl76}
J. Bergh and J. L\"{o}fstr\"{o}m,
Interpolation Spaces. An Introduction,
Grundlehren der Mathematischen Wissenschaften,
223, Springer-Verlag, Berlin-New York, 1976.

\vspace{-0.26cm}

\bibitem{mb03}
M. Bownik,
Anisotropic Hardy spaces and wavelets, Mem. Amer.
Math. Soc. 164 (2003), no. 781, vi+122 pp.

\vspace{-0.26cm}

\bibitem{mb05}
M. Bownik, Boundedness of operators on Hardy spaces via atomic decompositions,
Proc. Amer. Math. Soc. 133 (2005), 3535-3542.

\vspace{-0.26cm}

\bibitem{mb07}
M. Bownik,
Anisotropic Triebel-Lizorkin spaces with doubling measures,
J. Geom. Anal. 17 (2007), 387-424.

\vspace{-0.26cm}

\bibitem{bh06}
M. Bownik and K.-P. Ho,
Atomic and molecular decompositions of anisotropic Triebel-Lizorkin spaces,
Trans. Amer. Math. Soc. 358 (2006), 1469-1510.

\vspace{-0.26cm}

\bibitem{blyz08}
M. Bownik, B. Li, D. Yang, and Y. Zhou,
Weighted anisotropic Hardy spaces and their
applications in boundedness of sublinear operators,
Indiana Univ. Math. J. 57 (2008), 3065-3100.

\vspace{-0.26cm}

\bibitem{c64}
A.-P. Calder\'on,
Intermediate spaces and interpolation.
the complex method, Studia Math. 24 (1964) 113-190.

\vspace{-0.26cm}

\bibitem{c77}
A.-P. Calder\'{o}n,
An atomic decomposition of distributions in parabolic $H^p$ spaces,
 Advances in Math. 25 (1977), 216-225.

\vspace{-0.26cm}

\bibitem{ct75}
A.-P. Calder\'{o}n and A. Torchinsky,
Parabolic maximal functions associated with a distribution,
Adv. Math. 16 (1975), 1-64.

\vspace{-0.26cm}

\bibitem{ct77}
A.-P. Calder\'{o}n and A. Torchinsky,
Parabolic maximal functions associated with a distribution. II,
Adv. Math. 24 (1977), 101-171.

\vspace{-0.26cm}

\bibitem{clms93}
R. R. Coifman, P.-L. Lions, Y. Meyer and S. Semmes,
Compensated compactness and Hardy spaces,
J. Math. Pures Appl. (9) 72 (1993), 247-286.

\vspace{-0.26cm}

\bibitem{cw71}
R. R. Coifman and G. Weiss,
Analyse Harmonique Noncommutative sur Certains
Espaces Homog\`{e}nes, Lecture Notes in Mathematics
242, Springer, Berlin, Germany, 1971.

\vspace{-0.26cm}

\bibitem{cw77}
R. R. Coifman and G. Weiss, Extensions of Hardy spaces and their use in analysis,
Bull. Amer. Math. Soc. 83 (1977), 569-645.

\vspace{-0.26cm}

\bibitem{c75}
M. Cwikel,
The dual of weak $L^p$,
Ann. Inst. Fourier (Grenoble) 25 (1975), 81-126.

\vspace{-0.26cm}

\bibitem{cf80}
M. Cwikel and C. Fefferman,
Maximal seminorms on weak $L^1$, Studia Math.
69 (1980/81), 149-154.

\vspace{-0.26cm}

\bibitem{cf84}
M. Cwikel and C. Fefferman,
The canonical seminorm on weak $L^1$, Studia Math.
78 (1984), 275-278.

\vspace{-0.26cm}

\bibitem{dl08}
Y. Ding and S. Lan,
 Anisotropic weak Hardy spaces and interpolation theorems,
 Sci. China Ser. A 51 (2008), 1690-1704.

\vspace{-0.26cm}

\bibitem{dl03}
Y. Ding and S. Lu,
Hardy spaces estimates for multilinear operators with homogeneous kernels,
Nagoya Math. J.  170  (2003), 117-133.

\vspace{-0.26cm}

\bibitem{dls06}
Y. Ding, S. Lu and S. Shao,
Integral operators with variable kernels on weak Hardy spaces,
J. Math. Anal. Appl.  317  (2006),  127-135.

\vspace{-0.26cm}

\bibitem{dlq07}
Y. Ding, S. Lu and Q. Xue,
Parametrized Littlewood-Paley operators on Hardy and weak Hardy spaces,
Math. Nachr.  280  (2007), 351-363.

\vspace{-0.26cm}

\bibitem{d01}
J. Duoandikoetxea, Fourier Analysis, Graduate Studies in Mathematics 29,
American Mathematical Society, Providence, RI, 2001. xviii+222 pp.

\vspace{-0.26cm}

\bibitem{frs74}
C. Fefferman, N. M. Rivi\`{e}re and Y. Sagher,
Interpolation between $H^p$ spaces: the real method,
 Trans. Amer. Math. Soc. 191 (1974), 75-81.

\vspace{-0.26cm}

\bibitem{fs72}
C. Fefferman and E. M. Stein,
$H^p$ spaces of several variables, Acta Math. 129 (1972), 137-193.

\vspace{-0.26cm}

\bibitem{fs87}
R. Fefferman and F. Soria,
The spaces weak $H^1$,
Studia Math, 85 (1987), 1-16.

\vspace{-0.26cm}

\bibitem{fs82}
G. B. Folland and E. M. Stein,
Hardy Spaces on Homogeneous Groups, Mathematical Notes 28,
Princeton University Press, Princeton,
N.J.; University of Tokyo Press, Tokyo, 1982.

\vspace{-0.26cm}

\bibitem{g92} L. Grafakos,
Hardy space estimates for multilinear operators, II,
Rev. Mat. Iberoamericana 8 (1992), 69-92.

\vspace{-0.26cm}

\bibitem{lg08}
L. Grafakos,
Classical Fourier Analysis, Second Edition,
Graduate Texts in Mathematics 249, Springer, New York, 2008.

\vspace{-0.26cm}

\bibitem{lg09}
 L. Grafakos,
Modern Fourier Analysis, Second Edition,
Graduate Texts in Mathematics 250, Springer, New York, 2009.

\vspace{-0.26cm}

\bibitem{gly08}
L. Grafakos, L. Liu and D. Yang,
Maximal function characterizations of Hardy spaces
on RD-spaces and their applications, Sci. China Ser. A 51 (2008), 2253-2284.

\vspace{-0.26cm}

\bibitem{h66}
R. Hunt,
On $L(p,q)$ spaces, Enseignement Math. 12 (1966), 249-276.

\vspace{-0.26cm}

\bibitem{iiy15} N. Ioku, K. Ishige and E. Yanagida, Sharp decay estimates
in Lorentz spaces for nonnegative Schr\"odinger heat semigroups,
J. Math. Pures Appl. (9) 103 (2015), 900-923.

\vspace{-0.26cm}

\bibitem{ky14} L. D. Ky,
New Hardy spaces of Musielak-Orlicz type and
boundedness of sublinear operators,
Integral Equations Operator Theory 78 (2014), 115-150.

\vspace{-0.26cm}

\bibitem{lby14}
B. Li, M. Bownik and D. Yang,
Littlewood-Paley characterization and duality of
weighted anisotropic product Hardy spaces,
J. Funct. Anal. 266 (2014), 2611-2661.

\vspace{-0.26cm}

\bibitem{lbyy12}
B. Li, M. Bownik, D. Yang and W. Yuan,
Duality of weighted anisotropic Besov and Triebel-Lizorkin spaces,
Positivity 16 (2012), 213-244.

\vspace{-0.26cm}

\bibitem{lbyy14}
B. Li, M. Bownik, D. Yang and W. Yuan,
A mean characterization of weighted anisotropic Besov
and Triebel-Lizorkin spaces,
Z. Anal. Anwend. 33 (2014), 125-147.

\vspace{-0.26cm}

\bibitem{lly11}
Y. Liang, L. Liu and D. Yang,
An off-diagonal Marcinkiewicz interpolation theorem on Lorentz spaces,
Acta Math. Sin. (Engl. Ser.) 27 (2011), 1477-1488.

\vspace{-0.26cm}

\bibitem{lp64}
J.-L. Lions and J. Peetre,
Sur une classe d¡¯espaces d¡¯interpolation (French), Inst.
Hautes ¨¦tudes Sci. Publ. Math. 19 (1964), 5-68.

\vspace{-0.26cm}

\bibitem{l91}
H. Liu,
The weak $H^p$ spaces on homogeneous groups,
Harmonic Analysis (Tianjin, 1988), 113-118,
Lecture Notes in Math. 1494, Springer, Berlin, 1991.

\vspace{-0.26cm}

\bibitem{lyy15}
J. Liu, D. Yang and W. Yuan,
Littlewood-Paley characterizations of anisotropic Hardy-Lorentz spaces,
Submitted.

\vspace{-0.26cm}

\bibitem{l50}
G. G. Lorentz,
Some new functional spaces, Ann. of Math. (2) 51 (1950), 37-55.

\vspace{-0.26cm}

\bibitem{l51}
G. G. Lorentz,
On the theory of spaces $\Lambda$, Pacific. J. Math. 1 (1951), 411-429.

\vspace{-0.26cm}

\bibitem{lu95}
S. Lu,
Four Lectures on Real $H^p$ Spaces, World Scientific Publishing Co.,
Inc., River Edge, NJ, 1995. viii+217 pp.

\vspace{-0.26cm}

\bibitem{msv08}
S. Meda, P. Sj\"{o}gren and M. Vallarino,
On the $H^1$-$L^1$ boundedness of operators, Proc. Amer. Math. Soc. 136 (2008), 2921-2931.

\vspace{-0.26cm}

\bibitem{mr15} J. Merker and J.-M. Rakotoson, Very weak solutions of Poisson's
equation with singular data under Neumann boundary conditions,
Calc. Var. Partial Differential Equations  52 (2015), 705-726.

\vspace{-0.26cm}

\bibitem{m84}
C. Merucci,
Applications of interpolation with a function parameter to Lorentz,
Sobolev and Besov spaces, Interpolation Spaces and Allied Topics in Analysis (Lund, 1983), 183-201,
Lecture Notes in Math. 1070, Springer, Berlin, 1984.

\vspace{-0.26cm}

\bibitem{mtt03} C. Muscalu, T. Tao and C. Thiele, A counterexample to a
multilinear endpoint question of Christ and Kiselev, Math. Res. Lett.
10 (2003), 237-246.

\vspace{-0.26cm}

\bibitem{m94}
S. M\"{u}ller,
Hardy space methods for nonlinear partial differential equations,
Equadiff 8 (Bratislava, 1993). Tatra Mt. Math. Publ. 4 (1994), 159-168.

\vspace{-0.26cm}

\bibitem{osttw12} R. Oberlin, A. Seeger, T. Tao, C. Thiele and
J. Wright, A variation norm Carleson theorem, J. Eur. Math. Soc.
(JEMS) 14 (2012), 421-464.

\vspace{-0.26cm}

\bibitem{p05}
D. Parilov,
Two theorems on the Hardy-Lorentz classes $H^{1,q}$,
 (Russian) Zap. Nauchn. Sem. S.-Peterburg. Otdel.
 Mat. Inst. Steklov. (POMI) 327 (2005), Issled. po Linein. Oper.
 i Teor. Funkts. 33, 150-167, 238; translation in
 J. Math. Sci. (N. Y.) 139 (2006),  6447-6456

\vspace{-0.26cm}

\bibitem{p63}
J. Peetre,
Nouvelles propri\'{e}t\'{e}s d'espaces d'interpolation,
C. R. Acad. Sci. Paris 256 (1963), 1424-1426.

\vspace{-0.26cm}

\bibitem{p15} N. C. Phuc, The Navier-Stokes equations in nonendpoint borderline Lorentz spaces,
J. Math. Fluid Mech.  17 (2015), 741-760.

\vspace{-0.26cm}

\bibitem{sr57} S. Rolewicz,
On a certain class of linear metric spaces,
Bull. Acad. Polon. Sci. Cl. Trois. 5 (1957), 471-473.

\vspace{-0.26cm}

\bibitem{wr91}
W. Rudin,
Functional Analysis. Second Edition,
International Series in Pure and Applied Mathematics,
McGraw-Hill, Inc., New York, 1991.

\vspace{-0.26cm}

\bibitem{s76}
C. Sadosky,
Interpolation of Operators and Singular Integrals, Marcel Dekker Inc., 1976.

\vspace{-0.26cm}

\bibitem{st87}
H.-J. Schmeisser and H. Triebel,
Topics in Fourier Analysis and Function
Spaces, John Wiley and Sons Ltd.,Chichester, 1987.

\vspace{-0.26cm}

\bibitem{st01} A. Seeger and T. Tao, Sharp Lorentz space estimates
for rough operators, Math. Ann. 320 (2001), 381-415.

\vspace{-0.26cm}

\bibitem{s94}
S. Semmes,
A primer on Hardy spaces, and some remarks on a theorem of Evans and M\"{u}ller,
Comm. Partial Differential Equations 19 (1994), 277-319.

\vspace{-0.26cm}

\bibitem{s93}
E. M. Stein,
Harmonic Analysis: Real-Variable Methods, Orthogonality,
and Oscillatory Integrals,
Princeton Mathematical Series 43, Monographs
in Harmonic Analysis III, Princeton University Press,
Princeton, NJ, 1993.

\vspace{-0.26cm}

\bibitem{sw71}
E. M. Stein and G. Weiss,
Introduction to Fourier Analysis on Euclidean Spaces,
Princeton Mathematical Series 32,
Princeton University Press, Princeton, N.J., 1971.

\vspace{-0.26cm}

\bibitem{sto89}
J.-O. Str\"{o}mberg and A. Torchinsky,
Weighted Hardy Spaces,
Lecture Notes in Mathematics, 1381.
Springer-Verlag, Berlin, 1989.

\vspace{-0.26cm}

\bibitem{tw01} T. Tao and J. Wright, Endpoint multiplier theorems
of Marcinkiewicz type, Rev. Mat. Iberoamericana  17  (2001), 521-558.

\vspace{-0.26cm}

\bibitem{t83}
H. Triebel,
Theory of Function Spaces, Birkh\"{a}user Verlag, Basel, 1983.

\vspace{-0.26cm}

\bibitem{t92}
H. Triebel,
Theory of Function Spaces. II, Birkh\"{a}user Verlag, Basel, 1992.

\vspace{-0.26cm}

\bibitem{t06}
H. Triebel,
Theory of Function Spaces. III, Birkh\"{a}user Verlag, Basel, 2006.

\vspace{-0.26cm}

\bibitem{w13} H. Wang,
Boundedness of several integral operators with bounded
variable kernels on Hardy and weak Hardy spaces,
Internat. J. Math. 24 (2013).

\vspace{-0.26cm}

\bibitem{yz08}
D. Yang and Y. Zhou, Boundedness of sublinear operators in
Hardy spaces on RD-spaces via atoms, J. Math. Anal. Appl. 339 (2008), 622-635.

\vspace{-0.26cm}

\bibitem{yz09}
D. Yang and Y. Zhou, A boundedness criterion via atoms for
linear operators in Hardy spaces, Constr. Approx. 29 (2009), 207-218.

\vspace{-0.26cm}

\end{thebibliography}
\end{document}